\newcolumntype{L}[1]{>{\raggedright\arraybackslash}p{#1}}
\newcolumntype{C}[1]{>{\centering\arraybackslash}p{#1}}
\newcolumntype{R}[1]{>{\raggedleft\arraybackslash}p{#1}}
\newcommand{%
    \def\svgwidth{\columnwidth}
    \import{./}{.pdf_tex}
}[1]{%
    \def\svgwidth{\columnwidth}
    \import{./}{#1.pdf_tex}
}
\tikzset{
    state/.style={
           rectangle,
           rounded corners,
           draw=black, very thick,
           minimum height=2em,
           inner sep=2pt,
           text centered,
           },
}
\numberwithin{equation}{section}
\theoremstyle{plain}
\newtheorem{theorem}{Theorem}[section]
\theoremstyle{theorem}
\newtheorem{prop}[theorem]{Proposition}
\newtheorem{lem}[theorem]{Lemma}
\newtheorem{cor}[theorem]{Corollary}
\newtheorem*{question*}{Question}
\theoremstyle{plain}
\newtheorem{ih}{IH}
\theoremstyle{definition}
\newtheorem{defn}[theorem]{Definition}
\newcommand{\R}{\mathbb{R}}
\newcommand{\C}{\mathbb{C}}
\newcommand{\Proj}{\mathbb{P}}
\newcommand{\Q}{\mathbb{Q}}
\newcommand{\Z}{\mathbb{Z}}
\newcommand{\Hyp}{\mathbb{H}}
\newcommand{\N}{\mathbb{N}}
\newcommand{\D}{\mathbb{D}}
\newcommand{\p}{p}
\newcommand{\vp}{p}
\newcommand{\T}{\mathcal{T}}
\DeclareMathOperator{\Rat}{Rat}
\DeclareMathOperator{\PH}{\mathcal{H}}
\DeclareMathOperator{\BP}{\mathcal{B}}
\DeclareMathOperator{\MP}{\mathcal{P}}
\DeclareMathOperator{\interior}{int}
\DeclareMathOperator{\Isom}{Isom}
\DeclareMathOperator{\Res}{Res}
\DeclareMathOperator{\Int}{Int}
\DeclareMathOperator{\chull}{Cvx\, Hull}
\DeclareMathOperator{\proj}{proj}
\numberwithin{figure}{section}
\title[On geometrically finite degenerations I]{On geometrically finite degenerations I: boundaries of main hyperbolic components}
\author{Yusheng Luo}
\address{Dept. of Mathematics \& University of Michigan, Ann Arbor, MI 48109 USA}
\email{yusheng.s.luo@gmail.com}
\date{\today}
\begin{document}

\begin{abstract}
In this paper, we develop a theory of {\em quasi post-critically finite} degenerations of Blaschke products.
This gives us tools to study the boundaries of hyperbolic components of rational maps in higher dimensional moduli spaces.
We use it to obtain a combinatorial classification of geometrically finite polynomials on the boundary of the {\em main hyperbolic component} $\PH_d$, i.e., the hyperbolic component in the space of monic and centered polynomials that contains $z^d$.
We also show that the closure $\overline{\PH_d}$ is not a topological manifold with boundary for $d\geq 4$ by constructing {\em self-bumps} on its boundary.
\end{abstract}

\maketitle

\setcounter{tocdepth}{1}
\tableofcontents

\section{Introduction}\label{sec:intro}
Let $f: \hat\C \longrightarrow \hat\C$ be a rational map of degree $d \geq 2$. 
It is said to be {\em hyperbolic} if all the critical points converge to attracting periodic cycles under iteration.
Hyperbolic maps form an open set in suitable moduli spaces,
and a connected component $U$ is called a {\em hyperbolic component}.
The topology of hyperbolic components has been studied extensively and is well understood in various settings \cite{BDK91, Milnor12, WY17}.
However, the boundaries of hyperbolic components and the interactions between hyperbolic components remain mysterious.
In a series of two papers, we develop a theory on `geometrically finite' degenerations to investigate these questions.
In this paper,
\begin{itemize}
\item We study {\em quasi post-critically finite} degenerations of Blaschke products $f_n \in \BP_d$, where $\BP_d$ is the space of normalized and marked Blaschke products of degree $d$ (see Equation \ref{eqn: BP}).
For such degenerations, we construct quasi-invariant trees modeled by a limiting simplicial tree map
$f: (\mathcal{T}, \p) \longrightarrow (\mathcal{T}, \p)$
with rescaling limits.
These quasi-invariant trees are the analogs of the Hubbard trees for post-critically finite polynomial.
We prove a realization theorem for quasi-invariant trees and thus classify quasi post-critically finite degenerations in $\BP_d$.
\item A rational map is said to be geometrically finite if the critical points in the Julia set have finite orbits.
Using the realization theorem, we obtain a classification of geometrically finite polynomials on the boundary of the main hyperbolic component of polynomials $\PH_d$ containing $z^d$.
\item The study of the quasi-invariant tree maps reveals many different accesses to a boundary point from $\PH_d$ and thus a `self-bump' occurs on $\partial \PH_d$, showing the closure $\overline{\PH_d}$ is not a topological manifold with boundary for $d\geq 4$ (see Figure \ref{fig:SB}).
\end{itemize}

In the sequel \cite{L21}, we study the convergence and divergence of quasi post-critically finite degenerations for rational maps.
We prove the boundedness of quasi post-critically finite degenerations for hyperbolic rational maps with Sierpinski carpet Julia set.
We also prove a `double limit theorem' for `quasi-Blaschke products' by giving a criterion for the convergence of simultaneous quasi post-critically finite degenerations on the two Fatou components of $z^d$.
Together with the realization result proved in this paper, the convergence results can be applied to show the existence of polynomial mating (cf. \cite{Douady83, TanLei92}).

Our theory runs parallel with the developments in Kleinian groups, and the results fit into the well-known Sullivan's dictionary between the two fields.
We summarize the comparisons in the following table.

\begin{center}
\begin{tabular}{|C{0.47\textwidth} | C{0.47\textwidth}|}
     \hline & \\
\textbf{Complex dynamics} & \textbf{Kleinian groups}  \\ & \\ \hhline{|==|}
Blaschke product      & Fuchsian group \\ \hline
Quasi-Blaschke products         &  Quasi-Fuchsian group \\ \hline
Main hyperbolic component         & Bers slice \\ \hline
Geometrically finite rational map         & Geometrically finite Kleinian group \\ \hline
Sierpinski carpet rational maps        &  Acylindrical Kleinian groups \\ \hline
Geometrically finite polynomials on $\partial \PH_d$ & Cusps on the Bers boundary\\ \hline
Self-bumps on $\partial \PH_d$           & Self-bumps on the Bers boundary \\ \hline
Double limit theorem for quasi-Blaschke product        
& Double limit theorem for quasi-Fuchsian group \\ \hline
Boundedness for Sierpinski carpet rational maps       
& Thurston's compactness theorem for Acylindrical 3-manifold
\\ \toprule
\end{tabular} 
\end{center}

We now turn to a detailed statement of results.

\subsection*{Main Hyperbolic component}
A polynomial $P(z) = a_dz^d+...+a_0$ is said to be {\em monic} if $a_d = 1$ and {\em centered} if $a_{d-1} = 0$.
Let $\MP_d$ be the space of all monic and centered polynomials.
A degree $d$ polynomial $P$ with connected Julia set has $d-1$ invariant external rays.
A {\em marking} of $P$ is a particular choice of an invariant external ray.
A monic and centered polynomial $P$ with connected Julia set has a unique choice of the B\"ottcher map normalized so that the derivative at infinity is $1$.
The angle $0$ external ray under this normalization thus gives a marking of $P$.
Therefore, $P$ is regarded as a {\em marked polynomial}.
If $P$ has a Jordan curve Julia set, then a marking is equivalent to a choice of a repelling fixed point on its Julia set $J(P)$.

Let $\PH_d \subseteq \MP_d$ be the hyperbolic component that contains $z^d$.
We call $\PH_d$ the {\em main hyperbolic component} of degree $d$.

In the quadratic polynomial case, the Landing Theorem of Douady and Hubbard can be used to give a complete understanding of geometrically finite polynomials on the boundary of a hyperbolic component and which hyperbolic components of the Mandelbrot set have intersecting closures \cite{Milnor00b}.
In particular, geometrically finite polynomials on $\partial \PH_2$ are in correspondence with rational rotation numbers $\Q/\Z$ (see \cite[Theorem 6.5]{Milnor00b}).

To describe the dynamics of a geometrically finite polynomial on $\hat P \in \partial \PH_d$ in higher degrees, we introduce the notion of {\em pointed Hubbard tree}.
A polynomial $P$ is said to be {\em post-critically finite} if the critical points have finite orbits.
Given any geometrically finite polynomial $\hat P \in \MP_d$ with connected Julia set, there exists a post-critically finite polynomial $P \in \MP_d$ with topologically conjugate dynamics on the Julia sets compatible with the markings \cite{Haissinsky00}.
The dynamics of $P$ is described combinatorially by its (angled) Hubbard tree $H$.

If $\hat P \in \partial \PH_d$, there exists a special non-repelling fixed point $\hat p = \lim p_n$ of $\hat P$, where $p_n$ is the attracting fixed point of $P_n \in \PH_d$ and $P_n \to \hat P$.
This gives a fixed point $p\in H$, and we call $(H,p)$ the pointed Hubbard tree for $\hat P$.

The pointed Hubbard tree $(H,p)$ is said to be {\em simplicial} if there exists a finite simplicial structure on $H$ for which $P$ is a simplicial map, i.e., $P$ sends an edge of $H$ to an edge of $H$.
We say $(H',\vp')$ is a {\em pointed simplicial tuning} of $(H,\vp)$ if $(H',\vp')$ is constructed from $(H,\vp)$ by `replacing' the center $\vp$ of local degree $\delta(\p)$ by a simplicial pointed Hubbard tree of degree $\delta(\p)$ and modifying the backward orbits of $\p$ accordingly (see \S \ref{sec:pht} for precise definition).
We say a degree $d$ pointed Hubbard tree $(H,\vp)$ is {\em iterated-simplicial} if it can be inductively constructed from the trivial degree $d$ pointed Hubbard tree $(H=\{\vp\}, \vp)$ by a sequence of pointed simplicial tunings. 
We first show
\begin{theorem}\label{thm: eq}
If $\hat P\in \partial \PH_d$ is geometrically finite, then the associated pointed Hubbard tree $(H,\vp)$ is iterated-simplicial.
Conversely, if a degree $d$ pointed Hubbard tree $(H,\vp)$ is iterated-simplicial, then there exists a corresponding geometrically finite polynomial $\hat P$ on the boundary $\partial \PH_d$.
\end{theorem}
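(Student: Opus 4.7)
The plan is to prove both directions by induction on the \emph{level} of geometric finiteness, which records how deeply the critical orbits enter the bounded attracting Fatou components of $\hat P$.

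For the forward direction, I would start from an approximating sequence $f_n\to\hat P$ with $f_n\in\PH_d$. Since $\hat P$ is geometrically finite, this sequence is quasi post-critically finite, so the machinery of the paper (the arrow $\text{QPCF}\to\text{QIT}$ in Figure~\ref{fig:LF}) produces a quasi-invariant tree $((\T,\p),\{F_v\})$ with simplicial tree map and rescaling limits. Passing through Ha\"issinsky's theorem to the post-critically finite model gives the pointed Hubbard tree $(H,p)$; the simplicial structure on $\T$ pulls back to a simplicial structure on the portion of $H$ corresponding to the unbounded attracting behavior, exhibiting $(H_1,p_1)$ as a simplicial pointed Hubbard tree at level $1$. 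To reach higher levels I would examine each bounded attracting Fatou component of $\hat P$ whose boundary carries critical points in $J(\hat P)$; on such a component the first-return map is itself a geometrically finite map of strictly smaller degree (essentially a $\PH_{\delta}$-boundary situation for $\delta=\delta(\p)$), to which the same argument applies. An induction on this complexity produces a sequence of pointed simplicial tunings yielding that $(H,\p)$ is iterated-simplicial; the induction terminates because the critical orbits are finite.

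For the converse I would induct on the number of tuning steps needed to build $(H,\p)$. The base case $(H,\p)=(\{\p\},\p)$ is realized by $z^d\in\PH_d$. For the inductive step, suppose $(H',\p')$ is obtained from $(H,\p)$ by a pointed simplicial tuning using a simplicial pointed Hubbard tree of degree $\delta(\p)$, and that $(H,\p)$ is already realized by some $\hat P\in\partial\PH_d$ (or by $z^d$). I would first feed the new simplicial layer into an \emph{admissible splitting} to manufacture an admissible angled tree map $(f:(\T,\p)\to(\T,\p),\{\delta_v\},\{\alpha_v\})$ (arrow $\text{PH}\to\text{AATM}$), and then apply the realization theorem for quasi-invariant trees (arrow $\text{AATM}\to\text{QIT}$) to obtain a quasi post-critically finite degeneration $(f_n)_n$ inside $\PH_d$. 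After extracting a subsequential limit $\hat P'\in\overline{\PH_d}$ (which is geometrically finite because the degeneration is QPCF), the rescaling limits on the vertex $\p'$ match the previously realized $(H,\p)$ while the surrounding simplicial tree matches the prescribed tuning, so Ha\"issinsky's topological conjugacy identifies the pointed Hubbard tree of $\hat P'$ with $(H',\p')$.

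The main obstacle I expect is the two-way bookkeeping that pins down the correspondence layer by layer. In the forward direction one must check that the quasi-invariant tree extracted from a single approximating sequence picks off exactly one pointed simplicial tuning layer, so that the induction on bounded Fatou components is clean. In the converse direction one must verify that among the many admissible splittings of $(H',\p')$ at least one is realized by a convergent degeneration whose limit lies in $\overline{\PH_d}$ (rather than in the closure of some other hyperbolic component), and that the rescaling limit at the distinguished vertex $\p'$ reproduces the previously constructed $\hat P$. This coherence between the \emph{geometric} limit provided by the realization theorem and the \emph{combinatorial} tuning operation is the heart of the theorem; once it is established, the two directions glue together to prove the claimed equivalence.
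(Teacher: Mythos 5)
There is a genuine gap in both directions, and it is the same gap: you assume that geometrically finite maps on $\partial \PH_d$ can be reached (and analyzed) through quasi post-critically finite sequences inside $\PH_d$. For the forward direction you start from ``an approximating sequence $f_n\to\hat P$ with $f_n\in\PH_d$'' and assert it is quasi post-critically finite; this is false in general, and the paper's own example (Figure \ref{fig:D3}) exhibits a geometrically finite $\hat P\in\partial\PH_3$ with an iterated-simplicial but non-simplicial pointed Hubbard tree for which the escaping rates of the critical points are incompatible, one critical point is not quasi pre-periodic, and $\hat P$ ``cannot be obtained directly by a quasi post-critically finite degeneration.'' So the QPCF/quasi-invariant-tree machinery cannot be launched from an arbitrary approximating sequence, and your recursion on bounded Fatou components never gets off the ground. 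The paper's forward argument (Proposition \ref{prop:NC}) is instead entirely on the limit side: Proposition \ref{prop:bp} shows every periodic cut point of $J(\hat P)$ is parabolic with attracting petals in all complementary directions except the one toward $\hat\vp$; Lemmas \ref{lem:sbv} and \ref{lem:sbv2} then show the maximal simplicial invariant subtree of $(H,\vp)$ is non-trivial and that the \emph{pointed simplicial quotient} again satisfies the same hypotheses, so one quotients repeatedly until the tree is trivial. No approximating sequence is used at all.

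For the converse, your outline matches the paper for one layer (admissible splitting, the realization theorem, limit in $\overline{\PH_d}$, identification of the tree; this is Theorem \ref{thm:shtr} together with Proposition \ref{prop:rsh}), but the inductive step as you state it---realize the tuned tree $(H',\p')$ by another QPCF degeneration $(f_n)_n$ \emph{inside} $\PH_d$---fails for the reason above: trees of level $\geq 2$ are not realizable by a single interior degeneration. In the paper the induction proceeds by degenerating the \emph{normalized mapping scheme} on the bounded attracting Fatou components, i.e. by quasi post-critically finite sequences $\mathcal{F}_n\in\BP^{\mathcal{S}_\p}$ (Proposition \ref{prop:rt2}); the approximating maps at the next level are geometrically finite polynomials already lying on $\partial\PH_d$, not hyperbolic maps in $\PH_d$. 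Making this work requires the uniform $M$-uni-critical control in Propositions \ref{prop:rt} and \ref{prop:rsh}, with $M$ independent of the mapping scheme, combined with the compactness of $M$-uni-critical doubly parabolic Blaschke products (Proposition \ref{lem:Muc}): this is what keeps the remaining Fatou components from collapsing while the mapping scheme is pushed to infinity, and it is precisely the ingredient absent from your sketch. You correctly flag the coherence between the geometric limit and the combinatorial tuning as the crux, but the resolution is this two-tier (interior, then boundary) degeneration scheme rather than one interior degeneration per tuning layer.
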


We remark that the classification naturally gives a level structure for geometrically finite polynomials on $\partial \PH_d$.
This level structure is a manifestation of the Schwarz lemma and the incompatible escaping rates for the critical points for the dynamics in the corresponding Fatou component (see Figure \ref{fig:D3} and the discussions below Theorem \ref{thm:shtr}).

Theorem \ref{thm: eq} gives concrete combinatorial models for geometrically finite polynomials on $\partial \PH_d$ and classifies all models that arise.
Let us denote by $U_{\hat P}$ the space of all geometrically finite polynomials corresponding to the same pointed Hubbard tree as $\hat P \in \partial \PH_d$.
We do not know a description of $U_{\hat P} \cap \partial \PH_d$, and it is expected this space can be quite complicated in general \cite{McM94b} (see \S \ref{sec:BPH} for some partial answers).
To discuss some of the subtleties of describing this space, it is well-known that the closure of a hyperbolic component $\mathcal{H}$ may not be {\em quasiconformally closed}: if $P \in \overline{\mathcal{H}}$, a quasiconformal deformation of $P$ may not be in $\overline{\mathcal{H}}$ (see \cite{Tan06, IM20}).

A hyperbolic component $\mathcal{H} \neq \PH_d$ is said to be a {\em satellite component} of $\PH_d$ if there exists a parabolic polynomial $\hat P\in \partial{\mathcal{H}} \cap \partial{\PH_d}$ that has conjugate dynamics on the Julia sets with any $P \in \mathcal{H}$.
As any parabolic polynomial can be perturbed to a hyperbolic polynomial with conjugate dynamics on the Julia sets \cite{Haissinsky00}, we immediately have the following corollary:
\begin{cor}\label{thm:eq2}
Let $\mathcal{H} \neq \PH_d$ be a hyperbolic component with connected Julia set. 
Let $H$ be the Hubbard tree of the post-critically finite center $P \in \mathcal{H}$.
Then $\mathcal{H}$ is a satellite component of $\PH_d$ if and only if there exists a fixed point $p\in H$ so that $(H,p)$ is iterated-simplicial.
\end{cor}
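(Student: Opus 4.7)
The plan is to derive the corollary by combining Theorem~\ref{thm: eq} with Ha\"issinsky's perturbation theorem for parabolic polynomials, together with the classification of hyperbolic components by the combinatorics of their post-critically finite center.

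$(\Rightarrow)$ Assume $\mathcal{H}$ is a satellite component of $\PH_d$. By definition there is a parabolic polynomial $\hat P \in \partial\mathcal{H}\cap\partial\PH_d$ whose Julia-set dynamics is topologically conjugate (compatibly with markings) to that of every $P\in\mathcal{H}$, in particular to the center. A parabolic polynomial is geometrically finite, so Theorem~\ref{thm: eq} applied to $\hat P\in\partial\PH_d$ yields an iterated-simplicial pointed Hubbard tree associated to $\hat P$. Since the Hubbard tree is an invariant of the Julia-set conjugacy class, its underlying tree coincides with the Hubbard tree $H$ of $P$, and the distinguished fixed point is precisely the fixed point $p\in H$ that arises as the limit of the attracting fixed points of any approximating sequence in $\PH_d$. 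Hence $(H,p)$ is iterated-simplicial.

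$(\Leftarrow)$ Conversely, suppose $p\in H$ is a fixed point for which $(H,p)$ is iterated-simplicial. The converse direction of Theorem~\ref{thm: eq} then produces a geometrically finite polynomial $\hat P\in\partial\PH_d$ whose pointed Hubbard tree is $(H,p)$. By construction, $p$ is the limit of attracting fixed points of a sequence in $\PH_d$ converging to $\hat P$, so the multiplier $\hat P'(p)$ lies on $\partial\D$; geometric finiteness of $\hat P$ rules out an irrationally indifferent fixed point at $p$ (since then the critical orbits accumulating there would fail to be finite), forcing $\hat P$ to be parabolic at $p$. Now invoke Ha\"issinsky's theorem to perturb $\hat P$ to a nearby hyperbolic polynomial $\tilde P$ whose Julia-set dynamics is topologically conjugate to that of $\hat P$, hence to that of $P$. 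Both $\tilde P$ and $P$ are hyperbolic with combinatorial Hubbard tree $H$; since a hyperbolic component with connected Julia set is determined by the combinatorial Hubbard tree of its post-critically finite center, $\tilde P$ lies in $\mathcal{H}$. Consequently $\hat P\in\overline{\mathcal{H}}\setminus\mathcal{H}$ and, being parabolic with $\hat P\in\partial\PH_d$ and conjugate on Julia sets to the center of $\mathcal{H}$, it witnesses $\mathcal{H}$ as a satellite of $\PH_d$.

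The main obstacle is the backward direction: ensuring that the geometrically finite polynomial $\hat P$ delivered by Theorem~\ref{thm: eq} is genuinely parabolic (not merely irrationally indifferent) and that Ha\"issinsky's perturbation lands in the specific component $\mathcal{H}$ rather than in a different hyperbolic component accidentally sharing the same combinatorics. Both issues dissolve once one exploits that $p$ is a limit of attracting fixed points from $\PH_d$ combined with geometric finiteness, and that the map from hyperbolic components (with connected Julia set) to the combinatorial Hubbard trees of their centers is injective.
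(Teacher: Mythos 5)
Your overall architecture is the paper's intended one-line derivation (Theorem \ref{thm: eq} combined with Ha\"issinsky's perturbation), and your forward direction is fine. The genuine problem is in the backward direction, at the step where you assert that ``the multiplier $\hat P'(p)$ lies on $\partial\D$'' and hence that $\hat P$ is parabolic \emph{at} $p$. A limit of attracting fixed points only has multiplier in the closed disk, and in fact whenever $\delta(p)\geq 2$ the polynomial produced by Theorem \ref{thm: eq} has an attracting (typically superattracting) fixed point at the marked point: this is exactly Corollary \ref{cor:afc}. It already happens for the simplest satellites (e.g.\ a cubic component whose center has two superattracting fixed points), and for the paper's own example $\hat P(z)=z^4-\frac{3}{8}z^2+\frac{9}{8}z-\frac{3}{256}$ in \S\ref{sec:sb}, whose marked fixed point $-3/4$ is superattracting while the parabolic point sits at $1/4$. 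Since the definition of a satellite component requires a \emph{parabolic} witness on $\partial\mathcal{H}\cap\partial\PH_d$, the step you lean on to produce one fails as stated.

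What you actually need is that $\hat P$ has \emph{some} parabolic cycle, not necessarily at $p$, and this requires a different argument: $\hat P\in\partial\PH_d$ cannot be hyperbolic (a hyperbolic map on $\partial \PH_d$ would lie in an open hyperbolic component meeting $\PH_d$, forcing that component to be $\PH_d$ itself); by the construction underlying Theorem \ref{thm: eq} (Proposition \ref{prop:rsh}, Proposition \ref{prop:dfc}, Corollary \ref{cor:afc}) all critical points of $\hat P$ lie in Fatou components, since $H$ is the tree of a hyperbolic center and all its critical vertices are Fatou vertices; and geometric finiteness excludes irrationally indifferent cycles. Non-hyperbolicity then forces a parabolic cycle --- concretely, the periodic Fatou components other than the marked one are parabolic by Corollary \ref{cor:afc}, and periodic Julia cut points are parabolic by Proposition \ref{prop:bp}. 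With that substitution the rest of your backward direction (Ha\"issinsky perturbation to nearby hyperbolic maps with conjugate Julia dynamics, which lie in $\mathcal{H}$ because the angled Hubbard tree determines the center and hence the component, so $\hat P\in\partial\mathcal{H}$) goes through and agrees with the paper.
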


The direction that the pointed Hubbard tree for $\hat P\in \partial \PH_d$ is iterated-simplicial in Theorem \ref{thm: eq} follows from an analysis of cut points in the Julia set (see \S \ref{sec:pht}).
The other implication is proved by studying the degenerations of Blaschke products sketched in the following.

\subsection*{Blaschke products}
For $d\geq 2$, we let $\BP_d$ denote the space of {\em normalized and marked Blaschke products} $f: \D \longrightarrow \D$ of the form 
\begin{align}\label{eqn: BP}
f(z) = z\prod_{i=1}^{d-1} \frac{z-a_i}{1-\overline{a_i}z}, \text{ where }|a_i| < 1.
\end{align}
Note that $f(0) = 0$, and any proper holomorphic map from $\D$ to $\D$ of degree $d$ with a fixed point in $\D$ is holomorphically conjugate to a map in $\BP_d$.

Any map $f\in \BP_d$ can be extended to a rational map $f: \hat\C \longrightarrow \hat\C$.
Viewed as a rational map, the Julia set $J(f)$ is the circle $\mathbb{S}^1$, and there is a unique homeomorphism $\eta_f: \mathbb{S}^1 \cong\R/\Z \longrightarrow \mathbb{S}^1$, called the {\em marking}, that varies continuously with $f$, conjugates $z^d$ to $f$, and such that $\eta_f(z)$ is the identity map if $f(z) = z^d$.

The polynomials $P\in \PH_d$ are in correspondence with $f\in \BP_d$ by gluing $f\in \BP_d$ with $z^d$ using their markings on $\mathbb{S}^1$ \cite[\S 5]{McM88}.
This polynomial is denoted by $P = f \sqcup z^d$.
Thus, the study of $\partial \PH_d$ is naturally related to the study of degenerations in $\BP_d$.
We study `geometrically finite' degenerations in $\BP_d$ which give us uniform control on the rescaling dynamics at the critical orbits.

A sequence $f_n \in \BP_d$ is said to be {\em ($K$-)quasi post-critically finite} if we can label the critical points by $c_{i,n} \in \D$, $i=1,..., d-1$, such that for each $i$, there exist quasi pre-period $l_i$ and quasi period $q_i$ with
$$
d_{\D}(f_n^{l_i}(c_{i,n}), f_n^{l_i+q_i}(c_{i,n})) \leq K.
$$
The uniform bounds allow us to construct a sequence of quasi-invariant trees $\mathcal{T}_n \subseteq \D$ for the hyperbolic metric $d_{\D}$ on $\D$, capturing all the interesting dynamics (see Theorem \ref{thm:qit}, which is interesting by its own right). 
The dynamics on $\mathcal{T}_n$ is described by a simplicial tree map $f: (\mathcal{T}, \p) \longrightarrow (\mathcal{T}, \p)$, with {\em rescaling limits} $F_{v}: \D_v \longrightarrow \D_{f(v)}$ between vertices (cf. \cite{Kiwi15}).
This simplicial tree map plays a similar role as the Hubbard tree for a post-critically finite polynomial.

Let $P_n = f_n \sqcup z^d\in \PH_d$ be the corresponding quasi post-critically finite sequence of polynomials.
We show the limit of $P = \lim_{n\to\infty} P_n$ is geometrically finite, and the sequence of the quasi-invariant trees converges to a pointed Hubbard tree with `decorations' (see Figure \ref{fig:R} and Figure \ref{fig:HD}).
This allows us to describe the pointed Hubbard tree of the limit using quasi-invariant trees, building a bridge between geometrically finite polynomials on $\partial \PH_d$ and quasi post-critically finite degenerations in $\BP_d$.
\begin{figure}[ht]
  \centering
  \resizebox{1\linewidth}{!}{
    \def\svgwidth{\columnwidth}
\begingroup%
  \makeatletter%
  \providecommand\color[2][]{%
    \errmessage{(Inkscape) Color is used for the text in Inkscape, but the package 'color.sty' is not loaded}%
    \renewcommand\color[2][]{}%
  }%
  \providecommand\transparent[1]{%
    \errmessage{(Inkscape) Transparency is used (non-zero) for the text in Inkscape, but the package 'transparent.sty' is not loaded}%
    \renewcommand\transparent[1]{}%
  }%
  \providecommand\rotatebox[2]{#2}%
  \newcommand*\fsize{\dimexpr\f@size pt\relax}%
  \newcommand*\lineheight[1]{\fontsize{\fsize}{#1\fsize}\selectfont}%
  \ifx\svgwidth\undefined%
    \setlength{\unitlength}{388.5bp}%
    \ifx\svgscale\undefined%
      \relax%
    \else%
      \setlength{\unitlength}{\unitlength * \real{\svgscale}}%
    \fi%
  \else%
    \setlength{\unitlength}{\svgwidth}%
  \fi%
  \global\let\svgwidth\undefined%
  \global\let\svgscale\undefined%
  \makeatother%
  \begin{picture}(1,0.95559846)%
    \lineheight{1}%
    \setlength\tabcolsep{0pt}%
    \put(0,0){\includegraphics[width=\unitlength,page=1]{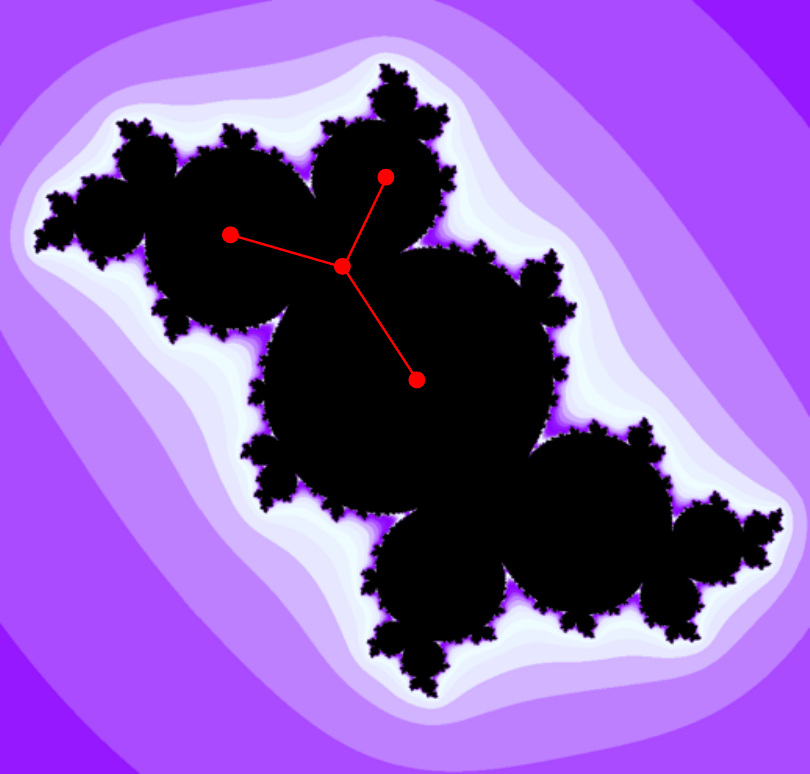}}%
    \put(0.53489906,0.48224011){\color[rgb]{1,0,0}\makebox(0,0)[lt]{\lineheight{1.25}\smash{\begin{tabular}[t]{l}{\LARGE$c_n$}\end{tabular}}}}%
    \put(0.44939229,0.62855636){\color[rgb]{1,0,0}\makebox(0,0)[lt]{\lineheight{1.25}\smash{\begin{tabular}[t]{l}{\LARGE$p_n$}\end{tabular}}}}%
    \put(0,0){\includegraphics[width=\unitlength,page=2]{R.pdf}}%
    \put(0.71662377,0.06613473){\color[rgb]{0,0,0}\makebox(0,0)[lt]{\lineheight{1.25}\smash{\begin{tabular}[t]{l}{\LARGE$\deg(c_n)=2$}\end{tabular}}}}%
  \end{picture}%
\endgroup%

    \def\svgwidth{\columnwidth}
\begingroup%
  \makeatletter%
  \providecommand\color[2][]{%
    \errmessage{(Inkscape) Color is used for the text in Inkscape, but the package 'color.sty' is not loaded}%
    \renewcommand\color[2][]{}%
  }%
  \providecommand\transparent[1]{%
    \errmessage{(Inkscape) Transparency is used (non-zero) for the text in Inkscape, but the package 'transparent.sty' is not loaded}%
    \renewcommand\transparent[1]{}%
  }%
  \providecommand\rotatebox[2]{#2}%
  \newcommand*\fsize{\dimexpr\f@size pt\relax}%
  \newcommand*\lineheight[1]{\fontsize{\fsize}{#1\fsize}\selectfont}%
  \ifx\svgwidth\undefined%
    \setlength{\unitlength}{388.5bp}%
    \ifx\svgscale\undefined%
      \relax%
    \else%
      \setlength{\unitlength}{\unitlength * \real{\svgscale}}%
    \fi%
  \else%
    \setlength{\unitlength}{\svgwidth}%
  \fi%
  \global\let\svgwidth\undefined%
  \global\let\svgscale\undefined%
  \makeatother%
  \begin{picture}(1,0.95559846)%
    \lineheight{1}%
    \setlength\tabcolsep{0pt}%
    \put(0,0){\includegraphics[width=\unitlength,page=1]{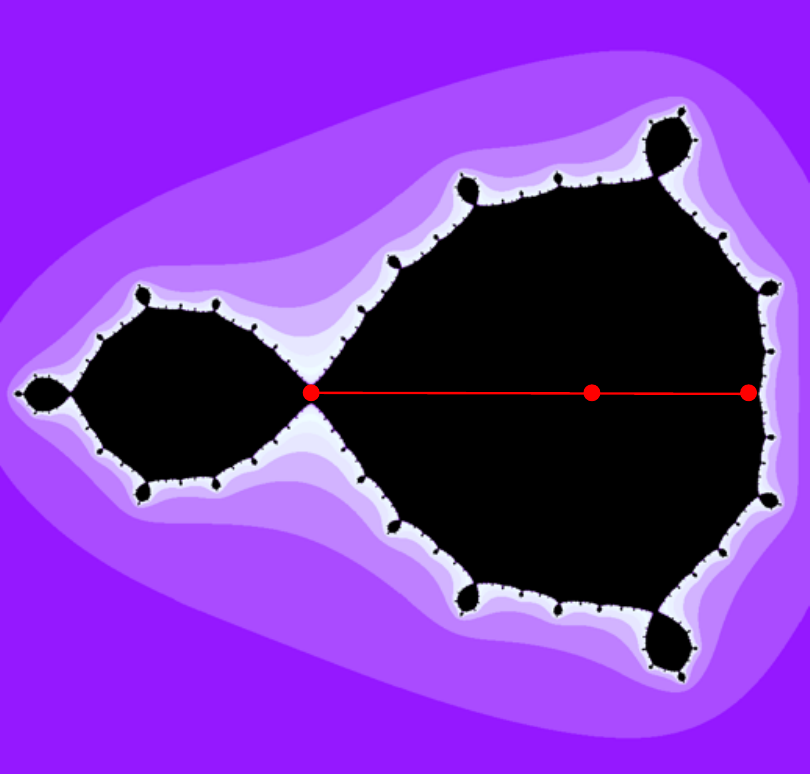}}%
    \put(0.72661393,0.43626924){\color[rgb]{1,0,0}\makebox(0,0)[lt]{\lineheight{1.25}\smash{\begin{tabular}[t]{l}{\LARGE$p_n$}\end{tabular}}}}%
    \put(0.40145289,0.44004794){\color[rgb]{1,0,0}\makebox(0,0)[lt]{\lineheight{1.25}\smash{\begin{tabular}[t]{l}{\LARGE$c_n$}\end{tabular}}}}%
    \put(0,0){\includegraphics[width=\unitlength,page=2]{G.pdf}}%
    \put(0.53729687,0.06663897){\color[rgb]{0,0,0}\makebox(0,0)[lt]{\lineheight{1.25}\smash{\begin{tabular}[t]{l}{\LARGE$\deg(p_n) = \deg(c_n)=2$}\end{tabular}}}}%
  \end{picture}%
\endgroup%

    \def\svgwidth{\columnwidth}
\begingroup%
  \makeatletter%
  \providecommand\color[2][]{%
    \errmessage{(Inkscape) Color is used for the text in Inkscape, but the package 'color.sty' is not loaded}%
    \renewcommand\color[2][]{}%
  }%
  \providecommand\transparent[1]{%
    \errmessage{(Inkscape) Transparency is used (non-zero) for the text in Inkscape, but the package 'transparent.sty' is not loaded}%
    \renewcommand\transparent[1]{}%
  }%
  \providecommand\rotatebox[2]{#2}%
  \newcommand*\fsize{\dimexpr\f@size pt\relax}%
  \newcommand*\lineheight[1]{\fontsize{\fsize}{#1\fsize}\selectfont}%
  \ifx\svgwidth\undefined%
    \setlength{\unitlength}{388.5bp}%
    \ifx\svgscale\undefined%
      \relax%
    \else%
      \setlength{\unitlength}{\unitlength * \real{\svgscale}}%
    \fi%
  \else%
    \setlength{\unitlength}{\svgwidth}%
  \fi%
  \global\let\svgwidth\undefined%
  \global\let\svgscale\undefined%
  \makeatother%
  \begin{picture}(1,0.95559846)%
    \lineheight{1}%
    \setlength\tabcolsep{0pt}%
    \put(0,0){\includegraphics[width=\unitlength,page=1]{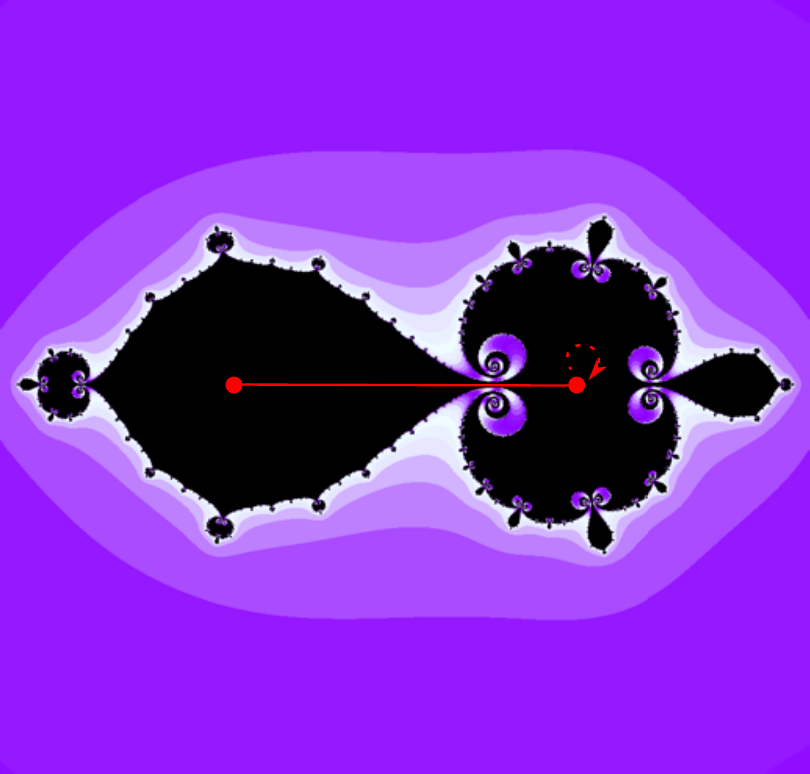}}%
    \put(0.288369,0.44570684){\color[rgb]{1,0,0}\makebox(0,0)[lt]{\lineheight{1.25}\smash{\begin{tabular}[t]{l}{\LARGE$p_n$}\end{tabular}}}}%
    \put(0.5401457,0.07876517){\color[rgb]{0,0,0}\makebox(0,0)[lt]{\lineheight{1.25}\smash{\begin{tabular}[t]{l}{\LARGE$\deg(p_n) = \deg(c_n)=2$}\end{tabular}}}}%
    \put(0.70269265,0.44289106){\color[rgb]{1,0,0}\makebox(0,0)[lt]{\lineheight{1.25}\smash{\begin{tabular}[t]{l}{\LARGE$c_n$}\end{tabular}}}}%
  \end{picture}%
\endgroup%

  }
  \caption{Illustrations of quasi-invariant trees for quasi post-critically finite degenerations of $(f_n)_n \in \BP_d$ in the bounded Fatou component of the corresponding polynomials $P_n = f_n \sqcup z^d$. }
  \label{fig:R}
\end{figure}

In \S \ref{sec:atm}, we define an abstract {\em angled tree map} 
$$
(f: (\mathcal{T}, p) \longrightarrow (\mathcal{T}, p), \delta, \alpha = \{\alpha_v\})
$$ 
with local degree function $\delta$ and angle functions $\alpha_v$ satisfying certain compatibility conditions. 
This data combinatorially classifies the simplicial tree map with the rescaling limits (cf. angled Hubbard tree in \cite{Poirier93}). 
Every simplicial pointed Hubbard tree $P:(H, \p)\longrightarrow (H, \p)$ gives an angled tree map.
In \S \ref{sec:raatm}, we define {\em admissible} angled tree maps and prove a realization theorem. 
In particular, we show
\begin{theorem}\label{thm:shtr}
Every simplicial pointed Hubbard tree
is realizable by $f_n \in \BP_d$ after an admissible splitting on its Julia branch points.
\end{theorem}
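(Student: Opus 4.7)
The plan is to reduce Theorem \ref{thm:shtr} to the realization theorem for admissible angled tree maps developed in Section \ref{sec:raatm}. Given a simplicial pointed Hubbard tree $P:(H,\p)\to(H,\p)$, I first associate to it a canonical angled tree map: the underlying tree is $\mathcal{T} = H$ with basepoint $\p$, the dynamics is $f = P|_H$, the local degrees $\delta_v$ are the local degrees of $P$ at each vertex $v$, and the angle function $\alpha_v$ records the cyclic order of incident edges inherited from the planar embedding of $H$ in $\C$. The issue, and the reason the theorem requires a splitting, is that admissibility---in particular the compatibility between local degrees, the angle function, and the dynamics---need not hold at the \emph{Julia} branch points of $H$, since the identifications of external rays landing at a Julia cut point are not resolved uniquely by the Hubbard tree data.

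The central construction is therefore the \emph{admissible splitting} at Julia branch points. At each offending Julia branch point $v$, I would replace $v$ by a small subtree $T_v$ with new interior vertices of local degree one, redistributing the edges formerly incident to $v$ among the vertices of $T_v$ with new angle data chosen so that the admissibility conditions of Section \ref{sec:raatm} hold. The freedom available is precisely the freedom to prescribe how the sectors at a Julia cut point are separated; since a Julia branch point is a cut point of $J(P)$, any combinatorially consistent prescription can in principle be realized.

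For this to define a genuine angled tree map, the splittings must be carried out simultaneously along the entire grand orbit of $v$ under $f$: the subtree $T_{f(v)}$ must be the image of $T_v$ under the local dynamics with compatible degrees and angles, and splittings on distinct grand orbits must be chosen consistently. I would organize this inductively: first choose splittings on periodic cycles of Julia branch points so that each subtree is invariant under the appropriate first-return map, then propagate the choice backward along pre-periodic orbits using the local degree constraints to lift the data through $f$. The main obstacle will be handling periodic Julia branch points whose first-return imposes a nontrivial fixed-point condition on the angle data, and pre-periodic branch points that land on critical vertices, where the branching of $f$ constrains how the incoming subtree can be matched to $T_{f(v)}$. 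This is where the combinatorial hypothesis that $(H,\p)$ is a genuine simplicial pointed Hubbard tree (rather than an arbitrary simplicial tree map) is used: the original polynomial $P$ supplies consistent orbit combinatorics that guarantee a solution exists.

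Once the splitting produces a globally admissible angled tree map $\tilde f : (\tilde{\mathcal{T}},\p) \to (\tilde{\mathcal{T}},\p)$, I invoke the realization theorem for admissible angled tree maps from Section \ref{sec:raatm} to obtain a quasi post-critically finite sequence $f_n \in \BP_d$ whose quasi-invariant trees, together with their rescaling limits, model $\tilde f$. By construction, the simplicial pointed Hubbard tree $(H,\p)$ is recovered by collapsing each subtree $T_v$ back to a point, so $(f_n)_n$ realizes $(H,\p)$ in the sense of the theorem.
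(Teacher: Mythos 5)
Your high-level plan (turn $(H,\p)$ into an angled tree map, split at the Julia branch points to make it admissible, then quote the realization theorem of \S\ref{sec:raatm}) is the same as the paper's, but the step that carries all the content---the definition of the splitting---is described in a way that does not work. You propose to replace a periodic Julia branch point $v$ by a subtree $T_v$ of \emph{new interior vertices of local degree one} and to redistribute the old incident edges among the vertices of $T_v$. If $v$ has valence $m+1\geq 3$, any such redistribution forces some new vertex of $T_v$ to have valence at least $3$; since these new vertices are degree one and (in the periodic case) periodic, they are periodic Julia branch points. But an admissible angled tree map is forbidden from having periodic Julia branch points other than $\p$ (Definition \ref{defn:adm}), and this is not a technical convention: by Lemma \ref{lem:h} and Corollary \ref{cor:nb} such a configuration is never realized by a quasi post-critically finite sequence in $\BP_d$, because the first return rescaling limit at such a vertex would have to fix three boundary points while also having an attracting fixed point on the circle. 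So the tree you produce is not admissible and Theorem \ref{thm:ar}/Proposition \ref{prop:rt} cannot be invoked.

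The correct splitting is structurally different. One first shows (Proposition \ref{prop:bp}, a Schwarz-lemma type statement for maps on $\partial\PH_d$) that the vertices adjacent to a periodic Julia branch point $v\neq\p$ are all periodic \emph{Fatou} points $a_0,\dots,a_m$; this fact, which your argument never uses, is what makes any splitting possible. One then deletes $v$ together with all its incident edges and reconnects $a_0,\dots,a_m$ directly by new edges arranged in a level structure (Figure \ref{fig:M}), assigning at these Fatou vertices the angles $0$ and $0^{\pm}$ from the extended circles $\mathbb{S}^1_{\Delta(a_i)}$; the symbols $0^{\pm}$ encode that the corresponding rescaling limits are boundary-hyperbolic/parabolic and that the quasi-invariant tree passes through the Fatou disks near their boundary fixed points rather than through any Julia vertex. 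Preimages of $v$ are handled by pulling this pattern back, using admissible branched coverings at critical preimages, and Proposition \ref{prop:lam} checks the dual lamination (hence the underlying Hubbard tree data) is unchanged; Proposition \ref{prop:eqaht} then says admissible angled tree maps are exactly the admissible splittings of simplicial pointed Hubbard trees, after which Theorem \ref{thm:ar} finishes the proof. You would also need to verify the core condition in Definition \ref{defn:adm} (critically star-shaped core, with angles at core vertices in $\mathbb{S}^1$) for the unsplit part of the tree, which your write-up does not address.
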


We remark that the precise statement for the realization theorem can be found in Theorem \ref{thm:ar} and Proposition \ref{prop:rt}.
The definition of admissible splitting is given in \S \ref{sec:BPH} (see also Figure \ref{fig:HD}).
The non-uniqueness of splittings is the source for self-bumps on $\partial \PH_d$ (see Figure \ref{fig:SB} and \S \ref{sec:sb}).

Theorem \ref{thm:shtr} allows us to construct geometrically finite polynomials $\hat P_1 \in \partial \PH_d$ for any simplicial pointed Hubbard trees $(H_1, \p_1)$.

The limit map $\hat P_1$ has at most one attracting Fatou component which is necessarily fixed. 
If it has one, we show we can further degenerate its dynamics on this attracting Fatou component (and its backward orbits under $\hat P_1$) by a quasi post-critically finite sequence, while staying on the boundary $\partial \PH_d$.
The limit of this sequence gives a geometrically finite polynomial $\hat P_2$.
The pointed Hubbard tree $(H_2, \p_2)$ of $\hat P_2$ is a pointed simplicial tuning of $(H_1, \p_1)$.
By induction, we thus construct geometrically finite polynomials on $\partial \PH_d$ for any iterated-simplicial Hubbard tree and conclude the proof of Theorem \ref{thm: eq}.

\begin{figure}[ht]
  \centering
  \resizebox{1\linewidth}{!}{
    \def\svgwidth{\columnwidth}
\begingroup%
  \makeatletter%
  \providecommand\color[2][]{%
    \errmessage{(Inkscape) Color is used for the text in Inkscape, but the package 'color.sty' is not loaded}%
    \renewcommand\color[2][]{}%
  }%
  \providecommand\transparent[1]{%
    \errmessage{(Inkscape) Transparency is used (non-zero) for the text in Inkscape, but the package 'transparent.sty' is not loaded}%
    \renewcommand\transparent[1]{}%
  }%
  \providecommand\rotatebox[2]{#2}%
  \newcommand*\fsize{\dimexpr\f@size pt\relax}%
  \newcommand*\lineheight[1]{\fontsize{\fsize}{#1\fsize}\selectfont}%
  \ifx\svgwidth\undefined%
    \setlength{\unitlength}{371.25bp}%
    \ifx\svgscale\undefined%
      \relax%
    \else%
      \setlength{\unitlength}{\unitlength * \real{\svgscale}}%
    \fi%
  \else%
    \setlength{\unitlength}{\svgwidth}%
  \fi%
  \global\let\svgwidth\undefined%
  \global\let\svgscale\undefined%
  \makeatother%
  \begin{picture}(1,1.04646465)%
    \lineheight{1}%
    \setlength\tabcolsep{0pt}%
    \put(0,0){\includegraphics[width=\unitlength,page=1]{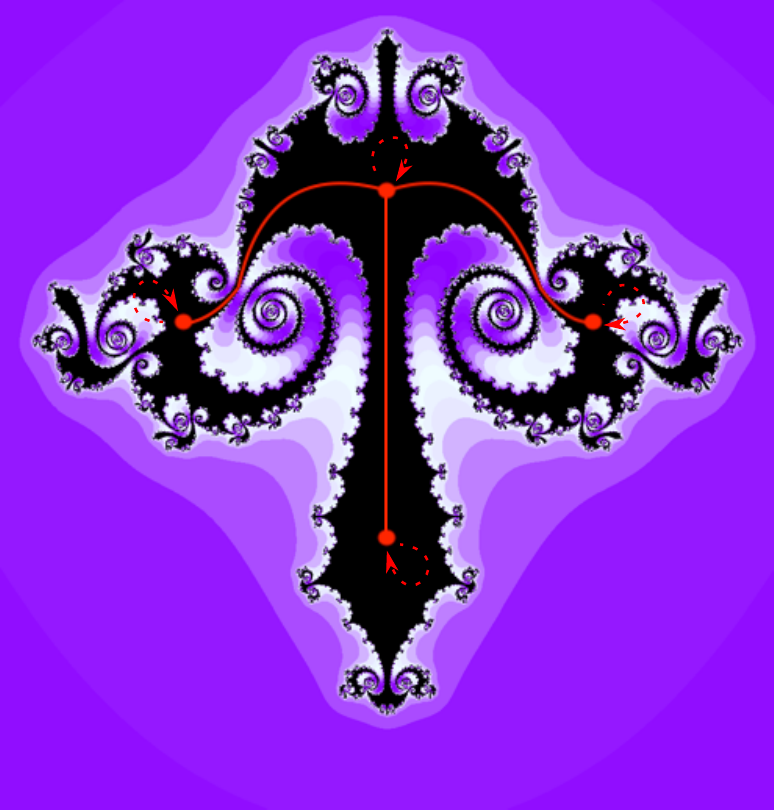}}%
    \put(0.51837745,0.35915691){\color[rgb]{1,0,0}\makebox(0,0)[lt]{\lineheight{1.25}\smash{\begin{tabular}[t]{l}{\LARGE$p_n$}\end{tabular}}}}%
  \end{picture}%
\endgroup%

    \def\svgwidth{\columnwidth}
\begingroup%
  \makeatletter%
  \providecommand\color[2][]{%
    \errmessage{(Inkscape) Color is used for the text in Inkscape, but the package 'color.sty' is not loaded}%
    \renewcommand\color[2][]{}%
  }%
  \providecommand\transparent[1]{%
    \errmessage{(Inkscape) Transparency is used (non-zero) for the text in Inkscape, but the package 'transparent.sty' is not loaded}%
    \renewcommand\transparent[1]{}%
  }%
  \providecommand\rotatebox[2]{#2}%
  \newcommand*\fsize{\dimexpr\f@size pt\relax}%
  \newcommand*\lineheight[1]{\fontsize{\fsize}{#1\fsize}\selectfont}%
  \ifx\svgwidth\undefined%
    \setlength{\unitlength}{371.25bp}%
    \ifx\svgscale\undefined%
      \relax%
    \else%
      \setlength{\unitlength}{\unitlength * \real{\svgscale}}%
    \fi%
  \else%
    \setlength{\unitlength}{\svgwidth}%
  \fi%
  \global\let\svgwidth\undefined%
  \global\let\svgscale\undefined%
  \makeatother%
  \begin{picture}(1,1.04646465)%
    \lineheight{1}%
    \setlength\tabcolsep{0pt}%
    \put(0,0){\includegraphics[width=\unitlength,page=1]{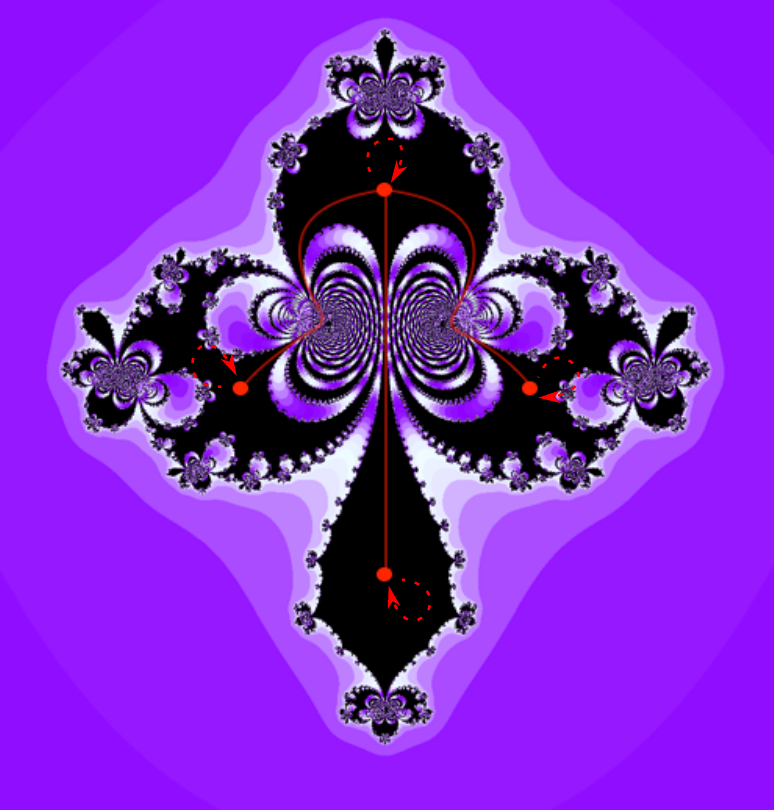}}%
    \put(0.5148569,0.31509154){\color[rgb]{1,0,0}\makebox(0,0)[lt]{\lineheight{1.25}\smash{\begin{tabular}[t]{l}{\LARGE$p_n$}\end{tabular}}}}%
  \end{picture}%
\endgroup%

    \def\svgwidth{\columnwidth}
\begingroup%
  \makeatletter%
  \providecommand\color[2][]{%
    \errmessage{(Inkscape) Color is used for the text in Inkscape, but the package 'color.sty' is not loaded}%
    \renewcommand\color[2][]{}%
  }%
  \providecommand\transparent[1]{%
    \errmessage{(Inkscape) Transparency is used (non-zero) for the text in Inkscape, but the package 'transparent.sty' is not loaded}%
    \renewcommand\transparent[1]{}%
  }%
  \providecommand\rotatebox[2]{#2}%
  \newcommand*\fsize{\dimexpr\f@size pt\relax}%
  \newcommand*\lineheight[1]{\fontsize{\fsize}{#1\fsize}\selectfont}%
  \ifx\svgwidth\undefined%
    \setlength{\unitlength}{371.25bp}%
    \ifx\svgscale\undefined%
      \relax%
    \else%
      \setlength{\unitlength}{\unitlength * \real{\svgscale}}%
    \fi%
  \else%
    \setlength{\unitlength}{\svgwidth}%
  \fi%
  \global\let\svgwidth\undefined%
  \global\let\svgscale\undefined%
  \makeatother%
  \begin{picture}(1,1.04646465)%
    \lineheight{1}%
    \setlength\tabcolsep{0pt}%
    \put(0,0){\includegraphics[width=\unitlength,page=1]{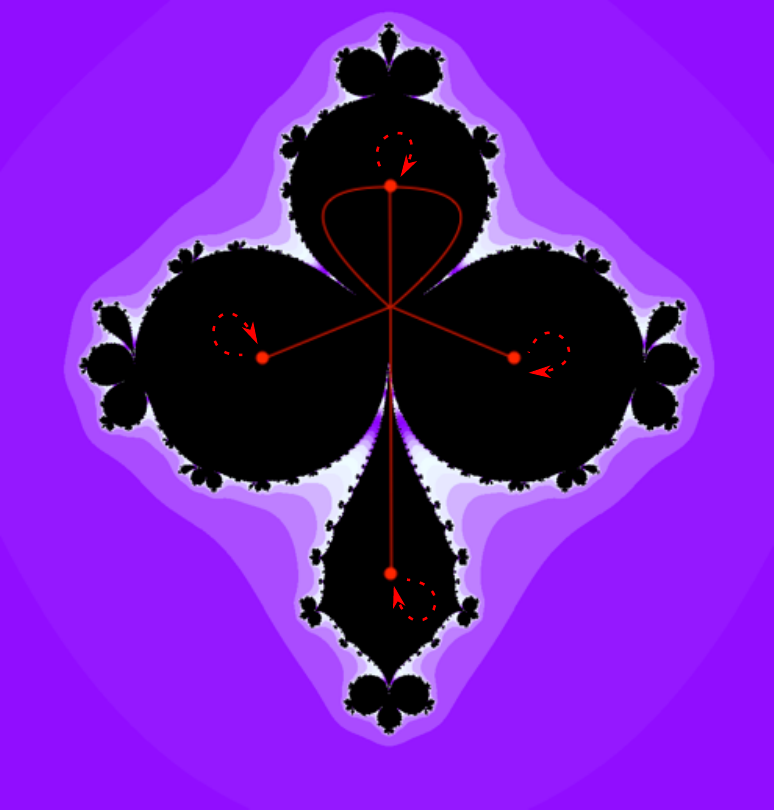}}%
    \put(0.51852734,0.31068698){\color[rgb]{1,0,0}\makebox(0,0)[lt]{\lineheight{1.25}\smash{\begin{tabular}[t]{l}{\LARGE$p$}\end{tabular}}}}%
  \end{picture}%
\endgroup%

  }
  \caption{Another example of quasi-invariant trees, where all vertices are quasi-fixed and have local degree $2$. The trees converge to a `decorated pointed Hubbard tree' in the limit. The corresponding pointed Hubbard tree is a `cross', and `splits' at the Julia branch point to get approximating quasi-invariant trees (see \S \ref{sec:BPH} for the definition of `splitting').}
  \label{fig:HD}
\end{figure}

\subsection*{Self-bumps on $\partial \PH_d$}
It is expected that the boundary of $\PH_d$ is quite complicated for $d\geq 3$ \cite{Milnor14}.
We construct polynomials $\hat P \in \partial \PH_d$ with different accesses from the main Hyperbolic component $\PH_d$ (see Figure \ref{fig:SB}).
Thus, {\em self bumps} occur on $\partial \PH_d$.
More precisely, we prove
\begin{theorem}\label{thm:sb}
For any degree $d\geq 4$, there exists a geometrically finite polynomial $\hat P \in \partial \PH_d$ such that for any sufficiently small neighborhood $U$ of $\hat P$, the intersection $U\cap \PH_d$ is disconnected.
\end{theorem}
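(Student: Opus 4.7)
The plan is to realise self-bumps from the non-uniqueness of admissible splittings at Julia branch points, a mechanism that the paper has already highlighted as the source of the phenomenon (see the discussion right after Theorem \ref{thm:shtr}). As a first step I would exhibit an explicit simplicial pointed Hubbard tree $(H, p)$ of degree $d \geq 4$ with a periodic Julia branch point $b$ whose local combinatorics admits at least two non-equivalent admissible splittings. The hypothesis $d \geq 4$ enters because one needs at least three free critical marked points in the sectors around $b$ for two combinatorially distinct splitting patterns to coexist; in degrees $2$ and $3$ the branch-point combinatorics is too rigid to allow such a choice.

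Next, applying Theorem \ref{thm:shtr} to each of the two splittings produces two quasi post-critically finite sequences $(f_n^{(1)})_n$ and $(f_n^{(2)})_n$ in $\BP_d$, and gluing each with $z^d$ gives sequences $P_n^{(i)} = f_n^{(i)} \sqcup z^d \in \PH_d$. Following the correspondence summarised in Figure \ref{fig:LF}, both sequences converge to the same geometrically finite polynomial $\hat P \in \partial \PH_d$, because the limit is determined by the pointed Hubbard tree alone and is insensitive to which admissible splitting produced the approximating Blaschke sequence. This supplies two a priori distinct accesses to the same boundary point $\hat P$ from inside $\PH_d$.

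The main step is then to show that these two accesses lie in different components of $U \cap \PH_d$ whenever $U$ is a sufficiently small neighbourhood of $\hat P$. The approach is to attach to each $P \in \PH_d$ close to $\hat P$ a locally constant combinatorial invariant that separates $P_n^{(1)}$ from $P_n^{(2)}$ for all large $n$. The natural candidate is the cyclic landing pattern of the finitely many external rays arriving at the holomorphic continuation of the periodic branch point $b$: this is a finite datum, stable under small deformations in $\PH_d$ by standard holomorphic-motion arguments on repelling cycles together with their landing rays, and the two splittings differ precisely in the cyclic arrangement of the critical sectors at $b$, so the invariant must detect the difference.

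The principal obstacle is the last step: verifying that the splitting data genuinely survives as a ray-combinatorial invariant of $P_n^{(i)}$. Concretely, one must match the quasi-invariant tree picture of the degeneration, in which the two splittings correspond to different `pinching directions' of edges emanating from $b$, with the external-ray structure on the polynomial side via the marking homeomorphism of each $f_n^{(i)}$ and the gluing with $z^d$. Once this matching is established, local constancy of the invariant together with its taking distinct values on the two approaches forces $U \cap \PH_d$ to break into at least two components, yielding Theorem \ref{thm:sb}. This is the complex-dynamical analogue of the Anderson--Canary--McCullough self-bump mechanism on the Bers boundary of quasi-Fuchsian space, exactly as predicted by the Sullivan dictionary.
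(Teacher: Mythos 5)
Your overall strategy (a pointed Hubbard tree with a fixed Julia branch point, two inequivalent admissible splittings, two approximating sequences in $\PH_d$) is the same as the paper's, but the separating invariant you propose in the main step does not work, and this is precisely the hard part of the theorem. For every $P\in\PH_d$ the Julia set is a Jordan curve and $P|_{J(P)}$ is topologically conjugate to $z^d|_{\mathbb{S}^1}$ compatibly with the B\"ottcher marking; hence the landing pattern of external rays at repelling periodic points is literally the same for all $P\in\PH_d$, in particular for both of your sequences ${}^1P_n$ and ${}^2P_n$. Moreover, the branch point $b$ corresponds to a parabolic fixed point of $\hat P$ (Proposition \ref{prop:bp}), which has no single ``holomorphic continuation'' into $\PH_d$: it splits into several repelling fixed points, each of which receives exactly one fixed external ray. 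So your candidate invariant is globally constant on $\PH_d$ --- your holomorphic-motion stability argument proves exactly this --- and it cannot distinguish the two accesses. The difference between the accesses is analytic, not ray-combinatorial: it lies in \emph{how} the repelling fixed points coalesce onto the parabolic point. The paper's proof separates the accesses by the signs of the imaginary parts of the multipliers of the repelling fixed points splitting off the double parabolic point, and the step that makes this a component-separating invariant is Lemma \ref{lem:rf}: since the holomorphic fixed-point index satisfies $res(\hat P,1/4)=1$, the residue theorem forces every such multiplier to have $\mathfrak{Im}\neq 0$ for all $P\in U\cap\PH_4$ (a real multiplier would make the sum of residues impossible without creating an extra attracting fixed point), so the sign is locally constant on $U\cap\PH_4$; explicit computations then give opposite signatures along the two sequences. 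Without a nonvanishing statement of this kind, no invariant in your outline is shown to be locally constant yet non-constant near $\hat P$.

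There is also a secondary gap: your claim that both sequences converge to the same $\hat P$ ``because the limit is determined by the pointed Hubbard tree alone'' is false in general --- the set $U_{\hat P}$ of geometrically finite polynomials with a given pointed Hubbard tree is typically not a single point (it fibers over the mapping schemes on the attracting Fatou components, cf.\ Proposition \ref{prop:rsh}). The paper arranges equality of the two limits by choosing the example so that the Fatou dynamics is rigid: a super-attracting fixed point of maximal local degree together with parabolic components each containing a single critical point, so the dynamics on all bounded Fatou components carries no moduli. You would need a similar rigidity or normalization argument before asserting that the two splittings produce the same boundary polynomial.
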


\begin{figure}
   \begin{subfigure}[b]{0.45\textwidth}
     \centering
     \includegraphics[width=\textwidth]{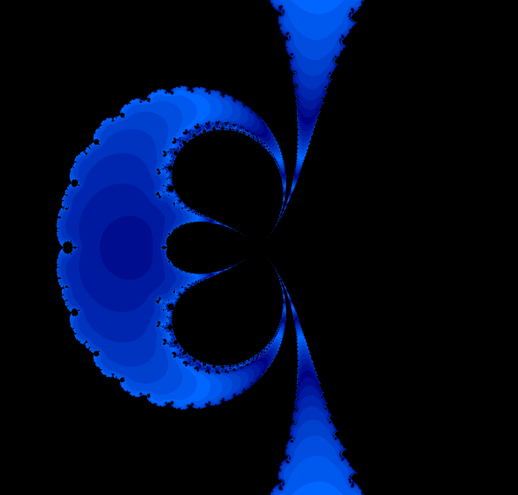}
     \caption{A self bump on $\partial \PH_4$ presented in a 1-D parameter slice.}
     \label{fig:SBP}
   \end{subfigure}
   \begin{subfigure}[b]{0.45\textwidth}
     \centering
     \includegraphics[width=\textwidth]{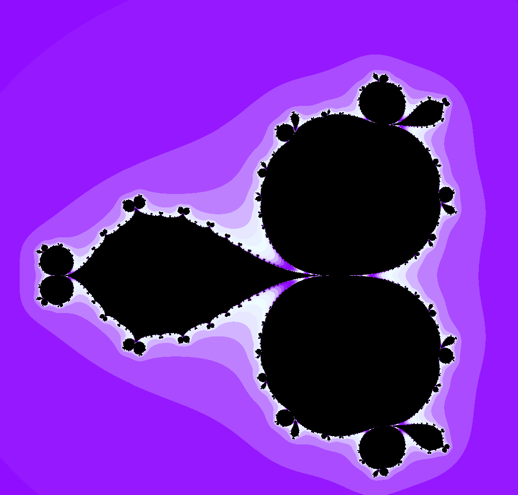}
     \caption{A self bump $\hat P\in \partial \PH_4$ with a double parabolic fixed point.}
     \label{fig:P}
   \end{subfigure}
   \begin{subfigure}[b]{0.45\textwidth}
     \centering
     \includegraphics[width=\textwidth]{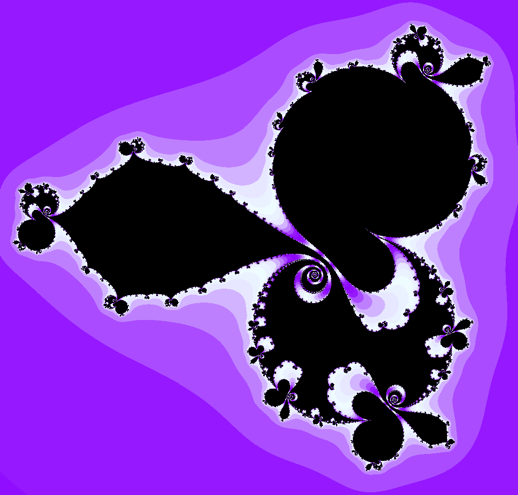}
     \caption{The access of the self-bump from `top'.}
     \label{fig:AccessTop}
   \end{subfigure}
     \begin{subfigure}[b]{0.45\textwidth}
     \centering
     \includegraphics[width=\textwidth]{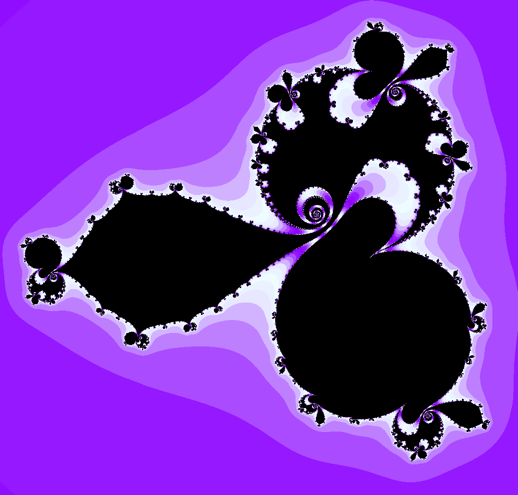}
     \caption{The access of the self-bump from `bottom'.}
     \label{fig:AccessBottom}
   \end{subfigure}
    \caption{A self-bump on $\partial \PH_4$ with two different accesses}
    \label{fig:SB}
\end{figure}

As an immediate corollary (cf. \cite[Theorem A.1]{McM98}), we have
\begin{cor}
The closure $\overline{\PH_d}$ is not a topological manifold with boundary for $d\geq 4$.
\end{cor}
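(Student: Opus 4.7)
The plan is to deduce the corollary directly from Theorem~\ref{thm:sb} by invoking the standard local topology of manifolds with boundary. Recall that if $X$ is a topological $n$-manifold with boundary and $x\in\partial X$, then $x$ admits a neighborhood basis of open sets $U\subset X$ each homeomorphic to the half-ball $\{y\in\mathbb{R}^n : |y|<1,\ y_n\geq 0\}$; in particular $U\setminus\partial X$ is connected for each such $U$, being homeomorphic to the open half-ball $\{y_n>0\}$.

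To apply this principle to $X=\overline{\PH_d}$ at the geometrically finite point $\hat P$ supplied by Theorem~\ref{thm:sb}, I first have to identify the intrinsic manifold-boundary of $X$ with the frontier $\partial\PH_d$ of $\PH_d$ inside $\MP_d$. On the one hand, $\PH_d$ is open in $\MP_d$ (hyperbolicity is an open condition), hence contained in the interior of $\overline{\PH_d}$. On the other hand, every $\hat P\in\partial\PH_d$ is non-hyperbolic and is accumulated in $\MP_d$ by parameters outside $\overline{\PH_d}$ (for instance by maps in a neighboring escape locus with disconnected Julia set, or by maps in a different hyperbolic component), so $\hat P$ is not interior to $\overline{\PH_d}$. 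Consequently $U\setminus\partial\overline{\PH_d}=U\cap\PH_d$ for all sufficiently small neighborhoods $U$ of $\hat P$ inside $\overline{\PH_d}$.

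With this identification in hand, Theorem~\ref{thm:sb} supplies arbitrarily small such $U$ for which $U\cap\PH_d$ is disconnected, directly contradicting the connectedness conclusion of the topological principle above. Hence $\overline{\PH_d}$ fails to be a topological manifold with boundary at $\hat P$, a fortiori globally. The only substantive step beyond citing Theorem~\ref{thm:sb} is the verification that $\PH_d$ coincides with the manifold-interior of $\overline{\PH_d}$; this is the one point requiring care, but it is routine once one uses standard approximation of non-hyperbolic parameters on $\partial\PH_d$ by parameters outside $\overline{\PH_d}$.
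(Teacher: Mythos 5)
Your overall architecture is the same as the paper's: use the half-ball local structure of a manifold with boundary to get arbitrarily small neighborhoods $U$ of $\hat P$ with $U\cap\interior\overline{\PH_d}$ connected, identify that interior with $\PH_d$ near $\hat P$, and contradict Theorem \ref{thm:sb}. The gap is exactly at the step you flag and then dismiss as routine: the identification $\interior\overline{\PH_d}=\PH_d$, equivalently that no point of $\partial\PH_d$ is accumulated only by parameters of $\overline{\PH_d}$. Your proposed justification — that every $\hat P\in\partial\PH_d$ is approximated by maps in a ``neighboring escape locus with disconnected Julia set, or by maps in a different hyperbolic component'' — does not hold up. Since $\overline{\PH_d}$ lies in the connectedness locus, a boundary point need not be approximable by maps with disconnected Julia set at all: for instance, a parabolic point on the common boundary of $\PH_d$ and a satellite component has a full neighborhood inside the connectedness locus, so the ``escape locus'' mechanism is simply unavailable there. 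And the assertion that every boundary point is accumulated by some \emph{other} hyperbolic component is not routine either; it is close in spirit to density-of-hyperbolicity-type statements and is certainly not something you can invoke without proof. So the one substantive step of the corollary is left unproved.

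The way to close the gap (and what the paper does in one line) is to use the characterization of $J$-stability (Theorem 4.2 in \cite{McM94}). Concretely: if some $\hat P\in\partial\PH_d$ had a connected open neighborhood $U\subseteq\overline{\PH_d}$ in $\MP_d$, then no map in $U$ can have an attracting cycle of period $\geq 2$, because attracting cycles persist under perturbation while maps of $\PH_d$ (which are dense in $U$) have none; by the equivalence of ``maximum period of attracting cycles locally bounded'' with $J$-stability, all of $U$ is $J$-stable, hence contained in a single component $W$ of the stable locus. Since $W$ meets $\PH_d$, hyperbolicity propagates through $W$, so $W$ is a hyperbolic component equal to $\PH_d$, forcing $\hat P\in\PH_d$ — a contradiction. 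With this argument (or the paper's citation) inserted in place of your approximation claim, the rest of your proof — the half-ball neighborhood basis, connectedness of the punctured half-ball, invariance of domain to match manifold interior with topological interior, and the contradiction with Theorem \ref{thm:sb} — is correct and matches the paper's proof.
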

\begin{proof}
By the characterizations of $J$-stable rational maps (see \cite[Theorem 4.2]{McM94}), 
$\interior \overline{\PH_d} = \PH_d$.
If $\overline{\PH_d}$ were a topological manifold with boundary, then there would be a small neighborhood $U$ of $\hat P$ meeting the manifold's interior in a connected set, contrary to Theorem \ref{thm:sb}.
Thus $\overline{\PH_d}$ is not a topological manifold with boundary.
\end{proof}

\subsection*{Comparisons with Kleinian groups}
We now discuss some similarities and differences between the theory in rational maps and Kleinian groups.
\begin{itemize}
\item {\em Dimension one vs higher dimension:} The Bers slice for a once punctured torus group has complex dimension one. It is known \cite{Minsky99} that the boundary is a Jordan curve, and hence locally connected.
When the deformation space has complex dimension $\geq 2$, it is known that the boundary of deformation spaces can be non-locally connected \cite{Bromberg11}.
For rational maps, $\partial \PH_2$ is a Jordan curve.
Many other hyperbolic components in one-dimensional slice are also proved to be Jordan disks \cite{Roesch07, Wang17}.
For $d\geq3$, it is conjectured that $\partial \PH_d$ is not locally connected \cite{Milnor14}.

\item {\em Geometric convergence:} A geometrically finite group $G$ on the Bers boundary can be constructed by pinching a system of disjoint simple closed curves.
The pinching deformations $G_n$ converge both algebraically and geometrically to $G$. 
In particular, the limit sets converge in Hausdorff topology to the limit set of $G$.
This is in contrast with rational maps. 
For most (in some suitable sense) iterated-simplicial pointed Hubbard trees, the convergence to any of the corresponding polynomial $\hat P \in \partial \PH_d$ is \textbf{never} {\em geometric}: the Julia sets do not converge in Hausdorff topology to $J(\hat P)$ for any $P_n \to \hat P$ (see Figure \ref{fig:HD}). See \cite{McM99, McM00, CT18} for related discussions.

\item {\em Self-bump:} Self-bumps on the Bers boundary were first constructed by McMullen using projective structures \cite{McM98}.
This was generalized to other deformation spaces in \cite{HB01}.
The self-bumps on $\partial \PH_d$ are direct analogues in the setting of rational maps.
We use the sign of the imaginary part of the multipliers to distinguish difference accesses (see \S \ref{sec:sb}), which are the analogues of the traces of the holonomy representations for Kleinian groups.
Our method does not provide any self-bumps in degree $3$.
\end{itemize}

\subsection*{Notes and references}
A large portion of the boundary $\partial \PH_3$ has been studied in \cite{PT09} and a combinatorial model of $\partial \PH_3$ is studied in \cite{BOPT14}.
The problem on how hyperbolic components are positioned for quadratic rational maps is studied in details in \cite{Rees90, Rees92}.
A related problem of self-bumps on $\partial \PH_3$ is studied in \cite{BOPT16, BOPT18}.

Degenerations of Fatou components have been studied extensively in terms of elementary moves like pinching and spinning \cite{Makienko00, TanLei02, HT04, PT04, CT18}.
The quasi post-critically finite degenerations in $\BP_d$ generalize these operations, and thus provide a unified framework.

Other comparisons of $\BP_d$ with Teichm\"uller theory can be found in \cite{McM08, McM09b, McM09, McM10}.
The quasi-invariant trees are closely related to the ribbon $\R$-trees as in \cite{McM09} and the analogous constructions of isometric group actions on $\R$-trees for Kleinian groups \cite{MorganShalen84, Bestvina88, Paulin88} (see discussions in \S \ref{sec: gfb}).

Related bumping problems for deformation spaces of Kleinian groups are studied in \cite{ACMcC00}.
Rescaling limits of Blaschke products have also been studied in \cite{Ivrii16}, and other application of rescaling limits in complex dynamics can be found in \cite{Epstein00, DeM05, Kiwi15, Arfeux17, L19a, L19}.

\subsection*{Outline of the paper}
We define and prove some basic properties of quasi post-critical finite sequences of Blaschke products in \S \ref{sec: gfb}. The abstract angled tree map is introduced in \S \ref{sec:atm} and the realization theorem is proved in \S \ref{sec:raatm}.
One direction of Theorem \ref{thm: eq} is proved in \S \ref{sec:pht}, and using the realization theorem for degenerations in $\BP_d$, we finish the proof of Theorem \ref{thm: eq} in \S \ref{sec:BPH}.
Finally, Theorem \ref{thm:sb} is proved in \S \ref{sec:sb}.

\subsection*{Acknowledgment}
The author thanks Curt McMullen and Kevin Pilgrim for useful suggestions and discussions on this problem.
The author gratefully thanks the anonymous reviewers for their careful reading and valuable comments and suggestions.

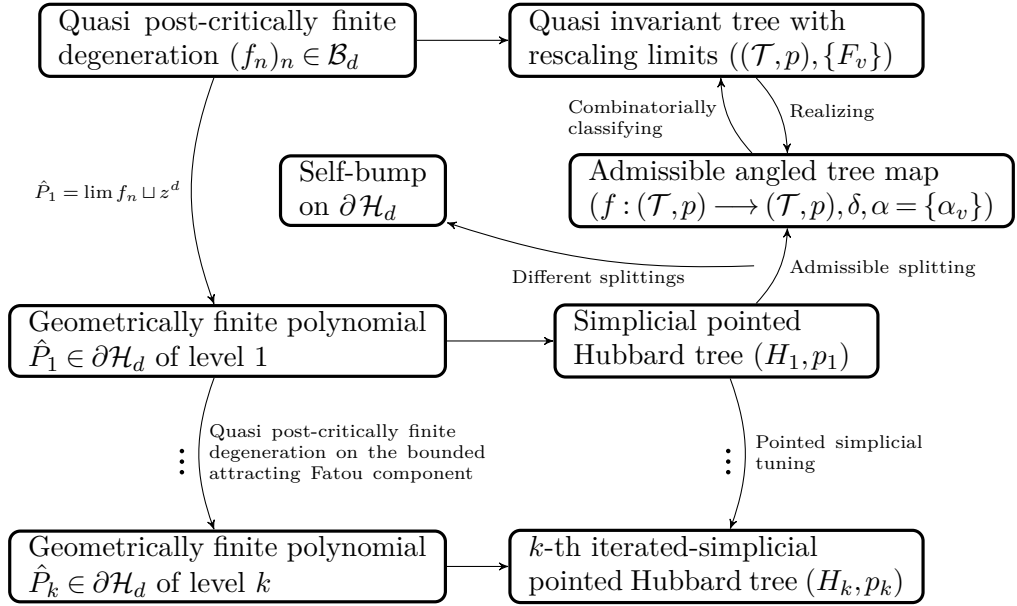
\begin{figure}
\centering
\begin{tikzpicture}[->,>=stealth']
 \node[state] (QPCF) 
 {\begin{tabular}{l}
 \parbox{4.5cm}{
  {Quasi post-critically finite degeneration $(f_n)_n \in \mathcal{B}_d$}}\\
 \end{tabular}};
 
 \node[state,    	
  yshift=-4cm, 		
  anchor=center] (GFP) 	
 {%
 \begin{tabular}{l} 	
  \parbox{5.3cm}{
  {Geometrically finite polynomial $\hat P_1 \in \partial\mathcal{H}_d$ of level 1}}\\
 \end{tabular}
 };
 
 \node[state,    	
  yshift=-7cm, 		
  anchor=center] (GGFP) 	
 {%
 \begin{tabular}{l} 	
  \parbox{5.3cm}{
  {Geometrically finite polynomial $\hat P_k \in \partial\mathcal{H}_d$ of level $k$}}\\
 \end{tabular}
 };
  
 \node[state,    	
  right of=QPCF, 	
  node distance=6.5cm, 	
  anchor=center] (QIT) 	
 {%
 \begin{tabular}{l} 	
  \parbox{5cm}{
  {Quasi invariant tree with\\ rescaling limits $((\mathcal{T}, \p), \{F_v\})$}
  }
 \end{tabular}
 };
 
 \node[state,
  right of=GFP,
  node distance=6.5cm,
  anchor=center] (PH) 
 {%
 \begin{tabular}{l}
  \parbox{3.8cm}{{Simplicial pointed\\ Hubbard tree $(H_1, p_1)$}}
 \end{tabular}
 };
 
 \node[state,
  right of=GGFP,
  node distance=6.5cm,
  anchor=center] (HT) 
 {%
 \begin{tabular}{l}
  \parbox{5cm}{{$k$-th iterated-simplicial\\ pointed Hubbard tree $(H_k, p_k)$}}
 \end{tabular}
 };

 \node[state,
  right of=QIT,
  yshift=-2cm, 		
  node distance=1cm,
  anchor=center] (ATM) 
 {%
 \begin{tabular}{l}
  \parbox{5.4cm}{{Admissible angled tree map\\ $(f: (\mathcal{T}, p) \longrightarrow (\mathcal{T}, p), \delta, \alpha=\{\alpha_v\})$}}
 \end{tabular}
 };
 
 \node[state,
  right of=QPCF,
  yshift=-2cm, 		
  node distance=1.8cm,
  anchor=center] (SB) 
 {%
 \begin{tabular}{l}
  \parbox{1.7cm}{{Self-bump on $\partial \PH_d$}}
 \end{tabular}
 };

 \path (QPCF) edge (QIT)
 (QPCF) edge[bend right=20] node[anchor=south,left]{\tiny $\hat P_1 = \lim f_n \sqcup z^d$ } (GFP)
  (GFP) edge[bend right=20] node[anchor=south,right,text width=4cm]{{\tiny Quasi post-critically finite degeneration on the bounded attracting Fatou component \\}} node[anchor=south,left]{\huge\vdots}  (GGFP)
 (QIT)  edge[bend left=20] node[anchor=south,right]{\tiny Realizing} (ATM)
 (ATM)  edge[bend left=20] node[anchor=south,left, text width=2cm]{\tiny Combinatorially classifying\\} (QIT)
 (PH)  edge[bend right=20] node[anchor=east,right,text width=3cm]{{\tiny Admissible splitting \\}} (ATM)
 (PH)  	edge[bend left=20] node[anchor=north,right, text width=2.6cm]{{\tiny Pointed simplicial tuning \\}} node[anchor=south,left]{\huge\vdots} (HT)
 (GFP)  edge (PH)
 (GGFP)  edge (HT)
 ($ (PH) !.5! (ATM) $) edge[bend left=10] node[anchor=east,below]{\tiny Different splittings} (SB);

\end{tikzpicture}
\caption{The logical flow for different concepts in this paper.}
\label{fig:LF}
\end{figure}

\section{Quasi post-critically finite sequences of Blaschke products}\label{sec: gfb}
A proper holomorphic map $f:\D \longrightarrow \D$ of degree $d \geq 2$ can be written as a {\em Blaschke product}
$$
f(z) = e^{i\theta}\prod_{i=0}^{d} \frac{z-a_i}{1-\overline{a_i}z},
$$
where $|a_i| < 1$.
By Schwartz reflection, any such map extends to a rational map $f: \hat\C \longrightarrow\hat \C$.
Both perspectives are useful. 
In particular, we shall use the fact that $f$ is defined on $\overline{\D}$ freely throughout the paper.

By Denjoy-Wolff theorem, there is a unique non-repelling fixed point of $f$ on $\overline{\D}$, which puts a Blaschke product $f$ into exactly three categories:
\begin{itemize}
\item $f$ is {\bf \em interior-hyperbolic} or simply {\bf \em hyperbolic} if $f$ has an attracting fixed point in $\D$;
\item $f$ is {\bf \em parabolic} if $f$ has a parabolic fixed point on $\mathbb{S}^1$;
\item $f$ is {\bf \em boundary-hyperbolic} if $f$ has an attracting fixed point on $\mathbb{S}^1$.
\end{itemize}
The parabolic Blaschke products can be further divided into {\bf \em singly parabolic} or {\bf \em doubly parabolic} depending on the multiplicities for the parabolic fixed points.
The Julia set of a hyperbolic or a doubly parabolic Blaschke product is the circle $\mathbb{S}^1$, while the Julia set of a singly parabolic or a boundary-hyperbolic Blaschke product is a Cantor set on $\mathbb{S}^1$.

In this paper, we shall mainly focus on hyperbolic Blaschke products, although the parabolic and boundary-hyperbolic ones will appear as {\em rescaling limits} as we degenerate hyperbolic Blaschke products.

For $d\geq 2$, let
$$
\BP_d := \{f(z) = z\prod_{i=1}^{d-1} \frac{z-a_i}{1-\overline{a_i}z}: |a_i| < 1\} \cong \D^{d-1}
$$
be the space of (normalized) marked hyperbolic Blaschke products. 
Each map in $\BP_d$ has an attracting fixed point at the origin.
Any hyperbolic Blaschke product of degree $d$ is conformally conjugate to a map in $\BP_d$.
We shall identify $\mathbb{S}^1 := \R/\Z$. Under this identification, we set $m_d(t) = d\cdot t$.
Note that for each $f\in \BP_d$, there exists a unique quasisymmetric homeomorphism $\eta_f:\mathbb{S}^1 \cong \R/\Z \longrightarrow \mathbb{S}^1$ with
\begin{enumerate}
\item $\eta_f \circ m_{d} = f \circ \eta_f$, and
\item $f\mapsto \eta_f$ is continuous on $\BP_d$ with $\eta_{z^d} = \mathrm{id}$.
\end{enumerate}
In particular, the conjugacy $\eta_f$ gives a way to label the periodic points of $f$ on $\mathbb{S}^1$ by periodic points of $m_d$ on $\R/\Z$ that varies continuously on $\BP_d$.

Note that the space $\BP_d\cong \D^{d-1}$ is not compact, we study a particular type of degenerations in $\BP_d$:
\begin{defn}\label{defn:qpcf}
Let $f_n \in \BP_d$.
We say it is {\em $K$-quasi post-critically finite} if we can label the critical points by $c_{i,n} \in \D, i=1,..., d-1$, such that for each sequence $c_{i,n}$ of critical points, there exists $l_i$ and $q_i$ called quasi pre-periods and quasi periods respectively with
$$
d_{\D}(f_n^{l_i}(c_{i,n}), f_n^{l_i+q_i}(c_{i,n})) \leq K.
$$
We say $f_n$ is {\em quasi post-critically finite} if it is $K$-quasi post-critically finite for some $K$.
\end{defn}

A {\em ribbon structure} on a finite tree is the choice of a planar embedding up to isotopy. The ribbon structure can be specified by a cyclic ordering of the edges incident to each vertex.
A finite tree with a ribbon structure is called a finite ribbon tree.
Recall that a map $f:\mathcal{T} \longrightarrow \mathcal{T}$ is said to be simplicial if $f$ sends an edge to an edge.

In this section, we first construct a sequence of quasi-invariant trees $\mathcal{T}_n \subseteq \D$ for a quasi post-critically finite sequence $f_n \in \BP_d$ capturing all interesting dynamical features.

\begin{theorem}\label{thm:qit}
Let $f_n \in \BP_d$ be quasi post-critically finite. 
After passing to a subsequence, there exists a constant $K > 0$, a simplicial map 
$$
f: (\mathcal{T}, \p) \longrightarrow (\mathcal{T}, \p)
$$ 
on a pointed finite ribbon tree with vertex set $\mathcal{V}$, a sequence of pointed finite ribbon trees $(\mathcal{T}_n, \p_n)$ with $\p_n \in \mathcal{T}_n \subseteq \D$, and a sequence of isomorphisms
$$
\phi_n: (\mathcal{T}, \p) \longrightarrow (\mathcal{T}_n, \p_n)
$$
such that
\begin{itemize}
\item (Degenerating tree.) If $v_1\neq v_2 \in \mathcal{V}$, then 
$$
d_{\D}(\phi_n(v_1), \phi_n(v_2)) \to \infty.
$$
\item (Geodesic edge.) If $E =[v_1,v_2] \subseteq \mathcal{T}$ is an edge, then the corresponding edge $\phi_n(E) \subseteq \mathcal{T}_n$ is a hyperbolic geodesic segment connecting $\phi_n(v_1)$ and $\phi_n(v_2)$.
\item (Critically approximating.) Any critical point of $f_n$ is within $K$ hyperbolic distance from the vertex set $\mathcal{V}_n:= \phi_n(\mathcal{V})$ of $\mathcal{T}_n$.
\item (Quasi-invariance on vertices.)
If $v\in \mathcal{V}$, then
$$
d_{\D}(f_n(\phi_n(v)), \phi_n(f(v))) \leq K \text{ for all } n.
$$

\item (Quasi-invariance on edges.) If $E\subseteq \mathcal{T}$ is an edge and $x_n \in \phi_n(E)$, then there exists $y_n \in \phi_n(f(E))$ so that
$$
d_{\D}(f_n(x_n), y_n) \leq K \text{ for all } n.
$$
If $E$ is a periodic edge of period $q$, then 
$$
d_{\D}(f_n^q(x_n), x_n) \leq K \text{ for all } n.
$$
\end{itemize}
\end{theorem}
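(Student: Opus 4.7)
My plan is to build the trees $\mathcal{T}_n \subseteq \D$ as spanning trees of the critical orbits inside the disk, stabilize their combinatorial type along a subsequence, and then collapse all edges whose hyperbolic lengths remain bounded to produce the abstract pointed Ribbon tree $(\mathcal{T}, \p)$. The map $f$ on the quotient is then forced by the action of $f_n$, and the main obstacle I foresee is the quasi-invariance on edges.

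Let $P_n \subseteq \D$ be the union of the critical orbits $\{f_n^k(c_{i,n}) : 0 \leq k \leq l_i + q_i,\; 1 \leq i \leq d-1\}$ together with the attracting fixed point $0$. The quasi post-critically finite hypothesis bounds $|P_n|$ uniformly, so after a subsequence this cardinality is constant. Take $\mathcal{T}_n \subseteq \D$ to be the minimum-length spanning tree on $P_n$ with hyperbolic geodesic edges; its vertex set $\mathcal{V}_n \supseteq P_n$ consists of the points of $P_n$ together with at most $|P_n|-2$ trivalent Steiner branch points. The orientation of $\D$ equips $\mathcal{T}_n$ with a cyclic order at every vertex, making it a finite Ribbon tree, and after a further subsequence the isomorphism type of the pointed Ribbon tree $(\mathcal{T}_n, 0)$ stabilizes to a fixed combinatorial model $(\widetilde{\mathcal{T}}, \widetilde p)$, with simplicial identifications $\widetilde\phi_n : \widetilde{\mathcal{T}} \to \mathcal{T}_n$.

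After one more subsequence, each edge $e$ of $\widetilde{\mathcal{T}}$ has hyperbolic length $\ell_n(e)$ that either stays bounded or tends to infinity. Contracting all edges of bounded length yields a pointed Ribbon tree $(\mathcal{T}, \p)$ and simplicial isomorphisms $\phi_n : (\mathcal{T}, \p) \to \mathcal{T}_n' \subseteq \mathcal{T}_n$, where $\mathcal{T}_n'$ is the subtree obtained by picking one representative vertex per contracted subtree. The geodesic-edge property is then immediate by construction. For the degenerating-tree property, note that for $v \neq v'$ in $\mathcal{V}$ the $\mathcal{T}_n$-path from $\phi_n(v)$ to $\phi_n(v')$ uses at least one edge of unbounded length, and the hyperbolic triangle inequality applied at the endpoints of that edge forces $d_{\Hyp^2}(\phi_n(v), \phi_n(v')) \to \infty$. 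The critical-approximating property holds because every point of $P_n$ sits inside some contracted subtree of bounded diameter. To define $f : (\mathcal{T}, \p) \to (\mathcal{T}, \p)$, observe that the quasi post-critically finite hypothesis puts $f_n(P_n)$ in a bounded hyperbolic neighborhood of $P_n$, so the Schwarz--Pick lemma (making $f_n$ hyperbolically $1$-Lipschitz) sends each contracted subtree into a bounded neighborhood of a single contracted subtree. This induces a simplicial map $f$ on the quotient, preserving the cyclic order since each $f_n$ is orientation preserving, and quasi-invariance on vertices is then immediate.

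Quasi-invariance on edges is the main technical hurdle. For an edge $E = [v_1, v_2]$ of $\mathcal{T}$, the image $f_n(\phi_n(E))$ is a hyperbolically $1$-Lipschitz arc from a point within uniform distance of $\phi_n(f(v_1))$ to one within uniform distance of $\phi_n(f(v_2))$. Any substantial detour away from the piecewise geodesic $\phi_n(f(E))$ would force the arc either to pass near another vertex of $\mathcal{V}_n$ or to cut a very thin hyperbolic triangle; but the degenerating-tree property keeps all other vertices far from the geodesic $\phi_n(f(E))$, and the $\delta$-thinness of $\Hyp^2$-triangles combined with the $1$-Lipschitz bound then yields a uniform $K$ such that the image arc $K$-fellow-travels $\phi_n(f(E))$. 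For a periodic edge of period $q$, iterating this estimate $q$ times and using $\phi_n(f^q(E)) = \phi_n(E)$ closes the orbit up to bounded error.
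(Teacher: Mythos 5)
Your construction of the trees (spanning trees on the critical orbits, stabilizing the combinatorics, contracting bounded edges) is a reasonable variant of the paper's inductive cluster construction, but the dynamical half of the argument has a genuine gap: you treat $f_n$ only as a hyperbolically $1$-Lipschitz map (Schwarz--Pick) and try to extract quasi-invariance from $\delta$-thinness of triangles. Non-expansion alone does not rule out \emph{contraction}, and that is exactly what your fellow-traveling argument needs to exclude. Concretely, for an edge $E=[v_1,v_2]$, if $d_{\Hyp^2}(\phi_n(f(v_1)),\phi_n(f(v_2)))$ were much smaller than $d_{\Hyp^2}(\phi_n(v_1),\phi_n(v_2))$, then Schwarz--Pick only places the image of an interior point $x_n$ in the region $\{x: d_{\Hyp^2}(x,\phi_n(f(v_1)))+d_{\Hyp^2}(x,\phi_n(f(v_2)))\leq d_{\Hyp^2}(\phi_n(v_1),\phi_n(v_2))+O(1)\}$, which can lie unboundedly far from the short segment $\phi_n(f(E))$; no thin-triangle argument closes this. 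The paper's proof supplies the missing two-sided control via Theorem \ref{thm:almostisometry} (a Blaschke product is an isometry up to additive $O(1)$ along geodesics staying a definite distance from its critical set), together with the fact that the construction forces critical points to lie within bounded distance of the vertex set, so edge interiors avoid the critical set. This yields $d_{\Hyp^2}(\phi_n(f(v_1)),\phi_n(f(v_2)))=d_{\Hyp^2}(\phi_n(v_1),\phi_n(v_2))+O(1)$, and then near-equality in the triangle inequality pins $f_n(x_n)$ within bounded distance of $\phi_n(f(E))$ (Proposition \ref{prop:qi}) and pins $f_n^q(x_n)$ near $x_n$ itself on a periodic edge. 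The same ingredient is hidden in your ``quasi-invariance on vertices is then immediate'': for a branch (Steiner) vertex, the image of the median of a tripod under a merely $1$-Lipschitz map need not be near any vertex of the image tree; the paper's Lemma \ref{lem:qfiv} uses the almost-isometry to show the image angles stay bounded below, hence the image lands near a branch point. Note also that your periodic-edge step (iterate $q$ times) only shows $f_n^q(x_n)$ is near the edge $\phi_n(E)$, not near $x_n$, which again requires the per-endpoint lower bounds coming from the almost-isometry.

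Two smaller problems: your proof of the degenerating-tree property is a non sequitur --- a tree path containing one long edge can perfectly well have endpoints at bounded distance (a hairpin), so the triangle inequality gives nothing; what saves you is minimality of the spanning/Steiner tree (the exchange or minimax-path property forces any two terminals at uniformly bounded distance into the same contracted cluster, and a similar swap argument handles Steiner points), but you must actually invoke it. Finally, ``this induces a simplicial map $f$ on the quotient'' is not automatic: the induced map on vertices may send an edge over a path of several edges, and one needs the analogue of Lemma \ref{lem:simplicial} and Corollaries \ref{cor:ne}--\ref{cor: sim} (periodicity of vertices in the hull of periodic vertices, non-expansion, and adding finitely many vertices on aperiodic edges) to arrange simpliciality. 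These two points are fixable; the omission of the almost-isometry input is the essential defect.
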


To prepare ourselves with the construction, we will first review some results for degenerations of rational maps and geometric bounds for Blaschke products.

\subsection*{Degeneration of rational maps}
Let $\Rat_d$ denote the space of rational maps of degree $d$.
By fixing a coordinate system of $\hat \C \cong \Proj^1(\C)$, a rational map can be expressed as a ratio of two homogeneous polynomials $f(z:w) = (P(z,w):Q(z,w))$, where $P$ and $Q$ have degree $d = \deg(f)$ with no common divisors.
Thus, using the coefficients of $P$ and $Q$ as parameters, we have
$$
\Rat_d = \Proj^{2d+1} \setminus V(\Res),
$$ 
where $\Res$ is the resultant of the two polynomials $P$ and $Q$, and $V(\mathrm{Res})$ is the hypersurface for which the resultant vanishes.
This embedding gives a natural compactification $\overline{\Rat_d} = \Proj^{2d+1}$, which will be called the {\em algebraic compactification}.
A map $f\in \overline{\Rat_d}$ can be written as
$$
f = (P:Q) = (Hp: Hq),
$$
where $H = \gcd(P, Q)$.
We set $$\varphi_f := (p:q),$$ which is a rational map of degree at most $d$.
The zeroes of $H$ in $\Proj^1$ are called the {\em holes} of $f$, and the set of holes of $f$ is denoted by $\mathcal{H}(f)$.

If the coefficients of $f_n \in \Rat_d$ converge to those of $f\in \overline{\Rat_d}$, we say that $f$ is the {\em algebraic limit} of the sequence $f_n$.
We also say $f_n$ {\em converges algebraically} to $f$. 
The limit $f$ is said to have degree $k$ if $\varphi_f$ has degree $k$.
Abusing notations, sometimes we shall refer to $\varphi_f$ as the algebraic limit of $f_n$. 
The following is useful in analyzing the limiting dynamics.

\begin{lem}[\cite{DeM05} Lemma 4.2]\label{lem:ac}
If $f_n \in \Rat_d$ converges to $f$ algebraically, then $f_n$ converges to $\varphi_f$ uniformly on compact subsets of $\widehat{\C}-\mathcal{H}(f)$.
\end{lem}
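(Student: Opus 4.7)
The plan is to work in homogeneous coordinates on $\widehat{\C} = \Proj^1$, compare $f_n$ and $\varphi_f$ via the spherical (Fubini--Study) metric, and exploit the factorization $(P,Q) = (Hp, Hq)$: the common factor $H$ cancels in the numerator of the spherical distance, while in the denominator it is controlled from below on any compact set avoiding $\mathcal{H}(f)$.

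First I would normalize representatives. Algebraic convergence $f_n = [P_n : Q_n] \to f = [P:Q]$ in $\Proj^{2d+1}$ means that, after multiplying by suitable nonzero scalars (which does not change the rational map $f_n$), the coefficient vectors of $(P_n, Q_n)$ may be chosen to converge to those of $(P,Q) = (Hp, Hq)$. Consequently $P_n \to Hp$ and $Q_n \to Hq$ as polynomials in the homogeneous variables, hence uniformly on compact subsets of $\widehat{\C}$.

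Next, for $z \in \widehat{\C}$ the spherical distance between $f_n(z) = [P_n(z):Q_n(z)]$ and $\varphi_f(z) = [p(z):q(z)]$ is
$$
\dist_{\widehat{\C}}(f_n(z), \varphi_f(z)) \;=\; \frac{\bigl|P_n(z)\, q(z) - p(z)\, Q_n(z)\bigr|}{\sqrt{|P_n(z)|^2 + |Q_n(z)|^2}\;\sqrt{|p(z)|^2 + |q(z)|^2}}.
$$
Writing $P_n = Hp + \epsilon_n$ and $Q_n = Hq + \delta_n$ with $\epsilon_n, \delta_n \to 0$ uniformly on compact sets, the numerator becomes $\epsilon_n q - p\, \delta_n$; the $Hpq$ cross-term cancels, and the numerator therefore tends to $0$ uniformly on any compact $K \subseteq \widehat{\C}$.

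It remains to bound the denominator below on a compact $K \subseteq \widehat{\C} \setminus \mathcal{H}(f)$. The factor $\sqrt{|P_n|^2 + |Q_n|^2}$ converges uniformly to $|H|\sqrt{|p|^2 + |q|^2}$, so the full denominator converges uniformly to $|H|\bigl(|p|^2 + |q|^2\bigr)$. On $K$, $|H|$ has a positive lower bound because $K$ misses the zero set $\mathcal{H}(f)$ of $H$, and $|p|^2 + |q|^2$ has a positive lower bound because $\gcd(P,Q) = H$ forces $\gcd(p,q)$ to be constant, i.e.\ $p$ and $q$ have no common zeros on $\Proj^1$. Combining the uniform vanishing of the numerator with a uniformly positive lower bound on the denominator yields $f_n \to \varphi_f$ uniformly on $K$. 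The only delicate point is the normalization step — extracting convergent coefficient representatives compatible with the factorization $(Hp,Hq)$ — but this is automatic from convergence in projective space, so the rest of the argument is essentially a one-line spherical-metric computation.
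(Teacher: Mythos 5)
Your proof is correct. Note that the paper itself gives no argument for this lemma --- it is quoted directly from \cite{DeM05} --- and what you wrote is essentially the standard proof of DeMarco's Lemma 4.2: the cancellation of the common factor $H$ in the cross term $P_nq-pQ_n$, together with the two lower bounds on the denominator ($|H|$ bounded below on a compact set avoiding its zero set $\mathcal{H}(f)$, and $|p|^2+|q|^2$ bounded below because $p$ and $q$ are coprime), are exactly the right ingredients. The only cosmetic remark is that the uniform estimates should be read on the unit-sphere lift of the compact set $K\subseteq \widehat{\C}-\mathcal{H}(f)$ in $\C^2$, where the chordal-distance expression is homogeneous of degree zero and hence well-defined; with that understood, the argument is complete.
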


The following statement is also useful in many situations:
\begin{lem}\label{lem:critc}
Let $f_n\in \Rat_d$ converge to $f$ algebraically with $\deg(f) \geq 1$.
If $x\in \mathcal{H}(f)$, then there exists a sequence of critical points $c_n$ for $f_n$ with $c_n \to x$.
\end{lem}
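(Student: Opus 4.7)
The plan is to track the critical points of $f_n$ via the Wronskian determinant and then appeal to continuity of polynomial roots under coefficient convergence.

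I would work in homogeneous coordinates, writing $f_n = (P_n : Q_n)$ with $P_n, Q_n$ homogeneous of degree $d$ and coprime, and consider
\[
W_n := \frac{\partial P_n}{\partial z}\frac{\partial Q_n}{\partial w} - \frac{\partial P_n}{\partial w}\frac{\partial Q_n}{\partial z},
\]
a homogeneous polynomial of degree $2d-2$ whose zero locus in $\Proj^1$ is the critical set of $f_n$ (counted with multiplicity, giving $2d-2$ points by Riemann--Hurwitz). Since $f_n \to f$ algebraically, after rescaling the coefficients of $P_n, Q_n$ converge to those of $P, Q$, and hence $W_n \to W$ coefficient-wise.

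The main step is the claim that if $H = \gcd(P, Q)$ has a zero of order $m \geq 1$ at $x \in \Proj^1$, then $W$ vanishes to order at least $2m$ at $x$. The cleanest route is to restrict to an affine chart containing $x$: writing $P = (z-x)^m \tilde p$ and $Q = (z-x)^m \tilde q$ locally, a direct differentiation gives
\[
P'Q - PQ' = (z-x)^{2m}\bigl(\tilde p' \tilde q - \tilde p \tilde q'\bigr),
\]
and Euler's identity shows the affine expression equals $\tfrac{1}{d} W$ in that chart. The hypothesis $\deg f \geq 1$ (i.e.\ $\deg \varphi_f \geq 1$) ensures $p'q - pq' \not\equiv 0$, so $W$ is a \emph{nonzero} polynomial of degree $2d-2$ with a zero of order at least $2m$ at $x$.

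The conclusion is then immediate from continuity of roots of polynomials with respect to coefficients: since $W_n \to W \not\equiv 0$ at common degree $2d-2$, at least $2m$ zeros of $W_n$ must accumulate at $x$, and any such sequence of zeros supplies the desired critical points $c_n \to x$ of $f_n$. The only bookkeeping subtlety, which I would not expect to be genuinely difficult, is handling the case where $x$ is the point at infinity of the chosen affine chart; this is settled by symmetry, replacing $(z:w)$ with $(w:z)$ so that the same local computation applies.
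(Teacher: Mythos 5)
Your argument is correct, but it takes a genuinely different route from the paper. The paper proves the lemma by contradiction: assuming no critical points of $f_n$ approach the hole $x$, it uses the locally uniform convergence $f_n \to \varphi_f$ away from $\mathcal{H}(f)$ (Lemma \ref{lem:ac}) to produce a neighborhood $U_n$ of $x$ on which $f_n$ is univalent with image avoiding $\infty$ (after possibly swapping $0$ and $\infty$), and then derives a contradiction from the fact that poles of $f_n$ --- zeros of $Q_n$, which converge to zeros of $Hq$ --- must accumulate at the hole $x$. Your proof instead works entirely at the level of coefficients: critical points are the zeros of the Wronskian $W_n$, the factorization $P'Q-PQ' = (z-x)^{2m}(\tilde p'\tilde q - \tilde p\tilde q')$ together with linear independence of $p,q$ (equivalent to $\deg\varphi_f\geq 1$) shows the limit Wronskian $W$ is a nonzero degree $2d-2$ form vanishing to order at least $2m$ at an order-$m$ hole, and continuity of roots finishes. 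This buys something the paper's soft argument does not: a quantitative statement that at least $2m$ critical points (with multiplicity) converge to a hole of multiplicity $m$, and it bypasses Lemma \ref{lem:ac} entirely; conversely, the paper's argument is shorter given that Lemma \ref{lem:ac} is already in place and requires no bookkeeping with charts, Euler's identity, or the behavior at $\infty$. All the steps you leave implicit (zeros of $W_n$ being exactly the critical set since $P_n,Q_n$ are coprime of degree $d$, the chart at infinity, and $W\not\equiv 0$) are indeed routine, so I see no gap.
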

\begin{proof}
Suppose for contradiction that this is not true.
Let $x\in U$ be a small open neighborhood containing no critical points of $f_n$ for all large $n$ and let $C = \partial U$.
Modifying $U$ and changing the role of $0$ and $\infty$ if necessary, we may assume that $f$ is univalent on $U$, $f(C)$ is a simple closed curve and the image $U'$ does not contain $\infty$.
Since $f_n$ converges uniformly to $f$ on a neighborhood of $C$ by Lemma \ref{lem:ac}, there exists a component $C_n \subseteq f_n^{-1}(f(C))$ that converges in Hausdorff topology to $C$.
Let $U_n$ be the component of $\hat\C-C_n$ that contains $x$.
So for large $n$, $U_n$ contains no critical points of $f_n$.
Note that $f_n(\partial U_n) = f_n(C_n) = f(C) = \partial U'$, so $f_n: U_n \longrightarrow U'$ is univalent.
This is a contradiction as there exists a sequence of poles of $f_n$ converging to the hole $x$, but $\infty \notin U'$.
\end{proof}

\subsection*{Geometric bounds for Blaschke products}
The normalization $f(0) = 0$ imposed on maps $f\in \BP_d$ gives the following compactness result:
\begin{prop}\label{prop:ac1}
Let $K \geq 0$, and let $f_n: \D\longrightarrow \D$ be a sequence of proper holomorphic map of degree $d$ with $d_{\D}(0, f_n(0)) \leq K$.
Then after passing to a subsequence, $f_n$ converges compactly on $\D$ to a proper holomorphic map $f: \D \longrightarrow \D$ (of potentially lower degree).

By Schwarz reflection, $f, f_n$ are rational maps. As rational maps, $f_n$ converges algebraically to a map $f$ of degree $\geq 1$ with holes contained in $\mathbb{S}^1$.
\end{prop}
\begin{proof}
If $f_n \in \BP_d$, then the statement follows from \cite[Proposition 10.2]{McM09}.
The statement for general sequences can be derived from the above by post-composing with a bounded sequence $M_n \in \Isom(\D)$ so that $M_n\circ f_n \in \BP_d$.
\end{proof}

We remark that the rational map point of view in Proposition \ref{prop:ac1} gives more information.
By Lemma \ref{lem:ac}, we know where the convergence of $f_n \to f$ fails to be uniform on $\mathbb{S}^1$.

By Schwarz lemma, any holomorphic map $f: \D \longrightarrow \D$ is distance non-increasing with respect to the hyperbolic metric.
We will frequently use the following which gives a bound in the other direction:
\begin{theorem}[\cite{McM09}, Theorem 10.11]\label{thm:almostisometry}
There is a constant $R>0$ such that for any holomorphic map $f: \D \longrightarrow \D$ of degree $d$:
\begin{enumerate}
\item If $d_{\D}([a,b], C(f)) > R$, then $d_{\D}(f(a), f(b)) = d_{\D}(a,b) + O(1)$; and
\item If $d_{\D}([a,b], f(C(f))) > R$, then $d_{\D}(f^{-1}(a), f^{-1}(b)) = d_{\D}(a,b) + O(1)$.
\end{enumerate}
Here $C(f)$ denotes the critical set of $f$, $[a,b]$ is the hyperbolic geodesic segment that connects $a$ and $b$, and $f^{-1}$ is any branch of the inverse map that is continuous along $[a,b]$.
\end{theorem}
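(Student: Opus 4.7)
The plan is to deduce both statements from a quantitative Schwarz--Pick estimate for the hyperbolic derivative
$$
|f'(z)|_{\Hyp^2} := \frac{|f'(z)|(1-|z|^2)}{1-|f(z)|^2}
$$
of a degree-$d$ proper holomorphic map $f: \D \to \D$, namely
$$
1 - |f'(z)|_{\Hyp^2} \leq C_d \sum_{c \in C(f)} e^{-2\, d_{\Hyp^2}(z, c)},
$$
with $C_d$ depending only on $d$. The conceptual point is that each single-factor Blaschke map $B_a(z) = (z-a)/(1-\bar a z)$ is a hyperbolic isometry, so the failure of $f$ to be an isometry is concentrated near its critical set. Writing $f$ (up to a unit scalar) as a Blaschke product $\prod_{j=1}^d B_{a_j}$ and computing $f'/f = \sum_j B'_{a_j}/B_{a_j}$, one controls the cross-terms by the pseudohyperbolic distances from $z$ to the $a_j$, then converts these into distances from $z$ to the critical points via elementary estimates and the identity $(1-|z|^2)(1-|a|^2)/|1-\bar a z|^2 \asymp e^{-2 d_{\Hyp^2}(z,a)}$.

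Granted this pointwise bound, statement (1) follows by integration. By Schwarz--Pick $d_{\Hyp^2}(f(a),f(b)) \leq d_{\Hyp^2}(a,b)$, while in the other direction
$$
d_{\Hyp^2}(a,b) - d_{\Hyp^2}(f(a),f(b)) \leq \int_{[a,b]} \bigl(1 - |f'(z)|_{\Hyp^2}\bigr)\, |dz|_{\Hyp^2}.
$$
For any $c \in \D$ and any hyperbolic geodesic at distance $\geq R$ from $c$, the integral $\int e^{-2 d_{\Hyp^2}(\cdot,c)}\, |dz|_{\Hyp^2}$ is at most a universal constant times $e^{-2R}$: parametrize by arclength $s$ from the foot of perpendicular and use the hyperbolic Pythagorean identity $\cosh d_{\Hyp^2}(z(s),c) = \cosh R \cdot \cosh s$. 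Summing over the $d-1$ critical points of $f$ gives total additive error $O(d \cdot e^{-2R}) = O(1)$ once $R$ is fixed, which is exactly (1).

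For (2), let $g$ denote the branch of $f^{-1}$ continuous along $[a,b]$ and set $\gamma := g([a,b])$. Since $f$ is hyperbolically distance non-increasing, $\gamma$ must lie at distance $> R$ from $C(f)$ (otherwise $f(\gamma)=[a,b]$ would meet an $R$-neighborhood of $f(C(f))$). Applying (1) along infinitesimal sub-arcs shows $f|_\gamma$ is nearly isometric, so $\gamma$ has hyperbolic length $d_{\Hyp^2}(a,b) + O(1)$ and is a uniform quasi-geodesic in $\Hyp^2$. By the Morse lemma, $\gamma$ has bounded Hausdorff distance from the geodesic $[g(a),g(b)]$; after inflating $R$ by this Morse constant, the geodesic $[g(a),g(b)]$ also avoids the critical set, and applying (1) directly to it yields
$$
d_{\Hyp^2}(a,b) \;=\; d_{\Hyp^2}\bigl(f(g(a)), f(g(b))\bigr) \;=\; d_{\Hyp^2}(g(a),g(b)) + O(1).
$$

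The hard part will be the pointwise derivative estimate: the heuristic that each Möbius factor is an isometry and all interference is controlled by distances to critical points is clean, but extracting an exponential decay rate with a constant depending only on $d$ (and not on how tightly the zeros $a_j$ cluster, nor on the critical values) requires careful bookkeeping of the logarithmic derivative and of the conversion between pseudohyperbolic distances to zeros and true hyperbolic distances to critical points. A subtler but less demanding point in (2) is that $\gamma = g([a,b])$ is not literally a geodesic; the Morse lemma resolves this, at the cost of one universal additive constant absorbed into the final choice of $R$.
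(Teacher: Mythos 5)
This statement is not proved in the paper at all: it is imported verbatim from \cite{McM09} (Theorem 10.11), so the only comparison available is with your argument itself, and as it stands it has two genuine gaps. First, the pointwise estimate $1-\|f'(z)\|_{\Hyp^2}\le C_d\sum_{c\in C(f)}e^{-2d_{\Hyp^2}(z,c)}$ is the entire analytic content of the theorem, and you leave it unproved. The sketched route is also misleading about where the difficulty lies: the logarithmic derivative of the Blaschke factorization produces quantities governed by the pseudohyperbolic distances from $z$ to the \emph{zeros} $a_j$, and these are not interchangeable with distances to the \emph{critical points}. For example, for $f(z)=z\cdot\frac{z-a}{1-\bar az}$ with $a$ far from $0$, the point $z=0$ sits on a zero (so any zero-based cross-term bound is vacuous there), yet it is at distance roughly $\tfrac12 d_{\Hyp^2}(0,a)$ from the critical point and $1-\|f'(0)\|\approx 2e^{-2d_{\Hyp^2}(0,c)}$, which is exactly the nontrivial statement to be proved. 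So the ``conversion'' from zeros to critical points is not bookkeeping; it is the theorem, and your proposal assumes it.

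Second, even granting the pointwise bound, the deduction of statement (1) is incorrect as written. Integrating $1-\|f'\|_{\Hyp^2}$ along $[a,b]$ controls the hyperbolic \emph{length} of the image curve $f([a,b])$; that only yields $d(f(a),f(b))\le d(a,b)$, which is already Schwarz--Pick. The displayed inequality $d(a,b)-d(f(a),f(b))\le\int_{[a,b]}(1-\|f'\|)$ is not a consequence of integration: a distance-nonincreasing map whose derivative has norm close to $1$ along a curve could in principle fold the curve back, so the distance between the endpoint images is not bounded below by length minus the error. Ruling out folding needs additional input, e.g.\ gradient bounds on $\log\|f'\|$ in an $R$-neighborhood of $[a,b]$ (giving exponentially small geodesic curvature of $f([a,b])$) followed by a local-to-global quasi-geodesic argument, or a quantitative comparison of $f$ with a M\"obius map near the geodesic. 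Ironically, you deploy exactly this kind of care (quasi-geodesics, Morse lemma) in part (2), where it is unnecessary --- there Schwarz--Pick gives $d(g(a),g(b))\ge d(f(g(a)),f(g(b)))=d(a,b)$ for free, and $\mathrm{length}(g([a,b]))=\int_{[a,b]}\|f'\|^{-1}\le d(a,b)+O(1)$ gives the upper bound directly --- but omit it in part (1), where it is the missing step. (Your observation that $g([a,b])$ must avoid the $R$-neighborhood of $C(f)$, and the $\cosh d=\cosh R\cosh s$ integration giving total error $O(d\,e^{-2R})$, are both fine.)
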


\subsection*{Quasi-invariant tree}
Let $f_n \in \BP_d$ be a quasi post-critically finite sequence.
We now explain how a pointed quasi-invariant tree can be constructed after possibly passing to a subsequence.
This pointed quasi-invariant tree plays a similar role as the Hubbard tree for post-critically finite polynomials.

The construction is in the same spirit as the ribbon $\R$-tree introduced in \cite{McM09}, see also \cite{L19a, L19}.
One of the key differences is that two sequences $x_n, y_n \in \D$ are identified in the ribbon $\R$-tree if $\lim d_{\D}(x_n, y_n)/ R_n = 0$, where $R_n$ is a given rescaling factor; while they are identified in the quasi-invariant tree if they are of uniform bounded distance apart.
The more restrictive identification allows us to better control the dynamics and construct rescaling limits.
Another problem with using the ribbon $\R$-tree is that it only sees the dynamics on the scale of $R_n$.
To deal with incompatible escaping rates of the critical points, we thus abandon the rescaling construction as in \cite{McM09}.

In the following, we shall use $(a_n)$ to denote a sequence, and $a_n$ as the $n$-th term of the sequence $(a_n)$. If there are multiple subindices, we shall use $(-)_n$ to emphasize the index for the sequence is $n$.

Let $\p_n = 0$ be the unique attracting fixed point of $f_n$ in $\D$.
Let 
$$
\widetilde{\mathcal{P}}:=\{(f_n^j(c_{i,n}))_n: i=1,..., d-1, j=0,..., l_i+q_i-1\} \cup \{(\p_n)\}.
$$
Note that an element of $\widetilde{\mathcal{P}}$ is a sequence of points in $\D$ indexed by $n$, and the cardinality of $\widetilde{\mathcal{P}}$ is $1+\sum_{i=1}^{d-1} l_i+q_i$.

Let $(v_n) \in \widetilde{\mathcal{P}}$.
After passing to a subsequence for $f_n$, we assume that the local degree $\deg_{v_n} f_n$ of $f_n$ at $v_n$ is constant in $n$ and the limit
$$
\lim_{n\to\infty} d_{\D}(v_n, w_n)
$$
exists for any pair of elements $(v_n), (w_n) \in \widetilde{\mathcal{P}}$ (which can possibly be $\infty$).
This defines an equivalence relation: 
$$
(v_n) \sim (w_n) \text{ if } \lim_{n\to\infty} d_{\D}(v_n, w_n) < \infty.
$$
An equivalence class $\mathcal{C}\in \widetilde{\mathcal{P}} / \sim$ will be called a {\em cluster set}.
We define the degree of $\mathcal{C}$ by
$$
\deg(\mathcal{C}) = 1+ \sum_{(v_n) \in \mathcal{C}} (\deg_{v_n} f_n - 1).
$$

Let $\mathcal{C}\in \widetilde{\mathcal{P}} / \sim$ be an equivalence class.
Note that $\mathcal{C}$ is a finite union of sequences of points in $\D$.
It is convenient to choose a representative $(v_n(\mathcal{C})) \in\mathcal{C}$ with the convention that $(v_n(\mathcal{C})) = (p_{n})$ if $\mathcal{C} = [(p_{n})]$.
We denote 
$$
\mathcal{P}:= \{(v_n(\mathcal{C})): \mathcal{C} \in \widetilde{\mathcal{P}} / \sim\}.
$$ 
Note that an element in $\mathcal{P}$ is a sequence of points in $\D$ indexed by $n$.
Since $\widetilde{\mathcal{P}}$ is a finite set, $\mathcal{P}$ is a finite set as well.

We also set 
$$
\mathcal{P}_n = \{ v_n(\mathcal{C}): \mathcal{C} \in \widetilde{\mathcal{P}} / \sim\},
$$ 
i.e., $\mathcal{P}_n$ consists of the $n$-th term of the elements of $\mathcal{P}$.
Note that $\mathcal{P}_n$ is a finite collection of points of $\D$.
If $(v_n) \neq (w_n) \in \mathcal{P}$, then $d_{\D}(v_n, w_n) \to \infty$.
Thus, after passing to a subsequence, we can assume that for any pair of distinct elements $(v_n), (w_n) \in \mathcal{P}$, $v_n \neq w_n$ for all $n$.
Thus, each point $v_n \in \mathcal{P}_n$ uniquely determines an element $(v_n) \in \mathcal{P}$, which gives a canonical identification of $\mathcal{P}_n$ with $\mathcal{P}$.
We define the degree of a point $v_n \in \mathcal{P}_n$ by
$$
\deg(v_n) = \deg(\mathcal{C})
$$
if $(v_n) \in \mathcal{P}$ represents $\mathcal{C} \in \widetilde{\mathcal{P}} / \sim$.

\begin{figure}[ht]
  \centering
  \resizebox{0.4\linewidth}{!}{
    \def\svgwidth{\columnwidth}
\begingroup%
  \makeatletter%
  \providecommand\color[2][]{%
    \errmessage{(Inkscape) Color is used for the text in Inkscape, but the package 'color.sty' is not loaded}%
    \renewcommand\color[2][]{}%
  }%
  \providecommand\transparent[1]{%
    \errmessage{(Inkscape) Transparency is used (non-zero) for the text in Inkscape, but the package 'transparent.sty' is not loaded}%
    \renewcommand\transparent[1]{}%
  }%
  \providecommand\rotatebox[2]{#2}%
  \newcommand*\fsize{\dimexpr\f@size pt\relax}%
  \newcommand*\lineheight[1]{\fontsize{\fsize}{#1\fsize}\selectfont}%
  \ifx\svgwidth\undefined%
    \setlength{\unitlength}{792bp}%
    \ifx\svgscale\undefined%
      \relax%
    \else%
      \setlength{\unitlength}{\unitlength * \real{\svgscale}}%
    \fi%
  \else%
    \setlength{\unitlength}{\svgwidth}%
  \fi%
  \global\let\svgwidth\undefined%
  \global\let\svgscale\undefined%
  \makeatother%
  \begin{picture}(1,0.77272727)%
    \lineheight{1}%
    \setlength\tabcolsep{0pt}%
    \put(0,0){\includegraphics[width=\unitlength,page=1]{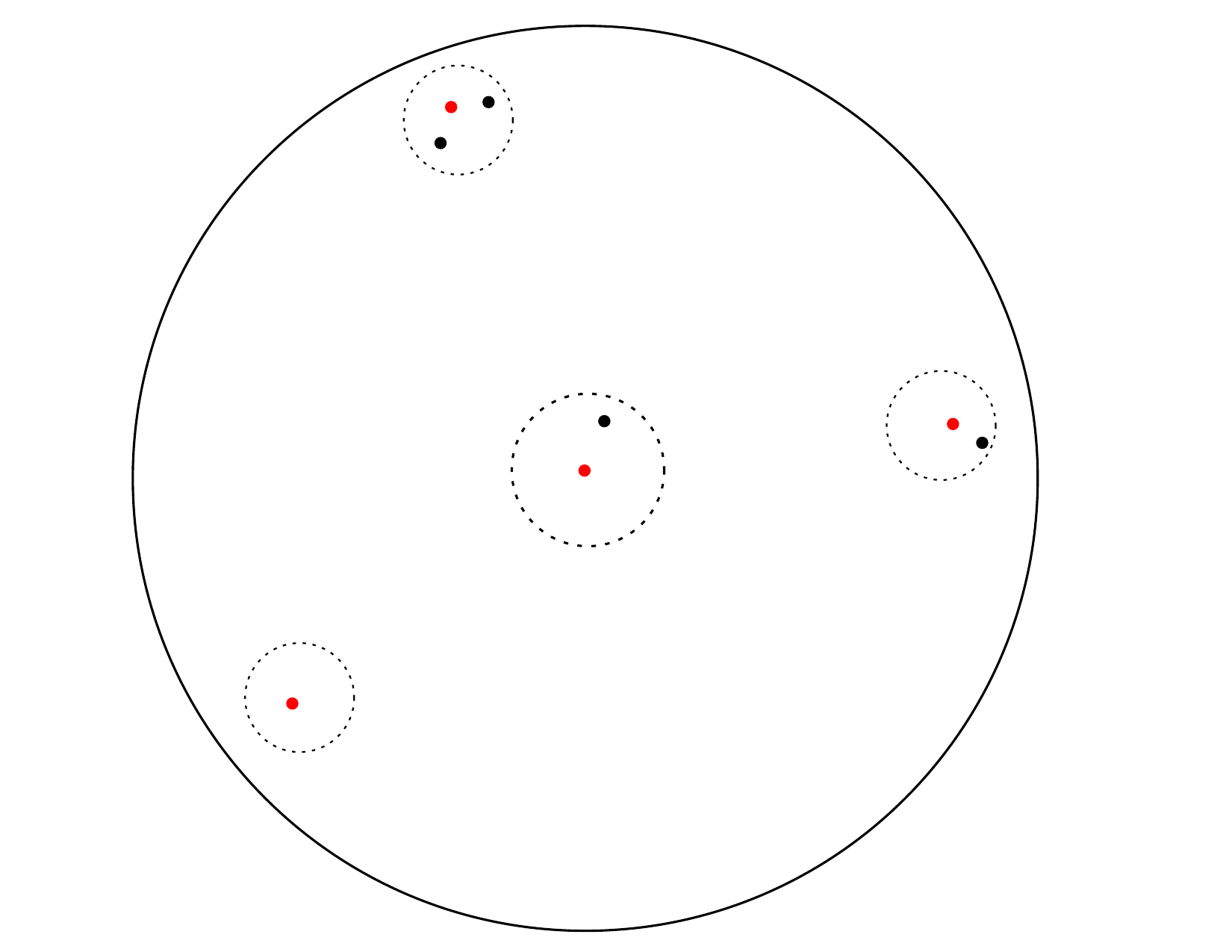}}%
    \put(0.73985778,0.66537185){\color[rgb]{0,0,0}\makebox(0,0)[lt]{\lineheight{1.25}\smash{\begin{tabular}[t]{l}{\Huge $\D$}\end{tabular}}}}%
    \put(0.5135168,0.45155565){\color[rgb]{0,0,0}\makebox(0,0)[lt]{\lineheight{1.25}\smash{\begin{tabular}[t]{l}{\Large $\mathcal{C}$}\end{tabular}}}}%
    \put(0.4535183,0.36724683){\color[rgb]{0,0,0}\makebox(0,0)[lt]{\lineheight{1.25}\smash{\begin{tabular}[t]{l}{\Large $v_n(\mathcal{C})$}\end{tabular}}}}%
    \put(0,0){\includegraphics[width=\unitlength,page=2]{Cluster.pdf}}%
  \end{picture}%
\endgroup%

  }
  \caption{A schematic illustration of cluster sets. We choose a representative (red) for each cluster. As $n\to \infty$, the distance is bounded between points in the same cluster, while goes to infinity between points in different clusters.}
  \label{fig:Cluster}
\end{figure}

Index the elements of $\mathcal{P} = \{(b_{0,n})_n, (b_{1,n})_n,..., (b_{m,n})_n\}$ with $(b_{0,n}) = (\p_{n})$.
The sequence of quasi-invariant trees $\mathcal{T}_n$ is the `spine' of the degenerating hyperbolic polygon $\chull(\mathcal{P}_{n})$ and is constructed inductively as follows:

As the base case, we define $\mathcal{T}^0_n = \{b_{0,n}\}$ with vertex set $\mathcal{V}^0_n=\{b_{0,n}\}$.

Assume that $\mathcal{T}^i_n$ is constructed with vertex set $\mathcal{V}^i_n:= \{v_{1,n},..., v_{m_i,n}\}$ containing $\{b_{0,n},..., b_{i,n}\}$. 
Assume as induction hypotheses that 
\begin{enumerate}
\item $\lim_{n\to\infty}d_{\D}(v_{k,n}, v_{k',n}) = \infty$ for all $k\neq k' \leq m_i$;
\item $\min_{k=i+1,..., m} d_{\D}(b_{k,n}, \chull(\mathcal{V}^{i}_n)) \to \infty$ as $n\to\infty$;
\item each edge of $\mathcal{T}^i_{n}$ is a hyperbolic geodesic segment.
\end{enumerate}

After passing to a subsequence and changing indices, we assume for all $n$,
$$
d_{\D}(b_{i+1,n}, \chull(\mathcal{V}^{i}_n)) = \min_{k=i+1,..., m} d_{\D}(b_{k,n}, \chull(\mathcal{V}^{i}_n)).
$$
Let $a_{i+1,n}$ be the projection of $b_{i+1,n}$ to the convex hull $\chull(\mathcal{V}^i_n)$.
After passing to a subsequence, we assume
$\lim_{n\to\infty} d_{\D}(a_{i+1,n}, v_{k,n})$
exist (which can possibly be $\infty$) for all $k$ (see Figure \ref{fig:Construction}).

Since $a_{i+1,n}$ is on the boundary of $\chull(\mathcal{V}^i_n)$, there exist $u_n, w_n \in \mathcal{V}^{i}_n$ so that $a_{i+1,n}$ is on the hyperbolic geodesic segment $[u_n, w_n]$.
Since $\mathcal{T}^i_n$ is a tree and each edge is a hyperbolic geodesic segment, there is a unique piecewise geodesic path in $\mathcal{T}^i_n$ that connects $u_n$ and $w_n$. We denote it by
$$
[v_{j_1,n} = u_n, v_{j_2,n}] \cup [v_{j_2,n}, v_{j_3,n}] \cup ... \cup [v_{j_{l-1}, n}, v_{j_l,n} = w_n],
$$
where $[v_{j_i,n}, v_{j_{i+1},n}]$ is an edge of $\mathcal{T}^i_n$.
After passing to a subsequence, we assume that the subindices $j_i$ does not depend on $n$.

\begin{figure}[ht]
  \centering
  \resizebox{0.9\linewidth}{!}{
    \def\svgwidth{\columnwidth}
    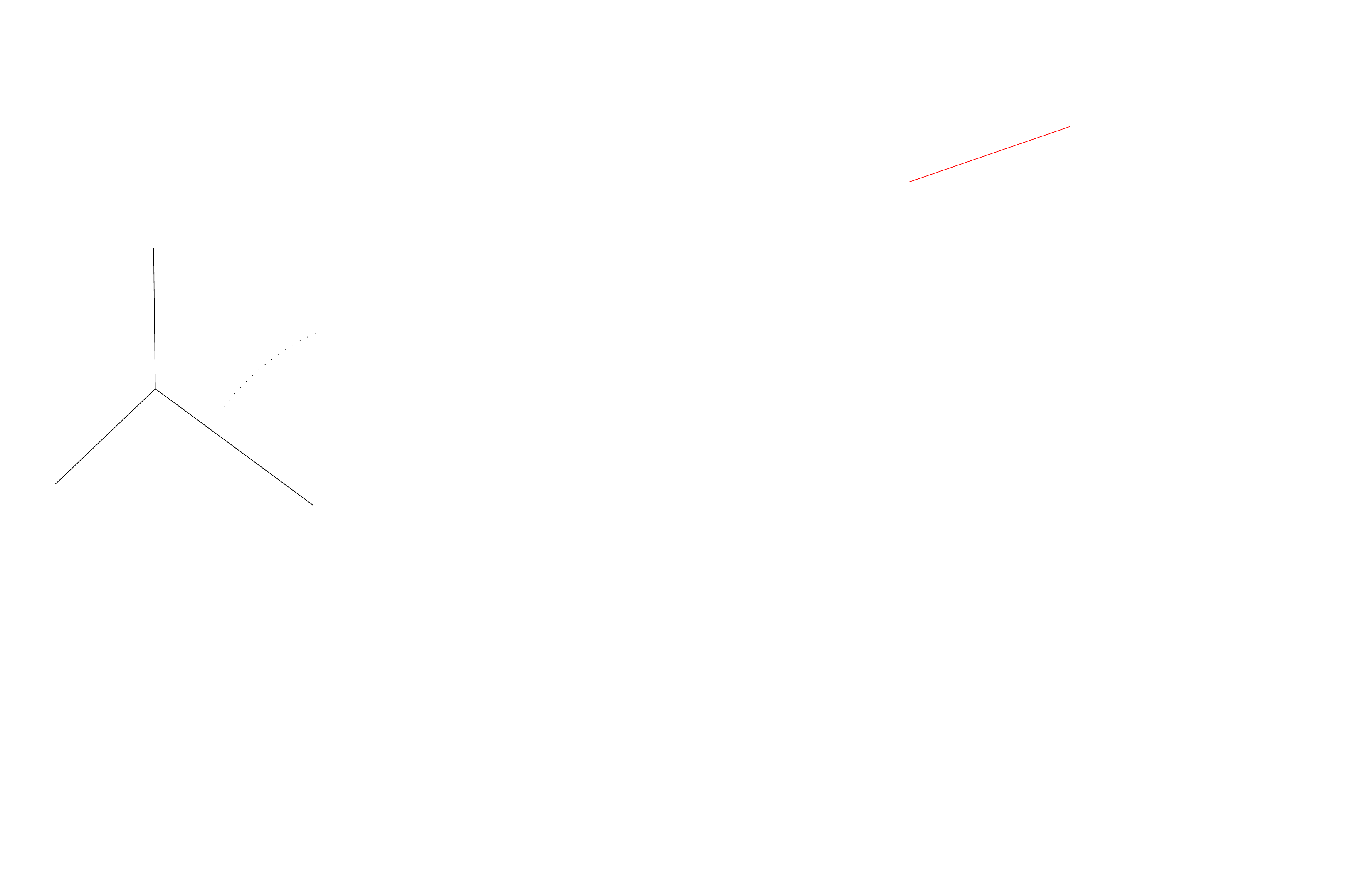

  }
  \caption{An illustration of the inductive construction of $\mathcal{T}_{s,n}$.}
  \label{fig:Construction}
\end{figure}

We have two cases. 
If $a_{i+1,n}$ stay at a bounded distance from some vertex $v_{k,n} \in \mathcal{V}^i_n$, then we define
$$
\mathcal{T}^{i+1}_n := \mathcal{T}^{i}_n \cup [v_{k,n}, b_{i+1,n}],
$$
and set $\mathcal{V}^{i+1}_n:=\mathcal{V}^i_n \cup \{b_{i+1,n}\}$ (see Figure \ref{fig:Construction}).

Otherwise, since the hyperbolic triangles are thin, subdividing the polygon $v_{j_1,n},..., v_{j_k,n}$ into finitely many triangles, there exist 
$$
\tilde{a}_{i+1,n} \in [v_{j_1,n}, v_{j_2,n}] \cup ... \cup [v_{j_{l-1}, n}, v_{j_l,n}] \subseteq \mathcal{T}^i_n
$$ 
which stay at a bounded distance from $a_{i+1,n}$. 
Note that  the bound depends only on the number of vertices.
We define
$$
\mathcal{T}^{i+1}_n := \mathcal{T}^{i}_n \cup [\tilde{a}_{i+1,n}, b_{i+1,n}],
$$
and set the vertex set $\mathcal{V}^{i+1}_n:=\mathcal{V}^i_n \cup \{\tilde{a}_{i+1,n}, b_{i+1,n}\}$ (see Figure \ref{fig:Construction}).

By our construction, it is easy to verify that the 3 induction hypotheses are satisfied.
Thus, by induction, let $\mathcal{T}_n = \mathcal{T}^m_n$ be the finite tree when all $m$ points in $\mathcal{P}_n$ are added.

By construction, $\mathcal{P}_n \subseteq \mathcal{V}_n$, and any point in $\mathcal{V}_n - \mathcal{P}_n$ is a branch point for $\mathcal{T}_n$. 
Moreover, $\lim_{n\to\infty} d_{\D}(v_{k,n}, v_{k',n}) = \infty$ for all $k\neq k'$ and the edges are hyperbolic geodesic segments.

After passing to a further subsequence, we may assume that $\mathcal{T}_n$ are all canonically isomorphic with the same ribbon structure.
Thus, we denote by $\mathcal{T}$ the underlying ribbon finite tree, with the isomorphisms 
$$
\phi_n: \mathcal{T}\longrightarrow \mathcal{T}_n.
$$
We denote the vertex set as $\mathcal{V}$.
By our construction, $\phi_n^{-1}(\mathcal{P}_n) \subseteq \mathcal{V}$ is canonically identified with $\mathcal{P}$ and the identification does not depend on $n$.
In this way, we shall abuse the notation and regard $\mathcal{P} \subseteq \mathcal{V}$.

The local degree at a vertex is defined by $\delta(v) := \deg (\phi_n(v))$ if $v\in \mathcal{P}$, and $\delta(v) := 1$ if $v\notin \mathcal{P}$.
There exists a unique vertex corresponding to the attracting fixed point for $f_n$, and we shall denote it by $\p \in \mathcal{V}$.
Note that by construction, $\delta(v) -1$ equals to the number of critical points of $f_n$ counted with multiplicity that stay at a bounded distance from $\phi_n(v)$.

We first show that the vertex set is quasi forward invariant.
\begin{lem}\label{lem:qfiv}
There exists an induced map $f:\mathcal{V} \longrightarrow \mathcal{V}$ and a constant $K$ such that for all $v\in \mathcal{V}$ and for all $n$,
$$
d_{\D}(f_n(\phi_n(v)), \phi_n(f(v))) \leq K.
$$
\end{lem}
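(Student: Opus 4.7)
The plan is to construct $f: \mathcal{V} \to \mathcal{V}$ case by case over $\mathcal{V}$, partitioning into three types of vertices: the attracting fixed vertex $\p$, the cluster-representative vertices $v \in \mathcal{P}\setminus\{\p\}$, and the branch vertices $v \in \mathcal{V}\setminus\mathcal{P}$.

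The first two cases are direct. Setting $f(\p)=\p$ gives the trivial bound since $\phi_n(\p)=0$ is fixed by $f_n$. For $v \in \mathcal{P}\setminus\{\p\}$, the vertex represents a cluster class $\mathcal{C}$ containing some orbit point $(f_n^j(c_{i,n}))_n \in \widetilde{\mathcal{P}}$; its forward image $(f_n^{j+1}(c_{i,n}))_n$ again lies in $\widetilde{\mathcal{P}}$, using the quasi post-critically finite wrap-around from Definition \ref{defn:qpcf} when $j = l_i+q_i-1$. Let $\mathcal{C}'$ be the resulting cluster and set $f(v)$ to be its representative vertex. The required bound then follows from the Schwarz lemma (which makes $f_n$ distance non-increasing) together with the uniformly bounded diameters of $\mathcal{C}$ and $\mathcal{C}'$.

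For a branch vertex $v \in \mathcal{V}\setminus\mathcal{P}$, I would use a tripod argument. Since branch vertices are introduced in the induction as interior vertices of degree at least three, every leaf of $\mathcal{T}$ lies in $\mathcal{P}$, so one may select $w_1, w_2, w_3 \in \mathcal{P}$ in three distinct tree directions from $v$. By the inductive construction of $\mathcal{T}_n$ together with the $\delta$-thinness of hyperbolic triangles in $\Hyp^2$, $\phi_n(v)$ is within bounded distance of the hyperbolic tripod center of $\phi_n(w_1), \phi_n(w_2), \phi_n(w_3)$. Because the critical points of $f_n$ lie within bounded distance of $\mathcal{P}_n \subseteq \mathcal{V}_n$, the interiors of the three geodesic sides of the tripod stay uniformly far from the critical set, so Theorem \ref{thm:almostisometry} forces $f_n$ to be an almost isometry on each side. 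It follows that $f_n(\phi_n(v))$ is uniformly close to the hyperbolic tripod center of the image triangle with vertices $f_n(\phi_n(w_i))$, which by the $\mathcal{P}$-case is in turn uniformly close to the tripod of $\phi_n(f(w_i))$. In $\mathcal{T}_n$ the three tree-paths joining these vertices meet at a unique vertex $u \in \mathcal{V}_n$, which is uniformly close to the hyperbolic tripod center by the same thinness-plus-construction argument; set $f(v) = u$.

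The main obstacle is the branch-vertex case, specifically (i) comparing tree paths in $\mathcal{T}_n$ with the hyperbolic geodesics between the same endpoints, required to match the hyperbolic tripod center with a vertex of $\mathcal{V}_n$, and (ii) invoking Theorem \ref{thm:almostisometry} uniformly despite the critical points accumulating near the vertices in $\mathcal{P}_n$. Both difficulties are controlled by thin triangles, Theorem \ref{thm:almostisometry}, and the inductive insertion of branch vertices precisely at the locations where a tree path would otherwise deviate from its hyperbolic geodesic.
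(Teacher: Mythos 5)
Your proposal follows essentially the same route as the paper's proof: vertices in $\mathcal{P}$ are handled by quasi post-critical finiteness plus the Schwarz lemma, and branch vertices by a tripod of $\mathcal{P}$-points together with thinness of hyperbolic triangles and Theorem \ref{thm:almostisometry}, concluding that $f_n(\phi_n(v))$ lies a bounded distance from the $\phi_n$-image of a branch vertex (your tree median of the image points). The one step that needs tightening is your justification that the interiors of the three geodesic sides of the tripod stay uniformly far from the critical set: this does not follow merely from the critical points lying within bounded distance of $\mathcal{P}_n$, because an arbitrary choice of $w_1,w_2,w_3$ in three distinct directions may produce a side whose underlying tree path passes through another $\mathcal{P}$-vertex, in which case the corresponding critical cluster sits near the middle of that geodesic side; there Theorem \ref{thm:almostisometry} does not apply, and folding at such a critical cluster can destroy both the almost-isometry on the side and the lower bound on the image angles. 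The paper avoids this by taking the three points to be the \emph{closest} points of $\mathcal{P}$ to $v$ in their respective directions, so the tree paths joining them contain no $\mathcal{P}$-vertices in their interiors, and since vertices off the path diverge from the sides (the angles at vertices are uniformly bounded below), the sides can meet the critical set only near their endpoints. With that choice of $w_1,w_2,w_3$ made explicit, your argument is correct and coincides with the paper's.
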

\begin{proof}
First consider the case $v\in \mathcal{P}$. 
Note that there exists an induced map $\tilde f: \widetilde{\mathcal{P}} \longrightarrow \widetilde{\mathcal{P}}$ defined as follows.
Let $(v_n) \in \widetilde{\mathcal{P}}$.
\begin{itemize}
\item If $(v_n) = (\p_n)$, then $\tilde f ((v_n)) = (v_n)$.
\item If $(v_n) = (f_n^j(c_{i,n}))$ with $j \leq l_i+q_i-2$, then $\tilde f ((v_n)) = (f_n^{j+1}(c_{i,n}))$.
\item If $(v_n) = (f_n^j(c_{i,n}))$ with $j = l_i+q_i-1$, then $\tilde f ((v_n)) = (f_n^{l_i}(c_{i,n}))$.
\end{itemize}
Note that in the first two cases, $\tilde f ((v_n)) = (f_n(v_n))$.
In the third case, $\tilde f ((v_n)) = (f_n^{l_i}(c_{i,n}))$ stay at a bounded distance from $(f_n(v_n)) = (f_n^{l_i+q_i}(c_{i,n}))$ as $f_n$ is quasi post-critically finite.

If $(v_n) \sim (w_n)$, then by Schwarz lemma, $f_n(v_n)$ stay at a bounded distance from $f_n(w_n)$, so $\tilde f((v_n)) \sim \tilde f ((w_n))$.
Thus the map  $\tilde f: \widetilde{\mathcal{P}} \longrightarrow \widetilde{\mathcal{P}}$ descends to a map
$f: \mathcal{P} \longrightarrow \mathcal{P}$.
By our construction, there exists a constant $K$ so that
$$
d_{\D}(f_n(\phi_n(v)), \phi_n(f(v))) \leq K.
$$

Now suppose $v\in \mathcal{V} - \mathcal{P}$. Then $v$ is a branch point. 
Let $b_1, b_2, b_3$ be the closest points to $v$ on $\mathcal{P}$ so that the convex hull of them in $\mathcal{T}$ forms a `tripod' with $v$ as the center.
Then the angles $\angle \phi_n(b_i)\phi_n(v)\phi_n(b_j)$ are uniformly bounded away from $0$ for $i\neq j$ by construction.
Thus $\phi_n(v)$ is of a uniform bounded distance from the hyperbolic geodesic segment $[\phi_n(b_i), \phi_n(b_j)]$ that connects $\phi_n(b_i)$ and $\phi_n(b_j)$.
By construction, the geodesic $[\phi_n(b_i), \phi_n(b_j)]$ is close to critical points (in the sense of the bound in Theorem \ref{thm:almostisometry}) only possibly near its end points.
Thus by Theorem \ref{thm:almostisometry}, $d_{\D}(f_n(\phi_n(b_i)), f_n(\phi_n(b_j))) = d(\phi_n(b_i), \phi_n(b_j)) + O(1)$.
Therefore, the angles $\angle f_n(\phi_n(b_i)) f_n(\phi_n(v)) f_n(\phi_n(b_j))$ are all uniformly bounded away from $0$ for $i\neq j$.
Thus, $f_n(\phi_n(v))$ is of a uniform bounded distance from some branch point $\phi_n(v')$ of $\mathcal{T}_n$.
Therefore, we define $f(v) = v'$ in this case, and the lemma follows.
\end{proof}

The above lemma allows us to define an induced map 
$f: (\mathcal{T}, \p) \longrightarrow (\mathcal{T}, \p)$ 
by extending $f$ continuously on any edge $[v, w]$ to the arc $[f(v), f(w)]$.
Consider the finite subtree $\mathcal{T}^P \subseteq \mathcal{T}$ as the convex hull of the periodic vertices of $f$. 
We first show 
\begin{lem}\label{lem:simplicial}
Any vertex $v\in \mathcal{T}^P$ is periodic and $f: \mathcal{T}^P \longrightarrow \mathcal{T}^P$ is a simplicial map, i.e, if $[v,w]$ is an edge of $\mathcal{T}^P$, then $[f(v), f(w)]$ is also an edge of $\mathcal{T}^P$.
\end{lem}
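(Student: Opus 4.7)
My plan has two linked parts: first establishing that $f\colon \mathcal{T}^P \to \mathcal{T}^P$ is a surjective self-map, and then exploiting the Schwarz lemma on the approximating Blaschke products in $\D$ to force periodicity of every vertex, with simpliciality following as a consequence.

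For the first part, let $\mathcal{V}^P \subseteq \mathcal{V}$ denote the set of $f$-periodic vertices. Since $\mathcal{V}$ is finite, $\mathcal{V}^P$ is $f$-invariant and $f|_{\mathcal{V}^P}$ is a bijection. The image $f(\mathcal{T}^P)$ is connected and contains $f(\mathcal{V}^P) = \mathcal{V}^P$, so it contains the convex hull $\mathcal{T}^P$. Conversely, any vertex $v \in V(\mathcal{T}^P)$ lies on the tree-arc between some pair $u_1, u_2 \in \mathcal{V}^P$, so $f(v) \in [f(u_1), f(u_2)] \subseteq \mathcal{T}^P$; extending linearly on edges then gives $f(\mathcal{T}^P) \subseteq \mathcal{T}^P$.

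For periodicity, suppose for contradiction that $v \in V(\mathcal{T}^P) \setminus \mathcal{V}^P$ is strictly pre-periodic with pre-period $l \geq 1$. Choose $u_1, u_2 \in \mathcal{V}^P$ with $\phi_n(v)$ strictly interior to the tree-arc from $\phi_n(u_1)$ to $\phi_n(u_2)$ in $\mathcal{T}_n \subseteq \D$. The crucial ingredients are: (i) the Schwarz lemma making $f_n$ hyperbolic-distance non-increasing, (ii) the quasi-invariance of vertices, keeping images of periodic vertices within bounded hyperbolic distance of themselves after any fixed number of iterations, and (iii) Theorem \ref{thm:almostisometry}, which renders the broken-geodesic path along the tree-arc an actual hyperbolic geodesic up to bounded error away from critical vertices. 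Applying $f_n^{l+jq}$ for a common period $q$ of $u_1, u_2, f^l(v) \in \mathcal{V}^P$ and letting $j$ grow, the iterated Schwarz inequalities, summed along the tree-arc together with quasi-exact return of the endpoints, force the slack to collapse: $\phi_n(f^l(v))$ stays within bounded hyperbolic distance of $\phi_n(v)$, which in the cluster-set construction identifies $f^l(v) = v$---contradicting strict pre-periodicity.

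Simpliciality then follows: if $[v,w]$ is an edge of $\mathcal{T}^P$ (so $v, w \in \mathcal{V}^P$) and some vertex $u$ were strictly interior to the arc $[f(v), f(w)]$, then $u \in \mathcal{V}^P$ by the preceding step, and its unique periodic preimage $u' = (f|_{\mathcal{V}^P})^{-1}(u) \in \mathcal{V}^P \subseteq V(\mathcal{T}^P)$; tracking the local degrees and edge-branches at the critical vertices (the only places where $f$ can fold) forces $u'$ to appear strictly interior to the edge $[v,w]$, contradicting the defining property that an edge contains no interior vertices. The main obstacle is the Schwarz step: each individual Schwarz inequality is non-strict and accumulates bounded error per iteration, so the rigidity needed to pin $\phi_n(f^l(v))$ near $\phi_n(v)$ emerges only by summing inequalities along the entire tree-arc between $u_1$ and $u_2$ and iterating by the common period, which requires careful control of the accumulated error near the critical vertices where Theorem \ref{thm:almostisometry} breaks down.
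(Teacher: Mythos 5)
Your periodicity step follows the paper's strategy (Schwarz lemma plus the quasi-invariance of vertices from Lemma \ref{lem:qfiv}, plus the fact that distances add up to $O(1)$ across an interior vertex because the angles there are bounded below), but the iterate you apply is chosen incorrectly: under $f_n^{l+jq}$, with $q$ a common period of $u_1,u_2,f^l(v)$, the endpoints do \emph{not} quasi-return to themselves — they land near $\phi_n(f^l(u_1))$ and $\phi_n(f^l(u_2))$, which differ from $u_1,u_2$ unless $l$ happens to be a multiple of their periods. So the "quasi-exact return of the endpoints" you invoke fails, and your sandwich argument only pins $\phi_n(f^l(v))$ near the arc $[\phi_n(f^l(u_1)),\phi_n(f^l(u_2))]$, not near $\phi_n(v)$; the conclusion $f^l(v)=v$ does not follow as written. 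The repair is exactly what the paper does: apply a single iterate $f_n^{N}$ with $N$ a common multiple of the periods of the two endpoints (taking $N\geq l$ if you want your contradiction setup), so the endpoints quasi-return, and conclude $d_{\Hyp^2}(f_n^N(\phi_n(v)),\phi_n(v))=O(1)$, hence $f^N(v)=v$ since distinct vertices diverge. "Letting $j$ grow" buys nothing — the quasi-invariance error grows with the number of iterates, and one fixed iterate suffices. Also, your opening claim that $f(v)\in[f(u_1),f(u_2)]$ for $v$ on the arc $[u_1,u_2]$ is unjustified (the image of a multi-edge arc can fold past the arc between the endpoint images), though it becomes unnecessary once periodicity is proved.

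The genuine gap is in the simpliciality step. If $u$ is a vertex strictly interior to $[f(v),f(w)]$, all you get for free is an interior \emph{point} $x\in(v,w)$ of the edge with $f(x)=u$; $x$ need not be a vertex, so there is no contradiction yet, and the assertion that the periodic preimage $u'=(f|_{\mathcal{V}^P})^{-1}(u)$ "appears strictly interior to $[v,w]$" is precisely what must be proven. "Tracking the local degrees and edge-branches at the critical vertices" is not an argument here, and no purely combinatorial bookkeeping can work: at this stage the induced map on $\mathcal{T}$ is only known to send each edge onto an arc, and an abstract tree map of that kind can have the periodic preimage of $u$ located anywhere in the tree — ruling this out is the metric content of the lemma. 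The paper closes exactly this point by rerunning the distance estimate: with $q$ the least common multiple of the periods of $v,w,u$, the additivity $d_{\Hyp^2}(\phi_n(f(v)),\phi_n(f(w)))=d_{\Hyp^2}(\phi_n(f(v)),\phi_n(u))+d_{\Hyp^2}(\phi_n(u),\phi_n(f(w)))+O(1)$ is pushed through $f_n^{q-1}$ using Schwarz and Lemma \ref{lem:qfiv} to give $d_{\Hyp^2}(\phi_n(v),\phi_n(w))=d_{\Hyp^2}(\phi_n(v),\phi_n(f^{q-1}(u)))+d_{\Hyp^2}(\phi_n(f^{q-1}(u)),\phi_n(w))+O(1)$, which, since distinct vertices diverge and angles at vertices are bounded below, forces the genuine vertex $f^{q-1}(u)$ into the open arc $(v,w)$ — contradicting that $[v,w]$ is an edge. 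Without some version of this second quantitative step, your proof of simpliciality does not go through.
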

\begin{proof}
If $v$ is a vertex in $\mathcal{T}^P$, then there are two periodic vertices $w_1, w_2$ so that $v\in [w_1, w_2]$.
Let $q$ be the least common multiplier of the periods of $w_1$ and $w_2$.
By Lemma \ref{lem:qfiv}, 
$d_{\D}(\phi_n(w_i), f_n^q(\phi_n(w_i))) = O(1)$. 
By Schwarz lemma, $d_{\D}(f_n^q(\phi_n(v)), f_n^q(\phi_n(w_i))) \leq d_{\D}(\phi_n(v), \phi_n(w_i))$, thus we have 
$$
d_{\D}(f_n^q(\phi_n(v)), \phi_n(w_i)) \leq d_{\D}(\phi_n(v), \phi_n(w_i)) + O(1).
$$ 
Since the angle $\angle \phi_n(w_1)\phi_n(v)\phi_n(w_2)$ is uniformly bounded away from $0$, 
$$
d_{\D}(\phi_n(w_1), \phi_n(w_2)) = d_{\D}(\phi_n(v), \phi_n(w_1)) + d_{\D}(\phi_n(v), \phi_n(w_2)) + O(1).
$$
A standard hyperbolic geometry estimate using the horocycles gives that 
$$
d_{\D}(f_n^q(\phi_n(v)),\phi_n(v)) = O(1).
$$ 
This means that $v$ is periodic.
Thus $\mathcal{T}^P$ is invariant under $f$.

A similar proof also shows $f$ is simplicial.
Indeed, suppose for contradiction that $f$ is not simplicial. Then there exists an edge $[w_1,w_2] \subseteq \mathcal{T}^P$ so that $[f(w_1), f(w_2)]$ is not an edge.
Let $v\in (f(w_1), f(w_2))$ be a vertex.
Then $v$ is also periodic by the previous paragraph.
Let $q$ be the least common multiplier of the periods of $w_1, w_2, v$.
Note that
$$
d_{\D}(\phi_n(f(w_1)), \phi_n(f(w_2))) = d_{\D}(\phi_n(f(w_1)), \phi_n(v)) + d_{\D}(\phi_n(v), \phi_n(f(w_2))) + O(1).
$$
Thus apply Lemma \ref{lem:qfiv} to $f^{q-1}$,
$$
d_{\D}(\phi_n(w_1), \phi_n(w_2)) = d_{\D}(\phi_n(w_1), \phi_n(f^{q-1}(v))) + d_{\D}(\phi_n(f^{q-1}(v))), \phi_n(w_2)) + O(1).
$$
Therefore, $f^{q-1}(v) \in (w_1,w_2)$ which is a contradiction.
\end{proof}

Since the vertices of $\mathcal{T}_n$ are uniformly quasi-invariant, we show $f_n$ is quasi invariant on $\mathcal{T}_n$ with dynamics modeled by $f: (\mathcal{T}, p) \longrightarrow (\mathcal{T}, p)$.
\begin{prop}\label{prop:qi}
There exists a constant $K$ such that 
\begin{itemize}
\item if $[v_1, v_2]\subseteq \mathcal{T}$ is an edge, then for any point $x_n\in [\phi_n(v_1), \phi_n(v_2)]$, there exists $y_n \in \phi_n(f([v_1,v_2]))$ so that
$$
d_{\D}(f_n(x_n), y_n) \leq K \text{ for all $n$};
$$

\item if $[v_1, v_2]\subseteq \mathcal{T}$ is a periodic edge of period $q$, then for any point $x_n\in [\phi_n(v_1), \phi_n(v_2)]$,
$$
d_{\D}(f_n^q(x_n), x_n) \leq K \text{ for all $n$}.
$$
\end{itemize}
\end{prop}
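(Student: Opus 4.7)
The plan is to treat the two bullets separately, reducing each to the Schwarz lemma (1-Lipschitz for holomorphic self-maps of $\D$), Theorem \ref{thm:almostisometry}, Lemma \ref{lem:qfiv}, and standard hyperbolic geometry. For the first bullet, the strategy is a quasi-geodesic comparison: I would show that both the image path $f_n(\phi_n(E))$ and the tree arc $\phi_n(f(E))$ are uniform quasi-geodesics in $\Hyp^2$ with endpoints lying within a bounded distance of each other, and then conclude that they have bounded Hausdorff distance via the stability lemma for quasi-geodesics in $\delta$-hyperbolic spaces.

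For the first bullet, fix an edge $E = [v_1, v_2]$ of $\mathcal{T}$ and set $\gamma_n = \phi_n(E)$. By the construction of $\mathcal{T}_n$, the critical points of $f_n$ lie within a uniform distance $K$ of $\mathcal{V}_n$, and any vertex other than $\phi_n(v_1)$ and $\phi_n(v_2)$ becomes infinitely far from $\gamma_n$; hence any critical point near $\gamma_n$ is concentrated within a bounded neighborhood of its two endpoints. Applying Theorem \ref{thm:almostisometry} on the portion of $\gamma_n$ at distance $> R + K$ from the endpoints shows that $f_n$ acts as an $O(1)$-almost isometry there, so $f_n(\gamma_n)$ is a uniform quasi-geodesic from $f_n(\phi_n(v_1))$ to $f_n(\phi_n(v_2))$; by Lemma \ref{lem:qfiv} these endpoints are within $K$ of $\phi_n(f(v_1))$ and $\phi_n(f(v_2))$. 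The tree arc $\eta_n := \phi_n(f(E))$ is a concatenation of hyperbolic geodesic edges of $\mathcal{T}_n$ whose lengths diverge in $n$ and whose turning angles at intermediate vertices remain bounded away from $\pi$ (branch vertices arise as perpendicular projections in the construction, and the combinatorial map $f$ does not traverse an edge back on itself at a vertex). Thus $\eta_n$ is also a uniform quasi-geodesic, and the Morse stability lemma yields the required uniform bound.

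For the second bullet, suppose $E = [v_1, v_2]$ is periodic of period $q$; then both endpoints are fixed by $f^q$, and iterating Lemma \ref{lem:qfiv} gives $d_{\Hyp^2}(f_n^q(\phi_n(v_i)), \phi_n(v_i)) \leq K'$ for $i = 1, 2$. Setting $a_i = d_{\Hyp^2}(x_n, \phi_n(v_i))$ and $A_i = d_{\Hyp^2}(f_n^q(x_n), \phi_n(v_i))$, the Schwarz lemma combined with the triangle inequality gives $A_i \leq a_i + K'$, while the triangle inequality $A_1 + A_2 \geq d_{\Hyp^2}(\phi_n(v_1), \phi_n(v_2)) = a_1 + a_2$ forces $A_i \geq a_i - K'$. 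The standard thin-ellipse/horocycle estimate in $\Hyp^2$ used in the proof of Lemma \ref{lem:simplicial} (Cf.\ Corollary 5.6 of \cite{LLM20}) then places $f_n^q(x_n)$ within a uniformly bounded hyperbolic distance of the unique point on $\gamma_n$ at distance $a_1$ from $\phi_n(v_1)$, namely $x_n$ itself, which delivers the bound $d_{\Hyp^2}(f_n^q(x_n), x_n) \leq K$.

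The main obstacle I anticipate lies in the first bullet: verifying uniformly in $n$ that the concatenation $\eta_n$ is a quasi-geodesic with constants independent of $n$. This reduces to controlling the turning angles of $\eta_n$ at its intermediate vertices; while the branch vertices introduced by perpendicular projection in the construction contribute angles near $\pi/2$ by design, the ``pass-through'' behavior at cluster-set vertices in $\mathcal{P}_n$ must be analyzed, and in particular one must rule out that the combinatorial map $f$ makes $\eta_n$ traverse the same edge of $\mathcal{T}_n$ twice in opposite directions at some vertex. A short auxiliary argument, exploiting that $f$ is simplicial on $\mathcal{T}^P$ (Lemma \ref{lem:simplicial}) and proceeding edge-by-edge along non-periodic portions of $\mathcal{T}$, should suffice to secure this.
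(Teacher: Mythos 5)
Your argument is correct, but on the first bullet it takes a genuinely different route from the paper. The paper does not pass through quasi-geodesic stability at all: for $x_n\in[\phi_n(v_1),\phi_n(v_2)]$ it bounds $d_{\Hyp^2}(f_n(x_n),\phi_n(f(v_i)))\leq d_{\Hyp^2}(x_n,\phi_n(v_i))+O(1)$ by the Schwarz lemma and Lemma \ref{lem:qfiv}, then uses Theorem \ref{thm:almostisometry} (plus Lemma \ref{lem:qfiv}) to get $d_{\Hyp^2}(\phi_n(f(v_1)),\phi_n(f(v_2)))=d_{\Hyp^2}(x_n,\phi_n(v_1))+d_{\Hyp^2}(x_n,\phi_n(v_2))+O(1)$, and concludes by the same thin-ellipse/horocycle estimate you invoke for the periodic case that $f_n(x_n)$ lies a bounded distance from $\phi_n(f([v_1,v_2]))$; your treatment of the periodic bullet is essentially identical to the paper's intended ``similar argument.'' Your first-bullet route instead shows that $f_n(\phi_n(E))$ and $\phi_n(f(E))$ are uniform quasi-geodesics with nearby endpoints and applies the Morse lemma; this works (the middle of the edge is far from the critical set, so the image is a $(1,O(1))$-quasi-geodesic, and the tree path has long edges meeting at angles bounded below), and it has the mild advantage of bounding the whole image path at once, at the cost of importing Gromov-hyperbolicity machinery and of having to certify uniform quasi-geodesic constants for $\eta_n$, which the paper's two-endpoint distance argument sidesteps. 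One remark on the obstacle you flag: there is no backtracking issue to rule out, since at this stage $f$ is defined on an edge $E=[v_1,v_2]$ precisely as a continuous extension onto the arc $[f(v_1),f(v_2)]$, which is an embedded path in the tree $\mathcal{T}$; consecutive edges of $\phi_n(f(E))$ at an intermediate vertex are therefore distinct edges of $\mathcal{T}_n$, whose angles are uniformly bounded below by the construction, so no appeal to Lemma \ref{lem:simplicial} is needed there.
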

\begin{proof}
The proof uses the same estimates as in the proof of Lemma \ref{lem:simplicial}.
By Schwarz lemma,
$$
d_{\D}(f_n(x_n), f_n(\phi_n(v_i)))\leq d_{\D}(x_n, \phi_n(v_i)).
$$ 
Thus, by Lemma \ref{lem:qfiv}, 
$$
d_{\D}(f_n(x_n), \phi_n(f(v_i)))\leq d_{\D}(x_n, \phi_n(v_i)) + O(1).
$$ 
By Theorem \ref{thm:almostisometry} and Lemma \ref{lem:qfiv},
\begin{align*}
d_{\D}(\phi_n(f(v_1)), \phi_n(f(v_2))) &= d_{\D}(\phi_n(v_1), \phi_n(v_2)) + O(1)\\
&= d_{\D}(x_n, \phi_n(v_1)) + d_{\D}(x_n, \phi_n(v_2)) + O(1)
\end{align*}
Therefore, there exists a constant $K$ so that $d_{\D}(f_n(x_n), \phi_n(f([v,w]))) \leq K$.

A similar argument proves the part on periodic edges.
\end{proof}

\begin{cor}\label{cor:ne}
The map $f$ is non-expanding. More precisely, let $E$ be an open edge of $\mathcal{T}$. 
If there exists $k$ with $f^k(E) \cap E \neq \emptyset$, then $f^k: E \longrightarrow E$ is a homeomorphism.
\end{cor}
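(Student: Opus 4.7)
The strategy is to first use tree combinatorics to reduce to proving $f^k(E)=E$ as sets, then derive this equality via a comparison between the Schwarz--Pick contraction for $f_n^k$ and the degeneration of $\mathcal{T}_n$, and finally invoke Lemma \ref{lem:simplicial} for the homeomorphism claim.

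Write $E=(v_1,v_2)$. Since $E$ contains no interior vertices of $\mathcal{T}$ and the arc $f^k(\bar E)=[f^k(v_1),f^k(v_2)]$ has its endpoints at vertices of $\mathcal{T}$, this arc cannot terminate or branch off inside the open edge $E$; hence $f^k(E)\cap E\neq\emptyset$ forces the arc to traverse $E$ entirely, i.e., $E\subseteq f^k(\bar E)$. Now suppose for contradiction that $f^k(\bar E)\supsetneq \bar E$. Then the arc contains at least one additional edge $E'$ of $\mathcal{T}$, whose corresponding geodesic $\phi_n(E')$ has length $L_n(E')\to\infty$ by the Degenerating tree property of Theorem \ref{thm:qit}. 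Set $D_n:=d_{\Hyp^2}(\phi_n(f^k(v_1)),\phi_n(f^k(v_2)))$, $L_n(E):=d_{\Hyp^2}(\phi_n(v_1),\phi_n(v_2))$, and let $S_n$ be the total tree length of the arc $\phi_n(f^k(\bar E))$ in $\mathcal{T}_n$. The plan is to show
$$D_n\leq L_n(E)+O(1)\quad\text{and}\quad D_n\geq S_n-O(1)\geq L_n(E)+L_n(E')-O(1),$$
which together force $L_n(E')=O(1)$ and contradict $L_n(E')\to\infty$.

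For the upper bound, iterate Lemma \ref{lem:qfiv} (using that $f_n$ is $1$-Lipschitz for $d_{\Hyp^2}$ by Schwarz--Pick) to obtain $d_{\Hyp^2}(f_n^k(\phi_n(v_i)),\phi_n(f^k(v_i)))=O(1)$ for $i=1,2$, and apply Schwarz--Pick once more to the holomorphic self-map $f_n^k$ of $\D$ to get $d_{\Hyp^2}(f_n^k(\phi_n(v_1)),f_n^k(\phi_n(v_2)))\leq L_n(E)$; the triangle inequality then gives the claimed upper bound on $D_n$. The lower bound uses the fact, built into the construction of $\mathcal{T}_n$ in the proof of Lemma \ref{lem:qfiv}, that the angle at every branch vertex of $\mathcal{T}_n$ is uniformly bounded below by some $\alpha_0>0$ independent of $n$. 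The hyperbolic law of cosines then shows that whenever two geodesic segments of lengths $a,b$ meet at a vertex with angle $\geq\alpha_0$, their far endpoints satisfy $d_{\Hyp^2}\geq a+b-C(\alpha_0)$. Iterating over the fixed (finite) number of branch vertices encountered along the arc $\phi_n(f^k(\bar E))$ yields $D_n\geq S_n-O(1)$, producing the contradiction.

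Hence $f^k(\bar E)=\bar E$, so $\{f^k(v_1),f^k(v_2)\}=\{v_1,v_2\}$ and both endpoints of $E$ are periodic under $f$ (with period dividing $2k$). Therefore $\bar E\subseteq\mathcal{T}^P$, and by Lemma \ref{lem:simplicial}, $f$ restricted to $\mathcal{T}^P$ is simplicial; in particular it maps each edge of $\mathcal{T}^P$ homeomorphically onto another edge. Composing the $k$ homeomorphisms along the forward orbit of $E$ shows that $f^k:E\longrightarrow E$ is a homeomorphism. The main obstacle is the lower bound $D_n\geq S_n-O(1)$: the uniform angle lower bound at branch vertices is present in the construction of $\mathcal{T}_n$ but not recorded in the clean statement of Theorem \ref{thm:qit}, so it must be extracted from the construction and combined with the hyperbolic law of cosines to convert the combinatorial tree length into a genuine lower bound for the hyperbolic distance in $\D$.
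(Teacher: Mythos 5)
Your overall mechanism — playing the Schwarz--Pick contraction of $f_n^k$ against the degenerating edge lengths of $\mathcal{T}_n$, with the uniform angle bounds at branch points converting tree length into hyperbolic distance up to $O(1)$ — is exactly the mechanism behind the paper's (very terse) proof, and your upper bound $D_n\le L_n(E)+O(1)$ and the additivity estimate at branch vertices are fine. The genuine gap is that you only track the two endpoints of $E$, and endpoint data cannot rule out \emph{folding}, which is half of what ``homeomorphism'' asserts. For $k\ge 2$ the set $f^k(\overline{E})$ is a subtree whose leaves are vertices, but it need not be the arc $[f^k(v_1),f^k(v_2)]$: nothing proved before this corollary prevents $Df$ from identifying two directions at a critical vertex (the rescaling limits $F_v$ may have degree $\ge 2$, cf.\ Lemma \ref{lem:ct}), so an intermediate image can backtrack. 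This breaks your argument in two places. First, the lower bound $D_n\ge S_n-O(1)$ presupposes that $\phi_n(f^k(\overline{E}))$ is an embedded arc joining $\phi_n(f^k(v_1))$ to $\phi_n(f^k(v_2))$; if $f^k|_{\overline{E}}$ folds, $S_n$ can be large while $D_n$ stays bounded, and the contradiction with $D_n\le L_n(E)+O(1)$ evaporates. Second, the step ``$f^k(\overline{E})=\overline{E}$, so $\{f^k(v_1),f^k(v_2)\}=\{v_1,v_2\}$'' is a non sequitur: a fold with $f^k(v_1)=f^k(v_2)=v_1$ and an interior point of $E$ mapping to $v_2$ satisfies $f^k(\overline{E})=\overline{E}$ and $f^k(E)\cap E\neq\emptyset$ but is not a homeomorphism, and your proof never excludes it — yet excluding it is precisely part of the content of the corollary (your Lemma \ref{lem:simplicial} endgame only works after this is known).

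The missing ingredient is the full-edge approximation, which is what the paper's proof means by invoking Proposition \ref{prop:qi} (``the dynamics of $f^k$ on $E$ is approximated by $f_n^k$ on $\phi_n(E)$''). Iterating that proposition edge by edge, and using connectedness together with the tracked images of the endpoints at every intermediate stage, one gets that the curve $f_n^k\bigl([\phi_n(v_1),\phi_n(v_2)]\bigr)$ stays within $O(1)$ of $\phi_n(f^k(\overline{E}))$ and comes within $O(1)$ of $\phi_n(w)$ for every vertex $w$ of $f^k(\overline{E})$, in the order dictated by the traversal. Its hyperbolic length is at most $L_n(E)$ by Schwarz, whereas any extra edge in the image, or any fold over $E$, forces the curve to have length at least $L_n(E)+L_n(E')-O(1)$ or $2L_n(E)-O(1)$ respectively, a contradiction for large $n$. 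With that in hand the image is a single edge traversed once, the endpoints are permuted, and your conclusion via Lemma \ref{lem:simplicial} goes through.
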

\begin{proof}
By Schwarz lemma, $f_n$ is distance decreasing. By Proposition \ref{prop:qi}, the dynamics of $f^k$ on $E$ is approximated by $f_n^k$ on $\phi_n(E)$. Thus $f$ is non-expanding.
\end{proof}

\begin{cor}\label{cor: sim}
We can add finitely many vertices on $\mathcal{T}$ so that $f: (\mathcal{T}, p) \longrightarrow (\mathcal{T}, p)$ is simplicial.
\end{cor}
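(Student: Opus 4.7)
The plan is to enlarge the existing vertex set $\mathcal{V}$ to a finite, fully invariant set $\mathcal{V}^{\ast} \subseteq \mathcal{T}$, and then declare $\mathcal{V}^{\ast}$ the vertex set of a refined simplicial structure on $\mathcal{T}$; the map $f$ will be simplicial for this refinement by construction.

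First I would verify that $f$ is injective on every open edge of $\mathcal{T}$. By the critically approximating property in Theorem \ref{thm:qit}, all critical points of $f_n$ lie within uniform hyperbolic distance $K$ of $\phi_n(\mathcal{V})$, so the interior of each $\phi_n(E)$ stays at hyperbolic distance tending to infinity from the critical set. Combined with Proposition \ref{prop:qi} and Theorem \ref{thm:almostisometry}, this forces $f|_{E}$ to be a homeomorphism from $E = (v, w)$ onto the open path $(f(v), f(w)) \subseteq \mathcal{T}$. Consequently, for any finite $S \subseteq \mathcal{T}$, the set $f^{-1}(S) \cap \mathcal{T}$ is finite (at most $|S|$ preimages per open edge plus finitely many vertex preimages).

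The crucial step is showing that
$$
\mathcal{V}^{\ast} := \bigcup_{k \geq 0} f^{-k}(\mathcal{V})
$$
is finite. Let $\mathcal{V}^{P} \subseteq \mathcal{V}$ denote the periodic vertices, let $\mathcal{T}^{P}$ be their convex hull (a subtree of $\mathcal{T}$, hence convex), and set $N := \max_{v \in \mathcal{V}} \mathrm{pp}(v)$, the largest pre-period of a vertex into $\mathcal{V}^{P}$. Then $f^{N}(\mathcal{V}) \subseteq \mathcal{V}^{P}$, and the defining identity $f([v, w]) = [f(v), f(w)]$ together with convexity of $\mathcal{T}^{P}$ gives $f^{N}(\mathcal{T}) \subseteq \mathcal{T}^{P}$. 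Lemma \ref{lem:simplicial} makes $f|_{\mathcal{T}^{P}}$ simplicial, so an easy induction yields $f^{-\ell}(\mathcal{V}^{P}) \cap \mathcal{T}^{P} = \mathcal{V}^{P}$ for every $\ell \geq 0$. For any $x \in \mathcal{V}^{\ast}$, pick $m$ with $f^{m}(x) \in \mathcal{V}$ and let $j = \mathrm{pp}(x) \leq N$; then $f^{j}(x) \in \mathcal{T}^{P}$ and
$$
f^{m + N}(x) = f^{m + N - j}\bigl(f^{j}(x)\bigr) \in f^{N}(\mathcal{V}) \subseteq \mathcal{V}^{P},
$$
so the simplicial identity on $\mathcal{T}^{P}$ forces $f^{j}(x) \in \mathcal{V}^{P}$. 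Therefore
$$
\mathcal{V}^{\ast} \subseteq \bigcup_{j = 0}^{N} f^{-j}(\mathcal{V}^{P}),
$$
which is a finite union of finite sets by the injectivity step.

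With $\mathcal{V}^{\ast}$ as the vertex set of $\mathcal{T}$, every refined edge $E' = [v_{1}, v_{2}]$ maps homeomorphically under $f$ onto $[f(v_{1}), f(v_{2})]$; if some interior point of the image lay in $\mathcal{V}^{\ast}$, then its unique preimage in the interior of $E'$ would also lie in $\mathcal{V}^{\ast}$ by backward invariance, contradicting interiority. Hence $f$ sends edges to edges and is simplicial. The main obstacle I expect is the finiteness of $\mathcal{V}^{\ast}$: without Lemma \ref{lem:simplicial}, iterated preimages of periodic vertices could proliferate inside $\mathcal{T}^{P}$, and it is simpliciality on $\mathcal{T}^{P}$ that collapses all such preimages onto $\mathcal{V}^{P}$, ensuring the enlargement terminates after finitely many steps.
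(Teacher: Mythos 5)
Your overall strategy (enlarge $\mathcal{V}$ to a finite, fully invariant vertex set $\mathcal{V}^\ast$ of iterated preimages and observe that the refined structure is automatically simplicial) is reasonable, and the final bookkeeping steps are fine; but the pivotal finiteness claim rests on an unjustified and, as stated, false assertion. You claim $f^{N}(\mathcal{T})\subseteq \mathcal{T}^{P}$ with $N$ the maximal pre-period of a vertex, "by the identity $f([v,w])=[f(v),f(w)]$ and convexity of $\mathcal{T}^{P}$." This identity only controls one iterate: $f^{k}([v,w])$ is in general \emph{not} the arc $[f^{k}(v),f^{k}(w)]$, because the arc $[f(v),f(w)]$ passes through intermediate vertices at which the induced map can fold (the tangent map $Df$ need not be injective at critical vertices, cf.\ Lemma \ref{lem:ct}), so iterated images of an edge can overflow the convex hull of the images of its endpoints. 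In fact one can write down an admissible configuration (a fixed critical vertex $c$ with branches containing an aperiodic vertex $u$ of pre-period $2$ lying in the interior of the image arc of another edge at $c$) in which a non-vertex point $x$ satisfies $f(x)=u$ and hence $f^{N}(x)$ is an aperiodic vertex, which by Lemma \ref{lem:simplicial} cannot lie in $\mathcal{T}^{P}$; so the exponent $N$ does not work and the inclusion needs a genuinely different argument. Without some such uniform entry-time bound into $\mathcal{T}^{P}$, your containment $\mathcal{V}^\ast\subseteq\bigcup_{j=0}^{N}f^{-j}(\mathcal{V}^{P})$ collapses to the tautological statement that each point of $\mathcal{V}^\ast$ lies in $f^{-j}(\mathcal{V}^{P})$ for \emph{some} $j$, which does not give finiteness.

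The missing ingredient is exactly Corollary \ref{cor:ne}, which you never invoke and on which the paper's two-line proof is built: recurring edges are periodic and mapped homeomorphically. With it your argument is repairable: since $f(\mathcal{T})\subseteq\mathcal{T}$, the sets $f^{k}(\mathcal{T})$ form a decreasing sequence of finite unions of edges, hence stabilize at a fully invariant union of edges; every edge of the stable part lies in a backward covering cycle of edges, Corollary \ref{cor:ne} forces such an edge to be periodic with periodic endpoints, and forward invariance of $\mathcal{T}^{P}$ then puts the whole stable part inside $\mathcal{T}^{P}$. This yields $f^{k_{0}}(\mathcal{T})\subseteq\mathcal{T}^{P}$ for some finite $k_{0}$, and your argument goes through with $N$ replaced by $\max(N,k_{0})$. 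The paper itself argues more briefly: by Corollary \ref{cor:ne} every recurring edge is periodic, Lemma \ref{lem:simplicial} makes $f$ simplicial on $\mathcal{T}^{P}$, and one only subdivides the finitely many aperiodic edges; your fully-invariant-vertex-set formulation is a fine alternative once the gap above is closed. (Your opening step, injectivity of $f$ on open edges, is already built into the definition of the induced map, so the dynamical justification there is unnecessary though harmless.)
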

\begin{proof}
By Corollary \ref{cor:ne}, each recurring edge is periodic.
Thus, for each edge $E$, there exists $n$ so that $f^n(E) \subseteq \mathcal{T}^P$.
We call the smallest such integer the {\em generation} of $E$.
By Lemma \ref{lem:simplicial}, $f$ is simplicial on generation $0$ edges.
If $E$ is generation $1$, then $f(E)$ is a finite union of generation $0$ edges.
By adding finitely many vertices on $E$, we may assume $f$ is simplicial on $E$.
The corollary now follows by induction.
\end{proof}

\begin{proof}[Proof of Theorem \ref{thm:qit}]
Let $f: (\mathcal{T}, \p)\rightarrow (\mathcal{T},\p)$ with isomorphisms $\phi_n (\mathcal{T}, \p)\rightarrow (\mathcal{T}_n,\p_n)$ constructed as above.
Then Corollary \ref{cor: sim} shows $f$ is simplicial.
By construction of $\mathcal{T}_n$, the first $3$ conditions are satisfied.
By Lemma \ref{lem:qfiv} and Proposition \ref{prop:qi}, the map is quasi-invariant.
\end{proof}

\subsection*{Rescaling limits}
Let $v\in \mathcal{V}$. 
We define a {\em normalization at $v$} or a {\em coordinate at $v$} as a sequence $M_{v,n}\in \Isom(\D)$ so that $M_{v,n}(0) = \phi_n(v)$.
Note that there are many choices for $M_{v,n}$, and they differ by pre-composing with rotations that fix $0$.

We shall think of the sequence $M_{v,n}$ as giving `coordinates' at $v$ which gives an associated limiting disk $\D_v$ and a limiting circle $\mathbb{S}^1_v$ for the vertex $v$.
More precisely, we say a sequence $z_n \in \overline{\D}$ converges to $z\in \overline{\D}_v$ in $v$-coordinate, denoted by $z_n \to_v z$ or $z = \lim_v z_n$ if
$$
\lim_{n\to\infty} M^{-1}_{v,n} (z_n) = z.
$$
Here the subindices in $\D_v$ and $\mathbb{S}^1_v$ are used to distinguish different coordinates at different vertices.

By Proposition \ref{prop:qi}, there exists a constant $K$ such that
$$
d_{\D}(0,M_{f(v),n}^{-1}\circ f_n \circ M_{v,n}(0)) = d_{\D}(M_{f(v),n}(0),f_n\circ M_{v,n}(0)) \leq K.
$$
Thus, by Proposition \ref{prop:ac1}, after passing to subsequences, the sequence
$$
M_{f(v),n}^{-1}\circ f_n \circ M_{v,n}
$$
converges compactly on $\D$ to a proper holomorphic map $F_v$ on the unit disk.
We call $F_v$ the {\em rescaling limit} between $v$ and $f(v)$.

We remark that it is important to keep track of the changing coordinates, thus one shall really think of $F_v$ as a map 
$$
F_v : \D_v \longrightarrow \D_{f(v)}.
$$
To emphasize this, we sometimes also write $F_v = F_{v\to f(v)}$.

We also remark that $M_{f(v),n}^{-1}\circ f_n \circ M_{v,n}$ and $F_v$ extend to rational maps on $\hat\C$ of degree $\geq 1$, and $M_{f(v),n}^{-1}\circ f_n \circ M_{v,n}$ converges algebraically to $F_v$.
This perspective is useful as it gives more information on where the convergence $M_{f(v),n}^{-1}\circ f_n \circ M_{v,n}$ to $F_v$ fails to be uniform on $\mathbb{S}^1$.

More generally,
$M_{f^k(v),n}^{-1}\circ f^k_n \circ M_{v,n}$
converges compactly on $\D$ to $F_{f^{k-1}(v)}\circ F_{f^{k-2}(v)}\circ...\circ F_{v}$.
We shall denote this composition by
$$
F^k_{v}:=F_{f^{k-1}(v)}\circ F_{f^{k-2}(v)}\circ...\circ F_{v}.
$$
In particular, if $v$ is a periodic point of period $q$, then
$$
M_{v,n}^{-1}\circ f^q_n \circ M_{v,n}
$$
converges compactly on $\D$ to $F^q_v$.
We shall call this map $F^q_v$ the {\em first return rescaling limit} at $v$.
Similar as before, all these maps extend to rational maps on $\hat\C$, and the convergences are algebraic.

Recall that $\delta(v)$ is the local degree of a vertex $v \in \mathcal{V}$, and $\delta(v)-1$ equals to the number of critical points of $f_n$ counted with multiplicity that stay at a bounded distance from $\phi_n(v)$.
Thus, the rescaling limit $F_v$ at $v$  has degree $\delta(v)$.
Also recall that $\eta_n: \R/\Z \longrightarrow \mathbb{S}^1$ is the marking for $f_n \in \BP_d$.
So $\eta_n(0)$ is the marked repelling fixed point of $f_n$ on $\mathbb{S}^1$.

By precomposing the normalizations with rotations, we make the following {\em anchored }convention on the normalizations. Throughout this paper, the normalizations will always be assumed to be anchored (see Figure \ref{fig:RL}).
\begin{defn}\label{defn:anchored}
Let $f_n \in \BP_d$ be quasi post-critically finite, with simplicial tree map $f: (\mathcal{T},\p) \longrightarrow (\mathcal{T}, \p)$. 
The normalizations $M_{v, n}$ at vertices $v\in \mathcal{V}$ are said to be {\em anchored} if they satisfy the following conditions.
\begin{itemize}
\item If $v = \p$ and $\delta(\vp) = 1$, then $M_{\vp,n}$ is chosen so that $\lim_\vp \eta_n(0)=1\in \mathbb{S}^1_{\vp}$.
\item If $v = \p$ and $\delta(\vp) \geq 2$, then $M_{\vp,n}$ is chosen so that $1 \in \mathbb{S}^1_{\vp}$ is the nearest fixed point of $F_\vp$ to $\lim_\vp \eta_n(0)$ counterclockwise.
\item If $v\neq \p$, then $M_{v,n}$ is chosen so that $\lim_v \phi_n(\p) = 1 \in \mathbb{S}^1_v$.
\end{itemize}
\end{defn}

To illustrate the second case in Definition \ref{defn:anchored}, consider the sequence $f_n(z) = z^3 \frac{z-(1-1/n)}{1-(1-1/n)z}$. One can verify that it is quasi post-critically finite.
Note that $L_{p,n}(z) = z$ is a normalization at $p$. 
Under this normalization, $\eta_n(0) \to_p 1 \in \mathbb{S}^1_p$ as $\eta_n(0) = 1$ is the marked fixed point for $f_n$ for all $n$.
This normalization $L_{p,n}$ does not satisfy the anchored convention as the rescaling limit 
$$
F_p(z) = \lim L_{p,n}^{-1} \circ f_n \circ L_{p,n}(z) = \lim f_n(z) =  -z^3
$$ 
has fixed points at $\pm i$.
Since the fixed point $i$ is closer to $1 = \lim_n \eta_n(0)$ counterclockwise, one can verify that the sequence $M_{p,n}(z) = iz$ is a normalization at $p$, and it satisfies the anchored convention.

\begin{figure}[ht]
  \centering
  \resizebox{0.9\linewidth}{!}{
    \def\svgwidth{\columnwidth}
    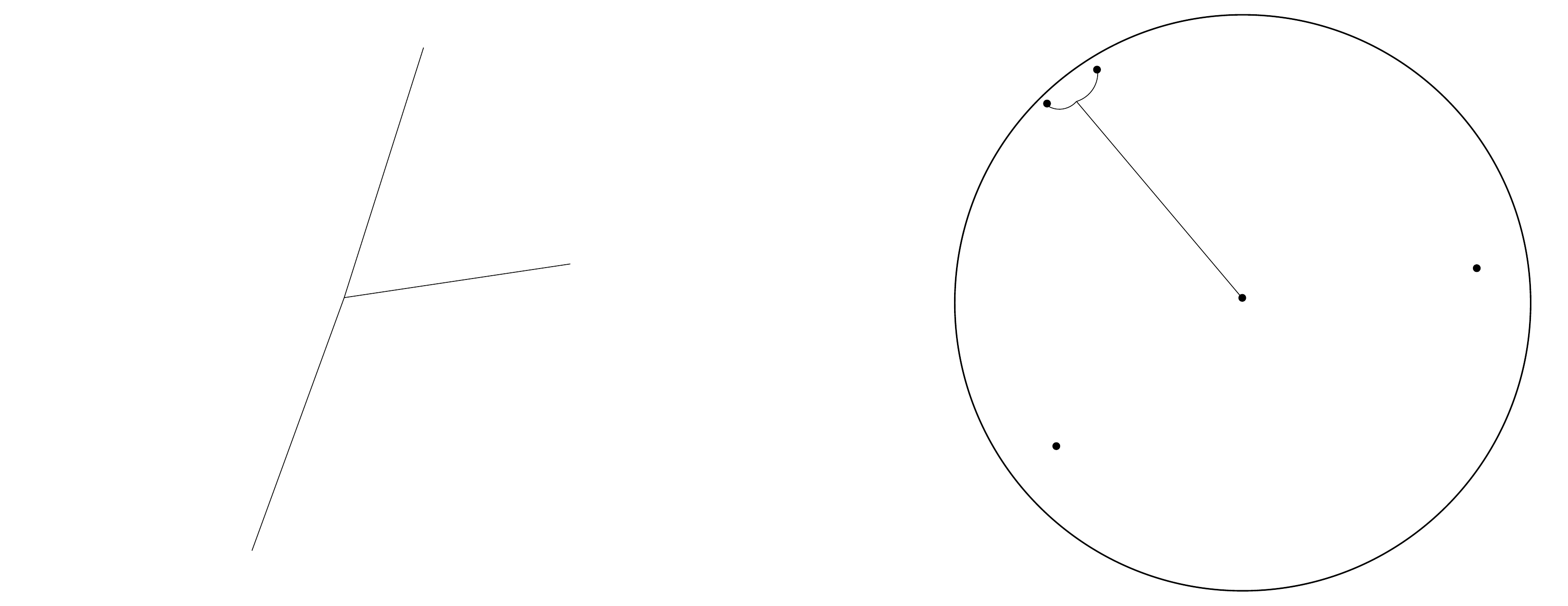

  }
  \caption{An illustration of $M_{v,n}$ for $v\neq p$, and the marked points on $\mathbb{S}^1_v$. This normalization satisfies the anchored convention as $\lim_v\phi_n(p) = t_p = 1 \in \mathbb{S}^1_v$.}
  \label{fig:RL}
\end{figure}

\subsection*{A marking of $\mathbb{S}^1$ at a vertex}
Let $v_1,..., v_l$ be the list of adjacent vertices of $v$ in $\mathcal{T}$. 
Then $\{v_1,..., v_l\}$ can be identified with the tangent space $T_v\mathcal{T}$ of $\mathcal{T}$ at $v$.
By our construction, after passing to a subsequence, there exist $l$ distinct points $t_1,..., t_l \in \mathbb{S}^1_v$ with
$t_i = \lim_v \phi_n(v_i)$ (see Figure \ref{fig:RL}).
We denote this correspondence by the map
$$
\xi_v: T_v\mathcal{T} \longrightarrow \mathbb{S}^1_v.
$$
Since the angles between different edges at a vertex in $\mathcal{T}_n$ are uniformly bounded from below by construction, in the $v$-coordinate, $\mathcal{T}_n$ converges to the union of geodesic rays connecting $0$ and points in $\xi_v (T_v\mathcal{T})$.
In particular, if $w\neq v$ is in the same component of $\mathcal{T}-\{v\}$ as $v_i$, then $\phi_n(w) \to_v t_i$.

Since $f$ is injective on each edge, we have a well-defined tangent map $Df:T_v\mathcal{T} \longrightarrow T_{f(v)}\mathcal{T}$. 
On the other hand, the rescaling limit $F_{v}$ gives a maps from $\mathbb{S}^1_v$ to $\mathbb{S}^1_{f(v)}$.
It is easy to check that this marking of $\mathbb{S}^1$ at a vertex is compatible with the dynamics of the tangent map:
\begin{lem}\label{lem:ct}
Let $v\in \mathcal{V}$. Then the holes of the rescaling limit $F_v$ are contained in $\xi_v(T_v\mathcal{T}) \subseteq \mathbb{S}^1_v$. Moreover,
$$
F_{v} \circ \xi_v = \xi_{f(v)} \circ Df.
$$
\end{lem}
\begin{proof}
The first statement follows immediately from Lemma \ref{lem:critc}.

To prove the second part, we let $w$ be adjacent to $v$ with $\phi_n(w)\to_v t$.
Thus, $[\phi_n(v), \phi_n(w)] \to_v [0,t]$.
Let $t' := \xi_{f(v)} \circ Df (w) \in \mathbb{S}^1_{f(v)}$. 
Then $[\phi_n(f(v)), \phi_n(f(w))] \to_{f(v)} [0, t']$.
Suppose for contradiction that $F_{v}(t) \neq t'$. Note that $M_{f(v),n}^{-1}\circ f_n \circ M_{v,n}$ converges to $F_{v}$ uniformly away from finitely many holes.
Let $K$ be the constant in Proposition \ref{prop:qi}. 
Then we conclude by the uniform convergence that there exists $x\in [0,t]$ such that
$$
d_{\D}([0,t'], M_{f(v),n}^{-1}\circ f_n \circ M_{v,n}(x)) > 2K.
$$
This is a contradiction.
\end{proof}

\begin{cor}
Let $v\neq \p \in \mathcal{V}$ be periodic of period $q$. Then $F^q_v$ has a non-repelling fixed point at $1$.
\end{cor}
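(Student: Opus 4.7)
My plan is to split the claim into two parts: first, identify $1 \in \mathbb{S}^1_v$ as a fixed point of $F_v^q$; second, show that its multiplier lies in $\overline{\D}$.

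For the fixed-point assertion, I will apply Lemma \ref{lem:ct} iteratively to obtain the identity $F_v^q \circ \xi_v = \xi_v \circ Df^q$ on $T_v \mathcal{T}$ (using $f^q(v) = v$). Under the anchoring condition (third bullet of Definition \ref{defn:anchored}), $1 = \xi_v(\tau_v)$, where $\tau_v \in T_v \mathcal{T}$ is the initial tangent of the unique tree geodesic from $v$ to $\p$. The key combinatorial claim is that $Df^q(\tau_v) = \tau_v$, and this follows from a length count: let the geodesic from $v$ to $\p$ in $\mathcal{T}$ be $v = v_0, v_1, \ldots, v_k = \p$; by the simpliciality of $f$ (Corollary \ref{cor: sim}), its image under $f^q$ is a walk of exactly $k$ edges from $v$ to $\p$, which in a tree of distance $k$ forces the walk to coincide with the original geodesic. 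In particular $f^q(v_1) = v_1$, so $Df^q(\tau_v) = \tau_v$ and $F_v^q(1) = 1$.

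For the non-repelling assertion, I will exploit the fact that the attracting fixed point $\p_n = 0$ of $f_n$ lies at $z_n := M_{v,n}^{-1}(\phi_n(\p))$, which tends to $1$. Setting $g_n := M_{v,n}^{-1}\circ f_n^q\circ M_{v,n}$, we have $g_n(z_n) = z_n$ and, by conjugacy invariance of multipliers, $g_n'(z_n) = \lambda_n^q$, where $\lambda_n = f_n'(0)$ has modulus strictly less than $1$. Each $g_n$ is a Blaschke product of degree $d^q$ whose unique Denjoy--Wolff point in $\overline{\D_v}$ is its interior attracting fixed point $z_n$. The algebraic limit $F_v^q$ is itself a (possibly lower-degree) Blaschke product by Proposition \ref{prop:ac1}, hence a proper self-map of $\D_v$. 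Picking any base point $w \in \D_v \setminus \mathcal{H}(F_v^q)$ and letting $n,k\to\infty$ along an appropriate diagonal subsequence, the iterates $g_n^k(w)$ (which for each fixed $n$ converge to $z_n$) force the Denjoy--Wolff point of $F_v^q$ to coincide with $\lim z_n = 1$. Denjoy--Wolff points are automatically non-repelling.

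The main obstacle is entirely in the second step: transferring the interior attracting multiplier $\lambda_n^q$ across the rescaling limit to the boundary fixed point $1$ of $F_v^q$. Angular derivatives at boundary fixed points are not a priori continuous under algebraic limits, so a direct derivative argument is hazardous. The Denjoy--Wolff route above sidesteps this, but one must then rule out the degenerate case where $F_v^q$ is a M\"obius automorphism of $\D_v$ (where Denjoy--Wolff does not directly apply); a degree count, combined with the hypothesis that $v$ supports nontrivial rescaling dynamics, should exclude this case.
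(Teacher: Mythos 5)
Your first step (identifying $1\in\mathbb{S}^1_v$ as a fixed point of $F_v^q$ via Lemma \ref{lem:ct}, the anchoring in Definition \ref{defn:anchored}, and the simpliciality argument that $Df^q$ fixes the direction toward $\p$) is correct and is essentially the paper's argument. The gap is in your second step. Writing $g_n=M_{v,n}^{-1}\circ f_n^q\circ M_{v,n}$ and $z_n=M_{v,n}^{-1}(\phi_n(\p))\to 1$, you know $g_n^k(w)\to (F_v^q)^k(w)$ as $n\to\infty$ for each \emph{fixed} $k$, and $g_n^k(w)\to z_n$ as $k\to\infty$ for each \emph{fixed} $n$; but concluding from a ``diagonal subsequence'' that the Denjoy--Wolff point of $F_v^q$ equals $\lim_n z_n$ is an unjustified interchange of the two limits. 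The rate at which the $g_n$-orbit of $w$ reaches $z_n$ degenerates exactly as $z_n\to\mathbb{S}^1_v$ (the multiplier $\lambda_n^q$ may tend to $1$ and $z_n$ escapes every compact set), so no choice of $k_n\to\infty$ transfers information from $g_n^{k_n}(w)$ to the $F_v^q$-orbit of $w$; in general the Denjoy--Wolff point is not continuous under locally uniform convergence, and your proposal supplies no uniform estimate that would make it so here. (Your side remark that the M\"obius case can be excluded ``by a degree count'' is also not tenable: first return rescaling limits of degree $1$ genuinely occur, e.g.\ at simple periodic vertices and at all pulled-back vertices, cf.\ Corollary \ref{cor:nb}; what rules out the elliptic automorphism case is rather that such a map has no fixed point on $\mathbb{S}^1_v$, whereas you have already shown $F_v^q(1)=1$, and if $F_v^q=\mathrm{id}$ the claim is trivial.)

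The missing uniformity is exactly what the paper's one-line appeal to the Schwarz lemma provides, in the form of Julia's lemma. Since $g_n(z_n)=z_n$ with $z_n\in\D_v$, Schwarz--Pick gives $g_n\bigl(B_{\Hyp^2}(z_n,R)\bigr)\subseteq B_{\Hyp^2}(z_n,R)$ for every $R>0$ and every $n$. Fixing $x\in\D_v$ and taking $R=d_{\Hyp^2}(z_n,x)$, the balls $B_{\Hyp^2}(z_n,d_{\Hyp^2}(z_n,x))$ converge, as $z_n\to 1$, to the horodisk at $1$ through $x$; passing to the limit along the locally uniform convergence $g_n\to F_v^q$ on $\D_v$ (Proposition \ref{prop:ac1} and Lemma \ref{lem:ac}), one gets that $F_v^q$ maps each horodisk based at $1$ into its closure. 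By the Julia--Wolff--Carath\'eodory inequality this forces the derivative of $F_v^q$ at its fixed point $1$ to be at most $1$, i.e.\ $1$ is non-repelling, with no Denjoy--Wolff dynamics or limit interchange needed. (If you insist on your route, this same horoball containment is also what shows the forward $F_v^q$-orbit of $w$ stays in the closed horodisk at $1$ through $w$, whose only boundary point on $\mathbb{S}^1_v$ is $1$, so the Denjoy--Wolff point is $1$; either way, the Julia-type estimate is the indispensable ingredient your proposal omits.)
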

\begin{proof}
Let $a \in T_v \mathcal{T}$ be the direction associated to $\p$.
Since $f$ is simplicial on $\mathcal{T}$ and $f(\p) = \p$, $Df^q(a) = a$.
Since the normalizations are anchored, the direction $a$ corresponds to $1\in \mathbb{S}^1_v$. 
By Lemma \ref{lem:ct}, $F_v(1) = 1$.
It is non-repelling by Schwarz lemma and the fact that the fixed point $\phi_n(p) \to_v 1$.
\end{proof}
\begin{defn}\label{defn:mcpc}
We introduce an {\em anchored marking} to anchored normalizations $M_{v,n}$ by associating a point $t_v \in \mathbb{S}^1_v$ with the following rules:
\begin{itemize}
\item if $v$ is periodic, $t_v$ is the periodic point $1\in \mathbb{S}^1_v$;
\item inductively, we choose $t_v$ on strictly pre-periodic points as the nearest $t$ to $1$ in counterclockwise orientation with $F(t) = t_{f(v)}$.
\end{itemize}
\end{defn}

\subsection*{Landing of pre-periodic points}
The pre-periodic points for $f_n$ are marked by pre-periodic points of $m_d$ on $\R/\Z$ using $\eta_n:=\eta_{f_n}$.
Let $x \in \Q/\Z$ be a rational angle. 
Then $\eta_n(x)$ is a pre-periodic point for $f_n$.

Denote the projection of $\eta_n(x)$ to the convex hull $\chull(\mathcal{T}_n)$ by $\proj_{\mathcal{T}_n}(\eta_n(x))$.
Since there are only countably many pre-periodic points, by a standard diagonal argument and passing to subsequences, we may assume that
$$
\lim_{n\to\infty} d_{\D}(\proj_{\mathcal{T}_n}(\eta_n(x)), \phi_n(v))
$$
exist (which can possibly be $\infty$) for all $v\in \mathcal{V}$ and for all pre-periodic point $x$.
We say $x$ {\em lands} at a vertex $v\in \mathcal{V}$ if
$$
\lim_{n\to\infty} d_{\D}(\proj_{\mathcal{T}_n}(\eta_n(x)), \phi_n(v)) < \infty.
$$
We say $x$ {\em lands} on $\mathcal{V}$ if $x$ lands at $v$ for some $v\in \mathcal{V}$.
The collection of $x\in \Q/\Z$ that lands at $v$ is called the {\em landing angles} at $v$.

After passing to a subsequence, we may assume the limits $\lim_v \eta_n(x)$ exist in $\mathbb{S}^1_v$ for all $v$ and all pre-periodic points $x$.
Another way to define the landing angles at $v$ is to look at the rescaling limits.
It can be checked easily from the definition that
\begin{lem}\label{lem:pland}
A pre-periodic point $x$ lands at $v\in \mathcal{V}$ if and only if
$$
\lim_v \eta_n(x) \notin \xi_v(T_v \mathcal{T}).
$$
\end{lem}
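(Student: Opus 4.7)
My plan is to work in the $v$-coordinates $M_{v,n}$ and exploit the horoball description of nearest-point projection onto convex subsets of $\D$. By construction of the quasi-invariant tree, $\phi_n(v) = 0$ in $v$-coordinates, and for every vertex $w$ of $\mathcal{T}$ in the component of $\mathcal{T}\setminus\{v\}$ containing an adjacent $v_i$, one has $\phi_n(w) \to_v t_i := \xi_v(v_i)$. Hence $M_{v,n}^{-1}(\chull(\mathcal{V}_n))$ converges, uniformly on compact subsets of $\overline{\D}$, to the ideal convex hull $K_\infty := \chull(\{0, t_1, \ldots, t_l\})$, where $\xi_v(T_v\mathcal{T}) = \{t_1, \ldots, t_l\}$.

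For the forward direction, I would assume $\zeta := \lim_v \eta_n(x) = t_j$ for some $j$. Let $p_n := \proj_{\chull(\mathcal{T}_n)}(\eta_n(x))$. Using the horoball characterization, $\proj_K(\zeta)$ is the tangent point of the outermost horoball at $\zeta$ disjoint from $\interior K$. Since $\eta_n(x)$ converges in $v$-coordinates to $t_j$ and all vertices of $\mathcal{V}_n$ in the $v_j$-subtree also converge to $t_j$ from inside $\D$, the horoball tangent point lies near one of these vertices, giving $d_{\Hyp^2}(p_n, 0) \geq d_{\Hyp^2}(0, \phi_n(v_j)) - O(1) \to \infty$. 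Hence $x$ does not land at $v$.

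For the reverse direction, assume $\zeta \notin \{t_1, \ldots, t_l\}$. Then $\zeta$ lies in one of the open arcs of $\mathbb{S}^1_v$ cut out by $\xi_v(T_v\mathcal{T})$ in the cyclic ribbon order. The projection of $\zeta$ onto $K_\infty$ is realized at a point $q \in \D$ at finite hyperbolic distance from $0$, attained either on a geodesic edge $[t_i, t_{i+1}]$ of the ideal polygon, on one of the rays $[0, t_i]$, or at $0$ itself (the case distinction depends on whether $0$ lies interior to $\chull(\{t_1, \ldots, t_l\})$). Hausdorff convergence of the convex hulls together with continuity of nearest-point projection away from the accumulation points then gives $p_n \to q$ in $v$-coordinates, so $d_{\Hyp^2}(p_n, 0)$ stays bounded and $x$ lands at $v$.

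The main obstacle is the reverse direction: the tree $\mathcal{T}_n$ carries whole subtrees beyond each $\phi_n(v_j)$, all of whose vertices collapse to $t_j$ in $v$-coordinates, and one must rule out that they produce a closer projection than the outer boundary of $K_\infty$ when $\zeta \neq t_j$. The cleanest way to resolve this is via the Busemann horofunction $\beta_\zeta$: on those deep subtree regions $\beta_\zeta \to +\infty$ because they approach $t_j \neq \zeta$, whereas $\beta_\zeta$ attains a finite minimum on the edges of $K_\infty$, so the infimum of $\beta_\zeta$ over the whole convex hull is achieved at a bounded interior point, giving the desired uniform bound on $d_{\Hyp^2}(p_n, \phi_n(v))$.
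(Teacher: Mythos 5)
The paper offers no written argument here (the lemma is stated as something that ``can be checked easily from the definition''), so there is no official route to compare against; your scheme --- pass to $v$-coordinates, observe that $M_{v,n}^{-1}(\chull(\mathcal{T}_n))$ Hausdorff-converges in $\overline{\D}$ to $K_\infty = \chull(\{0,t_1,\ldots,t_l\})$, and read off the projection of the ideal point $\eta_n(x)$ via horoballs and Busemann functions --- is a legitimate way to carry out that check. Your treatment of the harder direction ($\zeta:=\lim_v\eta_n(x)\notin\xi_v(T_v\mathcal{T})$ implies landing) is essentially right, with one uniformization you should make explicit: run the horofunction argument with $\beta_{\eta_n(x)}$ rather than the limiting $\beta_\zeta$. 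Since $\phi_n(v)$ lies in $\chull(\mathcal{T}_n)$, the minimizer $p_n$ lies in the horoball through $\phi_n(v)$ based at $\eta_n(x)$; these horoballs converge to the horoball at $\zeta$ through $0$ (in $v$-coordinates), the hulls eventually avoid a fixed Euclidean neighborhood of $\zeta$ because $\zeta\neq t_i$ for all $i$, and hence the $p_n$ stay in a fixed compact subset of $\D_v$, which is the uniform bound you need.

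The step that fails as written is in your first direction. If $\zeta=t_j$, it is not true that the tangency point of the maximal horoball lies near a vertex, nor that $d_{\Hyp^2}(p_n,\phi_n(v))\geq d_{\Hyp^2}(\phi_n(v),\phi_n(v_j))-O(1)$: when $\eta_n(x)$ approaches $t_j$ much more slowly than $\phi_n(v_j)$ does (in the upper half-plane model with $\phi_n(v)=i$ and $t_j=\infty$, take $\phi_n(v_j)=ie^{n^2}$ and $\eta_n(x)=e^{n}$), the projection sits in the middle of the edge $[\phi_n(v),\phi_n(v_j)]$, at distance roughly $n$ from $\phi_n(v)$, far from every vertex and much closer than $n^2$. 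The conclusion you want is still true, and follows from the same Busemann computation you deploy later: normalize $\beta_{\eta_n(x)}$ to vanish at $\phi_n(v)$; its minimum along the single geodesic edge $[\phi_n(v),\phi_n(v_j)]$ tends to $-\infty$ precisely because the far endpoint and the ideal basepoint converge to the same point $t_j\in\mathbb{S}^1_v$, and since the hull contains that edge and Busemann functions are $1$-Lipschitz, one gets $d_{\Hyp^2}(p_n,\phi_n(v))\geq -\min_{\chull(\mathcal{T}_n)}\beta_{\eta_n(x)}\to\infty$, so $x$ does not land at $v$. With that repair, and the uniformity point above, your proof is complete.
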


If $x$ does not land on $\mathcal{V}$, then there exists an edge $E = [v,w] \in \mathcal{T}$ so that the projection $\proj_{\mathcal{T}_n}(\eta_n(x))$ is within bounded distance to $[\phi_n(v),\phi_n(w)]$.
We say $x$ lands on the edge $E$ in this case.

The degree of a cycle $C \subseteq \mathbb{S}^1$ for $m_d$ is the least $e\geq 1$ so that $m_d|_C$ extends to a covering of the circle of degree $e$.
A cycle $C$ is said to be {\em simple} if the degree of $C$ is $1$ (see \cite{McM10} for more details).
We now show that almost all pre-periodic points land on $\mathcal{V}$:
\begin{prop}\label{prop:al}
All but finitely many pre-periodic points land on $\mathcal{V}$.

Moreover, let $\mathcal{C}=\cup C_i$ be the set of periodic points that do not land on $\mathcal{V}$. Then 
\begin{itemize}
\item every cycle $C_i$ is simple with the same rotation number; 
\item $m_d|_\mathcal{C}$ preserves the cyclic ordering of $\mathcal{C}$.
\end{itemize}
Any strictly pre-periodic point that does not land on $\mathcal{V}$ is eventually mapped to $\mathcal{C}$.
\end{prop}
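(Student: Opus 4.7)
The plan is to assign to each non-landing pre-periodic $x$ a unique landing edge $E(x) \subset \mathcal{T}$, verify the equivariance $E(m_d(x)) = f(E(x))$, conclude that $E(x)$ is $f$-periodic when $x$ is $m_d$-periodic, and then bound the number of non-landing periodic cycles landing at each periodic edge using the first-return rescaling limits. To define $E(x)$: the non-landing hypothesis and the lemma preceding the proposition give $\lim_v \eta_n(x) \in \xi_v(T_v\mathcal{T})$ at every $v \in \mathcal{V}$. Starting at any $v_0$, pick $v_1$ with $\lim_{v_0}\eta_n(x) = \xi_{v_0}(v_1)$, then inductively find $v_{i+1}$ with $\lim_{v_i}\eta_n(x) = \xi_{v_i}(v_{i+1})$. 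Each step must move further into $\mathcal{T}$, since the projection of $\eta_n(x)$ onto $\mathcal{T}_n$ lies on at most one edge; finiteness of $\mathcal{T}$ then forces termination at a step with $v_{k+1} = v_{k-1}$, and I set $E(x) := [v_{k-1}, v_k]$, characterized by $\lim_{v_{k-1}}\eta_n(x) = \xi_{v_{k-1}}(v_k)$ and $\lim_{v_k}\eta_n(x) = \xi_{v_k}(v_{k-1})$.

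Combining Proposition \ref{prop:qi}, Theorem \ref{thm:almostisometry}, and Lemma \ref{lem:ct}, the projection onto $\mathcal{T}_n$ commutes with $f_n$ up to bounded error, and the two characterizing identities $\lim_v\eta_n(x) = \xi_v(w)$, $\lim_w\eta_n(x) = \xi_w(v)$ are transported via $F_v$ and $F_w$ to the corresponding identities on $[f(v), f(w)]$; this yields $E(m_d(x)) = f(E(x))$. Consequently, if $x$ is $m_d$-periodic of period $q$, then $E(x)$ is $f$-periodic and lies inside the finite periodic subtree $\mathcal{T}^P$ of Lemma \ref{lem:simplicial}. The finiteness assertion in the proposition therefore reduces to bounding, for each periodic edge $E = [v,w]$ of period $q_E$, the number of $m_d$-periodic cycles with landing edge $E$.

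By iterating Lemma \ref{lem:ct}, the first-return rescaling limit $F^{q_E}_v : \D_v \to \D_v$ fixes $\xi_v(w) \in \mathbb{S}^1_v$. The Denjoy--Wolff attractor of $F^{q_E}_v$ lies in the $\p$-direction --- the interior point $0$ when $v = \p$, and the boundary point $\xi_v(u) \in \mathbb{S}^1_v$ (with $u$ the neighbor of $v$ on the path to $\p$) when $v \neq \p$, as one sees by tracking the image of $M_{v,n}^{-1}(\p_n)$. Hence $\xi_v(w)$ is a non-attracting boundary fixed point with multiplier $\geq 1$, and the repelling $f_n^{q_E}$-periodic points $\eta_n(x)$ accumulating at it force $\xi_v(w)$ to be parabolic, with its local petal structure governing which $m_d$-periodic cycles accumulate. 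The classical local theory of parabolic boundary fixed points then gives finiteness of such cycles, simplicity of each accumulating cycle, a common rotation number determined by the petal count, and preservation of cyclic order under $m_d$. The aperiodic case reduces immediately: any non-landing aperiodic pre-periodic $x$ has $m_d^k(x)$ periodic for some $k$, and the equivariance of $E$ places $m_d^k(x) \in \mathcal{C}$. The main obstacle is the parabolic and local-petal analysis at $\xi_v(w)$: one must pass the multipliers of the repelling periodic orbits $\eta_n(x)$ to the limit under the $v$-coordinate normalization, confirm multiplier $= 1$ and isolate the petal count, and then invoke the classical local theory at parabolic fixed points of holomorphic disk endomorphisms to extract the uniform rotation number and cyclic-order preservation across all the cycles $C_i$.
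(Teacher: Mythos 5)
Your setup (assigning a landing edge to each non-landing point and pushing it forward by the dynamics) is consistent with what the paper does implicitly, but the core of the proposition is not established in your proposal, and you acknowledge this yourself when you defer "the parabolic and local-petal analysis at $\xi_v(w)$." The difficulty is twofold. First, your premise that $\xi_v(w)$ must be a parabolic boundary fixed point of the first-return rescaling limit $F^{q_E}_v$ is unjustified: accumulation of repelling cycles of $f_n$ at a boundary point of an algebraic limit forces nothing, since the convergence is not uniform at the boundary and $\xi_v(w)$ may even be a hole of $F^{q_E}_v$ (critical points cluster at the far vertex in exactly that direction); moreover the point can genuinely be repelling or boundary-hyperbolic rather than parabolic -- compare Lemma \ref{lem:h}, where the first-return limit has an attracting boundary fixed point in the $\p$-direction and a repelling one in the opposite direction. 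Second, even granting parabolicity, no local petal analysis of the single limit map at one boundary point can yield the conclusions of the proposition: how many distinct cycles of $f_n$ accumulate there, that each such cycle is \emph{simple} as a cycle of $m_d$, that \emph{all} cycles in $\mathcal{C}$ (landing on different edges) share one rotation number, and that $m_d|_\mathcal{C}$ preserves cyclic order are global, marking-dependent statements about $m_d$ on $\mathbb{S}^1$, not local statements about $F^{q_E}_v$.

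The paper's proof circumvents both issues. It rescales not at the vertex but at a point $a_n$ of the edge at bounded distance from $\proj_{\mathcal{T}_n}(\eta_n(x))$: there the limit of $M_{a,n}^{-1}\circ f_n^q\circ M_{a,n}$ has degree $1$ (no nearby critical points), fixes the two edge-end directions $t_1,t_2$ and also the limit $t\neq t_1,t_2$ of $\eta_n(x)$, hence is the identity; this gives $(f_n^q)'(\eta_n(x))\to 1$, so $\log|(f_n^q)'(\eta_n(x))|\leq \log 2$ for large $n$. All of the combinatorial conclusions -- simplicity, common rotation number, preservation of cyclic order, and hence finiteness (cycles of a fixed rotation number for $m_d$ form a finite set) -- then come from the global Theorem 1.1 of \cite{McM10} on short geodesics and simple cycles, which is the essential external input your argument lacks. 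Unless you can supply a substitute for that theorem, the central step of your proposal remains a genuine gap; the equivariance and pull-back reduction for strictly pre-periodic points are fine but peripheral.
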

\begin{proof}
Let $x$ be a periodic point of period $q$ for $m_d$. Suppose that $x$ does not land on $\mathcal{V}$.
We first show that the multiplier at $\eta_n(x)$ converges to $1$.
Let $a_n \in \mathcal{T}_n$ be of uniform bounded distance away from $\proj_{\mathcal{T}_n}(\eta_n(x)) \in \chull(\mathcal{T}_n)$.
Note that $d_{\D} (a_n, \phi_n(v)) \to \infty$ for any vertex $v\in \mathcal{V}$.

Let $M_{a,n} \in \Isom(\D)$ so that $M_{a,n}(0) = a_n$.
Similar to the case of vertices of $\mathcal{T}_n$, we can define $a$-limit in this setting.
In this $a$-coordinate, after passing to a subsequence, $\mathcal{T}_n$ converges to a geodesic passing through $0$.
Thus, there exist two distinct points $t_1, t_2 = -t_1 \in \mathbb{S}^1_a$ corresponding to the $a$-limits of vertices.
After passing to a subsequence, we assume $\eta_n(x) \to_a t \in \mathbb{S}^1_a$.
Since $a_n$ and $\proj_{\mathcal{T}_n}(\eta_n(x))$ are uniformly bounded apart, $t \neq t_i$.

After passing to a subsequence, by Proposition \ref{prop:qi} and Proposition \ref{prop:ac1}, $M_{a,n}^{-1} \circ f_n^q \circ M_{a,n}$
converges algebraically to a degree $1$ map $F$.
A similar argument as in Lemma \ref{lem:ct} gives that $F(t_i) = t_i$.
Since $\eta_n(x)$ is fixed by $f_n^q$ and $t$ is not a hole for $F$, $F(t) = t$.
Thus, $F = \mathrm{id}$ as it fixes $3$ points.
Therefore, $(f_n^q)'(\eta_n(x))\to 1$,
in particular, $\log |(f_n^q)'(\eta_n(x))| \leq \log 2$ for all sufficiently large $n$.
By \cite[Theorem 1.1]{McM10}, these cycles $C_i$ are all simple with the same rotation number, and $m_d|_\mathcal{C}$ preserves the cyclic ordering of $\mathcal{C}$.

Since there are only finitely many simple cycles with the same rotation number (see \cite[\S 2]{McM10}), all but finitely many periodic points land on $\mathcal{V}$.

There are only finitely many edges in $\mathcal{T}$ that are mapped to the edges landed by $\mathcal{C}$.
Thus by pulling back, there are only finitely many strictly pre-periodic points landing at edges, and they all come from backward orbits of $\mathcal{C}$.
The proposition now follows.
\end{proof}

A vertex $v\in \mathcal{V}$ is said to be {\em simple} if $\delta(v) = 1$, and is called {\em critical} if $\delta(v) \geq 2$.
The same proof also gives
\begin{prop}\label{prop:sl}
Let $v\in \mathcal{V}$. If $f^k(v)$ is simple for all $k\geq 0$, then there are only finitely many pre-periodic points landing at $v$.
\end{prop}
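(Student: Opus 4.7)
The plan is to mirror the argument of Proposition \ref{prop:al}, now exploiting the simplicity hypothesis at $v$ rather than its failure on cycles avoiding $\mathcal{V}$. Suppose, for contradiction, that infinitely many pre-periodic points of $m_d$ land at $v$. Since $\mathcal{T}$ is finite and each pre-periodic point of $m_d$ is eventually periodic, after iterating $f$ a bounded number of times (which preserves the simplicity hypothesis, since it is imposed on the entire forward orbit), one reduces to the case where $v$ is periodic of period $p$ and infinitely many periodic points $\{x_j\}$ of $m_d$, each of period $q_j$ with $p \mid q_j$, land at $v$. Strictly pre-periodic landings then follow from the finitely-many-periodic-landings conclusion by iterating $m_d$ forward into the periodic case and using finite branching of the $m_d$-preimage structure within a fixed neighborhood of $\phi_n(v)$.

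For each such $x_j$, one works in the anchored coordinate $M_{v,n}$; after passing to a subsequence, $\eta_n(x_j) \to_v t_j \in \mathbb{S}^1_v \setminus \xi_v(T_v \mathcal{T})$, and $M_{v,n}^{-1} \circ f_n^{q_j} \circ M_{v,n}$ converges algebraically to $F_v^{q_j}$. The simplicity hypothesis on $\{v, f(v), \ldots, f^{p-1}(v)\}$ forces each $F_{f^k(v)}$ to have degree $1$, so the first-return rescaling limit $F^p_v$ and hence $F_v^{q_j}$ are M\"obius transformations of $\mathbb{S}^1_v$. Choosing $q_j$ divisible by the order of the tangent permutation $Df^p$ on $T_v\mathcal{T}$, Lemma \ref{lem:ct} ensures that $F_v^{q_j}$ fixes $\xi_v(T_v\mathcal{T})$ pointwise; combined with $F_v^{q_j}(t_j) = t_j$ (which follows from $m_d^{q_j}(x_j) = x_j$ by the same argument as in Lemma \ref{lem:ct}), the map has at least three fixed points on $\mathbb{S}^1_v$ and must equal the identity. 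Hence $(f_n^{q_j})'(\eta_n(x_j)) \to 1$ as $n \to \infty$.

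Applying Theorem 1.1 of \cite{McM10} to these infinitely many cycles of bounded multiplier, they are all simple with a common rotation number; since only finitely many simple $m_d$-cycles of any given rotation number exist, this contradicts the infinitude of the $\{x_j\}$. The hardest step is the second paragraph: correctly unpacking the hypothesis that each $f^k(v)$ is simple into degree $1$ for the rescaling limits, and then marshaling three fixed points of $F_v^{q_j}$ on $\mathbb{S}^1_v$. This needs $|T_v\mathcal{T}| \geq 2$, which is harmless since a leaf $v$ admits at most one landing direction and the statement is immediate there, and it needs a careful choice of $q_j$ so that the cyclically permuted tangent directions become actual fixed points rather than merely periodic points of the rescaling limit.
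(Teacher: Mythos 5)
Your overall strategy is exactly the paper's: the paper proves Proposition \ref{prop:sl} by re-running the argument of Proposition \ref{prop:al} (degree-one rescaling limit forced to be the identity by three fixed points, multipliers tending to $1$, then Theorem 1.1 of \cite{McM10} to conclude the cycles are simple with a common rotation number, hence finite in number), and your main case — $v$ periodic with valence at least two, periods adjusted to kill the tangent permutation, $t_j$ not a hole because holes lie in $\xi_v(T_v\mathcal{T})$ — is carried out correctly and at the same level of detail as the paper (including the somewhat breezy treatment of strictly pre-periodic points and of the equivariance ``$x$ lands at $v$ $\Rightarrow$ $m_d(x)$ lands at $f(v)$'', which the paper also does not spell out).

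The genuine gap is your dismissal of the valence-one case. The assertion that ``a leaf $v$ admits at most one landing direction and the statement is immediate there'' conflates tangent directions with landing points: a pre-periodic point lands at $v$ precisely when its limit in $\mathbb{S}^1_v$ avoids $\xi_v(T_v\mathcal{T})$, and at a leaf this excludes only a single point of the circle, so finiteness of landings is exactly the content of the proposition there, not something immediate. Simple leaves whose entire forward orbit is simple do occur (e.g.\ purely post-critical clusters), so the case cannot be skipped; and for a leaf your three-fixed-point argument supplies only two fixed points (one tangent point plus one landing limit), which does not force the M\"obius first-return limit to be the identity — it could be parabolic or hyperbolic with the landing limit at its repelling boundary fixed point, in which case the multipliers do not tend to $1$ and \cite{McM10} gives nothing. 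The case is repairable with the same toolkit: if two landing periodic points have distinct limits one recovers three fixed points of a common iterate; if all landing limits coincide at a point $t$ where the limit map has multiplier $\neq 1$, a Hurwitz/Rouch\'e count of fixed points of $f_n^Q$ near $t$ (for $Q$ a common multiple of the periods) shows at most one periodic point can land with limit $t$, and since a non-identity M\"obius map has at most two boundary fixed points this bounds the landings; the remaining (parabolic or identity) case has multiplier tending to $1$ and the McMullen argument applies. As written, though, the leaf case is asserted rather than proved, and the stated reason for discarding it is false.
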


We immediately have the following, which will be used to construct the dual lamination later.
\begin{cor}\label{cor:am1}
There are at most $2$ pre-periodic points landing on an edge $E$. More precisely, at most $1$ from each side of the edge.
\end{cor}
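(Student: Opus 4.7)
The plan is to prove this by contradiction using the Möbius rigidity argument from the proof of Proposition \ref{prop:al}, applied in the $a$-coordinate at an interior point of the edge. The heart of the argument is that two distinct pre-periodic points landing on the same side of $E$ would produce a Möbius rescaling limit with too many fixed points, forcing the limit to be the identity; combined with the critically approximating property of Theorem \ref{thm:qit}, this identity rescaling yields a contradiction.

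Concretely, suppose distinct pre-periodic points $x_1, x_2$ land on $E = [v,w]$ on the same side. By Proposition \ref{prop:al}, any pre-periodic point not landing on $\mathcal{V}$ is in the backward orbit of $\mathcal{C}$, so after replacing each $x_i$ by a forward iterate one reduces to the case that $x_1, x_2 \in \mathcal{C}$ are periodic of a common period $Q$. Pick $a_n$ in the interior of $\phi_n(E)$ at bounded hyperbolic distance from both $\proj_{\mathcal{T}_n}(\eta_n(x_i))$, and anchor coordinates $M_{a,n}$ at $a_n$. Let $t_v, t_w \in \mathbb{S}^1_a$ be the $a$-limits of the endpoints of $E$, and set $t_i^* = \lim_a \eta_n(x_i)$; by the same-side hypothesis, $t_1^*, t_2^*$ lie in the same arc of $\mathbb{S}^1_a \setminus \{t_v, t_w\}$. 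The bounded-displacement computation from Proposition \ref{prop:al} gives $d_{\Hyp^2}(a_n, f_n^Q(a_n)) = O(1)$, so by Proposition \ref{prop:ac1}, after passing to a subsequence, the algebraic limit $F := \lim_n M_{a,n}^{-1} \circ f_n^Q \circ M_{a,n}$ exists. By the tangent-map compatibility of Lemma \ref{lem:ct} it fixes $t_v, t_w$, and since $\eta_n(x_i)$ is fixed by $f_n^Q$ and $t_i^*$ is not a hole of $F$, it also fixes each $t_i^*$. In either case $t_1^* \neq t_2^*$ or $t_1^* = t_2^*$, $F$ fixes at least three distinct points of $\mathbb{S}^1_a$, forcing $F = \mathrm{id}$.

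The contradiction is extracted from the degree collapse. Since $F$ has degree $1$, the algebraic limit of the degree-$d^Q$ Blaschke maps $M_{a,n}^{-1} f_n^Q M_{a,n}$ produces $d^Q - 1$ holes (with multiplicity) on $\mathbb{S}^1_a$, and by Lemma \ref{lem:critc} each hole attracts a critical point $c_n$ of $f_n^Q$. Each such $c_n$ satisfies $f_n^{j}(c_n) \in C(f_n)$ for some $0 \leq j < Q$, and by the critically approximating property of Theorem \ref{thm:qit} this image lies within bounded distance of $\mathcal{V}_n$. Using the simplicial structure of $f:(\mathcal{T},p)\to(\mathcal{T},p)$ and the quasi-invariance of the tree on edges (Proposition \ref{prop:qi}), one then shows that the asymptotic $a$-direction of $c_n$ itself can only lie along the tree at $a_n$, i.e., it must equal $t_v$ or $t_w$. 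Hence all holes of $F$ concentrate in $\{t_v, t_w\}$, and comparing the local-degree contributions at these two points with the fact that, for each $n$, $\eta_n(x_1)$ and $\eta_n(x_2)$ are two distinct repelling fixed points of $f_n^Q$ lying in the arc bounded by $t_v, t_w$, yields the contradiction. In the coalescent subcase $t_1^* = t_2^*$, one performs a second-order rescaling at the common boundary limit; $\eta_n(x_1), \eta_n(x_2)$ separate in this finer coordinate and the same rigidity argument applies, reducing to the already-handled case.

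The main obstacle is the last step: formally tracking how critical points of the iterate $f_n^Q$ inherit the critically approximating property from $f_n$ through the simplicial tree map, and translating the hole concentration into a sharp combinatorial bound that forbids two fixed points on the same arc. This requires combining Lemma \ref{lem:critc}, the simplicial structure of $f$ on $\mathcal{T}^P$ from Lemma \ref{lem:simplicial}, and the cyclic-order preservation established in Proposition \ref{prop:al} to get the contradiction; the coalescent case furthermore requires iterating the rescaling argument until the two limits separate, and showing this iteration terminates because the pre-periodic points $\eta_n(x_1), \eta_n(x_2)$ are distinct for each finite $n$.
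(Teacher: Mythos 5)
Your proposal has a genuine gap at its decisive step. The first half (rescaling at an interior point $a_n$ of the edge and concluding $F=\mathrm{id}$) is fine, but it is essentially the computation already carried out in the proof of Proposition \ref{prop:al}, and $F=\mathrm{id}$ is not in itself anywhere near a contradiction: it is exactly what happens when a single cycle of $\mathcal{C}$ lands on an edge, which genuinely occurs. So everything rests on your second half, and that is where the argument breaks. The claim that every hole of $F$ must concentrate at $t_v$ or $t_w$ is asserted rather than proved, and it is doubtful: the critical points of $f_n^Q$ are the $f_n^{-j}$-preimages of $C(f_n)$, and these sit on branches of the pull-back trees $\mathcal{T}^j_n$ which may be attached at bounded distance from $a_n$ in directions transverse to the edge, so their $a$-limits need not be $t_v$ or $t_w$; Lemma \ref{lem:critc} only says that holes attract critical points, which is the wrong direction of control for what you need. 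More seriously, even granting the hole concentration, the final ``comparison of local-degree contributions'' does not yield a contradiction: a sequence of Blaschke products converging algebraically to the identity away from its holes can perfectly well have repelling fixed points converging to non-hole points --- their multipliers simply tend to $1$ --- and this is precisely the situation of the cycles in $\mathcal{C}$. Nothing in your local picture distinguishes one such fixed point in the arc (which is allowed and happens) from two on the same side (which is what must be excluded); the ``second-order rescaling'' in the coalescent case is likewise only sketched, and your reduction to periodic points by taking forward iterates also needs care, since distinct pre-periodic points can have coinciding images.

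The actual obstruction is global and much softer, and it is how the paper argues: if two pre-periodic points landed on the same side of $E$, then, since pre-periodic points of $m_d$ are dense in the arc of $\mathbb{S}^1$ between them and the marking $\eta_n$ together with the planar (ribbon) structure preserves the cyclic order, every pre-periodic point in that arc would project, up to bounded error, between the two given projections along $\phi_n(E)$; hence it would also land on $E$ and not on $\mathcal{V}$. This produces infinitely many pre-periodic points not landing on $\mathcal{V}$, contradicting the finiteness statement of Proposition \ref{prop:al}. I suggest replacing the rescaling/hole-counting argument by this ordering and counting argument, which is the content of the paper's two-line proof.
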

\begin{proof}
Suppose for contradiction that there are 2 pre-periodic points landing on the same side of the edge $E$, then the ribbon structure will give infinitely many periodic points landing on $E$, contradicting Proposition \ref{prop:al}.
\end{proof}

\subsection*{Pullback of quasi-invariant tree}
Given the quasi-invariant tree $\mathcal{T}_n$ for $f_n$ modeled by $f:\mathcal{T} \longrightarrow \mathcal{T}$, we can construct the pullback quasi-invariant tree $\mathcal{T}^1_n$ as follows.

Let $v\in \mathcal{V}$. 
Choose an ordering of the $d$-preimages of $\phi_n(v)$ under $f_n$, and denote them by $w_{1,n}(v),..., w_{d,n}(v)$.
Define 
$$
\widetilde{\mathcal{P}}^1 :=\{ (w_{i,n}(v)): i = 1,..., d, v \in \mathcal{V}\}.
$$
Note that an element of $\widetilde{\mathcal{P}}^1$ is a sequence of points in $\D$ indexed by $n$ and it is a finite set.
After passing to a subsequence, we assume
$$
\lim_{n\to\infty} d_{\D} (w_{i,n}(v), w_{j,n}(u)) \text{ and } \lim_{n\to\infty} d_{\D} (w_{i,n}(v), \phi_n(u))
$$
exist for all $i,j \in \{1,..., d\}$ and for all $v, u \in \mathcal{V}$.
This defines an equivalence relation on $\widetilde{\mathcal{P}}^1$, and an equivalence class is called a {\em cluster set}.
A cluster set $[(w_n)]$ is said to be {\em new} if $\lim_{n\to\infty} d_{\D} (w_n, \phi_n(\mathcal{V})) = \infty$.
We choose a representative $(w_n)$ for each new cluster set.

Note that by construction, $(f_n(w_n)) = (\phi_n(v))$ for some $v\in \mathcal{V}$.
If $a_n \in \mathcal{T}_n$ are such that $d_{\D}(a_n, \phi_n(\mathcal{V})) \to \infty$, then $d_{\D}(f_n(a_n), \phi_n(\mathcal{V})) \to \infty$ by Proposition \ref{prop:qi}.
Thus, 
\begin{align}\label{eqn:nedge}
\lim_{n\to\infty} d_{\D} (w_n, \mathcal{T}_n) = \infty \text{ for any new cluster }[(w_n)].
\end{align}
So we can apply the same inductive method in the construction of $\mathcal{T}_n$ to add these new cluster sets, and get a new quasi-invariant tree $\mathcal{T}^1_n \supseteq \mathcal{T}_n$.

After passing to a subsequence, we may assume $\mathcal{T}^1_n$ are all isomorphic with the same ribbon structure.
We denote $\mathcal{T}^1$ as the underlying ribbon finite tree, with the isomorphisms
$$
\phi_n: \mathcal{T}^1 \longrightarrow \mathcal{T}^1_n.
$$
A simplicial model of the dynamics can be constructed for the pullback and is denoted by 
$$
f: (\mathcal{T}^1, \p)\longrightarrow (\mathcal{T}^0, \p) := (\mathcal{T}, \p)\subseteq (\mathcal{T}^1, \p).
$$ 

The rescaling limits are defined similarly and the same proof of Lemma \ref{lem:ct} shows the compatibility of the local dynamics with the tangent map.
We remark that the new vertices are all simple, so the rescaling limits are defined by degree $1$ maps.
Each vertex $v\in \mathcal{V}^0$ of $\mathcal{T}^0$ has $d$ preimages in $\mathcal{T}^1$ counted with multiplicity.
By Equation \ref{eqn:nedge}, if $E$ is an edge of $\mathcal{T}^0$, then $E$ is also an edge of $\mathcal{T}^1$, i.e., the inclusion map from $\mathcal{T}^0$ to $\mathcal{T}^1$ is simplicial.

We also remark that a priori, there are many choices in this construction.
They all give the same simplicial model $f: (\mathcal{T}^1, \p)\longrightarrow (\mathcal{T}^0, \p)$, as they are equal to the unique one constructed combinatorially in \S \ref{sec:atm}.

The pullback can be iterated, and we denote the $k$-th pullback simplicial model by 
$f: (\mathcal{T}^k, \p) \longrightarrow (\mathcal{T}^{k-1}, \p) \subseteq (\mathcal{T}^k, \p)$.
We also denote $f: (\mathcal{T}^\infty, \p) \longrightarrow (\mathcal{T}^\infty, \p)$ as the union of these maps.

\subsection*{Dual Lamination of $\mathcal{T}$}
Let $A_v \subseteq \Q/\Z$ be the set of rational angles landing at $v$.
Since $\eta_n : \R/\Z\longrightarrow \mathbb{S}^1$ is a homeomorphism, and $\mathcal{T}_n$ is a ribbon tree, for any $v \neq w \in \mathcal{V}$, $A_v$ and $A_w$ are {\em unlinked}, i.e., there exists an interval $I \subseteq \R/\Z$ so that $A_v \subseteq I$ and $A_w \subseteq \R/\Z - I$.

Two intervals $J_1, J_2 \subseteq \R/\Z$ are said to be essentially the same if $\Int(J_1) = \Int(J_2)$ and $\overline{J_1} = \overline{J_2}$.
Let $E = [v,w]$ be an edge.
We claim that there exists an essentially unique interval $I$ with $A_v \subseteq I $ and $A_w \subseteq \R/\Z - I$.
Otherwise, there are infinitely many rational angles that are separated by $A_v$ and $A_w$.
Since $[v,w]$ is an edge, these angles must land on $E$, which is a contradiction to Corollary \ref{cor:am1}.
We denote $\partial I = \{t^\pm_E\}$, and call $t^\pm_E$ the {\em dual angles} for $E$.
We remark that it is possible that $I$ is a degenerate interval consisting of a single point. In this case, $\partial I$ consists of a single point. 

An equivalent way to compute $t^\pm_E$ is as follows.
Let $x_E \in T_v \mathcal{T}$ be the tangent vector at $v$ associated to $E$.
We consider the set
$$
I_E(v):= \{t \in \R/\Z: \lim_v \eta_n(t) = \xi_v(x_E) \in \mathbb{S}^1_v\} \subseteq \R/\Z.
$$
Since $\eta_{n}$ is a homeomorphism, $I_E(v)$ is an interval. 
By Lemma \ref{lem:pland}, one can verify that $\partial I_E(v) = t^\pm_E$.
Note that if the other boundary point $w \in \partial E$ is chosen, then $I_E(w)$ is essentially the complement of $I_E(v)$ in $\R/\Z$, so the boundary points are the same.

\begin{figure}[ht]
  \centering
  \resizebox{0.8\linewidth}{!}{
    \def\svgwidth{\columnwidth}
    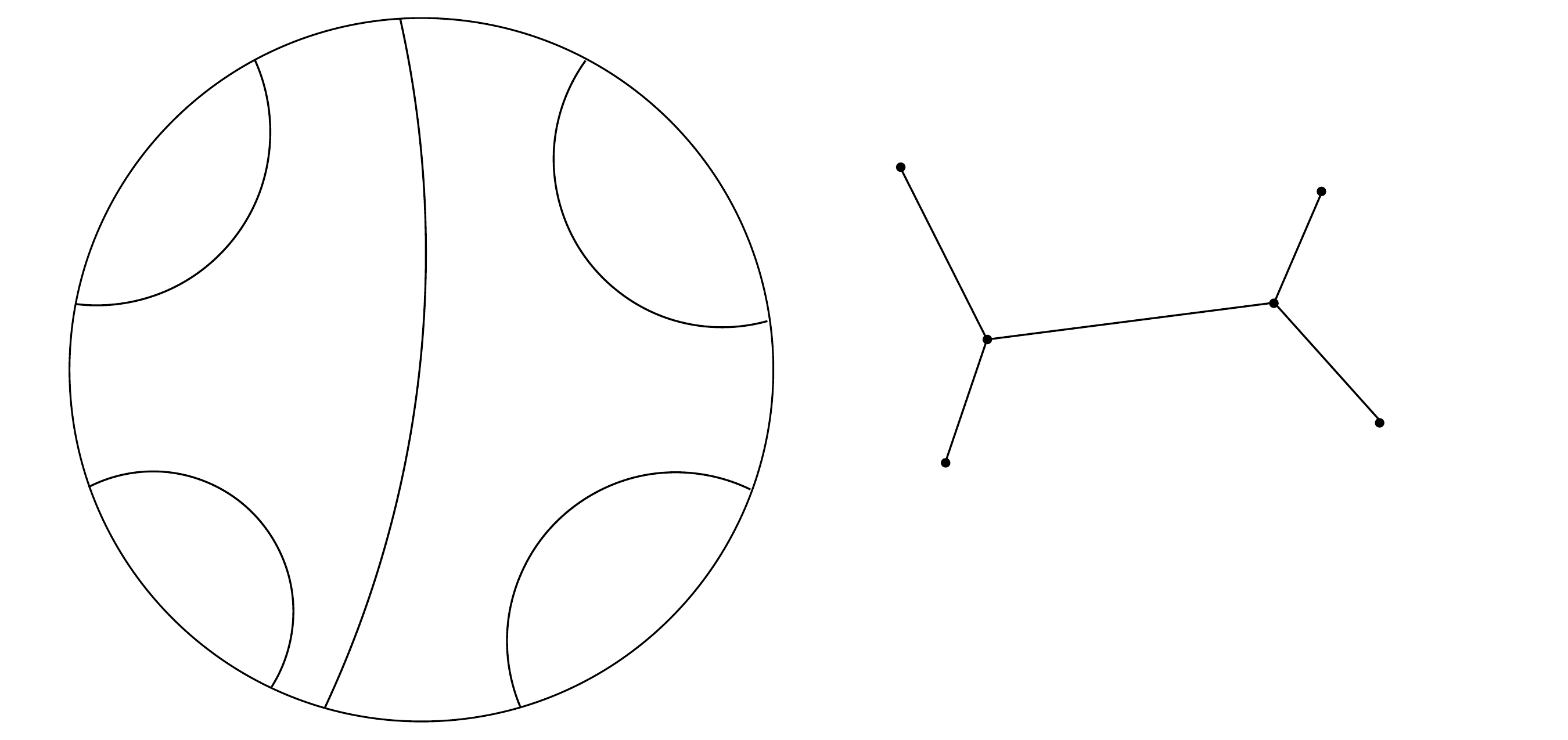

  }
  \caption{An illustration of the dual lamination. The rational angles in $\partial R_i \cap \R/\Z$ land at $v_i$. The dual angles $t_E^\pm$ for the edge $E$ are labeled on the figure.}
  \label{fig:DL}
\end{figure}

It can be verified that these angles are compatible with the dynamics:
\begin{prop}
Let $E$ be an edge of $\mathcal{T}$ with dual angles $t^\pm_E$.
Then $m_d(t^\pm_E)$ are the dual angels for $f(E)$.
\end{prop}

A {\em lamination} $\mathcal{L}$ is a family of disjoint hyperbolic geodesics in $\D$ together with the two endpoints in $\mathbb{S}^1\cong \R/\Z$, whose union $|\mathcal{L}| :=\bigcup\mathcal{L}$ is closed.
An element of the lamination is called a {\em leaf} of the lamination.

We define the leaf associated with the edge $E$ as the hyperbolic geodesics in $\D$ connecting $t^\pm_E \in \R/\Z \cong \partial \D$.
It is easy to check that leaves for different edges have disjoint interiors.

The {\em dual finite lamination} for $f: (\mathcal{T},\p) \longrightarrow (\mathcal{T}, \p)$ is defined as the union of all leaves for edges of $\mathcal{T}$, and is denoted by $\mathcal{L}^F_\mathcal{T}$.
The leaves can be constructed for edges of any pullbacks of $\mathcal{T}$.
We call the closure of the union of leaves for edges in $\mathcal{T}^\infty$ the {\em dual lamination} of $\mathcal{T}$, and is denoted by $\mathcal{L}_\mathcal{T}$.

We remark that our lamination $\mathcal{L}_\mathcal{T}$ is defined abstractly using dual angles in $\R/\Z$. The leaves of $\mathcal{L}_\mathcal{T}$ do not lie in the same disk $\D$ where $f_n \in \BP_d$ or $\mathcal{T}_n$ is defined.

The lamination $\mathcal{L}_\mathcal{T}$ gives an {\em equivalence relation} $\sim_\mathcal{T}$ on $\R/\Z$: $a\sim_\mathcal{T} b$ if there exists a finite chain of leaves connecting $a$ and $b$.
Note that different laminations may generate the same equivalence relation.

\subsection*{Periodic Fatou and Julia points}
For reasons that will appear apparent later in \S \ref{sec:BPH}, we define
\begin{defn}
Let $v\in \mathcal{V}$. It is called {\em a Fatou point} if it is eventually mapped to a critical periodic orbit, and is called a {\em Julia point} otherwise.
\end{defn}

Let $a,b\in \mathcal{V}$. We use $[a,b], (a,b), [a,b)$ and $(a,b]$ to denote the paths in $\mathcal{T}$ that connects $a,b$, with appropriate boundary points removed.
\begin{lem}\label{lem:h}
If $v\neq \p$ is a periodic point, and $[\p, v)$ contains a periodic Fatou point, then the first return rescaling limit $F$ at $v$ has an attracting fixed point on $\mathbb{S}^1_v$.
\end{lem}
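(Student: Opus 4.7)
The plan is to show that the non-repelling fixed point $1 \in \mathbb{S}^1_v$ of $F := F_v^{q_v}$ (given by the preceding corollary) is strictly attracting by relating its multiplier to a repelling multiplier extracted at the Fatou vertex. Let $u \in [\p, v)$ be a periodic Fatou point. By Lemma~\ref{lem:simplicial} every vertex of $[\p, v]$ is periodic, so I may replace $q_v$ by a common multiple $q$ of the periods of $u$ and $v$ and work with both first-return rescaling limits $F_u^q$ and $F_v^q$. It suffices to show $1$ is attracting for $F_v^q$, since $F_v^q = (F_v^{q_v})^{q/q_v}$.

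Since $u$ is Fatou, $F_u^q$ is a Blaschke product of degree $\geq 2$, with a unique non-repelling (Denjoy--Wolff) fixed point $\alpha \in \overline{\D}_u$: either $\alpha = 1 \in \mathbb{S}^1_u$ by the preceding corollary (when $u \neq \p$), or $\alpha = 0 \in \D_u$ as the attracting interior fixed point inherited from $f_n(0) = 0$ with $|f_n'(0)| < 1$ (when $u = \p$). Let $a_v \in T_u \mathcal{T}$ be the tangent direction at $u$ toward $v$. Because $f$ is simplicial and $f^q$ fixes both endpoints of the edge at $u$ along $[u, v]$, that edge is set-wise fixed by $f^q$, so $Df^q(a_v) = a_v$; by Lemma~\ref{lem:ct}, $t_v := \xi_u(a_v) \in \mathbb{S}^1_u$ is a fixed point of $F_u^q$. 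Since $t_v \neq \alpha$ in either case, $t_v$ is repelling with some boundary multiplier $\lambda > 1$.

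The key calculation transports repulsion at $t_v$ into attraction at $1 \in \mathbb{S}^1_v$ via a change of basepoint. For $\tilde x \in [0, 1) \subseteq \D_v$ at hyperbolic distance $s$ from $0$, set $x_n := M_{v, n}(\tilde x)$. By Theorem~\ref{thm:qit} and the anchored normalization, $x_n$ lies asymptotically on the geodesic $\phi_n([u, v])$ at hyperbolic distance $s$ from $\phi_n(v)$ and $L_n - s$ from $\phi_n(u)$, where $L_n := d_{\Hyp^2}(\phi_n(u), \phi_n(v)) \to \infty$. In $u$-coordinates $M_{u, n}^{-1}(x_n)$ lies on the radial segment toward $t_v$ at hyperbolic distance $L_n - s$ from $0 \in \D_u$. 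The linearization $F_u^q(z) \approx t_v + \lambda(z - t_v)$ near $t_v$ scales the Euclidean distance to $\mathbb{S}^1_u$ by $\lambda$, decreasing the hyperbolic distance from $0$ by $\log \lambda + o(1)$. Translating back, $y_n := f_n^q(x_n)$ lies on $\phi_n([u, v])$ at hyperbolic distance $s + \log \lambda + o(1)$ from $\phi_n(v)$, so in $v$-coordinates $F_v^q(\tilde x)$ lies on $[0, 1) \subseteq \D_v$ at hyperbolic distance $s + \log \lambda$ from $0$. This yields $1 - |F_v^q(\tilde x)| \sim (1/\lambda)(1 - |\tilde x|)$ as $\tilde x \to 1$, so the boundary multiplier of $F_v^q$ at $1$ equals $1/\lambda < 1$, and $1 \in \mathbb{S}^1_v$ is attracting.

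The principal obstacle is justifying the linearization of $F_u^q$ at $t_v$ rigorously, since $t_v \in \mathbb{S}^1_u$ could a priori coincide with a hole of the algebraic limit. This is handled by the Julia--Wolff--Carath\'eodory theorem: $F_u^q$ restricted to $\D_u$ is a Blaschke product with $t_v$ a repelling boundary fixed point, so the angular derivative $\lambda = (F_u^q)'(t_v)$ exists as a positive real number exceeding $1$ and controls the asymptotic hyperbolic displacement along any radial segment ending at $t_v$. Combined with the uniform convergence $M_{u, n}^{-1} \circ f_n^q \circ M_{u, n} \to F_u^q$ on compacta in $\overline{\D}_u \setminus \mathcal{H}(F_u^q)$ (Lemma~\ref{lem:ac}) and elementary horocyclic estimates in the disk model, this justifies the displacement calculation and pins down the exact multiplier $1/\lambda$ at $1 \in \mathbb{S}^1_v$ rather than only showing it to be non-repelling.
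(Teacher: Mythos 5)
Your reduction steps are fine (every vertex of $[\p,v]$ is periodic, $t_v\in\mathbb{S}^1_u$ is fixed by $F_u^q$ via Lemma~\ref{lem:ct}, and it is repelling because the Denjoy--Wolff point of $F_u^q$ sits elsewhere), but the core of your argument --- the exact multiplier identity ``$(F_v^q)'(1)=1/\lambda$'' --- is not justified, and the way you derive it cannot be repaired with the tools you cite. To compute the image of $\tilde x\in\D_v$ you pass to $u$-coordinates, where $M_{u,n}^{-1}(x_n)$ sits at hyperbolic distance $L_n-s\to\infty$ from $0$, i.e.\ it escapes to $\mathbb{S}^1_u$. The convergence $M_{u,n}^{-1}\circ f_n^q\circ M_{u,n}\to F_u^q$ from Lemma~\ref{lem:ac} is uniform only on compacta (in the spherical metric), and even uniform Euclidean closeness near $t_v$ gives no control on \emph{hyperbolic} displacements of points exponentially close to the circle; Julia--Wolff--Carath\'eodory governs the fixed limit map $F_u^q$, not the approximants $f_n^q$ in that boundary regime. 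Moreover the only dynamical control you have there is the quasi-invariance of Proposition~\ref{prop:qi} and Theorem~\ref{thm:almostisometry}, whose errors are $O(1)$, which is fatal for a multiplier computation (an $o(1)$-level statement). Finally, your intermediate conclusion that for \emph{every fixed} $s$ the point $F_v^q(\tilde x)$ lies on $[0,1)$ at distance exactly $s+\log\lambda$ is false as stated: it would force $F_v^q$ to act as a M\"obius translation along that geodesic, which a degree $\geq 2$ rescaling limit need not do; at best such a statement could be asymptotic as $s\to\infty$, and your derivation conflates the fixed-$s$ limit ($n\to\infty$ first) with the asymptotic regime ($s\to\infty$ simultaneously). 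The reciprocal-multiplier identity is also far stronger than what the lemma needs and is established nowhere in the paper.

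The paper avoids all of this by a soft contradiction argument carried out entirely at bounded distance from the two vertices, where the rescaling limits genuinely control $f_n^q$. Since $f_n^q$ has its unique fixed point in $\D$ at $0$, $F_v^q$ has no interior fixed point, so the non-repelling fixed point at $1\in\mathbb{S}^1_v$ is attracting or parabolic. If it were parabolic, one finds $x_n$ within bounded distance of $\phi_n(v)$, close to $1$ in $v$-coordinates, with $d_{\Hyp^2}(x_n,f_n^q(x_n))<\epsilon$. At the Fatou vertex, $t_v$ is a repelling fixed point of the first-return rescaling limit (of degree $\geq 2$), so there are $y_n$ at bounded distance from $\phi_n(u)$, close to $t_v$ in $u$-coordinates, with $d_{\Hyp^2}(y_n,f_n^q(y_n))\geq K$ and with the displacement pointing away from $x_n$ (angle $>\pi/2$). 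For $\epsilon$ small this forces $d_{\Hyp^2}(f_n^q(x_n),f_n^q(y_n))>d_{\Hyp^2}(x_n,y_n)$, contradicting the Schwarz lemma. If you want to salvage your write-up, replace the multiplier computation by this displacement comparison: it only uses definite (not exact) bounds, so the $O(1)$ slack and the lack of boundary uniformity never enter.
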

\begin{proof}
After passing to an iterate, we may assume that $f$ fixes $[\p,v]$.
Let $t_p^v := \lim_v\phi_n(p)$. 
By Lemma \ref{lem:ct}, $t_p^v$ is a fixed point of the rescaling limit $F_v$.
Since $f_n$ fixes $\phi_n(p)$, by Schwarz lemma, $M_{v,n}^{-1} \circ f_n \circ M_{v,n}$ moves points towards $M_{v,n}^{-1}(\phi_n(p))$.
Since $M_{v,n}^{-1} \circ f_n \circ M_{v,n}$ converges to $F_v$ and $M_{v,n}^{-1}(\phi_n(p)) \to t_p^v$, the fixed point $t_p^v$ is non-repelling.

Suppose for contradiction that $t_p^v$ is parabolic. 
Then $d_{\D}(x, F(x))$ can be made arbitrarily small\footnote{Note that if $t_p$ is attracting, the hyperbolic distance $d_{\D}(x, F(x))$ is uniformly bounded away from $0$.} by making $x$ close to $t_p^v$. 
Therefore, for any $\epsilon> 0$, there exists $x_n$ close to $t_p^v$ in $v$-coordinate so that $d_{\D}(x_n, f_n(x_n)) < \epsilon$ for all sufficiently large $n$.

On the other hand, let $c$ be the fixed Fatou point on $[\p, v)$.
Let $F_c$ be the rescaling limit at $c$.
Note that $\deg F_c = \delta(c) \geq 2$, as $c$ is a Fatou point.

We claim that $t_v^c:=\lim_c\phi_n(v)$ is a repelling fixed point of $F_c$.
There are two cases. If $c = \p$, then $F_c = F_\p$ has a fixed point at $0$, which is necessarily attracting as $\deg F_c \geq 2$. Thus, $t_v^c$ is repelling.
If $c \neq \p$, then by the same argument as for $t_p^v$, the point $t_p^c:= \lim_c\phi_n(\p) \in \mathbb{S}^1_c$ is a non-repelling fixed point of $F_c$.
Since $t_v^c \neq t_p^c$, it is repelling. This proves the claim.

Therefore for $y\in \D_c$ near $t_v^c$, $d_{\D}(y, F_c(y))\geq K$ for some $K$ depending on the multiplier of the repelling fixed point $t_v^c$ under $F_c$ and the angle $\angle F_c(y)yt_v^c$ is greater or equal to $\pi/2$.
Thus, for all sufficiently large $n$, there exists $y_n$ close to $t_v^c$ in $c$-coordinate so that $d_{\D}(y_n, f_n(y_n)) \geq K$, and the angle $\angle f(y_n)y_nx_n$ is greater or equal to $\pi/2$.
By choosing $\epsilon$ small enough, we have $d_{\D}(f_n(x_n), f_n(y_n)) > d_{\D}(x_n, y_n)$ which is a contradiction to Schwarz lemma.
\end{proof}

\begin{cor}\label{cor:nb}
If $v\neq \p$ is a periodic Julia point, and $[\p, v)$ contains a periodic Fatou point, then $v$ is not a branch point of $\mathcal{T}$.
\end{cor}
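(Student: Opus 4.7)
The plan is to argue by contradiction using the first-return rescaling limit $F := F^q_v$ at $v$, where $q$ denotes the period of $v$. The key preliminary observation is that since $v$ is a periodic \emph{Julia} point, no iterate $f^k(v)$ lies on a critical periodic orbit, so $\delta(f^k(v)) = 1$ for every $k \geq 0$. Consequently each rescaling limit $F_{f^k(v)}$ has degree one, and so does their composition $F$. In particular $F$ is a Möbius transformation preserving $\D_v$, and therefore a bijection of $\mathbb{S}^1_v$.

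Next, Lemma \ref{lem:h} furnishes an attracting fixed point of $F$ on $\mathbb{S}^1_v$. Combined with the fact that $F$ is a Möbius self-map of $\D_v$, this forces $F$ to be a hyperbolic isometry of $\Hyp^2$, with exactly two fixed points $t_1, t_2 \in \mathbb{S}^1_v$ and no other periodic points on $\mathbb{S}^1_v$ whatsoever.

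Now suppose toward a contradiction that $v$ is a branch point, so that $|T_v\mathcal{T}| \geq 3$. Because $f$ is simplicial and $v$ is periodic, $Df^q$ is a self-map of $T_v\mathcal{T}$, and by Lemma \ref{lem:ct} the finite set $A := \xi_v(T_v\mathcal{T}) \subseteq \mathbb{S}^1_v$ satisfies $F(A) \subseteq A$. Since $F$ is a bijection of $\mathbb{S}^1_v$ and $A$ is finite, the restriction $F|_A$ is an injection from a finite set into itself, hence a permutation of $A$, so every element of $A$ is periodic under $F$. But the only periodic points of $F$ on $\mathbb{S}^1_v$ are $t_1$ and $t_2$, which forces $|A| \leq 2$ and contradicts $|A| = |T_v\mathcal{T}| \geq 3$.

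The main obstacle I anticipate is verifying cleanly that $F$ really has degree one. This rests on two facts that are implicit but not foregrounded in the preceding discussion: that the degree of the rescaling limit at a vertex coincides with the combinatorial local degree $\delta$, and that degrees are multiplicative under the composition giving $F^q_v$. Both should follow from Proposition \ref{prop:ac1} together with the algebraic-limit construction of the rescaling maps, but this is the one place where the otherwise combinatorial argument must engage with the analytic side, and it is worth stating as a separate observation before running the contradiction above.
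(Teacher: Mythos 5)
Your proof is correct and takes essentially the same route as the paper: the paper's own argument likewise combines the degree-one first-return rescaling limit with the attracting boundary fixed point from Lemma \ref{lem:h} and the three or more tangent directions at a branch point, concluding that $F$ would have to be the identity, which is impossible. Your permutation argument on the finite invariant set $\xi_v(T_v\mathcal{T})$ is only a slightly more careful rendering of this step (it also handles the case where $Df^q$ permutes rather than fixes the branches), and the identification of the rescaling limit's degree with the local degree $\delta$, which you flag, is likewise taken for granted in the paper.
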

\begin{proof}
Suppose for contradiction that $v$ is a branch point, then the first return rescaling limit $F$ at $v$ is the identity map as it has degree $1$ and fixes three points on the circle.
This is a contradiction as $F$ has an attracting fixed point on the circle by Lemma \ref{lem:h}.
\end{proof}

We call a periodic Fatou point $v$ {\em parabolic} or {\em boundary-hyperbolic} if the first return rescaling limit is parabolic or boundary-hyperbolic.
If $\p$ is critical, then $[\p, v)$ contains a periodic Fatou point for any $v\neq \p$, so we have
\begin{cor}
If $\delta(\p) \geq 2$, then 
\begin{itemize}
\item every periodic branch point is a Fatou point;
\item every periodic Fatou point other than $\p$ is boundary-hyperbolic.
\end{itemize}
\end{cor}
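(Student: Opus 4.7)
The plan is to reduce the corollary to a direct application of Corollary \ref{cor:nb} and Lemma \ref{lem:h}, using only the observation that the hypothesis $\delta(\p)\geq 2$ promotes $\p$ itself to a periodic Fatou point lying at one end of every arc $[\p,v)$ in $\mathcal{T}$.

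For the first bullet, I would take $v$ to be a periodic branch point and split into cases. If $v=\p$, then $\p$ is critical and fixed, hence on a critical periodic orbit, so it is a Fatou point by definition. If $v\neq\p$, then $\p\in[\p,v)$, and $\p$ is a periodic Fatou point by the hypothesis $\delta(\p)\geq 2$. The contrapositive of Corollary \ref{cor:nb} then forbids $v$ from being a Julia point, so $v$ must be a Fatou point.

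For the second bullet, let $v\neq\p$ be a periodic Fatou point of period $q$. The same argument shows that $[\p,v)$ contains the periodic Fatou point $\p$, so Lemma \ref{lem:h} gives an attracting fixed point of the first return rescaling limit $F=F^{q}_{v}$ on $\mathbb{S}^{1}_{v}$. To conclude that $F$ is \emph{boundary-hyperbolic} rather than something degenerate, I would verify that $\deg F\geq 2$: since $v$ is a Fatou point, its cycle $v,f(v),\ldots,f^{q-1}(v)$ contains a vertex $w$ with $\delta(w)\geq 2$, and the anchored normalization convention makes each $F_{f^{i}(v)}$ a Blaschke product of degree $\delta(f^{i}(v))$, so $F$ has degree $\prod_{i}\delta(f^{i}(v))\geq 2$. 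A Blaschke product of degree at least two with an attracting fixed point on $\mathbb{S}^{1}$ is boundary-hyperbolic by the trichotomy stated at the beginning of the section, so $v$ is boundary-hyperbolic.

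The main obstacle, such as it is, is bookkeeping rather than any new idea. I need to be confident that the degree of the composed first return rescaling limit is truly $\prod_{i}\delta(f^{i}(v))$, which uses the anchored normalizations ensuring $F_{v}\in B_{\delta(v)}$ whenever $\delta(v)\geq 2$, and that the notion of an attracting fixed point on $\mathbb{S}^{1}_{v}$ produced by Lemma \ref{lem:h} matches the notion entering the trichotomy of Blaschke products. With these identifications in hand, the corollary follows formally from Corollary \ref{cor:nb} and Lemma \ref{lem:h}.
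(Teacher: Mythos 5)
Your argument is correct and is essentially the paper's own reasoning: since $\delta(\p)\geq 2$ makes $\p$ a periodic Fatou point lying on every arc $[\p,v)$, the first bullet is the contrapositive of Corollary \ref{cor:nb} (with the trivial case $v=\p$), and the second is Lemma \ref{lem:h} combined with the trichotomy for Blaschke products, exactly as the paper intends. Your extra check that the first return rescaling limit has degree $\geq 2$ (because the cycle of a Fatou point contains a critical vertex, whose rescaling limit inherits the clustered critical points) is a harmless refinement of what the paper leaves implicit, though the degree statement really comes from the cluster construction and Proposition \ref{prop:ac1}/Lemma \ref{lem:ac} rather than from the anchoring convention itself.
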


\section{Angled tree map}\label{sec:atm}
In this section, we introduce abstract angled tree maps that give combinatorial descriptions of the pointed quasi-invariant trees for $f_n\in \BP_d$.
The construction is similar to the angled Hubbard tree introduced in \cite{Poirier93} with two major differences:
\begin{itemize}
\item To work with quasi post-critically finite degenerations in $\BP_d$, the angled tree maps in our setting are simplicial and marked;
\item To capture the dynamics of parabolic or boundary-hyperbolic rescaling limits, the angles are allowed to be $0^\pm$.
\end{itemize}
We remark that unlike the angled Hubbard trees, the angle $0$ plays a special role in our setting.

Let $(\mathcal{T}, \p)$ be a pointed finite tree with a ribbon structure and vertex set $\mathcal{V}$.
Let $f: (\mathcal{T}, \p) \longrightarrow (\mathcal{T}, \p)$ be a simplicial map.
The tangent space at a vertex $v$ is identified with the set of incident edges to $v$ and is denoted by $T_v\mathcal{T}$.
We define the local degree function $\delta: \mathcal{V} \longrightarrow \Z_{\geq 1}$ which assigns an integer $\delta(v) \geq 1$ to each vertex $v\in \mathcal{V}$.
A vertex $v$ is said to be {\em critical} if $\delta(v) \geq 2$ and {\em simple} otherwise.
The degree of the map $f$ is defined by
$$
\deg(f):= 1+ \sum_{v\in \mathcal{V}} (\delta(v)-1),
$$
and we always assume $\deg(f) \geq 2$.
We also assume that $\mathcal{T}$ is {\em non-trivial}, i.e. it contains more than one point.
We say $f: (\mathcal{T}, \p) \longrightarrow (\mathcal{T}, \p)$ is {\em minimal} if $\mathcal{T}$ is the convex hull of critical orbits and $\mathcal{V}$ is the smallest set containing critical orbits such that $f$ is simplicial.

\subsection*{Angle structure on $\mathcal{T}$}
We identify $\mathbb{S}^1 = \R/\Z$, and $m_d: \mathbb{S}^1\longrightarrow \mathbb{S}^1$ is the multiplication by $d$ map.
By our convention, $m_1$ is the identity map.
For $d\geq 2$, $m_d$ gives a topological model of the dynamics on the Julia set of a degree $d$ hyperbolic and a doubly parabolic Blaschke product.

To set up a framework that also works for singly parabolic or boundary-hyperbolic Blaschke products uniformly, we consider an extended circle $\mathbb{S}^{1}_{d}$, which is naturally regarded as {\em cyclically ordered set} (see \cite[\S 2]{McM09}).
As a set, 
$\mathbb{S}^{1}_{d}$ is constructed from $\mathbb{S}^1$ by adding (formal symbols) $x^-, x^+$ for any point $x$ in the backward orbit of $0$ under $m_d$ for $d\geq 2$.
The cyclic ordering on $\mathbb{S}^{1}_{d}$ is defined so that $x^-$ (or $x^+$) is regarded as a point infinitesimally smaller than $x$ (or bigger than $x$ respectively) in the standard identification of $\mathbb{S}^1 = \R/\Z$.
We use the convention that $\mathbb{S}^1_1 = \mathbb{S}^1$.

Given any integer $k \geq 1$, the map $m_k$ naturally extends to
$m_k: \mathbb{S}^1_{d} \longrightarrow \mathbb{S}^1_{d}$ 
by setting
$m_k (x^\pm) = m_k(x)^\pm$.
This is well-defined as if $x$ is in the backward orbit of $0$ under $m_d$, $m_k(x)$ is also in the backward orbit of $0$ under $m_d$.

If $f$ is a degree $d$ boundary-hyperbolic Blaschke product, i.e., $f$ has an attracting fixed point $a$ on the circle, 
the Julia set $J$ of $f$ is a Cantor set on $\mathbb{S}^1$.
The complement $\mathbb{S}^1 - J$ consists of countably many intervals, which are all eventually mapped to the unique interval $I \subseteq \mathbb{S}^1$ that contains the attracting fixed point $a$. The boundary $\partial I$ consists of two repelling fixed points of $f$.
Let $\mathcal{O}(a)$ be the backward orbit of the attracting fixed point $a$.
Then there exists bijective map $\eta_f: \mathbb{S}^{1}_{d} \longrightarrow J(f) \cup \mathcal{O}(a)$ which preserves the cyclic ordering so that
$$
f \circ \eta_f = \eta_f \circ m_d.
$$
Note that $\eta_f(0) = a$, and $\eta_f(0^\pm) = \partial I$.

\begin{figure}[ht]
  \centering
  \resizebox{0.45\linewidth}{!}{
    \def\svgwidth{\columnwidth}
    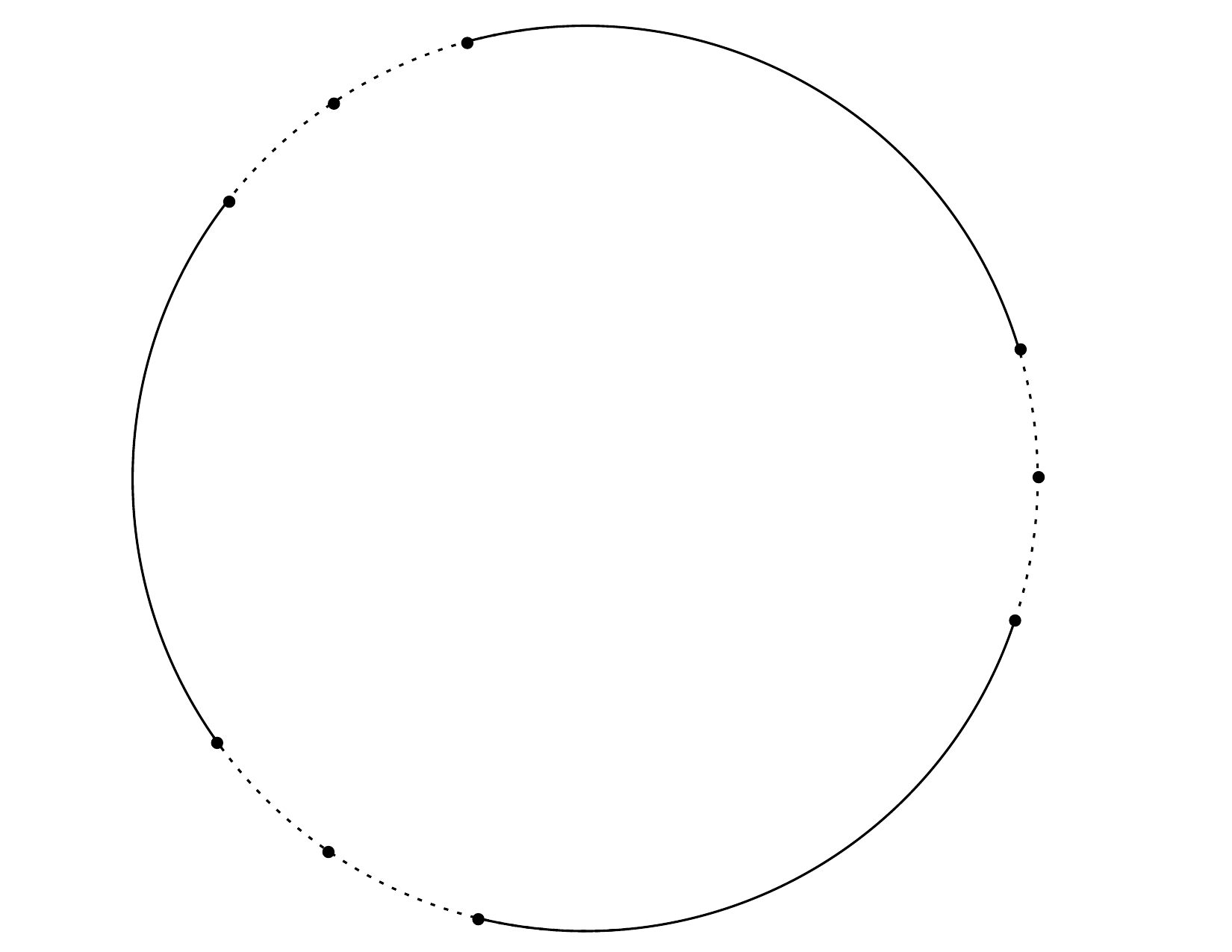

  }
  \caption{An illustration of the conjugacy $\eta_f$ for a degree $3$ boundary-hyperbolic Blaschke product. The Julia set is a Cantor set, constructed by removing the backward orbits of the interval $I$.}
  \label{fig:BHM}
\end{figure}

To model the dynamics of the pullbacks, we construct $\mathbb{S}^{1}_{d,D}$ by adding $x^-, x^+$ to $\mathbb{S}^1$ if $m_D(x)$ is in the backward orbit of $0$ under $m_d$, and the cyclic ordering is constructed in the same way.
Note that by this construction, $m_D:\mathbb{S}^{1}_{d,DD'} \longrightarrow \mathbb{S}^{1}_{d,D'}$ is a degree $D$ covering between cyclically ordered sets (see \cite[\S 2]{McM09} for detailed definitions).
Note that $\mathbb{S}^{1}_{d,1} = \mathbb{S}^{1}_{d}$.
We remark that the intervals can be defined naturally for $\mathbb{S}^{1}_{d,D}$, and we denote them by $[a,b], (a,b], [a,b), (a,b)$ with appropriate boundary points removed.

If $v \in \mathcal{V}$ has pre-period $l$ and period $q$.
We define the {\em cumulative degree}
$$
\Delta(v) = \delta(f^l(v))\delta(f^{l+1}(v))...\delta(f^{l+q-1}(v)),
$$
and the {\em cumulative pre-periodic degree}
$$
\Delta_{pre}(v) = \delta(v)\delta(f(v))...\delta(f^{l-1}(v)).
$$
We use the convention that $\Delta_{pre}(v) = 1$ for all periodic vertices.

We attach an extended circle $\mathbb{S}^1_{\Delta(v), \Delta_{pre}(v)}$ to a vertex $v \in \mathcal{V}$. 
We define an {\em angle function} $\alpha$ at $v$ as an injective map
$$
\alpha_v: T_v\mathcal{T}\hookrightarrow \mathbb{S}^1_{\Delta(v), \Delta_{pre}(v)}.
$$
We say $\alpha$ is {\em regular} at $v$ if $\alpha_v(T_v\mathcal{T}) \subseteq \mathbb{S}^1 \subseteq \mathbb{S}^1_{\Delta(v), \Delta_{pre}(v)}$.

\begin{defn}\label{defn:comp}
We say an angle function $\alpha$ is {\em compatible} if for any $v \in \mathcal{V}$,
\begin{enumerate}
\item $\alpha_v$ is {\em cyclically compatible}: if $x_1, x_2, x_3 \in T_v\mathcal{T}$ are clockwise oriented, then $\alpha_v(x_1), \alpha_v(x_2), \alpha_v(x_3)$ are also clockwise oriented; 
\item $\alpha_v$ is {\em dynamically compatible}:
\begin{itemize}
\item if $v= \p$ and $\delta(\p) = 1$, then there exists a rigid rotation $R$, which is necessarily a rational rotation, so that
$
R \circ \alpha_{\p} = \alpha_{\p} \circ Df|_{T_\p\mathcal{T}}
$;
\item otherwise, 
$
m_{\delta(v)} \circ \alpha_v = \alpha_{f(v)} \circ Df|_{T_v\mathcal{T}}
$;
\end{itemize}

\item $\alpha_v$ is {\em $\p$-compatible}: 
if $v \neq \p$ is periodic and $x \in T_v\mathcal{T}$ is the tangent vector in the direction of $\p$, then $\alpha_v(x) = 0$.
\end{enumerate}
\end{defn}

We remark that if $v\neq \p$ is a periodic point of period $q$, and $x\in T_v\mathcal{T}$ is in the direction of $\p$, then $D_vf^q(x) = x$ as $f$ is simplicial.
Thus condition (3) is compatible with the dynamics.

\begin{defn}
An {\em angled tree map} is a triple 
$$
(f: (\mathcal{T}, \p) \rightarrow (\mathcal{T}, \p), \delta, \alpha = \{\alpha_v\})
$$ 
of a simplicial map on a pointed finite ribbon tree together with a local degree function $\delta$ and a compatible angle function $\alpha$ which is regular at $p$.
\end{defn}

We shall use $f: (\mathcal{T}, \p) \rightarrow (\mathcal{T}, \p)$ or simply $\mathcal{T}$ to denote an angled tree map if the dynamics, local degree function and angle function are not ambiguous.

\subsection*{Pullback of an angled tree map}
Given an angled tree map $f: (\mathcal{T}, \p) \rightarrow (\mathcal{T}, \p)$, one can naturally construct a new angled tree map by pulling back by the dynamics.
More precisely, take $v\in \mathcal{V}$ and a tangent direction $a\in T_{f(v)}\mathcal{T}$.
Let $S_a$ be the component of $\mathcal{T}-\{f(v)\}$ corresponding to the direction $a\in T_{f(v)}\mathcal{T}$.
Let $B \subseteq \mathbb{S}^1_{\Delta(v), \Delta_{pre}(v)}$ be the preimage of $\alpha(a) \in \mathbb{S}^1_{\Delta(f(v)),\Delta_{pre}(f(v))}$ under the corresponding map from $\mathbb{S}^1_{\Delta(v), \Delta_{pre}(v)}$ to $\mathbb{S}^1_{\Delta(f(v)),\Delta_{pre}(f(v))}$.
We attach a copy of $S_a$ at every point in $B- \alpha(T_v\mathcal{T})$.
The dynamics extend naturally to the new copies by identifications, and the angular structures are defined by pulling back with the identity map $m_1$.

Let $\mathcal{T}^1$ be the angled tree constructed from $\mathcal{T}$ by running the above algorithm for all vertices $v$ and all tangent directions $a\in T_w\mathcal{T}$.
We call 
$$
f:(\mathcal{T}^1, \p) \longrightarrow (\mathcal{T}^0, \p) := (\mathcal{T}, \p)\subseteq (\mathcal{T}^1, \p)
$$ 
the {\em (first) pullback} of $f: (\mathcal{T}, \p) \longrightarrow (\mathcal{T}, \p)$.

Note that each vertex $v \in \mathcal{T}^0$ has exactly $d$ preimages in $\mathcal{T}^1$ counted with multiplicity, and the inclusion map $i: \mathcal{T}^0 \longrightarrow \mathcal{T}^1$ is simplicial.
Also note that the map $f: (\mathcal{T}^1, \p) \longrightarrow (\mathcal{T}^1, \p)$ is no longer minimal.

This construction can be iterated.
We denote the $k$-th pullback by 
$f:(\mathcal{T}^k, \p) \longrightarrow (\mathcal{T}^{k}, \p)$, and the union of the pullbacks by $f:(\mathcal{T}^\infty, \p) \longrightarrow (\mathcal{T}^\infty, \p)$.

\subsection*{External angles, markings and anchored conditions}
Similar to the abstract Hubbard trees (see \cite[Chapter III \S 4]{Poirier93}), external angles can be defined using the dynamics on $\mathcal{T}$.
They can be described as follows.

Let $d = \deg(f)$ be the degree of $f: \mathcal{T} \longrightarrow \mathcal{T}$.
Let $\mathcal{T}^\infty$ be the union of pullbacks of $\mathcal{T}$.
Let $\epsilon(\mathcal{T}^\infty)$ be the set of ends of $\mathcal{T}^\infty$.
Note that the ribbon structure of $\mathcal{T}^\infty$ makes $\epsilon(\mathcal{T}^\infty)$ a cyclically ordered set.
Since $\mathcal{T}$ is non-trivial, the set of ends $\epsilon(\mathcal{T}^\infty)$ is infinite.
The simplicial map $f: \mathcal{T}^\infty \longrightarrow \mathcal{T}^\infty$ induces a map $f_*: \epsilon(\mathcal{T}^\infty) \longrightarrow \epsilon(\mathcal{T}^\infty)$ (cf. \cite[\S 3]{McM09}).
There is a natural cyclical order-preserving map
$$
\phi: \epsilon(\mathcal{T}^\infty) \longrightarrow \mathbb{S}^1 = \R/\Z
$$
that transports the dynamics of $f_*$ on $\epsilon(\mathcal{T}^\infty)$ to the dynamics of $m_d$ on $\R/\Z$ (cf. \cite[\S 6]{McM09}).
We call $\phi$ an {\em external angle marking}, or simply a {\em marking}.
Since the image $\phi(\epsilon(\mathcal{T}^\infty))$ is invariant under $m_d^{-1}$, it is necessarily dense.
We remark that there are $d-1$ choices of markings, and any two markings are related by post-composition with an element of the automorphism group $\Z/(d-1)$ of $m_d$.
An angled tree map together with a marking is called a {\em marked angled tree map}.

Note that the $\p$-compatible condition in Definition \ref{defn:comp} gives the unique normalization of angle functions at a periodic vertex $v \neq \p$.
Given a marking $\phi$ on an angled tree map $\mathcal{T}$, we can normalize the angle function at all other vertices with the following anchored convention, which is an analogue of Definition \ref{defn:anchored} and Definition \ref{defn:mcpc}.

We remark that $0$ may not be in the image $\phi(\epsilon(\mathcal{T}^\infty))$.
But since $\phi(\epsilon(\mathcal{T}^\infty))$ is dense in $\R/\Z$, there exists a sequence $\gamma_n \in \epsilon(\mathcal{T}^\infty)$ with $\phi(\gamma_n) \to 0^-$, i.e., $\phi(\gamma_n)$ approaches $0$ from below.
We represent each end $\gamma_n$ by a path in $\mathcal{T}^\infty$ that starts at $p$.
Since $\alpha$ is regular at $p$, so $\alpha_p$ extends to
$$
\alpha_\p: T_\p \mathcal{T}^\infty \longrightarrow \mathbb{S}^1 \subseteq \mathbb{S}^1_{\Delta(p)}.
$$
Since each path $\gamma_n$ gives a tangent vector $x_n \in T_\p \mathcal{T}^\infty$, we can associate an internal angle $t_n := \alpha_\p(x_n)$ to $\gamma_n$.
With the standard topology on $\mathbb{S}^1$, one can verify that the limit of $t_n$ exists and is independent of the sequence $\gamma_n$ that we choose.
We call $t_p:= \lim t_n \in \mathbb{S}^1$ the internal angle at $p$ with respect to the marking (see Figure \ref{fig:PullBack}).

\begin{figure}[ht]
  \centering
  \resizebox{0.8\linewidth}{!}{
    \def\svgwidth{\columnwidth}
    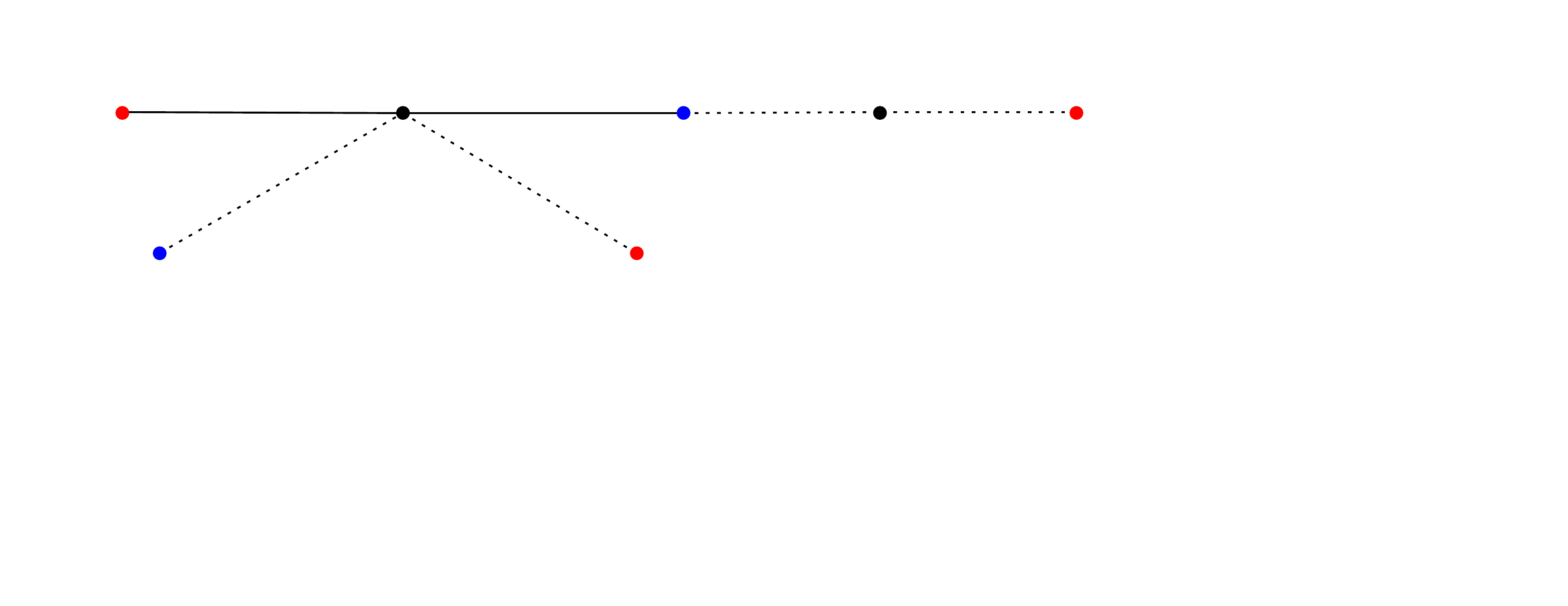

  }
  \caption{Vertices in $\mathcal{T}^1$ is sent to vertices in $\mathcal{T} = \mathcal{T}^0$ with the same color. This degree $3$ angled tree map has $2$ markings $\phi_A, \phi_B$. The corresponding ends of $0$-external angle are illustrated by the two arrows. Note that $0\in \R/\Z$ is in the image of $\phi_A$ but not in the image of $\phi_B$. The corresponding internal angles at $p$ with respect to $\phi_A$ and $\phi_B$ are $t_{p,A} = 1/3$ and $t_{p, B} = 0$.
   }
  \label{fig:PullBack}
\end{figure}

\begin{defn}\label{defn:angt}
Let $(f: (\mathcal{T}, \p) \rightarrow (\mathcal{T}, \p), \delta, \alpha)$ be a marked angled tree map.
Let $t_p$ be the internal angle at $p$ with respect to the marking.
It is said to be {\em anchored} if the followings hold:
\begin{itemize}
\item if $\delta(\p) = 1$, then $t_p = 0$;
\item if $\delta(\p) \geq 2$, then $t_p \in (-\frac{1}{\Delta(\p)-1},0]$.
\item if $v \neq \p$ is strictly pre-periodic and $x \in T_v\mathcal{T}$ is the tangent vector in the direction of $\p$, then $\alpha_v(x) \in (-\frac{1}{\delta(v)},0]$.
\end{itemize}
\end{defn}

We remark that by post-composing the angled functions with rotations that keep the compatibility condition, one can always make the angled tree map anchored.
Throughout this paper, we shall always assume that an angled tree map is marked and the angle functions are anchored.

\subsection*{Realizing angled tree maps}
Let $f_n\in \BP_d$ be a quasi post-critically finite sequence. 
By the discussion in \S \ref{sec: gfb}, we have an induced (marked) simplicial map $f: (\mathcal{T}, \p) \longrightarrow (\mathcal{T}, \p)$ with a local degree function $\delta$ modeling the dynamics of the critical orbits.
By construction, $f$ is minimal. 
The cumulative degree and cumulative pre-periodic degree for $v$ are defined similarly. 

To define the angle functions, we assign the angle $0 \in \mathbb{S}^1_{\Delta(v), \Delta_{pre}(v)}$ to the marked point $t_v \in \mathbb{S}^1_v$ (see Definition \ref{defn:mcpc}).
The angles are then determined by the conjugacy of the tangent map with the rescaling limit (see Lemma \ref{lem:ct}).
We remark that the angles assigned at periodic Julia points $v$ are rather artificial.
We use the convention to assign the angles $\frac{i}{\nu}, i=0,..., \nu-1$ according to their cyclic order where $\nu$ is the valence at $v$.

The above construction gives an angled tree map.
An angled tree map $(f: (\mathcal{T}, \p) \rightarrow (\mathcal{T}, \p), \delta, \alpha)$ that arises in this way is said to be {\em realized} by $f_n \in \BP_d$.
Let $\mathcal{T}_n$ be the sequence of quasi-invariant trees for $f_n$.
Since we will be working with pullbacks, we shall say $\mathcal{T}_n$ {\em realizes} the angled tree map $\mathcal{T}$ or the angled tree map $\mathcal{T}$ is {\em realized} by $\mathcal{T}_n$ when the underlying maps are not ambiguous.
We can associate an angled tree map for the $k$-th pullback $\mathcal{T}^k_n$ of the quasi-invariant tree $\mathcal{T}_n$.
One can verify that this angled tree map is the $k$-th pullback of the angled tree map $\mathcal{T}$.
We shall say $\mathcal{T}^k_n$ realizes $\mathcal{T}^k$.

\subsection*{Admissible angled tree maps}
Let $f:(\mathcal{T}, \p) \longrightarrow (\mathcal{T}, \p)$ be an angled tree map, we now describe a sufficient condition for realization. 
A periodic vertex $v$ is said to be {\em attached to $p$} if $[\p, v)$ contains no Fatou point.
Here $[\p,v)$ is the path in $\mathcal{T}$ that connects $\p$ and $v$ with the boundary point $v$ removed.
The {\em core} $\mathcal{T}^{C}$ of an angled tree map is defined as the convex hull of all periodic vertices attached to $p$.
Since $f$ is simplicial, any vertex in $\mathcal{T}^C$ is periodic.
Note that if $\delta(\p) \geq 2$, then $\mathcal{T}^C = \{\p\}$.

A vertex is said to be an end if the valence at $v$ is $1$.
We say a tree $T$ is {\em star-shaped} if there exists a unique vertex in $T$ that is not an end.
The unique vertex is called the {\em center} for the star-shaped tree.
For $k\geq 2$, we call a star-shaped tree with $k$ ends a {\em $k$-star}.
By our definition, a tree with a single vertex is also star-shaped with no end.

\begin{defn}
The core $\mathcal{T}^C$ is said to be {\em critically star-shaped} if
\begin{itemize}
\item $\mathcal{T}^C$ is star-shaped with center $\p$;
\item every end of $\mathcal{T}^C$ is a periodic Fatou point;
\item for any vertex $v \in \mathcal{T}^C$, the angle function $\alpha_v$ is regular at $v$.
\end{itemize}
\end{defn}

If $\delta(\p) \geq 2$, then $\mathcal{T}^C = \{\p\}$ and the conditions are trivially satisfied.
If $\delta(\p) =1$, these conditions give a way to `normalize' the dynamics at $\p$.

\begin{defn}\label{defn:adm}
An angled tree map $f:(\mathcal{T},\p) \longrightarrow (\mathcal{T}, \p)$ is said to be {\em admissible} if
\begin{itemize}
\item the core $\mathcal{T}^C$ is critically star-shaped;
\item every periodic branch point other than $\p$ is a Fatou point.
\end{itemize}
\end{defn}

Assume $\mathcal{T}^C$ is critically star-shaped, by Lemma \ref{cor:nb}, the second condition is necessary for realization.
Thus for $\delta(\p) \geq 2$, the admissible condition is a necessary condition for realization.
In \S \ref{sec:BPH}, we introduce a notion of admissible splitting of a simplicial pointed Hubbard tree.
We shall see that this produces an admissible angled tree map (see Proposition \ref{prop:eqaht}).

\section{Realizing admissible angled tree map}\label{sec:raatm}
In this section, we shall prove the following theorem:
\begin{theorem}\label{thm:ar}
If a minimal angled tree map $f:(\mathcal{T},\p) \longrightarrow (\mathcal{T}, \p)$ is admissible, then it is realizable.
\end{theorem}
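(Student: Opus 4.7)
The plan is to build the sequence $f_n \in \BP_d$ directly by a quasi-conformal plumbing construction that glues explicit local models, one per vertex of $\mathcal{T}$, along shrinking hyperbolic annuli, and then to verify that this sequence realizes the prescribed data. Admissibility will enter at every step as the structural hypothesis that lets the pieces fit together.

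\textbf{Step 1 (Local models).} For each $v\in\mathcal{V}$, I would first build a proper holomorphic map $F_v:\D\to\D$ of degree $\delta(v)$ whose combinatorics match $(\delta_v,\alpha_v)$ and the cyclic ordering from the Ribbon structure on $T_v\mathcal{T}$. At $\p$ with $\delta(\p)\geq 2$, admissibility gives $\alpha_\p(T_\p\mathcal{T})\subseteq\mathbb{S}^1\subseteq\mathbb{S}^1_{\Delta(\p)}$, so I can take $F_\p\in\BP_{\delta(\p)}$ with attracting fixed point at $0$ and the prescribed $\delta(\p)-1$ repelling fixed points on $\mathbb{S}^1$. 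At other periodic Fatou vertices (which the second clause of admissibility forces at every periodic branch point, and the star-shaped condition forces at every end of $\mathcal{T}^C$), I choose boundary-hyperbolic or doubly parabolic Blaschke products whose multipliers and marked boundary fixed point are read off from $\alpha_v$; the symbols $x^\pm\in\mathbb{S}^1_{\Delta(v),\Delta_{pre}(v)}$ are exactly what record the appearance of attracting/repelling boundary fixed points. At periodic Julia vertices and aperiodic simple vertices, $F_v$ is a Möbius transformation with the fixed-point trace dictated by $\alpha_v$; here Corollary \ref{cor:nb} is used in reverse: admissibility forbids the problematic case of a degree-$1$ rescaling limit at a Julia branch point.

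\textbf{Step 2 (Plumbing along the tree).} Next pick a scale $R_n\to\infty$ and Möbius embeddings $M_{v,n}\in\Isom(\Hyp^2)$ with $\phi_n(v):=M_{v,n}(0)$ placed at pairwise hyperbolic distance $\asymp R_n$ in a configuration realizing the Ribbon structure of $\mathcal{T}$. The rotational ambiguity of $M_{v,n}$ is fixed by requiring each adjacent-vertex direction in $T_v\mathcal{T}$ to point toward the corresponding angle of $\alpha_v$, as in Definition \ref{defn:anchored}. On a bounded hyperbolic neighborhood $U_{v,n}$ of $\phi_n(v)$ I set the candidate map equal to $M_{f(v),n}\circ F_v\circ M_{v,n}^{-1}$. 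In the transition annuli between $U_{v,n}$ and $U_{w,n}$ for adjacent $v,w$, I interpolate by a quasi-regular map that matches the two local models near each end; the matching of boundary multipliers prescribed by $\alpha_v$ and $\alpha_w$ is what makes such an interpolation possible with uniformly bounded dilatation supported on annuli of bounded modulus. Straightening with the measurable Riemann mapping theorem and normalizing produces a genuine $f_n\in\BP_d$ whose restriction near each $\phi_n(v)$ converges, in the rescaling limit, back to $F_v$.

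\textbf{Step 3 (Identification with $\mathcal{T}$).} By construction, the critical points of $f_n$ are conjugates of those of the $F_v$'s, so they sit within bounded hyperbolic distance of the vertices $\phi_n(v)$ for critical $v$. The periodic-orbit relations of $F_v$ on the boundary translate into the quasi-periodic relations $d_{\Hyp^2}(f_n^{l_i+q_i}(c_{i,n}),f_n^{l_i}(c_{i,n}))\leq K$ with a uniform $K$, i.e., the sequence $f_n$ is $K$-quasi post-critically finite. Applying Theorem \ref{thm:qit} and passing to a subsequence, I obtain a quasi-invariant tree whose vertex set, simplicial dynamics, rescaling limits, and anchored markings are forced by the plumbing data to coincide with $(\mathcal{T},\p,f,\{F_v\},\{\alpha_v\})$; Proposition \ref{prop:al} is used to check that the external marking of $\BP_d$ agrees with the marking on $\mathcal{T}$.

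\textbf{Main obstacle.} The hard part is the plumbing of Step 2: the boundary dynamics of the local models $F_v$ must align across each transition annulus so that a quasi-regular gluing of uniformly bounded dilatation exists, and the admissibility hypothesis is precisely what rules out the obstructions. The critically star-shaped core permits the placement of repelling boundary fixed points at $\p$ with the correct cyclic order; the Fatou-branch-point condition rules out the obstruction of Lemma \ref{lem:h}/Corollary \ref{cor:nb}, in which a Julia branch point would demand a degree-$1$ first-return rescaling limit that fixes three or more points of $\mathbb{S}^1$ and is therefore the identity. Keeping track of the extended circle $\mathbb{S}^1_{\Delta(v),\Delta_{pre}(v)}$ through the gluing, and checking that angles $x^\pm$ correctly correspond to the Fatou components of boundary-hyperbolic rescaling limits along the pull-back tower $\mathcal{T}^k$, is the technical bookkeeping that makes this step lengthy, but it is localized to each vertex's contribution.
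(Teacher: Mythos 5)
Your route (build local models at every vertex, plumb them together by a quasi-regular interpolation in the transition regions, and straighten by the measurable Riemann mapping theorem) is genuinely different from the paper's proof, which proceeds by induction on the degree: one removes a critically furthest preimage of $\p$ to form a reduction of degree $d$, realizes it by $g_n\in\BP_d$, and then multiplies in a single carefully placed zero, $f_n=\frac{-1}{\hat w_n}A_n(z)\tilde f_n(z)$, controlling the perturbation by explicit hyperbolic estimates (Lemmas \ref{lem:he}--\ref{lem:sep}, the radius function and Lemma \ref{lem:rb}), with a quasiconformal surgery only at the very end and only at aperiodic vertices (Lemma \ref{lem:mfp}). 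The difference matters, because the two analytic pillars of your Step 2 are exactly the points that fail as stated. First, the region between $U_{v,n}$ and $U_{w,n}$ for adjacent $v,w$ is not an annulus of bounded modulus: it is a tube of hyperbolic length $\asymp R_n\to\infty$ (indeed topologically a disk inside $\D$), and seen from the scale of $v$ the model $F_w$ degenerates to a constant on $\mathbb{S}^1_v$. So ``matching boundary multipliers'' does not by itself produce a quasi-regular interpolation with uniformly bounded dilatation; making the two definitions agree to bounded error across an unboundedly long transition is precisely the hard estimate, and it is what the paper's Lemmas \ref{lem:em}, \ref{lem:aem} and the inequality $r(v)+r(f(v))\leq r_f$ are engineered to deliver for the one new zero per induction step. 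Your proposal asserts this step rather than proving it.

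Second, and more decisively, the straightening step is not available in the form you use it. To replace the quasi-regular map by a holomorphic $f_n\in\BP_d$ with uniform control, you need an invariant Beltrami differential of uniformly bounded dilatation, which requires that almost every orbit meets the support of the non-conformal modification a bounded number of times (uniformly in $n$). Your dilatation is supported in the transition regions along \emph{every} edge, including periodic edges; but by quasi-invariance the dynamics moves points of a periodic edge by a bounded hyperbolic amount per iterate, so as $R_n\to\infty$ orbits linger in, or re-enter, those transition regions for an unbounded number of iterates, and the pulled-back dilatation is not uniformly bounded. This is exactly why the paper's own surgery in Lemma \ref{lem:mfp} modifies the map only near aperiodic vertices and verifies explicitly that ``orbits can pass through $M_{v,n}(V_n-\overline U)$ at most once.'' Without uniform dilatation bounds you also lose the uniform quasi-isometry constants needed in your Step 3 to conclude that the straightened sequence is $K$-quasi post-critically finite and that Theorem \ref{thm:qit} returns the prescribed tree, rescaling limits and markings. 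So as written the proposal has a genuine gap at its central step; to repair it you would either have to confine all non-conformal modification to regions that orbits cross boundedly often (which essentially forces an inductive or edge-by-edge scheme like the paper's), or prove the long-tube interpolation estimates directly, which is the content the paper supplies by its explicit hyperbolic computations.
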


The proof is by induction.
We start by showing any degree $2$ admissible angled tree map is realizable.
We then show we can build a degree $d+1$ admissible angled tree map from a degree $d$ one.
By the induction hypothesis, the degree $d$ admissible angled tree map is realizable by a sequence of $g_n \in \BP_d$.
The induction step is proved by carefully adding a zero of the Blaschke product. 
More precisely, we construct a sequence $f_n = e^{i\theta_n}\frac{z-a_n}{1-\overline{a_n} z}\cdot g_n$ where $\theta_n$ and $a_n$ are chosen carefully, and we show the corresponding angled tree map is the desired one.

For our purposes, it is convenient to define the distance $d_\mathcal{T}(v,w)$ between two vertices in $\mathcal{T}$ as the number of edges in the shortest path between $v$ and $w$.
Since $f$ is simplicial, $f$ is distance non-increasing with respect to this metric $d_\mathcal{T}$.

\subsection*{More precise statement}
A degree $d$ proper holomorphic map $f: \D\longrightarrow \D$ is said to be 
\begin{itemize}
\item {\bf \em 1-anchored} if $f(1) = 1$;
\item {\bf \em fixed point centered} if $f(0) = 0$;
\item {\bf \em zeros centered} if the sum
$$
a_1+... + a_d
$$
of the points of $f^{-1}(0)$ (counted with multiplicity) is equal to $0$;
\item {\bf \em $M$-uni-critical} if the critical points are all within $M$ hyperbolic distance from $0$.
\end{itemize}

Take $f\in \BP_d$.
By conjugating with a rotation fixing $0$ that sends $\eta_f(0)$ to $1$, we can canonically identify $f$ with a 1-anchored, fixed point centered Blaschke product.
The following definition can be found in \cite[\S 4]{Milnor12}:
\begin{defn}\label{defn:nms}
Let $\mathcal{S} \subseteq \mathcal{V}$ be an invariant subset. 
We associate a copy of the disk $\D_v$ for each $v\in \mathcal{S}$, and we define the {\em normalized mapping scheme} $\mathcal{F}$ for $\mathcal{S}$ as a collection of 1-anchored proper holomorphic maps 
$$
\mathcal{F}_v: \D_v\longrightarrow \D_{f(v)}
$$ 
of degree $\delta(v)$, which are either fixed point centered or zeros centered according as $v$ is periodic or strictly pre-periodic under $f: \mathcal{S} \rightarrow \mathcal{S}$.
\end{defn}

Denote $\mathcal{S}_\p \subseteq \mathcal{V}$ as the backward orbit of $\p$.
If $\delta(\p) \geq 2$, the rescaling limits give a normalized mapping scheme on $\mathcal{S}_\p$.
Since the rescaling limits are marked by the anchored convention (see Definition \ref{defn:mcpc}),
we say conjugacies $\phi_v$ between the rescaling limits and mapping schemes have {\em compatible markings} if $\phi_v(t_v) = 1$.
We prove the following more precise and technical statement which immediately implies Theorem \ref{thm:ar}:
\begin{prop}\label{prop:rt}
Let $f:(\mathcal{T},\p) \longrightarrow (\mathcal{T}, \p)$ be a minimal admissible angled tree map of degree $d$.
In the case when $\delta(\p) \geq 2$, we let $\mathcal{F}$ be a normalized mapping scheme on $\mathcal{S}_\p$.
There exists a $K$-quasi post-critically finite sequence $f_n \in \BP_d$ realizing the angled tree map such that
\begin{enumerate}
\item the rescaling limits on $\mathcal{S}_\p$ are conjugate to $\mathcal{F}$ with compatible markings;
\item there exists a constant $M$ depending only on the tree map $f$ (and thus independent of $\mathcal{F}$) so that 
\begin{enumerate}
\item[(a)] for any periodic cycle $\mathcal{C}$ other than $\p$, there exists a periodic point $v \in \mathcal{C}$ so that the first return rescaling limit 
$$
F^q_v: \D_v\longrightarrow \D_v
$$ 
is $M$-uni-critical, where $q$ is the period of $v$;
\item[(b)] for any strictly pre-periodic vertex $w \notin \mathcal{S}_\p$, the rescaling limit $F_w: \D_w \longrightarrow \D_{f(w)}$ is $M$-uni-critical and the critical values are within $M$ hyperbolic distance from $0 \in \D_{f(w)}$.
\end{enumerate}
\end{enumerate}
\end{prop}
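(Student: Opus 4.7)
The plan is to proceed by induction on the degree $d \geq 2$, following the strategy indicated by the outline: reduce a degree $d+1$ admissible angled tree map to a degree $d$ admissible angled tree map by decrementing the local degree at a carefully chosen critical vertex, realize the reduced map by the inductive hypothesis, and multiply the realizing sequence by a single Blaschke factor whose zero is steered to $\mathbb{S}^1$ in a prescribed way. The realizing sequence takes the form
$$
f_n(z) \;=\; e^{i\theta_n}\,\frac{z-a_n}{1-\overline{a_n}\, z}\cdot g_n(z) \;\in\; \BP_{d+1},
$$
where $g_n \in \BP_d$ realizes the reduced tree map and $(\theta_n, a_n)$ are tuned to reinstate the missing critical orbit.

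For the base case $d = 2$, admissibility forces either $\delta(\p) = 2$ with $\mathcal{T} = \{\p\}$, which is realized by any sequence in $\BP_2$ whose rescaling limit at the origin is conjugate to the prescribed scheme $\mathcal{F}_\p$; or $\delta(\p) = 1$ with $\mathcal{T}$ equal to the orbit of a single critical vertex. In the latter case one uses the one-parameter family $f_a(z) = z\cdot (z-a)/(1-\bar a z)$, choosing $a_n \to e^{2\pi i \sigma}$ radially along the direction $\sigma$ prescribed by the angle function at $\p$, with $1 - |a_n|$ tuned so that the quasi pre-period and quasi period of the critical orbit match the combinatorics of $f$. For the inductive step, I will select a critical vertex $v_0 \ne \p$ of largest pre-period lying on an outermost branch of $\mathcal{T}$; decrementing $\delta(v_0)$ by one and passing to the minimal subtree yields an admissible angled tree map $\tilde f$ of degree $d$, since the core $\mathcal{T}^C$ is untouched and $v_0$ is not a periodic branch point other than $\p$. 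The inductive hypothesis furnishes $g_n \in \BP_d$ realizing $\tilde f$. I will then steer $a_n \to e^{2\pi i \sigma_0}$ along the tangent direction in $\tilde{\mathcal{T}}$ where the missing branch of $\mathcal{T}$ must re-attach. By Lemma \ref{lem:critc} and Lemma \ref{lem:ac}, this produces a new critical point of $f_n$ accumulating at the hole $e^{2\pi i \sigma_0}$, while away from the hole $f_n$ converges uniformly to a unimodular M\"obius factor times the algebraic limit of $g_n$. The precise parameters $(\theta_n, 1-|a_n|)$ are then fixed by a two-parameter continuity/winding argument: as they range in a punctured neighborhood of their limits, the pre-period and period data of the new critical orbit sweep through all admissible patterns, and one selects parameters matching $f$.

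The main obstacle is the uniformity of the constant $M$ in conclusion (2), independent of the prescribed mapping scheme $\mathcal{F}$. At vertices $w$ whose anchored coordinates $M_{w,n}$ do not drift toward $e^{2\pi i \sigma_0}$, the extra Blaschke factor converges to a bounded M\"obius transformation in the $w$-rescaling picture, and so the inductive $\tilde M$-uni-criticality of $g_n$ passes to $f_n$ up to an additive $O(1)$ loss. At vertices arising from the newly reinstated critical orbit, uni-criticality must be built in directly: radial approach of $a_n$ aligns the new zero with the target vertex, while Theorem \ref{thm:almostisometry} provides the hyperbolic near-isometry estimates along the quasi-invariant tree needed to keep the critical values within bounded distance of the expected vertex under iteration. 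Finally, condition (1) on $\mathcal{S}_\p$ is propagated by arranging $\tilde f$ so its inductive mapping scheme agrees with $\mathcal{F}$ on $\mathcal{S}_\p^{\tilde f}$, and by absorbing the degree $1$ contribution of the extra Blaschke factor at the remaining $\mathcal{S}_\p$ vertices into a bounded M\"obius adjustment of the anchored normalizations.
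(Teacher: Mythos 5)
Your overall strategy (induction on degree, realizing the reduction $g_n\in\BP_d$ and multiplying by one more Blaschke factor) is indeed the paper's, but the two steps that carry all the weight are missing or would fail as stated. First, the choice of which critical vertex to decrement and, correlatively, where to place the new zero is not "a critical vertex of largest pre-period on an outermost branch with $a_n$ steered radially to a boundary point and then tuned by a winding argument." The paper works on the first pull-back tree $\mathcal{T}^1$, introduces the radius $r(v)=d_{\mathcal{T}^1}(v,\p)$, takes a \emph{critically furthest preimage} $w$ of $\p$, decrements the degree at the associated critical vertex $c_w$ (shown to be an endpoint), and places the new zero $w_n$ at the position of $w$ itself, i.e.\ at hyperbolic distance comparable to $r_f R_n$ from $\p_n$, in a direction uniformly separated from the existing preimages and at \emph{equal} hyperbolic distance from $\phi_n(c_w)$ as those preimages. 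This precise placement is what (via the clustering Lemma \ref{lem:cl}) forces exactly one new critical point to appear within bounded distance of $\phi_n(c_w)$, and the inequality $r(v)+r(f(v))\le r_f$ (Lemma \ref{lem:rb}), which holds precisely because $w$ is a furthest preimage of $\p$, is what makes the perturbation harmless at every other vertex: the Euclidean error $|f_n(\phi_n(v))-g_n(\phi_n(v))|\lesssim \delta_n^{1-r(v)/r_f}$ is beaten by the hyperbolic density $\lesssim \delta_n^{-(1-r(v)/r_f)}$ at the image, so the displacement is $O(1)$ in $d_{\Hyp^2}$. Your claim that away from the new zero the extra factor "converges to a bounded M\"obius transformation in the $w$-rescaling picture, so uni-criticality passes with an $O(1)$ loss" is exactly the point that needs this balancing inequality; with your choice of vertex (pre-period has nothing to do with distance to $\p$ in the pull-back tree) and an unspecified rate for $1-|a_n|$, the existing quasi-orbits can be displaced by unbounded hyperbolic amounts, and no continuity/winding argument over $(\theta_n,1-|a_n|)$ recovers the \emph{uniform} constant $K$, the induction hypotheses on separation of preimages, or the $\mathcal{F}$-independent constant $M$ in (2): those are quantitative hyperbolic estimates along the whole sequence, not discrete combinatorial data one can sweep through at each fixed $n$.

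Second, conclusion (1) and conclusion (2)(b) cannot be obtained by "absorbing the extra factor into a bounded M\"obius adjustment of the anchored normalizations": a M\"obius change of coordinates never changes the conformal conjugacy class of a degree $\ge 2$ rescaling limit, and when the decremented critical vertex lies in $\mathcal{S}_\p$ its rescaling limit after multiplication has the new, higher degree and is in no way the prescribed $\mathcal{F}_v$. The paper installs the prescribed mapping scheme (and the bounded-critical-value condition at aperiodic vertices outside $\mathcal{S}_\p$) by a quasiconformal surgery localized near the aperiodic vertices, together with an argument that the surgery does not alter the periodic rescaling limits (Lemma \ref{lem:mfp}); some such step is unavoidable. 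Finally, your induction skips the case $\widetilde{\mathcal{T}}\subsetneqq\mathcal{T}$, where the reduced minimal tree loses the orbit of the decremented critical point and one must first re-attach it (at repelling fixed points of the relevant rescaling limits, using admissibility to rule out periodic Julia branch points) before running the main estimate.
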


Note that condition (1) is vacuously satisfied when $\delta(p) = 1$.
We remark that 
as we degenerate $\mathcal{F}$, $K$ may go to infinity.
Since $M$ is independent of $\mathcal{F}$, the last condition says that nevertheless, the critical points for each vertex $v$ stay in a uniformly bounded distance.
Thus, the first return rescaling limit at $v$ is almost uni-critical.
This provides compactness that will be used in the successive degenerations (see Lemma \ref{lem:sep} and Proposition \ref{lem:Muc}).

We also remark that the $M$-uni-critical condition may not hold for all periodic points in the periodic cycle:
if there are two or more critical vertices in the periodic cycle, the fact that the first return map is $M$-uni-critical at one periodic vertex does not mean the first return map is $M$-uni-critical at other periodic vertices in this cycle.

\subsection*{Estimates in hyperbolic geometry}
We start with some useful estimates that will be used frequently in the proof.
The following lemma follows from the Schwarz lemma and Koebe distortion theorem.
\begin{lem}\label{lem:hme}
Let $U$ be a simply connected domain in $\C$ with hyperbolic metric $\rho_U |dz|$. Then for $z\in U$,
$$
\frac{1}{2d_{\R^2}(z, \partial U)} \leq \rho_U(z) \leq \frac{2}{d_{\R^2}(z, \partial U)}.
$$
\end{lem}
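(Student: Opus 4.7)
The plan is to prove this standard comparison between the hyperbolic density and the reciprocal of the Euclidean distance to the boundary by uniformizing $U$ by the unit disk and then invoking the Koebe $1/4$ theorem for the lower bound and the Schwarz lemma (monotonicity of the hyperbolic metric under inclusion) for the upper bound. Throughout I will use the normalization $\rho_\D(w) = \frac{2}{1-|w|^2}$, so that $\rho_\D(0) = 2$.

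First I would fix $z \in U$ and let $\phi \colon \D \to U$ be a Riemann map with $\phi(0) = z$. By the defining property of the pushforward hyperbolic metric, $\rho_U(z) = \rho_\D(0)/|\phi'(0)| = 2/|\phi'(0)|$. This reduces both inequalities to a two-sided bound on $|\phi'(0)|$ in terms of $d := d_{\R^2}(z,\partial U)$.

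For the lower bound on $\rho_U(z)$, I would apply the Koebe $1/4$ theorem to $\phi$ (after translating so that $z = 0$ and dividing by $\phi'(0)$ to normalize). It gives that $\phi(\D) = U$ contains the Euclidean disk $\{w : |w - z| < |\phi'(0)|/4\}$, hence $|\phi'(0)|/4 \leq d$. Substituting into $\rho_U(z) = 2/|\phi'(0)|$ yields $\rho_U(z) \geq 1/(2d)$. For the upper bound, I would note that the Euclidean disk $D(z,d)$ is contained in $U$, so the monotonicity of the hyperbolic metric under holomorphic inclusions (a direct consequence of the Schwarz lemma) gives $\rho_U(z) \leq \rho_{D(z,d)}(z)$. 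A direct computation (using the affine uniformization $w \mapsto z + dw$ from $\D$ to $D(z,d)$) shows $\rho_{D(z,d)}(z) = 2/d$, which is exactly the desired upper bound.

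There is no serious obstacle: both directions are standard. The only thing to be mindful of is the normalization of the hyperbolic metric (the factor of $2$ in the numerator of $\rho_\D$), which I have fixed above so that the constants $1/2$ and $2$ in the statement match. The lower bound is essentially equivalent to Koebe $1/4$, and the upper bound is just monotonicity applied to the largest inscribed Euclidean disk.
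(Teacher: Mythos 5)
Your proof is correct and follows exactly the route the paper indicates (the paper simply remarks that the lemma "follows from the Schwarz lemma and Koebe's theorem" without giving details): Koebe $1/4$ for the lower bound and monotonicity of the hyperbolic metric under inclusion for the upper bound, with the curvature $-1$ normalization $\rho_{\D}(w)=2/(1-|w|^2)$ giving the stated constants. Nothing further is needed.
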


The following fact in hyperbolic geometry is very useful.
\begin{lem}\label{lem:he}
Let $z, w\in \D$ with $|z| = 1-\delta$ and $|w| = 1- \delta^s$.\\
If $0\leq s < 1$, then
$$
sd_{\D}(0,z) \leq d_{\D}(0, w) \leq sd_{\D}(0,z) + \log 2.
$$
If $s> 1$, then
$$
s(d_{\D}(0,z) -\log 2) \leq d_{\D}(0, w) \leq sd_{\D}(0,z).
$$
\end{lem}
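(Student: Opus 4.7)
The plan is to express everything in terms of $\delta$ and $s$ and then reduce all four inequalities to a single convexity statement. Using the explicit formula $d_{\Hyp^2}(0, x) = \log\frac{1+|x|}{1-|x|}$, the first step is to compute
$$
d_{\Hyp^2}(0, z) = \log\frac{2-\delta}{\delta}, \qquad d_{\Hyp^2}(0, w) = \log\frac{2-\delta^s}{\delta^s}.
$$

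For the ``easy'' halves of each case — the upper bound in the regime $0 \leq s < 1$ and the lower bound in the regime $s > 1$ — I would use only the elementary sandwich
$$
-\log(1-|x|) \;\leq\; d_{\Hyp^2}(0, x) \;\leq\; -\log(1-|x|) + \log 2,
$$
which follows from $1 \leq 1+|x| \leq 2$. Applied to $z$ and $w$, this immediately yields $d_{\Hyp^2}(0,w) \leq -s\log\delta + \log 2 \leq s\,d_{\Hyp^2}(0,z) + \log 2$ when $s < 1$, and $d_{\Hyp^2}(0,w) \geq -s\log\delta \geq s(d_{\Hyp^2}(0,z) - \log 2)$ when $s > 1$.

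The two remaining ``sharp'' inequalities — namely $s\,d_{\Hyp^2}(0,z) \leq d_{\Hyp^2}(0,w)$ for $s \in [0,1)$ and $d_{\Hyp^2}(0,w) \leq s\,d_{\Hyp^2}(0,z)$ for $s > 1$ — both reduce, after exponentiating and clearing the common factor $\delta^s$, to a comparison between
$$
g(s) := (2-\delta)^s + \delta^s \qquad \text{and the constant} \qquad 2.
$$
Explicitly, the first is equivalent to $g(s) \leq 2$ and the second to $g(s) \geq 2$. The key observation is that $g(s) = e^{s\log(2-\delta)} + e^{s\log\delta}$ is a sum of exponentials in $s$, hence strictly convex, and it takes the value $2$ at both $s = 0$ (trivially, since $a^0 = 1$) and $s = 1$ (since $(2-\delta) + \delta = 2$). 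A convex function equal to $2$ at both endpoints $s=0$ and $s=1$ lies below the chord on $[0,1]$ and above its linear extension outside $[0,1]$, which is exactly the pair $g \leq 2$ on $[0,1]$ and $g \geq 2$ on $[1, \infty)$ that we need.

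I do not foresee any genuine obstacle: once the hyperbolic distances are written out in closed form, the whole lemma collapses to the elementary convexity of $s \mapsto a^s + b^s$ for $a, b > 0$, combined with the coincidence $g(0) = g(1) = 2$ arising from $(2-\delta) + \delta = 2$.
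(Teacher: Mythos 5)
Your proposal is correct and follows essentially the same route as the paper: both write out $d_{\Hyp^2}(0,z)=\log\frac{2-\delta}{\delta}$ and $d_{\Hyp^2}(0,w)=\log\frac{2-\delta^s}{\delta^s}$ and reduce everything to the comparison of $(2-\delta)^s+\delta^s$ with $2$, which the paper simply asserts and your convexity-of-exponentials argument (with $g(0)=g(1)=2$) cleanly justifies. The only harmless quibble is that $g$ is not \emph{strictly} convex in the degenerate case $\delta=1$ (i.e.\ $z=0$), but plain convexity suffices for all the stated inequalities.
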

\begin{proof}
Note that $d_{\D}(0,z)=\log\frac{2-\delta}{\delta}$ and $d_{\D}(0,w)=\log\frac{2-\delta^s}{\delta^s}$. So 
$$
d_{\D}(0, w)- sd_{\D}(0, z) = \log (2-\delta^s) - \log (2-\delta)^s.
$$

If $s<1$, then $(2-\delta)^s \leq 2 - \delta^s$, so $d_{\D}(0, w)- sd_{\D}(0, z) \in [0,\log2]$.

If $s>1$, then $(2-\delta)^s \geq 2 - \delta^s$, so $d_{\D}(0, w)- sd_{\D}(0, z) \in [-s\log 2, 0]$.
\end{proof}

Given $a\in \D$, we denote $\hat{a} = \frac{a}{|a|} \in \mathbb{S}^1$, $\delta_a = 1-|a|$ and $\rho_a= d_{\D}(0, a)$.
\begin{lem}\label{lem:em}
Let $M_a(z) = \frac{z-a}{1-\bar{a}{z}}$. Then for $0<s<1$,
$$
|M_a(z) + \hat{a}| \leq 2\delta_a^{1-s}
$$
for all $z\in B(0, 1-\delta_a^s)\supseteq B_{\Hyp^2}(0, s\rho_a)$, where $B_{\Hyp^2}(0, s\rho_a)$ is the hyperbolic ball centered at $0$ with radius $s\rho_a$.
\end{lem}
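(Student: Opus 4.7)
The plan is to exploit the fact that $-\hat{a}$ (like $\hat{a}$) is a fixed point of $M_a$ on $\mathbb{S}^1$: a direct check gives $M_a(-\hat{a})=(-\hat{a}-a)/(1+\bar{a}\hat{a}) = -\hat{a}(1+|a|)/(1+|a|) = -\hat{a}$. This is the geometric content of the estimate, and with it the quantity $M_a(z)+\hat{a}$ becomes the Möbius difference $M_a(z)-M_a(-\hat{a})$, which is the right reformulation to extract a clean factor of $\delta_a$.

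Using the standard identity
$$M_a(z)-M_a(w) = \frac{(z-w)(1-|a|^2)}{(1-\bar{a}z)(1-\bar{a}w)},$$
and specializing $w=-\hat{a}$ (so that $1-\bar{a}w = 1+|a|$), the factor $1+|a|$ cancels against $1-|a|^2$ and one obtains
$$M_a(z)+\hat{a} = \frac{(z+\hat{a})\,\delta_a}{1-\bar{a}z}.$$

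From this identity the bound is immediate. For $z$ with $|z|\le 1-\delta_a^s$, the numerator satisfies $|z+\hat{a}|\le |z|+1\le 2$, while the denominator satisfies $|1-\bar{a}z|\ge 1-|a||z|\ge 1-|z|\ge \delta_a^s$. Multiplying yields $|M_a(z)+\hat{a}|\le 2\delta_a\cdot\delta_a^{-s} = 2\delta_a^{1-s}$, as claimed.

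The inclusion $B_{\Hyp^2}(0,s\rho_a)\subseteq B(0,1-\delta_a^s)$ is then a direct application of the previous Lemma \ref{lem:he}: any point $w$ with $|w|=1-\delta_a^s$ satisfies $d_{\Hyp^2}(0,w)\ge s\,d_{\Hyp^2}(0,a)=s\rho_a$, so every point of the hyperbolic ball lies strictly inside the Euclidean ball. There is no real obstacle; the only insight beyond routine computation is recognizing that $-\hat{a}$ (rather than, say, $0$ or $a$) is the correct basepoint to subtract in order to make the estimate sharp in $\delta_a$.
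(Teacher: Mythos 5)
Your proof is correct and is essentially the paper's argument: the identity $M_a(z)+\hat{a}=\delta_a(z+\hat{a})/(1-\bar{a}z)$ you obtain via the fixed point $-\hat{a}$ and the M\"obius difference formula is exactly what the paper gets by direct simplification of the numerator, and the numerator/denominator bounds are identical. Your additional verification of the inclusion $B_{\Hyp^2}(0,s\rho_a)\subseteq B(0,1-\delta_a^s)$ via Lemma \ref{lem:he} is a fine touch that the paper leaves implicit.
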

\begin{proof}
Note that
\begin{align*}
|M_a(z) + \hat{a}|
&=|\frac{z-a+\hat{a}-\hat{a}\bar{a}z}{1-\bar{a}{z}}| =|\frac{\delta_az+\hat{a}\delta_a}{1-\bar{a}{z}}|.
\end{align*}
Since $|z|<1$ and $|\hat{a}|=1$, it follows that $|\delta_az+\hat{a}\delta_a| \leq 2\delta_a$.
Since $|z| \leq 1-\delta_a^s$, we have $|1-\bar{a}{z}| \geq \delta_a^s$.
Thus,
$|M_a(z) + \hat{a}| \leq \frac{2\delta_a}{\delta_a^s} = 2\delta_a^{1-s}$.
\end{proof}

More generally, we have the following estimate in terms of hyperbolic geometry:
\begin{lem}\label{lem:aem}
Let $\theta > 0$. There exists a constant $C = C(\theta)$ so that the following holds.
Let $M_a(z) = \frac{z-a}{1-\bar{a}{z}}$. 
Let $z\in \D$ so that the angle $\angle 0za$ between the hyperbolic geodesic segments $[0,z]$ and $[z, a]$ satisfies $\angle 0za \geq \theta$.
Then
$$
|M_a(z) + \hat{a}| \leq C\delta,
$$
where $\rho := d_{\D}(a,z)$ and $\delta$ is the positive number so that the hyperbolic ball $B_{\Hyp^2}(0, \rho)$ has Euclidean radius $1-\delta$.
\end{lem}
\begin{proof}
Since the angle between $[a,z]$ and $[z, 0]$ is bounded below by $\theta$, there exists a constant $C_1 = C_1(\theta)$ so that $\rho_a \geq \rho_z + \rho - C_1$, where $\rho_a = d_{\D}(0,a)$ and $\rho_z = d_{\D}(0,z)$.
Thus, there exists a constant $C= C(\theta)$ so that $\delta_a \leq \frac{C}{2} \delta_z \delta$, where $\delta_a = 1-|a|$ and $\delta_z = 1-|z|$.
By the same computation as in Lemma \ref{lem:em}, we have
$|M_a(z) + \hat{a}| \leq \frac{2\delta_a}{\delta_z} \leq C \delta$.
\end{proof}

We shall also use the following estimate for critical points:
\begin{lem}\label{lem:cl}
Let $C, \theta > 0$. 
Let $x_1,..., x_k, z\in \D$ be $k+1$ points with angles $\angle x_i z x_j \geq \theta$ for any pairs $i,j$.
Let $f: \D\longrightarrow \D$ be a proper map of degree $d$ such that $f(z) = a$, $f(x_i) = b$ for all $i$. Suppose that
$$
d_{\D}(z, x_i) \leq d_{\D}(a,b) + C.
$$
Then there exists a constant $R = R(C, \theta, d)$ such that there are $k-1$ critical points (counted with multiplicity) of $f$ in $B_{\Hyp^2}(z, R)$.
\end{lem}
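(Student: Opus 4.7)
My approach is a Riemann--Hurwitz count on a simply connected subdomain of $\D$ containing $z$ together with all $x_1, \ldots, x_k$, combined with a localization argument that exploits the angle condition to pin the resulting critical points near $z$.

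\emph{Step 1: a simply connected $U$ of the correct degree.} Let $r_0 := d_{\Hyp^2}(a,b) + C$ and $V_0 := B_{\Hyp^2}(a, r_0)$. By the Schwarz lemma, $f^{-1}(V_0) \supseteq B_{\Hyp^2}(z, r_0)$, and the latter contains $z$ together with every $x_i$. Let $U_0$ be the component of $f^{-1}(V_0)$ through $z$. Since $V_0$ is a hyperbolic disk and $f$ is a proper holomorphic self-map of $\D$, the restriction $f|_{U_0} : U_0 \to V_0$ is a proper branched cover of two simply connected disks, and $b \in V_0$ has at least the $k$ preimages $x_1, \ldots, x_k$ in $U_0$. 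Writing $n := \deg(f|_{U_0}) \geq k$, Riemann--Hurwitz then yields exactly $n - 1 \geq k - 1$ critical points of $f$ inside $U_0$, counted with multiplicity.

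\emph{Step 2: localization near $z$.} It remains to show that at least $k-1$ of these critical points actually lie in $B_{\Hyp^2}(z, R)$ for some $R = R(C, \theta, d)$ independent of $d(a,b)$. Here I would use the local degree $\delta(z)$ together with Theorem~\ref{thm:almostisometry}: the local $\delta(z)$-to-one tangent model of $f$ at $z$ can send at most $\delta(z)$ of the tangent vectors $v_i := T_z[z, x_i]$ to the $[a,b]$-direction at $a$, so whenever $\delta(z) < k$ some geodesic $[z, x_j]$ is forced to leave $a$ in a direction transverse to $[a,b]$ and nonetheless terminate at $b$. By Theorem~\ref{thm:almostisometry}, away from its critical locus $f$ acts as an additive $O(1)$-quasi-isometry; hence each such $[z, x_j]$ must pass within a bounded hyperbolic distance $R_0$ of some critical point of $f$. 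The angular separation $\theta$ then prevents two geodesic rays from $z$ from sharing a common ``sheet-merging'' critical point far out along their length, since such a shared point would contradict the exponential divergence of the hyperbolic metric in the two directions.

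The main obstacle will be making Step 2 quantitative and extracting the explicit bound $R(C, \theta, d)$. I would proceed by induction on $k$ using the combinatorics of the tangent map at $z$. The base case $\delta(z) \geq k$ concentrates the full multiplicity $k - 1$ at $z$ itself, so any $R > 0$ suffices. In the inductive step, one partitions $\{x_1, \ldots, x_k\}$ into $\delta(z)$ subsets according to the image tangent direction under the local model at $z$, runs the lemma recursively on each subset (noting that the sub-instances inherit an angle lower bound depending on $\theta$ and $d$), and sums the contributions. Each stage costs a constant in the radius bound depending only on $\theta$, $R_0$, and $d$, and there are at most $d$ stages, so the total bound $R = R(C, \theta, d)$ is independent of $d(a,b)$.
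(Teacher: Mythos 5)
There is a genuine gap, and it sits exactly where the content of the lemma lies. Your Step 1 is correct but does not address the difficulty: taking $V_0 = B_{\Hyp^2}(a, d_{\Hyp^2}(a,b)+C)$ and the component $U_0 \ni z$ of $f^{-1}(V_0)$ does give, via Riemann--Hurwitz, at least $k-1$ critical points in $U_0$ — but $U_0$ has hyperbolic diameter comparable to $d_{\Hyp^2}(a,b)$, which is unbounded, so this locates nothing. Step 2 is where a quantitative localization must happen, and as written it does not: the claim that a geodesic $[z,x_j]$ whose image leaves $a$ transverse to $[a,b]$ must pass within $R_0$ of a critical point is (even granting it) a statement about critical points somewhere along a geodesic of length up to $d_{\Hyp^2}(a,b)+C$, not within bounded distance of $z$; the ``sheet-merging'' remark only separates such critical points from each other, again without bounding their distance to $z$. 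The proposed induction is also not well-defined: it is never specified what the recursive instance is (which basepoint, which map, which target point plays the role of $b$), the asserted inherited angle bound for the sub-instances is unsubstantiated, and the count of ``at most $d$ stages'' has no precise meaning. So the proposal never produces $k-1$ critical points inside a ball $B_{\Hyp^2}(z,R)$ with $R$ depending only on $C,\theta,d$.

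The missing idea, which is how the paper proceeds, is to track the images of the geodesics $[z,x_i]$ along $[a,b]$ and find $k$ distinct preimages of a \emph{single} point at \emph{bounded} distance from $a$. Concretely: for $z_{i,t}\in[z,x_i]$ and $a_t\in[a,b]$ at distance $t$ from $z$ and $a$ respectively, the Schwarz lemma and the hypothesis $d_{\Hyp^2}(z,x_i)\leq d_{\Hyp^2}(a,b)+C$ force $f(z_{i,t})\in B_{\Hyp^2}(a,t)\cap B_{\Hyp^2}(b, d_{\Hyp^2}(a,b)+C-t)$, hence $d_{\Hyp^2}(f(z_{i,t}),a_t)\leq R_1(C)$; by a bounded-degree preimage estimate (Corollary 10.3 in \cite{McM09}) each $z_{i,t}$ lies within $R_2(R_1,d)$ of an actual preimage of $a_t$. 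The angle bound $\theta$ yields $R_3(\theta)$ so that at the fixed time $t=R_3$ the points $z_{1,R_3},\dots,z_{k,R_3}$ are pairwise more than $2R_2$ apart, producing $k$ distinct preimages of the single point $a_{R_3}$ inside $B_{\Hyp^2}(z,R_2+R_3)$. One then pulls back the \emph{bounded} ball $B_{\Hyp^2}(a,R_2+R_3)$: the component $U\ni z$ is simply connected, has degree at least $k$, and by Theorem \ref{thm:almostisometry} (degree at most $d$) satisfies $U\subseteq B_{\Hyp^2}(z,R)$ with $R=R(C,\theta,d)$; Riemann--Hurwitz in $U$ finishes. Your Step 1 is the right final move, but it must be applied to this bounded pull-back, not to $V_0$; without the intermediate construction of $k$ preimages of one nearby point, the localization claim is unproved.
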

\begin{proof}
Let $L = d_{\D}(a,b)$, and $z_{i,t}$ and $a_t$ be the points on the geodesic segments $[z, x_i]$ and $[a,b]$ with $d_{\D}(z, z_{i,t}) = d_{\D}(a, a_t) = t$.
Since $d_{\D}(z_{i,t}, x_i) \leq L+C-t$, by Schwarz lemma, we have 
$$
f(z_{i,t})\in B_{\Hyp^2}(a, t) \cap B_{\Hyp^2}(b, L+C-t).
$$
Thus, there exists a constant $R_1 = R_1(C)$ so that $d_{\D}(f(z_{i,t}), a_t) \leq R_1$.
Thus there exists a constant $R_2 = R_2(R_1,d)$ so that 
$d_{\D}(z_{i,t}, f^{-1}(a_t)) \leq R_2$ (see \cite[Corollary 10.3]{McM09}).
Since the angles $\angle x_i z x_j$ are bounded from below, there exists a constant $R_3 = R_3(\theta)$ so that the balls $B_{\Hyp^2}(z_{i,t}, R_2)$ are disjoint for all $t\geq R_3$.
Thus there are $k$ different preimages of $a_{R_3}$ in the ball $B_{\Hyp^2}(z, R_2+R_3)$.
We have $f(B_{\Hyp^2}(z, R_2+R_3)) \subseteq B_{\Hyp^2}(a, R_2+R_3)$ by Schwarz Lemma.
Let $U$ be the component of $f^{-1}(B_{\Hyp^2}(a, R_2+R_3))$ that contains $z$.
Since $f:\D \longrightarrow \D$ is proper, $U$ is simply connected.
By Theorem \ref{thm:almostisometry}, there exists a constant $R = R(R_2, R_3, d)=R(C, \theta, d)$ so that $U \subseteq B_{\Hyp^2}(z, R)$. 
The degree of $f: U \longrightarrow B_{\Hyp^2}(a, R_2+R_3)$ is at least $k$, so there are at least $k-1$ critical points in $U \subseteq B_{\Hyp^2}(z, R)$ proving the lemma.
\end{proof}

The following uniform separation property will also be used:
\begin{lem}\label{lem:sep}
Let $f:\D\longrightarrow \D$ be a proper map of degree $d$ that is $M$-uni-critical.
Then there exists $\theta = \theta(M) >0$ so that for any $t \in \mathbb{S}^1$, any two points in $f^{-1}(t)$ are separated by $\theta$ in the standard Euclidean metric on $\mathbb{S}^1$.
\end{lem}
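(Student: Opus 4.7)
The plan is a contradiction argument combining Theorem \ref{thm:almostisometry} with a local asymptotic analysis. Suppose two distinct preimages $z_1,z_2\in f^{-1}(t)\cap \mathbb{S}^1$ have $\epsilon:=|z_1-z_2|$, and let $\gamma:\mathbb{R}\to \D$ be the hyperbolic geodesic from $z_1$ to $z_2$, parametrized by signed hyperbolic arc length so that $\gamma(0)$ is its apex and $\gamma(\pm\infty)=z_{1,2}$. A direct computation in the disk model places $\gamma(0)$ at hyperbolic distance $\log(4/\epsilon)+O(1)$ from the origin. Since the $M$-uni-critical hypothesis gives $C(f)\subset \overline{B}_{\Hyp^2}(0,M)$, every point of $\gamma$ is at hyperbolic distance at least $\log(4/\epsilon)-M-O(1)$ from $C(f)$.

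Provided $\epsilon$ is smaller than an explicit threshold $\theta_0=\theta_0(M,d)$ depending only on $M$ and the universal constant $R$ of Theorem \ref{thm:almostisometry}, that theorem applies to every subsegment $[\gamma(-s),\gamma(s)]\subset\gamma$, yielding
$$
d_{\Hyp^2}\!\bigl(f(\gamma(s)),\,f(\gamma(-s))\bigr)\;=\;2s+O(1)\qquad (s\ge 0),
$$
so the left-hand side diverges as $s\to\infty$. The core of the argument is to show that this quantity is in fact bounded, giving the contradiction. Since $\gamma$ avoids $C(f)$, $f$ is a local biholomorphism near each $z_i$, and the boundary derivative $\lambda_i:=|f'(z_i)|=\sum_j(1-|a_j|^2)/|z_i-a_j|^2$ is finite and strictly positive. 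Both branches $\gamma(\pm s)$ approach $z_{2,1}$ along the inward normal to $\mathbb{S}^1$, so under the local biholomorphism $f$ their images both approach $t$ along the inward normal at $t$. Passing to an upper half-plane chart in which $t$ and $z_1,z_2$ lie on the real axis, and using the Taylor expansion $f(z)=\lambda_i(z-z_i)+O(|z-z_i|^2)$, one finds $f(\gamma(\pm s))$ has imaginary part of order $\lambda_{1,2}\epsilon e^{-|s|}$ with horizontal displacement of strictly lower order. The hyperbolic distance formula in the upper half-plane then yields the finite limit
$$
\lim_{s\to\infty}d_{\Hyp^2}\!\bigl(f(\gamma(s)),\,f(\gamma(-s))\bigr)\;=\;|\log(\lambda_2/\lambda_1)|,
$$
which contradicts the $2s+O(1)$ divergence for any sufficiently large $s$. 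Hence $\epsilon\ge\theta_0(M,d)$, proving the lemma.

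The technical delicacy lies in the quadratic error of the Taylor expansion: $|f''(z_i)|$ can blow up if zeros of $f$ accumulate on $\mathbb{S}^1$, so the leading-order approximation for $f(\gamma(\pm s))$ is valid only for $s$ larger than an $f$-dependent threshold $s_0(f)$. This does not affect the conclusion, since the contradiction requires only a single $s$ exceeding both $s_0(f)$ and $(|\log(\lambda_2/\lambda_1)|+C)/2$, where $C$ is the constant of Theorem \ref{thm:almostisometry}; such $s$ always exists. Consequently the threshold $\theta_0(M,d)$ is uniform in $f$. A clean way to organize the local analysis, bypassing explicit Taylor estimates, is via Julia's lemma: a local biholomorphism at a boundary point with boundary derivative $\lambda$ maps horocycles of size $r$ to horocycles of size $\lambda r$, and the hyperbolic distance between the tops of two horocycles at the same boundary point with sizes $h_1,h_2$ is $|\log(h_1/h_2)|$, giving directly the limit $|\log(\lambda_2/\lambda_1)|$.
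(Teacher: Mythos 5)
Your argument is correct, but it follows a genuinely different route from the paper's. The paper's proof is soft and very short: after post-composing with an element of $\Isom(\Hyp^2)$ (and a rotation) one may assume $f\in\BP_d$; if the zeros of such maps could approach $\mathbb{S}^1$ while the critical set stayed in $B_{\Hyp^2}(0,M)$, an algebraic limit (Proposition \ref{prop:ac1}) would have a hole on $\mathbb{S}^1$ and Lemma \ref{lem:critc} would force critical points to escape toward $\mathbb{S}^1$; hence $M$-uni-critical maps form a compact subset of $\BP_d$, on which the minimal Euclidean separation of $f^{-1}(t)$, a positive continuous function of $(f,t)$, is bounded below. Your proof replaces this compactness step by a quantitative dichotomy: the almost-isometry of Theorem \ref{thm:almostisometry} along the geodesic $[z_1,z_2]$, which lies at hyperbolic distance roughly $\log(4/\epsilon)-M$ from $C(f)$, forces $d_{\Hyp^2}(f(\gamma(s)),f(\gamma(-s)))=2s+O(1)$, while the boundary-derivative (Julia-type) asymptotics at the two endpoints keep the same quantity bounded, with limit $|\log(\lambda_2/\lambda_1)|$; these are incompatible once $\epsilon<\theta_0(M,d)$. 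What each buys: the paper's argument is two lines but non-effective and leans on the algebraic-compactification lemmas, whereas yours is longer but effective, producing an explicit threshold in terms of $M$ and the constants of Theorem \ref{thm:almostisometry}, and you correctly isolate the only non-uniform ingredient (the $f$-dependent Taylor threshold $s_0(f)$), which is harmless since the contradiction needs only a single large $s$. Two cosmetic points: the exact value $|\log(\lambda_2/\lambda_1)|$ of the limit uses the apex-symmetric parametrization (otherwise there is a bounded additive shift), and only finiteness is actually needed; and the local injectivity of $f$ at $z_1,z_2$ follows from $C(f)\subseteq B_{\Hyp^2}(0,M)$ (equivalently from your positive boundary-derivative formula), not from $\gamma$ avoiding $C(f)$.
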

\begin{proof}
Since post-composing by $\Isom(\D)$ does not change the statement of the lemma, we may assume $f\in \BP_d$.
Since the critical set is contained in $B_{\Hyp^2}(0, M)$, $f$ lives in a compact set of $\BP_d$, and the conclusion follows.
\end{proof}

\subsection*{Realization for degree 2 angled tree maps}
Let $f:(\mathcal{T}, \p) \longrightarrow (\mathcal{T}, \p)$ be a minimal admissible angled tree map. 
If $\delta(\p) = 2$, then $\mathcal{T} = \{\p\}$ and we can simply take the constant sequence $f_n(z) = \mathcal{F}_{\p}(z)$.
Therefore we may assume $\delta(\p) = 1$.
By the compatibility condition, there exists a rigid rotation $R$ so that $R\circ \alpha_{\p} = \alpha_{\p} \circ Df|_{T_v\mathcal{T}}$.
Note that here $R$ is necessarily a rational rotation.
By the admissible condition, the angled tree map is uniquely determined by this rotation number: 
$$
\mathcal{T} = \cup_{k=0}^{q-1} f^k([\p,c])
$$ 
where $c$ is the unique critical vertex of $\mathcal{T}$.
The realization for such angled tree maps has been established in \cite[\S 14]{McM09} using `strong convergence'.
We give a different proof here, as the induction step uses the same idea.

\begin{lem}\label{lem:r2}
Proposition \ref{prop:rt} holds in degree $2$.
\end{lem}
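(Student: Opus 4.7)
The case $\delta(\p) = 2$ is immediate: $\mathcal{T} = \{\p\}$ and $\mathcal{F}_\p$ is a $1$-anchored, fixed-point-centered Blaschke product of degree $2$ on $\D_\p$, so the constant sequence $f_n = \mathcal{F}_\p$ (viewed inside $\BP_2$) realizes the angled tree map with $K = M = 0$. In the nontrivial case $\delta(\p) = 1$, minimality and admissibility force $\mathcal{T}$ to be the $q$-star centered at $\p$ with tips $c, f(c), \dots, f^{q-1}(c)$, where $c$ is the unique critical vertex and $p/q$ is the rotation number of the rigid rotation $R$ determined by $R \circ \alpha_\p = \alpha_\p \circ Df|_{T_\p\mathcal{T}}$.

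My plan is to realize this by the explicit sequence
\[
f_n(z) = z \cdot \frac{z - a_n}{1 - \overline{a_n} z}, \qquad a_n := -e^{2\pi i p/q}(1 - \epsilon_n), \quad \epsilon_n \to 0^+,
\]
so that $f_n'(0) = -a_n$ is a small perturbation of the rotation factor of $R$. A direct computation gives the unique critical point in $\D$ as $c_n = (1 - s_n)/\overline{a_n}$ with $s_n := \sqrt{1-|a_n|^2} \sim \sqrt{2\epsilon_n}$, critical value $f_n(c_n) = -(1-s_n)^2/\overline{a_n}^2$, and (by Lemma \ref{lem:ac}) algebraic convergence $f_n \to R(z) = e^{2\pi i p/q} z$ uniformly on compacts of $\widehat{\C} \setminus \{-e^{2\pi i p/q}\}$, with the single hole at $-e^{2\pi i p/q}$.

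The main technical step I anticipate is the quasi-periodicity estimate $d_{\Hyp^2}(c_n, f_n^q(c_n)) = O(1)$. I would prove it by a finite induction on $k$. First, since $|f_n(c_n)| = |c_n|^2$, the critical value sits uniformly closer to the origin than $c_n$ (by hyperbolic $O(1)$), and its argument is rotated by $2\pi p/q$ away from the hole direction. For $1 \le k \le q$, each subsequent iterate remains uniformly far from both the hole and from $c_n$, so the uniform convergence $f_n \to R$ yields $f_n^k(c_n) = R^{k-1}(f_n(c_n)) + o(1)$ up to a small contraction $(1-\epsilon_n)^{k-1} = 1 + O(q\epsilon_n)$ which is absorbed into the error since $s_n \gg \epsilon_n$. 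After $q$ iterations the argument has rotated a full multiple of $2\pi$ back to $\arg c_n$, while the modulus stays $1 - 2s_n + o(s_n)$; a direct comparison with $|c_n| = 1 - s_n + O(\epsilon_n)$ gives $d_{\Hyp^2}(c_n, f_n^q(c_n)) = O(1)$, establishing quasi post-critical finiteness with quasi-period $q$.

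Feeding this into Theorem \ref{thm:qit}, the cluster sets of the critical orbit are exactly $\{\p\} \sqcup \{f^k(c)\}_{k=0}^{q-1}$ (pairwise divergent in hyperbolic distance), the underlying ribbon tree is the $q$-star with $\p$ at the center, and the induced simplicial map coincides with $f$. The angle function at $\p$ is then determined, via Lemma \ref{lem:ct} and the anchored normalization of Definition \ref{defn:anchored}, by the fact that the tangent map $Df|_{T_\p\mathcal{T}}$ is conjugated by $\xi_\p$ to the rigid rotation $R$ on $\mathbb{S}^1_\p$; this forces the anchored angles to equal $\alpha_\p$. Finally, for the $M$-uni-critical condition of Proposition \ref{prop:rt}, since $c$ is the only critical vertex in its periodic cycle it suffices to check it at $v = c$: the algebraic limit $F^q_c \colon \D_c \to \D_c$ of the rescaled first-return maps is a $1$-anchored proper holomorphic map of degree $\delta(c) = 2$ with its critical point at $0$, hence $0$-uni-critical, giving a universal $M = 0$ independent of any additional parameters and completing the verification.
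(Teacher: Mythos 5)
Your construction is, up to a rotation conjugacy, the same one the paper uses: the paper takes $f_n(z) = -R(z)\frac{z-a_n}{1-\overline{a_n}z}$ with $a_n \in \R^+$ escaping radially, you take $f_n(z) = z\frac{z-a_n}{1-\overline{a_n}z}$ with $a_n \to -e^{2\pi i p/q}$; the $\delta(\p)=2$ case, the identification of the minimal admissible tree as a $q$-star determined by the rotation number, and the verification of the uni-critical conditions of Proposition \ref{prop:rt} all run as in the paper. The only real difference is cosmetic and arguably cleaner: you compute the critical point $c_n = (1-s_n)/\overline{a_n}$ and the critical value exactly, where the paper works with the hyperbolic midpoint $b_n$ of $[0,a_n]$ and invokes Lemma \ref{lem:cl} to place $c_n$ within bounded distance of it.

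However, the central estimate $d_{\Hyp^2}(c_n, f_n^q(c_n)) = O(1)$ is not justified as written. You deduce it from ``uniform convergence $f_n \to R$ yields $f_n^k(c_n) = R^{k-1}(f_n(c_n)) + o(1)$'' and then assert the modulus of $f_n^q(c_n)$ is $1-2s_n+o(s_n)$. A Euclidean error that is merely $o(1)$ gives nothing here: the relevant points lie at Euclidean distance $\asymp s_n \to 0$ from $\mathbb{S}^1$, where the hyperbolic metric has size $\asymp 1/s_n$ (Lemma \ref{lem:hme}), so an $o(1)$ Euclidean displacement can be hyperbolically unbounded, and it certainly does not yield the $o(s_n)$ precision on the modulus and argument that your final comparison uses. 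What is needed is the quantitative bound that the paper isolates as Lemmas \ref{lem:em}/\ref{lem:aem}: for $w$ bounded away from the hole $-e^{2\pi i p/q}$ one has $\bigl|\frac{w-a_n}{1-\overline{a_n}w} - e^{2\pi i p/q}\bigr| \le \frac{2\delta_{a_n}}{|1-\overline{a_n}w|} = O(\epsilon_n)$, and this applies at the inputs $f_n^k(c_n)$, $1\le k\le q-1$, whose arguments are approximately $(k+1)\cdot 2\pi p/q + \pi$ and hence stay away from the hole direction (note your statement that the iterates stay far from the hole is false for $k=q$, but only the inputs matter). Combining this with Lemma \ref{lem:hme} gives a one-step hyperbolic error $O(\epsilon_n/s_n) = O(s_n)$, and Schwarz non-expansion propagates it through the $q$ steps; only then do your claimed asymptotics, and hence the $O(1)$ bound and the identification of the cluster structure via Theorem \ref{thm:qit}, follow. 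So the approach is right and the scales do work out, but the step you treat as a consequence of plain uniform convergence is precisely the quantitative heart of the paper's argument and must be supplied.
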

\begin{proof}
By the above discussion in the previous paragraph, we may assume that $\delta(\p) = 1$ with rotation number $r/q$.
Thus conditions $(1)$ and $(2)$ in Proposition \ref{prop:rt} are trivially satisfied, and it suffices to construct a quasi post-critically finite $f_n \in \BP_2$ realizing $\mathcal{T}$ of rotation number $r/q$.

Let $R(z) = e^{i2\pi r/q}z$ be the rigid rotation with rotation number $r/q$.
Let $M_n(z) = \frac{z-a_n}{1-\overline{a_n}z}$ where $a_n \in \R^+$ and $d_{\D}(0, a_n) = n$.
Note $M_n(z) \to -z$ algebraically.

We consider the sequence $f_n(z) = -R(z)M_n(z)$ which is conjugate to a map in $\BP_2$ by some rotation.
Abusing the notations, we shall not distinguish this difference and show that $f_n$ is quasi post-critically finite and realizes angled tree map with rotation $r/q$.

Let $b_n \in \R^+$ with $d_{\D}(0, b_n) = n/2$ be the hyperbolic midpoint of $[0, a_n]$.
Since the zeros of $f_n$ are $0$ and $a_n$, by Lemma \ref{lem:cl}, there exists some constant $K_1$ such that
\begin{align}\label{eqn:bc}
d_{\D}(b_n,c_n) \leq K_1
\end{align}
where $c_n$ is the critical point of $f_n$.
Since $d_{\D}(0, b_n) = \frac{1}{2}d_{\D}(0, a_n)$, we have $1-|b_n| = 1-|R(b_n)| \geq \delta_{a_n}^{1/2}$ by Lemma \ref{lem:he}.
Thus, by Lemma \ref{lem:em}, the error term $|f_n(b_n)-R(b_n)|$ satisfies
\begin{align}\label{eqn:error}
|f_n(b_n)-R(b_n)| = |R(b_n)||M_n(b_n)+1| \leq 2\delta_{a_n}^{1/2}
\end{align}
where $\delta_{a_n} = 1-|a_n|$.

By Schwarz lemma, $d_{\D}(0, f_n(b_n)) \leq n/2$, so $1-|f_n(b_n)| \geq \delta_{a_n}^{1/2}$ as well. 
Therefore, by Lemma \ref{lem:hme}, the hyperbolic metric along the Euclidean segment $[f_n(b_n), R(b_n)]_{\R^2}$ is bounded above by 
\begin{align}\label{eqn:hmc}
\rho_{\Hyp^2}(z)|dz| \leq \frac{2}{\delta_{a_n}^{1/2}}|dz|.
\end{align}
Thus, by Equations \ref{eqn:error} and \ref{eqn:hmc}
$$
d_{\D}(f_n(b_n),R(b_n))\leq 2\delta_{a_n}^{1/2}\cdot \frac{2}{\delta_{a_n}^{1/2}} = 4.
$$
A similar proof and Schwarz lemma give that for $k=1,..., q$,
$$
d_{\D}(f_n^k(b_n), R^k(b_n)) \leq K_2
$$
for some constant $K_2$.
By Schwarz lemma and Equation \ref{eqn:bc}, for $k=1,..., q$, 
$$
d_{\D}(f_n^k(c_n), R^k(b_n)) \leq K_3
$$ 
for some constant $K_3$.
Therefore, $f_n$ is a quasi post-critically finite sequence and realizes the angled tree map with rotation number $r/q$.
\end{proof}

\subsection*{Construction of the reduction}
Let $f:(\mathcal{T}, \p) \longrightarrow (\mathcal{T}, \p)$ be a minimal admissible angled tree map of degree $d+1$.
We shall construct a new admissible angled tree map of degree $d$ by removing a `furthest' critical point.

Let $\mathcal{T}^1$ be the first pullback of $\mathcal{T}$.
Then $\p$ has exactly $d+1$ preimages in $\mathcal{T}^1$ counted with multiplicity.
Since $f$ is simplicial, $f:\mathcal{T}^1 \longrightarrow \mathcal{T}^1$ is distance non-increasing with respect to the edge metric $d_{\mathcal{T}^1}$.
Since the inclusion map $i: \mathcal{T} \longrightarrow \mathcal{T}^1$ is simplicial, $d_{\mathcal{T}^1}$ agrees with $d_\mathcal{T}$ on $\mathcal{T}$.

It is convenient to introduce a radius function $r: \mathcal{V}^1 \longrightarrow \N$ by setting $r(v) = d_{\mathcal{T}^1}(v, \p)$.
Denote $r_f = \max_{v\in f^{-1}(\p)} r(v)$.
We say $w\in f^{-1}(\p) \subseteq \mathcal{T}^1$ is a {\em furthest preimage} of $\p$ if 
$r(w) = r_f$.

\begin{lem}\label{lem:rb}
Let $v\in \mathcal{V} \subseteq \mathcal{V}^1$. 
Then
$$
r(v) + r(f(v)) \leq r_f.
$$
\end{lem}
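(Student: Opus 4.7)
The plan is to exhibit a preimage $w \in f^{-1}(\p) \cap \mathcal{V}^1$ with $d_{\mathcal{T}^1}(w,\p) = r(v) + r(f(v))$; since $r_f = \max_{w' \in f^{-1}(\p)} r(w')$, this yields the inequality. To produce $w$, the idea is to pull back the geodesic $\gamma$ from $\p$ to $f(v)$ in $\mathcal{T}$ (which has length $r(f(v))$) through the simplicial map $f\colon \mathcal{T}^1 \to \mathcal{T}$, starting the lift at $v$ and walking edge-by-edge backwards along $\gamma$.

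By the pull-back construction of $\mathcal{T}^1$, at $v$ one attaches $\delta(v) - |\alpha_v(T_v\mathcal{T}) \cap B|$ fresh copies of the subtree $S_a \subset \mathcal{T}$ for the direction $a \in T_{f(v)}\mathcal{T}$ toward $\p$. Since $\p \in S_a$, each such attached copy provides a fresh copy of $\p$. In the generic case where the count above is positive, the plan is to lift the last edge of $\gamma$ into one such attached copy; the remaining steps of the lift are then determined inside this isomorphic copy of $S_a$, terminating at a fresh copy $w$ of $\p$ at distance $r(f(v))$ from $v$. Because $w$ lies strictly outside $\mathcal{T} \subset \mathcal{T}^1$, the unique geodesic in $\mathcal{T}^1$ from $w$ to $\p$ must pass through $v$, giving $d_{\mathcal{T}^1}(w,\p) = r(f(v)) + r(v)$, as required.

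The main obstacle is the exceptional case $|\alpha_v(T_v\mathcal{T}) \cap B| = \delta(v)$, when no fresh copy is attached at $v$ in the direction of $\p$. Then $v$ has $\delta(v)$ existing edges in $\mathcal{T}$ all mapping to the edge of $f(v)$ toward $\p$; at most one of these edges lies on the geodesic $[v,\p]$, so provided $\delta(v) \geq 2$ I pass to a neighbor $v' \in \mathcal{V}$ with $r(v') = r(v) + 1$ and $r(f(v')) = r(f(v)) - 1$, observe that $g(u) := r(u) + r(f(u))$ is preserved under this step, and induct on $r(f(v))$ to conclude. The genuinely delicate subcase is $\delta(v) = 1$ with the unique edge at $v$ running toward $\p$ and mapping to $a$; here $[\p, v]$ maps isometrically onto $[\p, f(v)]$ under $f$, and the plan is to exploit minimality and admissibility to locate elsewhere in $\mathcal{T}^1$ a generic-case attached copy already producing a preimage of $\p$ at distance at least $g(v)$ from $\p$.
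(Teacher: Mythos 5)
Your generic case and your exceptional case with $\delta(v)\geq 2$ are correct, and together they reproduce the paper's argument for critical vertices: when $\delta(v)\geq 2$ there are $\delta(v)$ preimage directions of $a$ at $v$ in $\mathcal{T}^1$, at most one of which points toward $\p$, so one can pull back $[\p,f(v)]$ into a component of $\mathcal{T}^1\setminus\{v\}$ not containing $\p$ and land on a preimage of $\p$ of radius $r(v)+r(f(v))$. The genuine gap is the subcase you yourself flag as delicate: $\delta(v)=1$ and the unique preimage direction of $a$ at $v$ is the direction toward $\p$. This is not a marginal configuration — it occurs at essentially every post-critical or plain vertex lying on a segment that $f$ pushes toward $\p$ (for instance, in the degree $2$ rotation trees $\mathcal{T}=\cup_k f^k([\p,c])$ every post-critical endpoint $f^k(c)$, $k\geq 1$, is of this type). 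Your lifting scheme genuinely fails there (the unique lift decreases the radius), and ``locate elsewhere in $\mathcal{T}^1$ a generic-case attached copy producing a preimage of $\p$ at distance at least $g(v)$'' is not an argument: the copies you can locate sit over critical vertices $c$ and give preimages at radius $r(c)+r(f(c))$, so you would still have to prove $r(v)+r(f(v))\leq r(c)+r(f(c))$ for a suitable critical $c$, which is exactly the content of the missing case. (Your side remark that $[\p,v]$ maps isometrically onto $[\p,f(v)]$ is also unjustified: only the first edge of $[v,\p]$ is controlled by the hypothesis of that subcase.)

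This comparison is precisely where the paper does its remaining work, by a different mechanism than lifting. For a post-critical vertex $v=f^l(c)$ it uses that a simplicial map is distance non-increasing for the edge metric, so $r(f(v))\leq r(v)\leq r(f(c))\leq r(c)$, and then invokes the critical case at $c$; for $v$ neither critical nor post-critical it uses minimality to place $v$ in the interior of a segment $[w_1,w_2]$ between consecutive critical/post-critical vertices on which $f$ is injective, and bounds $r(v)+r(f(v))$ by an averaging estimate in terms of $r(w_i)+r(f(w_i))$, reducing again to the previous cases. Without an ingredient of this kind your induction does not close, so the proposal as written is incomplete.
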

\begin{proof}
First assume $v$ is a critical vertex.
Suppose for contradiction that $r(v) + r(f(v)) > r_f$.
Note that $f(v) \neq \p$, as otherwise, $v$ gives a further preimage of $\p$, contracting to the definition of $r_f$.

Let $a\in T_{f(v)}\mathcal{T} \subseteq T_{f(v)}\mathcal{T}^1$ be the tangent vector associated to the component containing $\p$.
By construction of $\mathcal{T}^1$, the tangent vector $a$ has $\delta(v) \geq 2$ preimages in $T_v\mathcal{T}^1$ under the tangent map $D f: T_v\mathcal{T}^1 \longrightarrow T_{f(v)}\mathcal{T} \subseteq T_{f(v)}\mathcal{T}^1$.
At least one direction will increase the radius function.
Thus, by considering the pullback of $[\p, f(v)]$ in the increasing direction, we can find a preimage $x$ of $\p$ with radius $r(x) = r(v)+r(f(v)) > r_f$, which is a contradiction.

If $v$ is post-critical, let $v= f^l(c)$ where $c$ is critical.
Since $f$ is distance non-increasing, $r(f(v)) \leq r(v) \leq r(f(c)) \leq r(c)$, so $r(v)+r(f(v)) \leq r_f$.

If $v$ is neither critical nor post-critical, since $f$ is assumed to be minimal on $\mathcal{T}$, $v$ is not an endpoint. 
Let $w_1,w_2$ be two critical or post-critical vertices such that $v\in [w_1,w_2]$ and there are no other critical or post-critical points in $(w_1, w_2)$.
Thus, $f$ restricts to an isomorphism from $[w_1,w_2]$ to $[f(w_1), f(w_2)]$.
Note that at least one of the two points, say $w_1$, has larger radius than $v$.
Write $r(v) = r(w_1) - l$ for some positive integer $l$.
Since $f$ is an isomorphism from $[w_1,w_2]$ to $[f(w_1), f(w_2)]$, we have $r(f(v)) \leq r(f(w_1)) + l$.
Thus, $r(v) + r(f(v)) \leq r(w_1) + r(f(w_1)) \leq r_f$.
\end{proof}

Let $w$ be a preimage of $\p$ in $\mathcal{T}^1$.
Then there exists at least one critical point on the path $[\p, w] \subseteq \mathcal{T}^1$.
A critical point $c_w \in\mathcal{V}$ on the path $[\p, w]$ is said to be {\em furthest} if $r(c_w) \geq r(c)$ for any critical point $c \in \mathcal{V}$ on the path $[\p,w]$.
We call $c_w$ the critical point associated with $w$.

\begin{lem}\label{lem:fe}
If $w \in \mathcal{V}^1$ is a furthest preimage of $\p$, then
\begin{enumerate}
\item $r_f = r(c_w) + r(f(c_w))$;
\item $\mathcal{T} \cap [\p, w] = [\p, c_w]$.
\end{enumerate}
\end{lem}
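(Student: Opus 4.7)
My plan is to prove both assertions together by tracing the path $[\p, w]$ in $\mathcal{T}^1$ and exploiting the pull-back construction together with the maximality $r(w) = r_f$.

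I will first locate the exit vertex $v_{\text{exit}}$, defined as the last vertex of $\mathcal{T}$ on the oriented path from $\p$ to $w$. Since $\mathcal{T}$ is a subtree of $\mathcal{T}^1$, the sub-arc $[\p, v_{\text{exit}}]$ lies in $\mathcal{T}$ while $[v_{\text{exit}}, w] \setminus \{v_{\text{exit}}\}$ lies in $\mathcal{T}^1 \setminus \mathcal{T}$. By the pull-back construction the latter sits inside a copy of some $S_a$ attached at $v_{\text{exit}}$ for some direction $a \in T_{f(v_{\text{exit}})}\mathcal{T}$; since $f(w) = \p$, this forces $\p \in S_a$ and identifies $w$ with $\p$ in the copy, giving $r(w) = r(v_{\text{exit}}) + r(f(v_{\text{exit}}))$.

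For the first bullet I would use a no-folding argument along $(c_w, w]$: every vertex strictly between $c_w$ and $w$ is either a new vertex of $\mathcal{T}^1 \setminus \mathcal{T}$ (automatically of local degree $1$ in the pull-back) or a vertex of $\mathcal{V}$ lying past $c_w$, hence simple by maximality of $c_w$ as the furthest critical vertex on $[\p, w]$. In both cases $f$ does not fold, so the image $f([c_w, w])$ is a non-backtracking simplicial path in $\mathcal{T}$ from $f(c_w)$ to $\p$ of length $r(w) - r(c_w)$. Therefore $r(f(c_w)) = r(w) - r(c_w) = r_f - r(c_w)$, which is the first bullet. Combining with the previous paragraph then gives $r(v_{\text{exit}}) + r(f(v_{\text{exit}})) = r(c_w) + r(f(c_w)) = r_f$, so $v_{\text{exit}}$ also achieves the maximum in Lemma~\ref{lem:rb}.

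For the second bullet it remains to show $v_{\text{exit}} = c_w$, equivalently that $v_{\text{exit}}$ is critical. The pull-back attaches a new edge at a simple vertex $v \in \mathcal{V}$ only when $Df : T_v\mathcal{T} \hookrightarrow T_{f(v)}\mathcal{T}$ fails to be surjective, and I would rule this out for $v_{\text{exit}}$ by combining the equality case of Lemma~\ref{lem:rb} (which forces $r$ to be constant along the orbit of the associated critical vertex) with the anchored-angle compatibility and minimality of $\mathcal{T}$. The main obstacle is exactly this surjectivity step, which requires a valence cascade along the periodic or pre-periodic orbit of $v_{\text{exit}}$: folding can only decrease valence at critical vertices while injectivity preserves it at simple ones, and admissibility of the angled tree map is needed to exclude the remaining border cases.
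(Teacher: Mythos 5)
Your treatment of the first bullet is fine and is essentially the paper's argument: since $c_w$ is by definition the furthest critical vertex on $[\p,w]$, there is no folding along $(c_w,w]$ (at simple and at new pull-back vertices $Df$ is injective by compatibility of the angle functions), so $f$ carries $[c_w,w]$ onto the geodesic $[f(c_w),\p]$ and $r(f(c_w))=r(w)-r(c_w)=r_f-r(c_w)$. Your observation that $r(w)=r(v_{\mathrm{exit}})+r(f(v_{\mathrm{exit}}))$, so that $v_{\mathrm{exit}}$ also attains equality in Lemma~\ref{lem:rb}, is correct but does not by itself yield the second bullet.

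For the second bullet there is a genuine gap: the step you yourself flag as ``the main obstacle'' is precisely the content of the statement, and the route you sketch does not work. You cannot hope to show that at a simple vertex the $\p$-direction at $f(v)$ always lies in the image of $Df$: nothing in anchoring, admissibility or minimality forbids a preimage of $\p$ from being attached at a simple vertex (for instance at a post-critical endpoint whose single tangent direction maps away from the $\p$-direction); such attachments do occur, they just never produce a \emph{furthest} preimage. So the argument must use the maximality $r(w)=r_f$ globally rather than a local ``valence cascade,'' and the parenthetical claim that equality in Lemma~\ref{lem:rb} forces $r$ to be constant along an orbit is unsubstantiated. The paper's argument is different and short: if $\mathcal{T}\cap[\p,w]$ goes strictly past $c_w$, minimality of $\mathcal{T}$ produces a post-critical vertex $v'$ beyond $c_w$, hence with $r(v')>r(c_w)$; being post-critical, $v'=f(v)$ for some $v\in\mathcal{V}$, and since $f$ is distance non-increasing $r(v)\geq r(v')$; therefore
$$
r(v)+r(f(v)) \;=\; r(v)+r(v') \;>\; r(c_w)+r(f(c_w)) \;=\; r_f,
$$
using the first bullet, which contradicts Lemma~\ref{lem:rb}. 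Some counting argument of this kind, exploiting both minimality and the already-proved first bullet, is what your proposal is missing.
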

\begin{proof}
Since there are no critical points in $(c_w, w]$, we have
$$
d_{\mathcal{T}^1}(c_w, w) = d_{\mathcal{T}^1}(f(c_w), p) = r(f(c_w))
$$ 
and the first statement follows.

Suppose for contradiction that the second statement does not hold.
Since $f$ is minimal, there exists a post-critical vertex $v'$ on $(c_w, w]$.
Note that $v'$ has a preimage $v$ in $\mathcal{T}$ as $v'$ is post-critical.
Since $f$ is distance non-increasing, we have $r(f(c_w)) \leq r(c_w) < r(v') \leq r(v)$, so $r(v)+r(v') = r(v)+r(f(v)) > r_f$ which is a contradiction to Lemma \ref{lem:rb}.
\end{proof}

\begin{defn}
A preimage $w \in \mathcal{V}^1$ of $\p$ is said to be {\em critically furthest} if
\begin{itemize}
\item $r(w) = r_f$;
\item $r(c_w) = \max \{r(c_v): f(v) = \p \text{ and } r(v) = r_f\}$.
\end{itemize}
\end{defn}
We remark that $c_w$ may not be a furthest critical point. It is only furthest among the critical points associated to furthest preimages of $\p$.

Let $w$ be a critically furthest preimage of $\p$.
Note that $[\p, w]$ is a path in the pullback $\mathcal{T}^1$.
The following lemma says that the associated critical point $c_w \in [\p, w]$ is an end point of the original tree $\mathcal{T}$.

\begin{lem}\label{lem:end}
If $w \in \mathcal{V}^1$ is a critically furthest preimage of $\p$, then $c_w$ is an end point of $\mathcal{T}$.
\end{lem}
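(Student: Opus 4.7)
I will argue by contradiction: assume $c_w$ has valence $\geq 2$ in $\mathcal{T}$, choose any direction $b \in T_{c_w}\mathcal{T}$ distinct from $a_\p^{c_w}$, and let $v'$ be the neighbor of $c_w$ in direction $b$, with $T_e$ the component of $\mathcal{T}\setminus\{c_w\}$ containing $v'$. The goal is to produce a preimage $w''$ of $\p$ with $r(w'')=r_f$ and $r(c_{w''})>r(c_w)$, contradicting the critically furthest property.

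The first step is to force Lemma \ref{lem:rb} to be sharp along $b$. Since $r(v')=r(c_w)+1$, Lemma \ref{lem:rb} gives $r(f(v'))\leq r(f(c_w))-1$, while simpliciality of $f$ gives $r(f(v'))\geq r(f(c_w))-1$. Equality holds, so $r(v')+r(f(v'))=r_f$ and $Df(b)=a_\p^{f(c_w)}$. Propagating this along consecutive vertices $v^{(i)}$ at distance $i$ from $c_w$ in direction $b$, the same argument shows $r(v^{(i)})+r(f(v^{(i)}))=r_f$ with $r(f(v^{(i)}))=r(f(c_w))-i$, for as long as the path continues in $T_e$.

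Next, I would search for a critical vertex along this path inside $T_e$. If some $v^{(i)}$ (or any critical vertex $c'\in T_e$ reached by following edges away from $c_w$) is critical, then $\delta(c')\geq 2$, and the preimages of $a_\p^{f(c')}$ at $c'$ in $T_{c'}\mathcal{T}^1$ include at least one direction outside $a_\p^{c'}$. Lifting $[\p,f(c')]$ through that direction (exactly as in the proof of Lemma \ref{lem:rb}) produces a preimage $w''$ of $\p$ with $r(w'')=r(c')+r(f(c'))=r_f$. Because $c'$ is critical and lies on $[\p,w'']$ with $r(c')>r(c_w)$, we get $r(c_{w''})\geq r(c')>r(c_w)$, contradicting the definition of critically furthest.

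It remains to rule out the scenario in which $T_e$ contains no critical vertex. Here I would invoke the minimality of $\mathcal{T}$: every endpoint of $T_e$ lies in a critical orbit, so is of the form $e_1=f^k(c_1)$ with $k\geq 1$ and $c_1$ critical, and by hypothesis $c_1\notin T_e$. Applying Lemma \ref{lem:rb} to the preimage $f^{k-1}(c_1)\in\mathcal{V}$ of $e_1$ (mirroring the trick used in the proof of Lemma \ref{lem:fe}) and then tracing the orbit $c_1,f(c_1),\dots,f^{k}(c_1)=e_1$ back into $\mathcal{T}\setminus T_e$, I would locate a critical ancestor on whose side Lemma \ref{lem:rb} is again sharp, and invoke Step 3 at that critical vertex to obtain the contradiction. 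The technical heart is this last reduction: showing that the post-critical endpoint $e_1$ of $T_e$ must pull back through $c_w$ in a way that produces the sought critical witness. This is the step I expect to be the main obstacle, since it requires combining minimality of $\mathcal{T}$ with the forcing of equality in Lemma \ref{lem:rb} along an orbit that crosses $c_w$.
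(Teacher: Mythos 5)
Your Step 1 is exactly the paper's opening move: since $r(v')=r(c_w)+1$, Lemma \ref{lem:rb} combined with $r_f=r(c_w)+r(f(c_w))$ (Lemma \ref{lem:fe}) forces $Df(b)$ to be the direction toward $\p$ at $f(c_w)$. Your Step 2 (at a critical vertex $c'\in T_e$ with $r(c')+r(f(c'))=r_f$, lift $[\p,f(c')]$ along a radius-increasing preimage direction to produce a furthest preimage whose associated critical point is beyond $c_w$) is also sound --- it is the same lifting mechanism as in the proof of Lemma \ref{lem:rb}. The genuine gap is the case you flag yourself: when $T_e$ contains no critical vertex you offer only a sketch (``locate a critical ancestor on whose side Lemma \ref{lem:rb} is again sharp''), and as written it does not go through. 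Knowing an endpoint of $T_e$ is $f^k(c_1)$ with $c_1$ critical and $c_1\notin T_e$ only gives, via Lemma \ref{lem:rb}, the upper bound $r(f^{k-1}(c_1))\leq r_f-r(f^k(c_1))$; nothing supplied shows that the orbit of $c_1$ interacts with $c_w$ or that sharpness can be transported to a critical vertex from which a lift reaching radius $r_f$ can be launched. So the proof is incomplete exactly where you predicted.

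The case split is in fact unnecessary, and this is where the paper's argument is simpler: lift at $c_w$ itself. Since $c_w$ is critical and, by your Step 1, $Df(b)$ is the toward-$\p$ direction at $f(c_w)$, the path $[f(c_w),\p]$ lifts to an injective path in $\mathcal{T}^1$ starting at $c_w$ along the edge $[c_w,v']$; its endpoint $v$ satisfies $f(v)=\p$ and $r(v)=r(c_w)+r(f(c_w))=r_f$, so $v$ is a furthest preimage, and $[\p,v]\cap\mathcal{T}$ contains $[\p,v']$. Applying Lemma \ref{lem:fe} to $v$ gives $[\p,v]\cap\mathcal{T}=[\p,c_v]$, hence $r(c_v)\geq r(v')>r(c_w)$, contradicting that $w$ is critically furthest. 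In other words, you never need to locate a critical vertex inside $T_e$ by hand: Lemma \ref{lem:fe}, applied to the furthest preimage created by lifting at $c_w$, supplies the critical witness (and shows a posteriori that your problematic case cannot occur).
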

\begin{proof}
Suppose for contradiction that $c_w$ is not an end point of $\mathcal{T}$.
There is at least one direction $a\in T_{c_w}\mathcal{T}$ for which the radius function is increasing.
By Lemma \ref{lem:rb}, $Df(a) \in T_{f(c_w)}\mathcal{T}$ corresponds to the component containing $\p$.
Thus, by considering the pullback of $[\p, f(c_w)]$ in this direction, we find another furthest preimage $v$ of $\p$.
Since $[\p, c_w]$ is strictly contained in $[\p, v) \cap \mathcal{T}$, by the second conclusion of Lemma \ref{lem:fe}, $c_w$ is not the furthest critical point on the path $[\p, v]$.
Let $c_v$ be the associated critical point for $v$. 
Then $r(c_v) > r(c_w)$.
Thus, $w$ is not critically furthest, which is a contradiction.
\end{proof}

Let $w$ be a critically furthest preimage of $\p$.
We first define
$$
\tilde f = f: (\mathcal{T},\p) \longrightarrow (\mathcal{T}, \p)
$$
with the same degree function and angle function except at $c:= c_w$, where $\tilde\delta(c) = \delta(c) - 1$.
Since $c$ is an endpoint by Lemma \ref{lem:end}, it is easy to check the above definition gives an admissible angled tree map of degree $d$.

Let $\widetilde{\mathcal{T}}\subseteq \mathcal{T}$ be the angled subtree where the dynamics $\tilde f$ is minimal.
We call
$$
(\tilde f: (\widetilde{\mathcal{T}},\p) \longrightarrow (\widetilde{\mathcal{T}}, \p), \tilde \delta, \tilde \alpha)
$$
the {\em reduction} of $f$.

Note that if $\delta(c) \geq 3$, then the dynamics $\tilde f$ on $\mathcal{T}$ is minimal, so $\widetilde{\mathcal{T}}=\mathcal{T}$.
If $\delta(c) = 2$, then the dynamics $\tilde f$ may or may not be minimal on $\mathcal{T}$ (depending on whether $c$ is in post-critical set of other critical points or not).
In any case, $\mathcal{T}$ is the convex hull of $\widetilde{\mathcal{T}}$ and the orbit of $c$.

\subsection*{Induction step for realization}
Let $f:(\mathcal{T},\p) \longrightarrow (\mathcal{T}, \p)$ be a minimal admissible angled tree map of degree $d+1$. 
If $\delta(\p) \geq 2$, let $\mathcal{F}$ be a marked and normalized mapping scheme on $\mathcal{S}_\p$.

If $\delta(\p) = d+1$, then Proposition \ref{prop:rt} is vacuously satisfied by taking the constant sequence $f_n(z) = \mathcal{F}_{\p}(z)$.
Thus, we assume $\delta(\p) \leq d$.
Note that in this case, $\tilde{\delta}(p) = \delta(p)$ and $\widetilde{\mathcal{F}}_p = \mathcal{F}_p$.

We will break up the proof of Proposition \ref{prop:rt} into three steps:
\begin{itemize}
\item Using induction, we first show the quasi-invariant tree $f:(\mathcal{T},\p) \longrightarrow (\mathcal{T}, \p)$ is realized (Lemma \ref{lem:rt}), which already implies Theorem \ref{thm:ar};
\item We then show the conditions for periodic rescaling limits in Proposition \ref{prop:rt} are satisfied (see IH \ref{ih:3} below and Lemma \ref{lem:iF});
\item Finally, we perform surgery so that the conditions for strictly pre-periodic rescaling limits in Proposition \ref{prop:rt} are satisfied (Lemma \ref{lem:mfp}).
\end{itemize}

Let $(\tilde f: (\widetilde{\mathcal{T}},\p) \longrightarrow (\widetilde{\mathcal{T}}, \p), \tilde \delta, \tilde \alpha)$ be the reduction of $f$.
We assume the following technical and auxiliary induction hypotheses for the reduction. It is easy to verify that all these induction hypotheses are satisfied for the degree $2$ base case.
\begin{ih}\label{ih:m}
There exists a quasi post-critically finite sequence $\tilde f_n$ of degree $d$ realizing $\tilde f:(\widetilde{\mathcal{T}},\tilde\p) \rightarrow (\widetilde{\mathcal{T}}, \tilde\p)$, with isomorphisms $\tilde\phi_n:\widetilde{\mathcal{T}} \rightarrow \widetilde{\mathcal{T}}_n$, such that
the rescaling limit $\widetilde{F}_p$ at $p$ is conjugate to $\widetilde{\mathcal{F}}_p$ with compatible marking.
\end{ih}
\begin{ih}\label{ih:1}
There exists a sequence $R_n\to \infty$ such that
$$
d_{\D}(\tilde\phi_n(v), \tilde\phi_n(w)) = d_{\widetilde{\mathcal{T}}}(v, w)R_n + O(1),
$$ 
where $O(1)$ depends on $\tilde f$ and $\widetilde{\mathcal{F}}$.
\end{ih}

\begin{ih}\label{ih:2}
Let $v\in \widetilde{\mathcal{V}}$ with $f(v) \neq \p$.
Let $s_0 = \lim_{f(v)} \tilde\phi_n(\p) \in \mathbb{S}^1_{f(v)}$ and $\widetilde{F}_{v}^{-1}(s_0) = \{t_1,..., t_{\tilde{\delta}(\tilde v)}\} \subseteq \mathbb{S}^1_{v}$. Then there exists $\tilde \theta = \tilde \theta(\tilde f)$ so that
$$
|t_i-t_j| \geq \tilde \theta \text{ for all } i\neq j.
$$  
Let $w_{i,n}$ be the nearest zero of $\tilde f_n$ with
$\lim_{v} w_{i,n} = t_i$. Then
$$
d_{\D}(\tilde \phi_n(v), w_{i,n}) = d_{\D}(\tilde \phi_n(v), w_{j,n}) \text{ for all } i\neq j.
$$
\end{ih}
\begin{ih}\label{ih:3}
There exists a constant $\widetilde{L} = \widetilde{L}(\tilde f)$ so that for any periodic cycle $\mathcal{C}$ other than $\p$, we can order the cycle by $\mathcal{C} = \{v_1,..., v_q\}$ so that
$$
\lim_{n\to\infty} d_{\D}(\tilde \phi_n(v_{i+1}), \tilde f_n(\tilde \phi_n(v_i))) \leq \widetilde{L} \text{ for all } i =1,..., q-1.
$$
\end{ih}

We remark that IH \ref{ih:2} is vacuously satisfied if $\delta(\tilde v) = 1$.
The nearest zero in IH \ref{ih:2} exists as $\widetilde{F}_{v}(t_i) = s_0$.
By Lemma \ref{lem:cl}, IH \ref{ih:2} gives a constant $\widetilde{R}$ depending only $\tilde f$, so that $B_{\Hyp^2}(\phi_n(v), \widetilde{R})$ contains $\delta(v)-1$ critical points of $\tilde f_n$.
Thus IH \ref{ih:3} gives that the rescaling limit
$$
\widetilde{F}_{v_1}^q: \D_{v_1} \longrightarrow \D_{v_1}
$$
is $\widetilde{M}$-uni-critical for some constant $\widetilde{M} = \widetilde{M}(\tilde f)$ depending only on $\tilde f$.

We also remark that the estimate in IH \ref{ih:3} may not hold for $i=q$, as the hyperbolic distance between a critical point and its image under the first return map $\tilde F_{v_1}^q$ may depend on $\mathcal{F}$.

\begin{lem}\label{lem:rt}
There exists a quasi post-critically finite sequence $f_n \in \BP_d$ which realizes $f:(\mathcal{T}, \p) \longrightarrow (\mathcal{T}, \p)$, and satisfies IH \ref{ih:m}, IH \ref{ih:1} and IH \ref{ih:2}.
\end{lem}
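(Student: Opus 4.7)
The plan is to induct on $d$, with base case $d+1 = 2$ supplied by Lemma \ref{lem:r2}. Given the reduction $\tilde f:(\widetilde{\mathcal{T}},\p)\longrightarrow (\widetilde{\mathcal{T}},\p)$ realized by $\tilde f_n \in \BP_d$ satisfying IH \ref{ih:m}, IH \ref{ih:1}, IH \ref{ih:2}, I construct
$$
f_n(z) = \epsilon_n \cdot \frac{z - a_n}{1 - \overline{a_n}z} \cdot \tilde f_n(z),
$$
which after conjugation by a rotation fixing $0$ lies in $\BP_{d+1}$. Here $a_n \in \D$ is the additional zero corresponding to the preimage of $0$ deleted in passing from $f$ to $\tilde f$, and $\epsilon_n \in \mathbb{S}^1$ is a phase correction chosen so that the marking of $f_n$ remains anchored.

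To specify $a_n$, I apply IH \ref{ih:2} for $\tilde f_n$ at the critical vertex $c = c_w$: in $c$-coordinates the $\tilde\delta(c) = \delta(c) - 1$ zeros of $\tilde f_n$ that cluster at $\tilde\phi_n(c)$ and land at $0$ sit at symmetric angular positions with equal hyperbolic radii, with angles determined by $\alpha_c$ restricted to directions at $c$ mapping to the $\p$-direction. The combinatorics of $\mathcal{T}^1$ together with the admissibility of $f$ single out exactly one additional direction at $c$, along the edge toward $w$; I place $a_n$ in that direction at the common hyperbolic radius. By Lemma \ref{lem:cl}, this produces $\delta(c) - 1$ critical points of $f_n$ within uniformly bounded hyperbolic distance of $\tilde\phi_n(c)$, upgrading $c$ to its correct local degree. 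The phase $\epsilon_n$ is then forced by the anchored normalization.

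Away from a neighborhood of $a_n$, Lemma \ref{lem:em} together with Lemmas \ref{lem:he} and \ref{lem:hme} gives
$$
d_{\Hyp^2}\bigl(f_n(z),\, \epsilon_n(-\hat a_n)\tilde f_n(z)\bigr) = O(1)
$$
uniformly on $B_{\Hyp^2}(0, s\rho_{a_n})$ for any fixed $s \in (0,1)$. The choice of $\epsilon_n$ cancels the spurious rotation $\epsilon_n(-\hat a_n)$, so $f_n^k$ tracks $\tilde f_n^k$ to within $O(1)$ on $\widetilde{\mathcal{T}}_n$, keeping the preexisting critical orbits quasi post-critically finite. The new critical orbit passes through $a_n$ (a preimage of $0$) and then reenters $\widetilde{\mathcal{T}}_n$, where it is controlled by the same estimates together with Lemma \ref{lem:aem} along the edges of $\mathcal{T}$ it traverses, as in the degree-$2$ proof of Lemma \ref{lem:r2}. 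This yields a quasi post-critically finite $f_n \in \BP_{d+1}$ realizing $f$.

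The main obstacle is the bookkeeping needed to verify IH \ref{ih:1} and IH \ref{ih:2} for $f_n$ itself. For IH \ref{ih:1}, the inherited edges of $\mathcal{T}_n = \widetilde{\mathcal{T}}_n \cup [\tilde\phi_n(c), a_n] \cup (\text{forward orbit of this edge})$ automatically carry the same rescaling factor $R_n$, and the length of the new edge $[\tilde\phi_n(c), a_n]$ is arranged to equal $r(f(c)) R_n + O(1)$ by the choice of $a_n$, with the subsequent forward edges inheriting the correct lengths by the approximate conjugacy above together with Lemma \ref{lem:fe}. For IH \ref{ih:2} at each vertex of the new pull-back tree, the newly added zero in every cluster must share the common hyperbolic radius and the symmetric angle with the preexisting ones; the only cluster where new zeros appear is at $c$, where $a_n$ was placed precisely at the symmetric position, and this propagates outward under the pullback. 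The most delicate subcase is $\delta(c) = 2$, where $\widetilde{\mathcal{T}}$ is a strict subtree of $\mathcal{T}$ and the entire forward orbit of $c$ under $f$ appears only after the upgrade; it is handled by direct estimates mirroring the degree-$2$ argument of Lemma \ref{lem:r2}.
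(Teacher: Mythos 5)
Your construction matches the paper's at the structural level (the same product $f_n = \epsilon_n A_n \tilde f_n$, the new zero placed via IH \ref{ih:2} at the common radius of the zeros clustering at $c=c_w$, Lemma \ref{lem:cl} to upgrade the local degree), but the central analytic step is justified by a claim that is false. You assert that $d_{\Hyp^2}\bigl(f_n(z), \epsilon_n(-\hat a_n)\tilde f_n(z)\bigr)=O(1)$ uniformly on $B_{\Hyp^2}(0,s\rho_{a_n})$ for \emph{any} fixed $s\in(0,1)$. Lemma \ref{lem:em} only gives a Euclidean error of order $\delta_{a_n}^{1-s}$ there, and this is absorbed into a bounded hyperbolic displacement only when the image point lies within hyperbolic distance about $(1-s)\rho_{a_n}$ of the origin (so that the hyperbolic density at the image is $O(\delta_{a_n}^{-(1-s)})$ by Lemma \ref{lem:hme}). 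Schwarz only gives the image radius $\leq s\rho_{a_n}$, so your claim already fails for $s>1/2$ at points whose image stays far from $0$; the hyperbolic displacement there grows like a positive power of $\delta_{a_n}^{-1}$. What makes the estimate work at the points that matter — the vertices $\phi_n(v)$ and hence the tree and the critical orbits — is precisely the combinatorial inequality $r(v)+r(f(v))\leq r_f$ (Lemma \ref{lem:rb}, together with Lemma \ref{lem:fe}), which holds \emph{because} the deleted preimage $w$ was chosen critically furthest: a vertex at radius $s\rho_n$ has image at radius at most $(1-s)\rho_n+O(1)$, so the error $\delta_n^{1-s}$ is comparable to the hyperbolic scale at the image. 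You never invoke this inequality or any substitute, so your argument has no control at vertices with $r(v)>r_f/2$ (which occur in general), and the tracking of the preexisting critical orbits is not established. This is the heart of the induction step and cannot be waved through with the uniform-ball statement.

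Two secondary problems. First, the new critical point produced by Lemma \ref{lem:cl} sits within bounded hyperbolic distance of $\phi_n(c)$, not near $a_n$; its orbit shadows the orbit of $c$ in the tree and does not ``pass through $a_n$'' (except in the special case $f(c)=\p$, when $a_n=\phi_n(c)$). Relatedly, in Case (1) no new edge $[\tilde\phi_n(c),a_n]$ is added to $\mathcal{T}_n$ at all — the vertex $w$ lives in the pull-back tree $\mathcal{T}^1$, and only the local degree at $c$ changes — so your verification of IH \ref{ih:1} is aimed at the wrong object. Second, the case $\widetilde{\mathcal{T}}\subsetneq\mathcal{T}$ is not merely ``direct estimates mirroring Lemma \ref{lem:r2}'': when the orbit of $c$ never meets $\widetilde{\mathcal{T}}$ one must attach the new orbit at its projection onto $\widetilde{\mathcal{T}}$ along a repelling direction of the relevant first-return rescaling limit, and this is exactly where admissibility (no periodic Julia branch points; critically star-shaped core) is used; when the orbit does eventually enter $\widetilde{\mathcal{T}}$ one passes to a pull-back $\widetilde{\mathcal{T}}^k$ before running the Case (1) argument. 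These reductions need to be carried out, not just asserted.
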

\begin{proof}
We consider two cases.

{\bf Case (1): $\widetilde{\mathcal{T}} = \mathcal{T}$.}

We set $\phi_n(v) = \tilde \phi_n(v)$ for $v\in \mathcal{V}$.
Let $M_{c,n}, M_{\phi(c), n} \in \Isom(\D)$ be the coordinate at $c$ and $\phi(c)$ respectively.
Let $\widetilde{F}_{c}$ be the rescaling limit at $c$.
Since $\delta(\p) \leq d$, $c\neq \p$.

If $f(c) = \p$, set $w_n = \phi_n(c)$.

Otherwise, let
$s_0 = \lim_{f(c)}\phi_n(\p) \in \mathbb{S}^1_{f(c)}$.
Let $t_1,..., t_{\tilde \delta(c)} \in \mathbb{S}^1_{c}$ be the preimages of $s_0$ under $\widetilde{F}_{c}$.
Let $w_{i,n}$ be the nearest zero of $\tilde f_n$ to $c_n$ with
$\lim_{c} w_{i,n} = t_i$.

By IH \ref{ih:2}, we choose a point $t \neq t_i \in \mathbb{S}^1_{c}$ so that the $t, t_1,..., t_{\tilde \delta(c)}$ are separated by at least $\theta$, where $\theta$ depends only on $f$.
Let $w_n\in \D$ be such that
\begin{itemize}
\item $M_{c,n}(w_n)$ lies in the geodesic ray $[0, t)$; and
\item $d_{\D}(\phi_n(c), w_n) = d_{\D}(\phi_n(c), w_{i,n})$ for any $i$.
\end{itemize}
Let $\hat{w}_n = \frac{w_n}{|w_n|} \in \mathbb{S}^1$, and $A_n(z) = \frac{z-w_n}{1-\overline{w_n}z}$.

Consider the sequence 
$$
f_n(z) = \frac{-1}{\hat{w}_n} A_n(z) \tilde f_n(z),
$$
which is conjugate to a map in $\BP_d$ by some rotation. 
Again, abusing the notations, we shall not distinguish $f_n$ and its conjugate in $\BP_d$ in this proof.

Throughout this proof, if not specified, the constants and $O(1)$ depend on the tree map $f$ and the mapping scheme $\mathcal{F}$.
Let $v\in \mathcal{V}$. 
We first claim that there exists a constant $K_1$ with
$$
d_{\D}(f_n( \phi_n(v)), \tilde f_n(\phi_n(v))) \leq K_1.
$$

Denote $\delta_n:= 1-|w_n|$ and $\rho_n:= d_{\D}(0, w_n)$, then $\rho_n = r(w)R_n + O(1)$.
Let $s = \frac{r(v)}{r(w)}$.
By Lemma \ref{lem:aem}, there exists some constant $K_2$ such that the error term satisifes
\begin{align}\label{eqn:del}
|f_n(\phi_n(v)) - \tilde f_n(\phi_n(v))| = |\tilde f_n(\phi_n(v))||A_n(\phi_n(v)) + \hat{w}_n| \leq K_2 \delta_n^{1-s}.
\end{align}
By Lemma \ref{lem:rb}, we have $r(v) + r(f(v)) \leq r_f = r(w)$.
Thus 
$$
d_{\D}(0, \tilde f_n(\phi_n(v))) = r(f(v)) R_n + O(1) \leq (1-s) \rho_n + O(1).
$$
Since $|\frac{-1}{\hat{w}_n} A_n(z)| < 1$ for $z\in \D$,
$$
d_{\D}(0, f_n (\phi_n(v))) \leq d_{\D}(0, \tilde f_n(\phi_n(v))) \leq (1-s) \rho_n + O(1).
$$
Thus, by Lemma \ref{lem:hme}, the hyperbolic metric at $z$ along the Euclidean segment $[f_n (\phi_n(v)), \tilde f_n (\phi_n(v))]_{\R^2}$ satisfies
$$
\rho_{\Hyp^2}(z) |dz| \leq K_3 \frac{1}{\delta_n^{1-s}} |dz|.
$$
Together with Equation \ref{eqn:del}, we conclude that
$$
d_{\D}(f_n( \phi_n(v)), \tilde f_n(\phi_n(v))) \leq K_1,
$$
for some constant $K_1$.
Thus, the vertices for $\mathcal{T}_n$ are $K_1$ quasi-invariant.
The same proof as in Proposition \ref{prop:qi} shows that $\mathcal{T}_n$ is quasi-invariant under $f_n$.

Note that $f_n$ and $\tilde f_n$ have the same set of zeros except for $w_n$.
If $f(c) = \p$ i.e. $c = w$, then $w_n = \phi_n(c)$, so $\delta(c) = \tilde\delta(c) +1$.

Otherwise, by IH \ref{ih:2} and Lemma \ref{lem:cl}, there are $\delta(c)$ critical points within a bounded distance away from $\phi_n(c)$, so $\delta(c) = \tilde\delta(c) +1$.
The same argument also shows $\delta(v) = \tilde \delta(v)$ for all $v\neq c$.
A similar estimate on the first pullback tree $\mathcal{T}^1$ allows us to verify we have the correct marking for rescaling limits.
So $f_n$ realizes the angled tree map $f: (\mathcal{T}, \p) \rightarrow (\mathcal{T}, \p)$.

By construction, IH \ref{ih:m}, IH \ref{ih:1} and IH \ref{ih:2} are satisfied.
This proves the Case (1).

{\bf Case (2): $\widetilde{\mathcal{T}}\subsetneqq\mathcal{T}$.}

We show we can reduce to the first case.
During the reduction, we will encounter angled tree maps that are no longer minimal.
The notion of realization generalizes naturally to angle tree maps that are not minimal.

If $c \in \mathcal{T}$ is eventually mapped into $\widetilde{\mathcal{T}}$ by $\tilde f$, then there exists $k \geq 1$ with $\mathcal{T} \subseteq \widetilde{\mathcal{T}}^k$ for some pullback of $\widetilde{\mathcal{T}}$, which is realized by quasi-invariant tree $\widetilde{\mathcal{T}}^k_n$ for $\tilde {f}_n$.
It's easy to see $\widetilde{\mathcal{T}}^k$ satisfies all the induction hypotheses as all the additional vertices have degree $1$.
Applying the argument in Case (1) to $\widetilde{\mathcal{T}}^k$, we conclude Lemma \ref{lem:rt}.

Thus, we assume the orbit of $c$ does not intersect $\widetilde{\mathcal{T}}$.

Suppose $c$ is periodic with period $q$.
Let $a = \proj_{\widetilde{\mathcal{T}}}(c)$ be the projection of $c$ to $\widetilde{\mathcal{T}}$.
Since $c$ is periodic, and $f$ is distance non-increasing, $a$ is periodic as well. 
Since there are no periodic Julia branch points other than $\p$, $c$ is adjacent to $a$ and the period of $a$ divides $q$.

If $a$ is a periodic Fatou point, then the first return rescaling limit $\widetilde{F}$ at $a$ has degree $\geq 2$.
Let $t\in \mathbb{S}^1_{\Delta(a), \Delta_{pre}(a)}$ be the angle associated to the direction of $c$. Then the corresponding repelling periodic point $s\in \mathbb{S}^1_a$ for $\widetilde{F}$ is not a hole for $\widetilde{F}$ by Lemma \ref{lem:ct}.

Let $M_{a,n} \in \Isom(\D)$ be the local coordinate at $a$.
We define $\tilde{\phi}_n(c)$ so that
\begin{itemize}
\item $M_{a,n}(\tilde{\phi}_n(c))$ lies in the geodesic ray $[0, s)$; and 
\item $d_{\D}(0, M_n(\tilde{\phi}_n(c))) = R_n= d_{\mathcal{T}}(a, c) R_n$.
\end{itemize}
We define $\tilde\phi_n(\tilde f^k(c)) = \tilde f_n^{k}(\tilde{\phi}_n(c))$ for $k=1,..., p-1$.
We construct 
$$
\mathcal{T}_n = \widetilde{\mathcal{T}}_n \cup \bigcup_{k=0}^{q-1} [\tilde\phi_n(\tilde f^k(a)), \tilde\phi_n(\tilde f^k(c))].
$$
It is easy to verify that the map $\tilde f_n$ on $\mathcal{T}_n$ realizes the (non-minimal) angled tree map $\tilde f :(\mathcal{T}, \p) \longrightarrow (\mathcal{T}, \p)$ and satisfies all the induction hypotheses as all the additional vertices have degree $1$. 
Thus the lemma follows from the argument in Case (1).

If $a$ is a periodic Julia point and $a\neq \p$, 
since $\mathcal{T}$ is admissible, $a$ is not a branch point for $\mathcal{T}$.
Thus $a$ is an end point of $\widetilde{\mathcal{T}}$.
Therefore, if $t\in \mathbb{S}^1_{\Delta(a), \Delta_{pre}(a)}$ is the angle at $a$ associated to the direction of $c$, it corresponds to the unique repelling fixed point $s\in \mathbb{S}^1_a$ for the degree $1$ first return rescaling limit $\widetilde{F}$ at $a$.
The proof is similar to the previous case.

If $a$ is a periodic Julia point and $a= \p$, then $\mathcal{T}$ is a star-shaped tree. 
The lemma then follows directly from a similar argument as in Lemma \ref{lem:r2} (see also \cite[\S 14]{McM09}).

Finally suppose $c$ is strictly pre-periodic. 
Let $b = \tilde f^l(c)$ where $l$ is the pre-period, and $a= \proj_{\widetilde{\mathcal{T}}}(b)$.
The same argument as in the case when $c$ is periodic shows that the convex hull $\mathcal{T}' \subseteq \mathcal{T}$ of $\widetilde{\mathcal{T}}$ and the orbit of $b$ is realized by $\tilde{f}_n$. 
Then a similar argument as in the case when $c$ is mapped into $\mathcal{T}$ shows that by pulling back, $\tilde f :(\mathcal{T}, \p) \longrightarrow (\mathcal{T}, \p)$ is realized by $\tilde{f}_n$ and satisfies all the induction hypotheses.
Thus, the argument in Case (1) gives that $f :(\mathcal{T}, \p) \longrightarrow (\mathcal{T}, \p)$ is also realizable. This proves the Case (2).
\end{proof}

\subsection*{Rescaling limits for periodic orbits}
We now show the bound for periodic orbits is independent of $\mathcal{F}$.
\begin{lem}\label{lem:iF}
The realization $f_n$ for $f: (\mathcal{T},\p) \rightarrow (\mathcal{T},\p)$ satisfies IH \ref{ih:3}.
\end{lem}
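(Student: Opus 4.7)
The plan is to prove Lemma \ref{lem:iF} by transferring IH \ref{ih:3} from $\tilde{f}_n$ to $f_n$ via the explicit formula $f_n(z) = -A_n(z)\tilde{f}_n(z)/\hat{w}_n$ used in the construction. I would split the periodic cycles $\mathcal{C}$ of $f$ (other than $\{\p\}$) into those lying entirely in $\widetilde{\mathcal{V}}$ (hence already periodic for $\tilde{f}$) and those containing the removed critical vertex $c$, which can only occur in Case (2) of Lemma \ref{lem:rt}. For a cycle of the first type, IH \ref{ih:3} applied to $\tilde{f}$ yields an ordering $v_1,\dots,v_q$ with
$$
\limsup_{n\to\infty} d_{\Hyp^2}(\tilde\phi_n(v_{i+1}),\, \tilde{f}_n(\tilde\phi_n(v_i))) \leq \widetilde{L}(\tilde{f}),
$$
and since $\phi_n=\tilde\phi_n$ on $\widetilde{\mathcal{V}}$ in Case (1), it remains to bound $d_{\Hyp^2}(\tilde{f}_n(\phi_n(v_i)),\, f_n(\phi_n(v_i)))$ by a constant depending only on $f$.

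For that comparison I would use Lemma \ref{lem:aem} together with the uniform angle separation between $\phi_n(v_i)$ and $w_n$ built into $\mathcal{T}_n$ to obtain
$$
|f_n(\phi_n(v_i)) - \tilde{f}_n(\phi_n(v_i))| = |\tilde{f}_n(\phi_n(v_i))|\,|A_n(\phi_n(v_i))+\hat{w}_n| \leq C(f)\,\delta_n^{1-s}, \qquad s=r(v_i)/r_f.
$$
Combining this with the lower bound $1-|\tilde{f}_n(\phi_n(v_i))| \gtrsim e^{-r(f(v_i))R_n - O(1)}$ coming from IH \ref{ih:1} and Lemma \ref{lem:hme}, the induced hyperbolic distance scales like $e^{(r(f(v_i))+r(v_i)-r_f)R_n + O(1)}$. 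By Lemma \ref{lem:rb} the exponent is non-positive, so this factor stays bounded; whenever $r(v_i)+r(f(v_i))<r_f$ it decays exponentially in $n$, which absorbs any $\mathcal{F}$-dependent $O(1)$ constant into the $\limsup$.

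For cycles through $c$ (Case (2)), I would invoke the explicit placement of $\phi_n(c)$ along a geodesic ray from $\tilde\phi_n(\proj_{\widetilde{\mathcal{T}}}(c))$ in the direction corresponding to a repelling periodic point of the rescaling limit at the projection vertex, exactly as in the proof of Lemma \ref{lem:rt}. The quasi-invariance along such a cycle then follows from a direct hyperbolic-geometry estimate using Theorem \ref{thm:almostisometry} and the fact that the first return rescaling limit at the projection vertex is $\widetilde{M}$-uni-critical with $\widetilde{M}=\widetilde{M}(\tilde{f})$, so that the resulting constant depends only on the combinatorics of $f$.

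The main obstacle I anticipate is the boundary case $r(v_i)+r(f(v_i))=r_f$ in the first step: here the exponential decay of $|f_n-\tilde{f}_n|$ exactly matches the shrinkage of $1-|\tilde{f}_n(\phi_n(v_i))|$, so the ratio is merely bounded and could a priori absorb the $\mathcal{F}$-dependent $O(1)$ term from IH \ref{ih:1}. To resolve this, I would argue that either the equality forces $v_i$ to lie on the orbit of $\p$ (and thus the cycle is excluded from the hypothesis of IH \ref{ih:3}), or that the refined angle-based estimate of Lemma \ref{lem:aem} gives a comparison constant $C(f)$ which is itself independent of $\mathcal{F}$, so that the exponentially matching rates still yield a bound depending only on $f$.
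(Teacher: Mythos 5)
Your overall strategy coincides with the paper's: invoke IH \ref{ih:3} for the reduction $\tilde f_n$ and then bound $d_{\Hyp^2}(f_n(\phi_n(v_i)),\tilde f_n(\phi_n(v_i)))$ by an $\mathcal{F}$-independent constant, using the radius function and Lemma \ref{lem:rb} to see that the exponent $r(v_i)+r(f(v_i))-r_f$ is non-positive, with exponential decay (which absorbs the $\mathcal{F}$-dependent $O(1)$) in the strict-inequality case, and reducing Case (2) to Case (1) as in Lemma \ref{lem:rt}. The gap is precisely in the boundary case $r(v_i)+r(f(v_i))=r_f$, which is where the entire content of the lemma lies. Your first proposed resolution is false: the equality does not push $v_i$ into the excluded cycle $\{\p\}$. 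Since $v_i$ is periodic one has $r(v_i)=r(f(v_i))=r_f/2>0$, and this situation genuinely occurs away from $\p$ --- already for the new critical vertex $c=c_w$ whenever it is periodic, Lemma \ref{lem:fe} gives $r(c)+r(f(c))=r_f$ (in the degree-two model $c$ sits at the hyperbolic midpoint of $[\p_n,w_n]$). Your second resolution also does not close the gap: the troublesome $\mathcal{F}$-dependence does not enter through the constant of Lemma \ref{lem:aem} (which indeed depends only on the angle $\theta(f)$), but through the $O(1)$ of IH \ref{ih:1} that your setup uses to convert the Euclidean error bound $C(f)\delta_n^{1-s}$ into a hyperbolic distance via a metric density of size comparable to $\delta_n^{-r(f(v_i))/r_f}$. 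When the exponents cancel exactly, the surviving factor $e^{O(1)}$ still depends on $\widetilde{\mathcal{F}}$, and asserting $\mathcal{F}$-independence of $C$ does not remove it.

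The paper's argument bypasses IH \ref{ih:1} entirely in the borderline case (after first disposing, by a similar decay estimate, of the subcase where $v$ is not the midpoint of $[\p,w]$). Setting $\epsilon_n=1-|\tilde f_n(\phi_n(v))|$, it uses IH \ref{ih:2} and Lemma \ref{lem:sep} to get the angle bound $\angle\,\phi_n(w)\,\phi_n(v)\,\phi_n(\p)\geq\theta(f)$, and then the Schwarz inequality $d_{\Hyp^2}(0,\tilde f_n(\phi_n(v)))\leq d_{\Hyp^2}(\phi_n(v),\phi_n(w))$ together with Lemma \ref{lem:aem} bounds the Euclidean error $|f_n(\phi_n(v))-\tilde f_n(\phi_n(v))|$ by $K_3(\theta)\,\epsilon_n$, while Lemma \ref{lem:hme} bounds the hyperbolic density along the connecting segment by $4/\epsilon_n$. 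Both the error and the metric are measured by the same quantity $\epsilon_n$, so the product is a constant depending only on $f$, with no reference to IH \ref{ih:1}. This self-contained $\epsilon_n$-comparison is the missing step in your sketch; the rest of your outline is in line with the paper.
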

\begin{proof}
Recall that we have two cases after the reduction.
We consider Case (1) for $\widetilde{\mathcal{T}} = \mathcal{T}$ here. 
The same modification as in Lemma \ref{lem:rt} can be used to prove Case (2) for $\widetilde{\mathcal{T}}\subsetneqq\mathcal{T}$.

After passing to a subsequence, we may assume all limits exist in the following discussion.
Let $\{v_1,..., v_q\}$ be a periodic cycle.
Changing the ordering if necessary, by IH \ref{ih:3}, there exists $M_1 = M_1(\tilde{f})$ so that for $i=1,..., q-1$, 
\begin{align}\label{eqn:M1}
\lim_{n\to\infty} d_{\D}(\phi_n(v_{i+1}), \tilde f_n(\phi_n(v_i))) \leq M_1.
\end{align}

We claim that after passing to a subsequence, there exists a constant $M_2 = M_2(f)$ so that for all $i=1,..., q$,
\begin{align}\label{eqn:M2}
\lim_{n\to \infty} d_{\D}(f_n( \phi_n(v_i)), \tilde f_n(\phi_n(v_i))) \leq M_2.
\end{align}

Let $w$ be the new pre-image of $\p$.
Then $r(w) = r_f$. 
Since $v_i \in \mathcal{V}$, by Lemma \ref{lem:rb}, $r(v_i) + r(f(v_i)) \leq r_f$.
Since $v_i$ is periodic, $r(f(v_i)) = r(v_i)$.
Thus $r(v_i) \leq r_f/2$.

Suppose we have strict inequality $r(v_i) < r_f/2$. 
Then the error term is
\begin{align*}
|f_n(\phi_n(v_i)) - \tilde f_n(\phi_n(v_i))| \leq K_1 \delta_n^{1-\frac{r(v_i)}{r_f}},
\end{align*}
where $K_1$ is a constant depending on $f$ and $\mathcal{F}$.
The hyperbolic metric at $\tilde f_n(\phi_n(v_i))$ and $f_n(\phi_n(v_i))$ is
$$
\rho_{\Hyp^2}(f_n(\phi_n(v_i)))|dz| \leq \rho_{\Hyp^2}(\tilde f_n(\phi_n(v_i)))|dz| \leq K_2 \delta_n^{-\frac{r(f(v_i))}{r_f}}|dz|,
$$
where $K_2$ is a constant depending on $f$ and $\mathcal{F}$.
Since $\delta_n \to 0$, and 
$$
1-{(r(v_i)+r(f(v_i)))}/{r_f} = 1-2r(v_i)/r_f > 0,
$$ 
we have
$$
\lim_{n\to \infty} d_{\D}(f_n( \phi_n(v_i)), \tilde f_n(\phi_n(v_i))) \leq \lim_{n\to \infty} K_1K_2 \delta_n^{1-\frac{r(v_i)+r(f(v_i))}{r_f}} = 0.
$$ 
Thus, after passing to a subsequence, the claim follows by simply taking the constant $M_2 = 1$.

Therefore, we only need to consider the case $r(v_i) = \frac{r_f}{2}$.
Suppose $v_i$ is not the midpoint of $[\p, w]$. Then a similar estimate as above would also give
$$
\lim_{n\to \infty} d_{\D}(f_n( \phi_n(v_i)), \tilde f_n(\phi_n(v_i))) = 0,
$$ 
so the claim follows in this case as well.

Finally, suppose $v_i$ is the midpoint of $[\p, w]$. 
By IH \ref{ih:2} and Lemma \ref{lem:sep}, there exists a constant $\theta = \theta(f)$ such that for any $n$, we have
$$
\angle \phi_n(w) \phi_n(v_i) \phi_n(\p)\geq \theta.
$$ 
By Schwarz lemma, 
$$
d_{\D}(0, \tilde f_n(\phi_n(v_i))) \leq d_{\D}(\phi_n(w), \phi_n(v_i)).
$$
Let $\epsilon_n > 0$ be so that $|\tilde f_n(\phi_n(v_i))| = 1-\epsilon_n$.
Recall that $f_n(z) = \frac{-1}{\hat{w}_n} A_n(z) \tilde f_n(z)$, where $A_n(z) = \frac{z-\phi_n(w)}{1-\overline{\phi_n(w)}z}$, and $\hat{w}_n = \frac{\phi_n(w)}{|\phi_n(w)|} \in \mathbb{S}^1$.
By Lemma \ref{lem:aem}, there exists a constant $K_3 = K_3(\theta)$ so that for all $n$, $|A_n(\phi_n(v_i)) + \hat w_n| \leq K_3 \epsilon_n$.
Therefore,
\begin{align*}
|f_n(\phi_n(v_i)) - \tilde f_n(\phi_n(v_i))| = |\tilde f_n(\phi_n(v_i)||1+\frac{1}{\hat w_n} A_n(\phi_n(v_i))| \leq K_3 \epsilon_n.
\end{align*}
By Lemma \ref{lem:he},
$$
\rho_{\Hyp^2}(f_n(\phi_n(v_i)))|dz|\leq \rho_{\Hyp^2}(\tilde f_n(\phi_n(v_i)))|dz| \leq \frac{4}{\epsilon_n}|dz|.
$$
Therefore, 
$$
d_{\D}(f_n( \phi_n(v_i)), \tilde f_n(\phi_n(v_i))) \leq 4K_3.
$$
Since $K_3$ depends only on $f$, the claim follows.

Combining Equations \ref{eqn:M1} and \ref{eqn:M2}, there exists $M_3 = M_3(f)$ so that 
$$
\lim_{n\to\infty} d_{\D}(\phi_n(v_{i+1}), f_n(\phi_n(v_i))) \leq M_3
$$ 
for all $i=1,..., q-1$.
\end{proof}

\subsection*{Rescaling limits for strictly pre-periodic orbits}
Let $f_n \in \BP_d$ be the quasi post-critically finite sequence realizing $\mathcal{T}$ constructed as above.
Using a standard quasi-conformal surgery argument (cf. \cite[Theorem 5.7]{Milnor12} or \cite{BrannerFagella14}), we show:
\begin{lem}\label{lem:mfp}
We can modify $f_n$ so that it satisfies the Proposition \ref{prop:rt}.
\end{lem}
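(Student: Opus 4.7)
The plan is to upgrade the quasi post-critically finite sequence $f_n$ produced by Lemma \ref{lem:rt} and Lemma \ref{lem:iF} via a quasiconformal surgery that normalizes the rescaling limits at the aperiodic vertices outside $\mathcal{S}_\p$, without disturbing the data already arranged on $\mathcal{S}_\p$ and on the periodic cycles. For each aperiodic vertex $w \notin \mathcal{S}_\p$ I fix once and for all a model proper holomorphic self-map $G_w : \D \to \D$ of degree $\delta(w)$ with a single critical point at $0$ of multiplicity $\delta(w)-1$ and with $G_w(0)=0$; such a model is trivially $1$-uni-critical with critical values at $0$. Since $\mathcal{T}$ is finite, only finitely many models are needed, and they are chosen independently of $\mathcal{F}$.

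Next, I perform the surgery along each aperiodic critical orbit. By Lemma \ref{lem:cl} together with IH \ref{ih:2}, all $\delta(w)-1$ critical points of $f_n$ that cluster at $w$ are contained in a fixed hyperbolic ball $B_{\Hyp^2}(\phi_n(w), R_0)$, with $R_0$ depending only on $f$. Working in the anchored coordinate $M_{w,n}$, I choose a $K_0$-quasiconformal map $\psi_{w,n}$ of $\D$ supported in a slightly larger ball that carries the local branched covering structure of $f_n$ over $w$ to the branched covering at $0$ of $G_w$, interpolating to the identity on an annular collar. An analogous choice on the image side near $f(w)$ glues the model dynamics into $f_n$, producing a new $K_0$-quasiregular map $\tilde g_n$ that equals $f_n$ outside a finite union of disjoint annular collars and equals a rescaled copy of $G_w$ on each inner region.

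The measurable Riemann mapping theorem then returns a genuine Blaschke product. I pull the standard complex structure back through all iterates of $\tilde g_n$ to obtain a $\tilde g_n$-invariant Beltrami coefficient $\mu_n$. Because the surgery is supported only in neighborhoods of aperiodic vertices outside $\mathcal{S}_\p$, each forward orbit meets the support of $\mu_n$ at most a bounded number of times (namely at most the maximum preperiod $l$), so $\|\mu_n\|_\infty \leq k_0 < 1$ uniformly in $n$. Solving the Beltrami equation, then post-composing with an element of $\Isom(\Hyp^2)$ to restore the normalization at the origin and at $\mathbb{S}^1$, yields the modified sequence $\tilde f_n \in \BP_d$.

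It remains to verify: the surgery preserves the angled tree map (its uniformly bounded quasiconformal distortion shifts every vertex of $\mathcal{T}_n$ by at most a bounded hyperbolic distance, so the combinatorics and quasi-invariance survive, giving the realization of the same $f : (\mathcal{T},\p) \to (\mathcal{T},\p)$); the rescaling limits at $\mathcal{S}_\p$ and on the periodic cycles are untouched because the surgery support is disjoint from neighborhoods of these vertices at the relevant scale, so both the conjugacy with $\mathcal{F}$ and the bound in Lemma \ref{lem:iF} persist; and at each aperiodic $w \notin \mathcal{S}_\p$ the rescaling limit is now $K_0$-quasiconformally conjugate to $G_w$, hence $M$-uni-critical with critical values within $M$ of $0$, for some $M = M(f)$. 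The main difficulty I expect is the bookkeeping that keeps the surgery regions genuinely disjoint as $\mathcal{F}$ degenerates and ensures the orbit-visit count stays finite; this is handled by supporting each $\psi_{w,n}$ inside a fixed hyperbolic neighborhood of $\phi_n(w)$ whose mutual distances tend to infinity along the degeneration, so that disjointness and the uniform dilatation bound both hold automatically for large $n$.
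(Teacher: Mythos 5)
There is a genuine gap: your surgery targets the wrong set of vertices and omits the main content of the lemma. After Lemma \ref{lem:rt} and Lemma \ref{lem:iF}, the only rescaling limit in $\mathcal{S}_\p$ that is under control is the one at $\p$ itself (IH \ref{ih:m}); nothing in the construction forces the rescaling limits at the aperiodic vertices $v\in \mathcal{S}_\p$ (the backward orbit of $\p$) to be conformally conjugate, with compatible markings and the prescribed normalization, to the given mapping scheme $\mathcal{F}_v$. Establishing condition (1) of Proposition \ref{prop:rt} is precisely what the modification in this lemma is for: in the paper one glues $\mathcal{F}_v\circ M_{v,n}^{-1}$ into $f_n$ on a large preimage region at each aperiodic $v\in\mathcal{S}_\p$ with $\delta(v)\ge 2$ (and simply re-marks when $\delta(v)=1$), interpolates quasi-regularly on an annulus, builds an invariant Beltrami differential (bounded dilatation because aperiodic orbits cross the annulus at most once), and straightens. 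Your proposal instead assumes the data on $\mathcal{S}_\p$ is ``already arranged'' and only installs unicritical models at aperiodic $w\notin\mathcal{S}_\p$, so it can at best address condition (2)(b); condition (1) is left unproved.

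A secondary problem is your justification that the periodic data survives. The straightening map $\psi_n$ coming from the measurable Riemann mapping theorem is global, so ``the surgery support is disjoint from neighborhoods of these vertices'' does not by itself keep the periodic rescaling limits unchanged; a uniform bound on the dilatation only gives that the new rescaling limits are $K_0$-quasiconformally conjugate to the old ones, which is not enough for the conformal conjugacy to $\mathcal{F}_\p$ in (1) nor for transporting the $M$-uni-critical bound of (2)(a) with a constant independent of $\mathcal{F}$. What is actually needed (and what the paper proves) is that, in the coordinate $M_{w,n}$ of each periodic vertex $w$, the pulled-back Beltrami differential tends to $0$ in $L^1$ because orbits of compact sets eventually avoid the surgery regions, so the normalized conjugating maps $\psi_{w,n}$ converge to the identity and the periodic rescaling limits are literally unchanged in the limit. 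Your quasiconformal-surgery machinery is the right tool, but as written the argument neither produces the prescribed scheme on $\mathcal{S}_\p$ nor rigorously preserves the periodic limits.
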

\begin{proof}
Let $f_n \in \BP_d$ be constructed as above.
Then $f_n$ has the correct rescaling limit at the periodic orbits, we need to modify the dynamics on the strictly pre-periodic vertices to get the desired rescaling limits while keeping the rescaling limits on periodic points unchanged.
Assume $\delta(\p) \geq 2$, and consider the case where $v$ is a strictly pre-periodic point with $f(v) = \p$.
The other strictly pre-periodic points can be treated using the same argument.

If $\delta(v) = 1$ then we can modify the marking $\phi_n$ so that $F_{v}(0) = 0$, and thus marked and normalized.

If $\delta(v) \geq 2$, we choose a large ball $B(0, r) \subseteq \D_p$ containing all critical values of the mapping scheme $\mathcal{F}_{v}$ and $F_{v}$.
We choose a larger ball $B(0, s) \subseteq \D_p$, 
so that $\mathcal{F}_{v}^{-1}(B(0,r)) \subseteq F_{v}^{-1}(B(0,s))$.
Let $U = \mathcal{F}_{v}^{-1}(B(0,r) \subseteq \D_v$ and $V=F_{v}^{-1}(B(0,s))\subseteq\D_v$.
Since $f_n \circ M_{v,n}$ converges compactly on $\D$ to $F_{v}$, there exists a component
$V_n$ of $(f_n \circ M_{v,n})^{-1}(B(0,s))$ approximating $V$.
For sufficiently large $n$, we define 
\[
\tilde g_n (z)=
\begin{cases}
f_n(z), & z \notin M_{v,n}(V_n) \\
\mathcal{F}_{v} \circ M^{-1}_{v,n}(z), & z \in M_{v,n}(\overline{U}) \\
H_n(z), & z \in M_{v,n}(V_n - \overline{U})
\end{cases}
\]
where $H_n(z)$ is a $K$ quasi-regular degree $\delta(v)$ covering between annuli $M_{v,n}(V_n - \overline{U})$ and $B(0, s) - \overline{B(0,r)}$ interpolating boundary values. 
Note $K$ can be chosen to be independent of $n$.
Let $\mu'_n$ be the Beltrami differential on $\D$ associated to $\tilde g_n$.
Then $\mu'_n = 0$ away from the annulus $M_{v,n}(V_n - \overline{U})$.
For sufficiently large $n$, orbits of $\tilde{g}_n$ can pass through $M_{v,n}(V_n - \overline{U})$ at most once.
We extend $\mu'_n$ to a Beltrami differential on $\hat\C$ by reflecting along $\mathbb{S}^1$.
Thus, we can construct a $\tilde g_n$ invariant Beltrami differential $\mu_n$ which has bounded dilatation. 
By measurable Riemann mapping theorem, there exists a $K$-quasiconformal map $\psi_n$ and a Blaschke product $g_n$ so that 
$$
\tilde g_n = \psi_n^{-1}\circ g_n \circ \psi_n.
$$
Let $L_{v,n}\in \Isom(\D)$ with $L_{v, n}(0) = \psi_n(v_n)$, and set $\psi_{v,n}=L_{v,n}^{-1} \circ \psi_n\circ M_{v,n}$.
We have the following commutative diagram.
\[ \begin{tikzcd}
(\D, 0) \arrow{r}{M_{v,n}} \arrow[swap]{d}{\psi_{v,n}} & (\D, v_n) \arrow{r}{\tilde g_n} \arrow{d}{\psi_n} & (\D, \tilde g_n(v_n)) \arrow{d}{\psi_n}\\%
(\D, 0) \arrow{r}{L_{v,n}} & (\D, \psi_n(v_n)) \arrow{r}{g_n} & (\D, g_n(\psi_n(v_n)))
\end{tikzcd}
\]

After passing to a subsequence, $M_{v,n} \circ \tilde g_n$ converges compactly on $\D$ to a proper map of degree $\delta(v)$ and $\psi_n$, $\psi_{v,n}$ converge to $K$ quasiconformal maps that preserve the circle.
Thus,  $L_{v,n} \circ g_n$ converges to a proper map of degree $\delta(v)$.
We denote this rescaling limit by $G_{v}$.
Since for sufficiently large $n$, $\psi_{v,n}$ is conformal on $U$ where $\tilde g_n \circ M_{v,n} = \mathcal{F}_{v}$ and $U$ contains all critical points of $\mathcal{F}_{v}$, $G_{v}$ is conformally conjugate to $\mathcal{F}_{v}$ (see \cite[Lemma 5.10]{Milnor12}).
Adjusting the coordinate $L_{v,n}$ and interpolating function $H_n$ if necessary, we can assume the rescaling limit $G_{v}$ is normalized and has compatible marking.

We now show the surgery does not change the rescaling limits on periodic points.
Let $w\in \mathcal{V}$ be a periodic point.
Without loss of generality, we assume $w$ is fixed. 
Let $\Omega\subseteq \D_w$ be a compact set.

We claim that the orbit of $z\in M_{w, n}(\Omega)$ under $f_n$ does not pass through $M_{v,n}(V_n)$ for all sufficiently large $n$,
Indeed, since $v$ is strictly pre-periodic, there exists $k_0$ so that for all $k\geq k_0$ and all sufficiently large $n$, 
$$
d_{\D}(\p_n, f_n^{-k}(M_{v,n}(V_n))) \geq 2 d_{\mathcal{T}}(\p, w) R_n.
$$
Therefore by Schwarz lemma, for all sufficiently large $n$ and $z \in M_{w, n}(\Omega)$, $f_n^k(z) \notin M_{v,n}(V_n)$ for all $k \geq k_0$.
On the other hand, since $w$ is fixed, for sufficiently large $n$ and $z\in M_{w, n}(\Omega)$, $f_n^k(z) \notin M_{v,n}(V_n)$ for all $k \leq k_0$.
Therefore, $(M_{w,n}^{-1})^*\mu_n$ converges to $0$ in $L^1$ norm.
Let $L_{w,n}\in \Isom(\D)$ with $L_{w,n}(0) = \psi_n(w_n)$ so that 
$$
\psi_{w,n} := L_{w,n}^{-1} \circ \psi_n\circ M_{w,n}
$$
fixes $0, 1, \infty$.
Thus, $\psi_{w,n}$ converges uniformly to the identity map (see \cite[Proposition 4.7.2]{Hubbard06}).
Denote $g_{w,n} = L_{w,n}^{-1}\circ g_n \circ L_{w,n}$ and $\tilde g_{w,n} = M_{w,n}^{-1}\circ \tilde g_n \circ M_{w,n}$.
We have the following commutative diagram.
\[ \begin{tikzcd}
(\D, 0) \arrow{r}{M_{w,n}} \arrow[swap]{d}{\psi_{w,n}} & (\D, w_n) \arrow{r}{\tilde g_n} \arrow{d}{\psi_n} & (\D, \tilde g_n(w_n)) \arrow{d}{\psi_n} & (\D, \tilde g_{w,n}(0)) \arrow[swap]{l}{M_{w,n}} \arrow{d}{\psi_{w,n}} \\%
(\D, 0) \arrow{r}{L_{w,n}} & (\D, \psi_n(w_n)) \arrow{r}{g_n} & (\D, g_n(\psi_n(w_n))) & (\D, g_{w,n}(0))\arrow[swap]{l}{L_{w,n}}
\end{tikzcd}
\]
Therefore, the rescaling limit $G_{w} :=  \lim_{n\to\infty} L_{w,n}^{-1} \circ g_n \circ L_{w,n}$ equals to $F_{w}$, i.e., we have not changed the rescaling limits at the periodic points.

By choosing the region of modification carefully (see \cite[Lemma 5.9]{Milnor12}), the same argument allows us to modify all strictly pre-periodic points in $\mathcal{V}$ simultaneously.
It is easy to verify that the modified sequence realizes $f:(\mathcal{T}, \p) \rightarrow (\mathcal{T}, \p)$ and satisfies the two conditions in Proposition \ref{prop:rt}.
\end{proof}

\section{Pointed Hubbard trees}\label{sec:pht}
In this section, we will prove the necessary condition for a geometrically finite polynomial to be on the boundary of $\PH_d$:
\begin{prop}\label{prop:NC}
Let $(H,\vp)$ be the pointed Hubbard tree for a geometrically finite polynomial $\hat P \in \overline{\PH_d}$. Then $(H,\vp)$ is iterated-simplicial.
\end{prop}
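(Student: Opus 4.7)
The plan is to combine the degeneration machinery of \S\ref{sec: gfb} with induction on a degree invariant. Choose $P_n \in \PH_d$ with $P_n \to \hat P$; via the identification $\PH_d \leftrightarrow \BP_d$ by gluing with $z^d$ along the marked circle, this produces Blaschke products $f_n \in \BP_d$ whose dynamics on the bounded Fatou component degenerate as $n \to \infty$. Because $\hat P$ is geometrically finite, every critical orbit of $\hat P$ in $J(\hat P)$ is finite and every critical orbit in a Fatou component converges to an attracting or parabolic cycle; in the Blaschke model these translate to uniform hyperbolic bounds on how far each critical orbit of $f_n$ strays from a periodic cycle, so after extracting a subsequence we may assume $f_n$ is $K$-quasi post-critically finite in the sense of Definition \ref{defn:qpcf}.

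Apply Theorem \ref{thm:qit} to obtain a simplicial pointed tree map $f:(\mathcal{T},p)\to(\mathcal{T},p)$ with rescaling limits $F_v:\D_v\to\D_{f(v)}$. Using the dual lamination $\mathcal{L}_\mathcal{T}$ and the landing analysis of Proposition \ref{prop:al}, identify each periodic Fatou vertex $v$ with a periodic bounded Fatou component of $\hat P$, and each periodic Julia vertex with a cut point of $J(\hat P)$. Collapsing the periodic Fatou disks (and their preimages) to points produces a post-critically finite model whose combinatorics match a simplicial pointed Hubbard tree $(H_1,p_1)$: the Julia point $p_1=p$ arises as the limit of the attracting fixed points at $0\in\D$, and simpliciality is exactly the content of Corollary \ref{cor: sim}. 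This $(H_1,p_1)$ realizes the first pointed simplicial tuning of the trivial tree $(\{p\},p)$.

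To recover the full pointed Hubbard tree $(H,p)$, induct on the complexity $\sum_v(\delta(v)-1)$ taken over periodic critical Fatou vertices of $\mathcal{T}$, which is bounded by $d-1$. At each such vertex $v$, the first-return rescaling limit $F_v^q:\D_v\to\D_v$ is a proper holomorphic map of strictly smaller degree $\delta(v)$ that models $\hat P$ on the corresponding periodic Fatou component, and it inherits geometric finiteness from $\hat P$; its distinguished fixed point on the boundary is supplied by the anchored marking (Definition \ref{defn:mcpc}). By the inductive hypothesis applied at $v$, the pointed Hubbard tree of this lower-degree system is iterated-simplicial, and attaching it at $v$ by the pointed simplicial tuning construction of \S\ref{sec:pht} advances one level. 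Iterating over all critical Fatou vertices and all levels exhibits $(H,p)$ as iterated-simplicial.

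The main obstacle is the bridge between the quasi-invariant tree $\mathcal{T}$ produced by Theorem \ref{thm:qit} and the Hubbard tree $H$ of the post-critically finite model $P$ with conjugate Julia dynamics supplied by Ha\"issinsky. One must verify that the dual lamination $\mathcal{L}_\mathcal{T}$ coincides with the rational lamination cutting out the first-level Julia structure of $\hat P$, and that the Fatou/Julia dichotomy on $\mathcal{V}$ (Lemma \ref{lem:h} and its corollaries, together with the admissibility restrictions on periodic branch points) correctly separates the vertices that must be ``tuned'' at a deeper level from those already recorded in $H_1$. Once this identification is in place, the inductive step is essentially formal, applied in the reverse direction to the realization Theorem \ref{thm:ar}.
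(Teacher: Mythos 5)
There is a genuine gap at the very first step: from $P_n \to \hat P$ with $\hat P$ geometrically finite you cannot conclude (even after passing to a subsequence) that the associated Blaschke products $f_n$ are $K$-quasi post-critically finite in the sense of Definition \ref{defn:qpcf}. Geometric finiteness constrains the limit map, not the way the critical orbits of $f_n$ degenerate: distinct critical points can escape to $\mathbb{S}^1$ at incompatible rates, so that a critical point has no bounded quasi pre-period/period at all. This is not a technicality the paper glosses over — it is exactly the phenomenon illustrated in Figure \ref{fig:D3}, whose caption states that a geometrically finite $\hat P\in\partial\PH_3$ with an iterated-simplicial but non-simplicial pointed Hubbard tree ``cannot be obtained directly by a quasi post-critically finite degeneration'': the critical point $c_{2,n}$ is hidden behind $c_{1,n}$ and is not quasi pre-periodic. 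Since Theorem \ref{thm:qit} is only available under the quasi post-critically finite hypothesis, your entire bridge from $\hat P$ to a simplicial tree map $f:(\mathcal{T},p)\to(\mathcal{T},p)$, and hence the base of your induction, collapses for precisely the maps at deeper levels that the proposition must handle. The subsequent inductive step also inverts the logic of the paper: Theorem \ref{thm:ar} and Proposition \ref{prop:rt} realize admissible trees by sequences, they do not classify the combinatorics of an arbitrary convergent sequence, so invoking the realization theorem ``in the reverse direction'' is not a formal step.

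The paper proves Proposition \ref{prop:NC} by a different, intrinsic route that avoids degenerations altogether: one first shows (Proposition \ref{prop:bp}, via stability of landing rays, counting of parabolic multiplicities, and a Schwarz-lemma-flavored argument on the tree) that every periodic cut point of $J(\hat P)$ is parabolic and that its attracting petals correspond exactly to the complementary components of the filled Julia set not containing $\hat\vp$. This forces every periodic incident edge at $\vp$ to be mapped homeomorphically by its first return (Lemma \ref{lem:sbv}), so the maximal simplicial invariant subtree containing $\vp$ is non-trivial whenever $H$ is; collapsing it and its iterated preimages (the pointed simplicial quotient, with Lemma \ref{lem:sbv2} again using Proposition \ref{prop:bp}) strictly reduces the number of vertices, and iterating terminates at the trivial tree, exhibiting $(H,\vp)$ as iterated-simplicial. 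If you want to salvage your approach, you would need to replace the single quasi post-critically finite degeneration by the level-by-level scheme the paper uses only for the converse direction (degenerations of mapping schemes as in Proposition \ref{prop:rt2}), and even then you would need an argument that an arbitrary geometrically finite boundary point can be approached by such structured sequences — which is essentially what you are trying to prove.
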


\subsection*{Pointed Hubbard trees}

Recall that given a monic and centered polynomial $P$ with connected Julia set, there exists a unique B\"ottcher map normalized with derivative $1$ at infinity.
This gives a unique choice of the angle $0$ external ray, and thus a marking on $P$.
In this section, all polynomials considered are monic and centered.

Given a geometrically finite polynomial $\hat P$ with connected Julia set, it can be perturbed into a sub-hyperbolic polynomial with topologically conjugate dynamics on the Julia set \cite{Haissinsky00}.
A quasi-conformal surgery then gives a post-critically finite polynomial $P$ associated to $\hat P$ \cite{HT04}.

For a post-critically finite polynomial $P$, the Hubbard tree $H$, introduced in \cite{DH85}, is defined as the `regulated hull' of the critical and post-critical points in the filled Julia set $K_P$.
More precisely, an arc $I \subseteq K_P$ is called {\em regulated} if its intersection with any bounded Fatou component consists of (at most two) segments of internal rays.
The Hubbard tree is the minimal closed regulated connected subset of $K(P)$ containing the critical and post-critical points (see \cite[\S 1]{Poirier93}).

The polynomial restricts to a map $P:H \longrightarrow H$.
In our setting, the Hubbard tree $H$ is marked by the B\"ottcher map.
We say $P$ is simplicial on $H$ if there exists a finite simplicial structure on $H$ for which $P$ is a simplicial map, i.e., $P$ sends an edge of $H$ to an edge of $H$.
Abusing the notation, we call $H$ a {\em simplicial Hubbard tree} if the map $P: H \longrightarrow H$ is simplicial.

The realization of Hubbard trees has been studied in \cite{Poirier93}.
It is proved that an abstract angled Hubbard tree is realizable by a post-critically finite polynomial if and only if it is expanding.
Thus, we shall not distinguish the Hubbard trees of monic and centered post-critically finite polynomials with the abstract expanding angled Hubbard trees with markings.

\begin{defn}
Let $H$ be a Hubbard tree and $\vp \in H$ be a fixed point.
The pair $(H, \vp)$ is called a {\em pointed Hubbard tree}.
\end{defn}

A point $z\in J$ on the Julia set is said to be a {\em cut point} if $J -\{z\}$ is disconnected. The valence at $z$ is defined as the number of components of $J -\{z\}$.

Let $\hat P \in \partial \PH_d$ be a geometrically finite polynomial. 
There exists a special non-repelling fixed point $\hat p = \lim p_n$ of $\hat P$, where $p_n$ is the attracting fixed point of $P_n \in \PH_d$ and $P_n \to \hat P$.
We remark that $\hat p$ can still be an attracting fixed point for $\hat P$ (see the middle or the right example in Figure \ref{fig:R}).
This happens when some critical points stay at a bounded hyperbolic distance from the attracting fixed point, while some other critical points escape.

Let $P$ be the corresponding post-critically finite polynomial with Hubbard tree $H$.
The special non-repelling fixed point $\hat p$ gives a fixed point $H$ constructed as follows.
We remark that the reason why we need to do some modification is that if the non-repelling fixed point $\hat p$ is on the Julia set, the corresponding fixed point for $P$ may or may not be on the Hubbard tree $H$.

\begin{itemize}
\item If $\hat \vp$ is attracting, then it is contained in a fixed critical Fatou component. We set $\vp$ as the corresponding Fatou fixed point in $H$.

\item If $\hat \vp$ is a parabolic endpoint, then $\hat \vp$ is on the boundary of a unique fixed critical Fatou component.
If the corresponding Julia fixed point is in $H$, we set $\vp$ to be that point; otherwise, we set $\vp$ to be the Fatou fixed point in $H$ corresponding to the fixed critical Fatou component.

\item If $\hat \vp$ is a parabolic cut point, then the corresponding Julia fixed point is contained in $H$. We set $\vp$ to be the corresponding Julia fixed point.
\end{itemize}

We call $(H, \vp)$ the corresponding pointed Hubbard tree for $\hat P$.

For $f\in \BP_d$, any point in $\D$ is mapped towards the attracting fixed point $0$ by Schwarz lemma.
The following key proposition is a manifestation of Schwarz lemma for maps on $\partial \PH_d$.
\begin{prop}\label{prop:bp}
Let $\hat P \in \overline{\PH_d}$ be geometrically finite with the special non-repelling fixed point $\hat \vp$. 
If $\hat v$ is a periodic cut point in $J = J(\hat P)$ with valence $\nu$, then $\hat v$ is parabolic. Moreover, let $K = K(\hat P)$ be the filled Julia set of $\hat P$.
\begin{itemize}
\item If $\hat v = \hat \vp$, then $\hat v$ has exactly $\nu$ attracting basins which are in bijective correspondence with components of $K-\{\hat v\}$.
\item If $\hat v \neq \hat \vp$, then $\hat v$ has exactly $\nu-1$ attracting basins which are in bijective correspondence with components of $K-\{\hat v\}$ that does not contain $\hat \vp$.
\end{itemize}
\end{prop}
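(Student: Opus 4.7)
I would proceed by approximation from $\PH_d$, where each Julia set is a Jordan curve, and argue that the collision of periodic Julia points at $\hat v$ forces a parabolic cycle in the limit. Fix a sequence $P_n\in\PH_d$ with $P_n\to\hat P$, writing $P_n=f_n\sqcup z^d$ for $f_n\in\BP_d$; each $J(P_n)$ is a Jordan curve marked by $\mathbb{S}^1$ via the B\"ottcher coordinate. Geometric finiteness gives local connectivity of $J(\hat P)$, and Douady--Hubbard landing shows the $\nu$ branches of $J(\hat P)\setminus\{\hat v\}$ are separated by $\nu$ periodic external rays at rational angles $\theta_1,\dots,\theta_\nu$ forming cycles under $m_d$. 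Stability of rational rays produces, for each $P_n$, pairwise distinct repelling periodic Julia points $v_{1,n},\dots,v_{\nu,n}\in J(P_n)$ on which these rays land, and by continuity $v_{j,n}\to\hat v$.

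\textbf{Parabolicity and a multiplicity count.} Let $q$ be the period of $\hat v$. Each $v_{j,n}$ is a zero of $P_n^q(z)-z$, and when $\hat v=\hat\vp$ the attracting fixed point $\vp_n$ is an additional fixed point of $P_n^q$ converging to $\hat v$. Thus the holomorphic family $(z,n)\mapsto P_n^q(z)-z$ carries at least $\nu$ zeros (respectively $\nu+1$ zeros in the case $\hat v=\hat\vp$) converging to $\hat v$, so by Hurwitz $\hat v$ is a zero of $\hat P^q(z)-z$ of multiplicity $\mu\ge\nu$ (respectively $\mu\ge\nu+1$). Since $\nu\ge 2$, multiplicity at least $2$ forces $(\hat P^q)'(\hat v)=1$, so the cycle of $\hat v$ is parabolic with multiplier $1$ for $\hat P^q$, yielding the first assertion.

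\textbf{Basin count via Leau--Fatou.} A parabolic fixed point $\hat v$ of $\hat P^q$ with multiplier $1$ and multiplicity $\mu$ carries exactly $\mu-1$ attracting petals alternating with $\mu-1$ repelling arms of $J(\hat P)$ at $\hat v$. By local connectivity of $J(\hat P)$, these arms correspond to $\mu-1$ global components of $K(\hat P)\setminus\{\hat v\}$, and each attracting petal sits in a distinct bounded Fatou component fixed by $\hat P^q$, contributing one immediate attracting basin at $\hat v$ per petal. To pair these with the $\nu$ global components I would argue case by case: when $\hat v=\hat\vp$, none of the $\nu$ components contains the marked point (it has been removed), so each carries an attracting petal of $\hat v$ and $\mu-1=\nu$; when $\hat v\neq\hat\vp$, the unique component of $K(\hat P)\setminus\{\hat v\}$ containing the regulated arc from $\hat v$ to $\hat\vp$ is part of the basin of $\hat\vp$ rather than an attracting petal of $\hat v$, so $\mu-1=\nu-1$ and the remaining $\nu-1$ components each carry one petal.

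\textbf{Main obstacle.} The real work lies in upgrading the Hurwitz inequality $\mu\ge\nu$ (or $\nu+1$) to equality and justifying that the collisions at $\hat v$ account for \emph{all} petals: one must rule out extraneous periodic Julia orbits migrating into $\hat v$ under the degeneration, which would inflate $\mu$. I expect this to be the ``Schwarz lemma'' manifestation promised in the statement: for $P_n\in\PH_d$ all bounded periodic orbits live in the single basin of $\vp_n$, and Schwarz's lemma on that disk --- equivalently, the dynamics of $f_n\in\BP_d$ --- obstructs uncontrolled migration of periodic orbits to a single limit point. Making this rigorous, plausibly by selecting the approximation so that $f_n$ is quasi post-critically finite and analyzing Proposition \ref{prop:al} for the quasi-invariant tree, would be the heart of the argument.
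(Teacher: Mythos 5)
Your framing (parabolicity by collision of periodic points, then a Leau--Fatou petal count) is reasonable, but the proposal leaves the actual content of the proposition unproved. The heart of the statement is the exact count: that every component of $K-\{\hat v\}$ carries exactly one attracting petal, except that the component containing $\hat \vp$ (when $\hat v\neq\hat\vp$) carries none --- equivalently that your Hurwitz lower bound $\mu\geq \nu+1$, resp.\ $\mu\geq\nu$, is an equality with the petals distributed one per sector. In your case analysis you simply assert this (``so each carries an attracting petal'', ``is part of the basin of $\hat\vp$ rather than an attracting petal''), and you then explicitly defer it as the ``main obstacle''. The remedy you sketch --- choosing the approximating sequence to be quasi post-critically finite and invoking Proposition \ref{prop:al} --- is not available: nothing guarantees that a given geometrically finite $\hat P\in\partial\PH_d$ is the limit of a quasi post-critically finite sequence (Figure \ref{fig:D3} exhibits boundary maps that cannot be reached directly by such degenerations), and this proposition is precisely what the paper uses to prove the necessary direction (Proposition \ref{prop:NC}), independently of the realization machinery. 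The paper's proof of the key claim is instead a self-contained induction on the pointed Hubbard tree: take a periodic Julia cut point $v$ closest to $\vp$ violating the claim; the vertex structure along $[\vp,v]$ then forces a periodic Fatou component $\Omega$ whose orbit would converge to two distinct boundary fixed points, a contradiction. Your proposal contains no substitute for this step.

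Two further problems. First, the claim that the $\mu-1$ repelling arms ``correspond to $\mu-1$ global components of $K\setminus\{\hat v\}$'' is false in general: in the case $\hat v\neq\hat\vp$ with $\nu=2$ and one petal, both components of $K-\{\hat v\}$ approach $\hat v$ asymptotically along the single repelling direction (the $\hat\vp$-component inside the cusp between the two landing rays), so arms and components are not in bijection; taken at face value your claim would give $\mu-1=\nu$ in both cases, contradicting the second bullet of the statement. Second, ``by continuity $v_{j,n}\to\hat v$'' is unjustified: the stability result you invoke (Lemma B.1 of \cite{GM93}) concerns rays landing at \emph{repelling} periodic points of the limit map, whereas here the limit landing point is parabolic --- exactly the situation in which landing points may jump --- so the convergence of the $\nu$ distinct repelling landing points to $\hat v$ needs an argument (for the specific sequences built later, the paper supplies one in Proposition \ref{prop:la}). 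For parabolicity alone there is a cleaner route consistent with the paper's one-line use of ray stability: if $\hat v$ were repelling, stability would force the $\nu\geq 2$ rays to land at a single repelling periodic point of nearby $P_n\in\PH_d$, which is impossible since the landing map for the Jordan curve $J(P_n)$ is injective; Cremer and Siegel behavior is excluded by geometric finiteness.
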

\begin{proof}
The fact that $\hat v$ is parabolic follows immediately from the stability of landing rays (see \cite[Lemma B.1]{GM93}).
Note that there are exactly $\nu$ external rays landing at $\hat v$.

If $\hat v = \hat \vp$, then there are $\nu+1$ periodic points ($\nu$ repelling and $1$ attracting) for any approximating polynomial $P_n \in \PH_d$ that converge to $\hat v$. Thus the parabolic multiplicity of $\hat v$ is $\nu+1$, so there are $\nu + 1$ attracting basins.
Since each component of $K-\{\hat v\}$ can correspond to at most $1$ attracting basin of $\hat v$, and different components of $K-\{\hat v\}$ correspond to different attracting basins, the first case follows.

If $\hat v \neq \hat \vp$, then the parabolic multiplicity of $\hat v$ is $\nu$. 

We claim the component of $K-\{\hat v\}$ containing $\hat \vp$ does not give an attracting basin.
Note that the proposition follows from the claim as there are $\nu-1$ attracting basins and $\nu-1$ components of $K-\{\hat v\}$ that does not contain $\hat \vp$.
We shall prove the claim using the corresponding pointed Hubbard tree $(H, \vp)$.
The periodic cut points for $\hat P$ are in correspondence with the periodic Julia cut point in $H$ and there are only finitely many periodic cut points as they are all parabolic.

Suppose $v \in H$ is a closest periodic Julia cut point to $\vp$ that does not satisfy the claim, i.e., all periodic Julia cut points on $[\vp, v)$ satisfy the claim.
If there exist Julia periodic points in $[\vp, v)$, let $w$ be the furthest one to $\vp$.
Let $u_w$ be the adjacent vertex of $\hat w$ on $(w, v)$.
Since $\hat w$ satisfies the claim, $u_w$ is a Fatou periodic point.
If there are no Julia periodic points in $[\vp, v)$, then $\vp$ is a Fatou fixed point and let $u_w = \vp$.
Since $\hat v$ does not satisfy the claim, the adjacent vertex $u_v \in [u_w,v)$ is a Fatou fixed point.
If $u_w \neq u_v$, there exists a Julia periodic point in $(u_w, u_v)$ which is necessarily a cut point, contradicting the assumption that $w$ is the furthest periodic Julia point.
Thus $u_w=u_v$.
Let $\Omega$ be the corresponding Fatou component for $\hat P$. Then $\Omega$ converges to two distinct boundary points under iteration, which is a contradiction.
The claim follows and we conclude the proof of the proposition.
\end{proof}

As an illustration of the second case, consider the geometrically finite polynomial $\hat P \in \partial \mathcal{H}_4$ in Figure \ref{fig:P}.
There is a parabolic fixed point $\hat v \neq \hat p$ with valence $3$.
This fixed point has parabolic multiplicity $3$, and is on the common boundary of 3 Fatou components $U_1, U_2, U_3$.
The Fatou component $U_1$ that contains $\hat p$ gives a repelling direction at $\hat v$.
The attracting basins at $\hat v$ are thus in bijective correspondence with the other two Fatou components.

\subsection*{Pointed simplicial tuning}
If $\delta(\vp) \geq 2$, we define a combinatorial operation called {\em pointed simplicial tuning} on a pointed Hubbard tree $(H, \p)$.

Let $(H_Q, \vp')$ be a marked pointed simplicial Hubbard tree of degree $\delta(p)$ associated to a monic and centered polynomial $Q$.
The marking gives an identification of the incident edges with external rays for $H_Q$.
Let $H_\vp$ be the regulated hull of $H_Q$ with the landing points of those external rays (viewed as an abstract angled tree).
As the first step, we remove $\vp$ and glue the $H_\vp$ to the incident edges at $\vp$.
We remark that to be more precise, we need to remove $T_{\vp}= H \cap U_\vp$, where $U_{\vp}$ is the Fatou component of $\vp$ for the polynomial $P$ associated to the Hubbard tree $H$, and glue back $H_{\vp}$.

Let $w \in H$ be a preimage of $\vp$. 
Suppose that $\delta(w) = 1$.
Note that the incident edges of $w$ corresponds to vertices in $H_\vp$,
we can thus remove $w$ and glue the regulated hull of those corresponding vertices in $H_\vp$ (viewed as an abstract angled tree) to the incident edges at $w$.

If $\delta(w) \geq 2$, we need to specify the pullback map as follows.
Let 
$$
P_w: \C \rightarrow \C
$$ be a polynomial of degree $\delta(w)$ with critical values $z_1,..., z_k$ such that
under $Q$, each $z_i$ is eventually mapped to a periodic critical cycle or a repelling periodic cycle on the boundary of a bounded Fatou component for $Q$.

Let $\tilde{H}_\vp$ be the regulated hull of $H_\vp$ and the orbits of $z_i$ (viewed as an abstract angled tree).
The condition on $z_i$ guarantees that $Q$ is simplicial on $\tilde{H}_\vp$.
Let $H_w$ be the pullback $P_w^{-1}(\tilde{H}_\vp)$.
Similar as before, we remove $w$ and glue a homeomorphic copy of $H_w$, and also replace $H_\vp$ with $\tilde{H}_\vp$.
The dynamics and angle structures are defined naturally.

Inductively, we replace the backward orbits of $\vp$ in the vertex set of $H$ by pullbacks with the above algorithm.
The algorithm terminates as the number of branch points and the critical points in $H$ are finite. 

This algorithm may produce some end points that are not critical or post-critical.
After deleting these end points and the corresponding edges if necessary, we obtain a new expanding pointed angled Hubbard tree $(H', \vp')$, which we call a {\em pointed simplicial tuning} of $(H,\vp)$ (see Figure \ref{fig:ST}).
We remark that depending on how the pointed simplicial tuning is performed, $(H', \vp')$ may or may not be simplicial.

Unlike general tuning, the pointed simplicial tuning can only be performed finitely many times, as the degree of the marked fixed point $\vp'$ is strictly smaller than the degree of $\vp$ after the operation.
We say a pointed Hubbard tree $(H,\vp)$ is {\em iterated-simplicial} if it can be constructed from the trivial pointed Hubbard tree by a sequence of pointed simplicial tunings.

\begin{figure}[ht]
  \centering
  \resizebox{1\linewidth}{!}{
    \def\svgwidth{\columnwidth}
    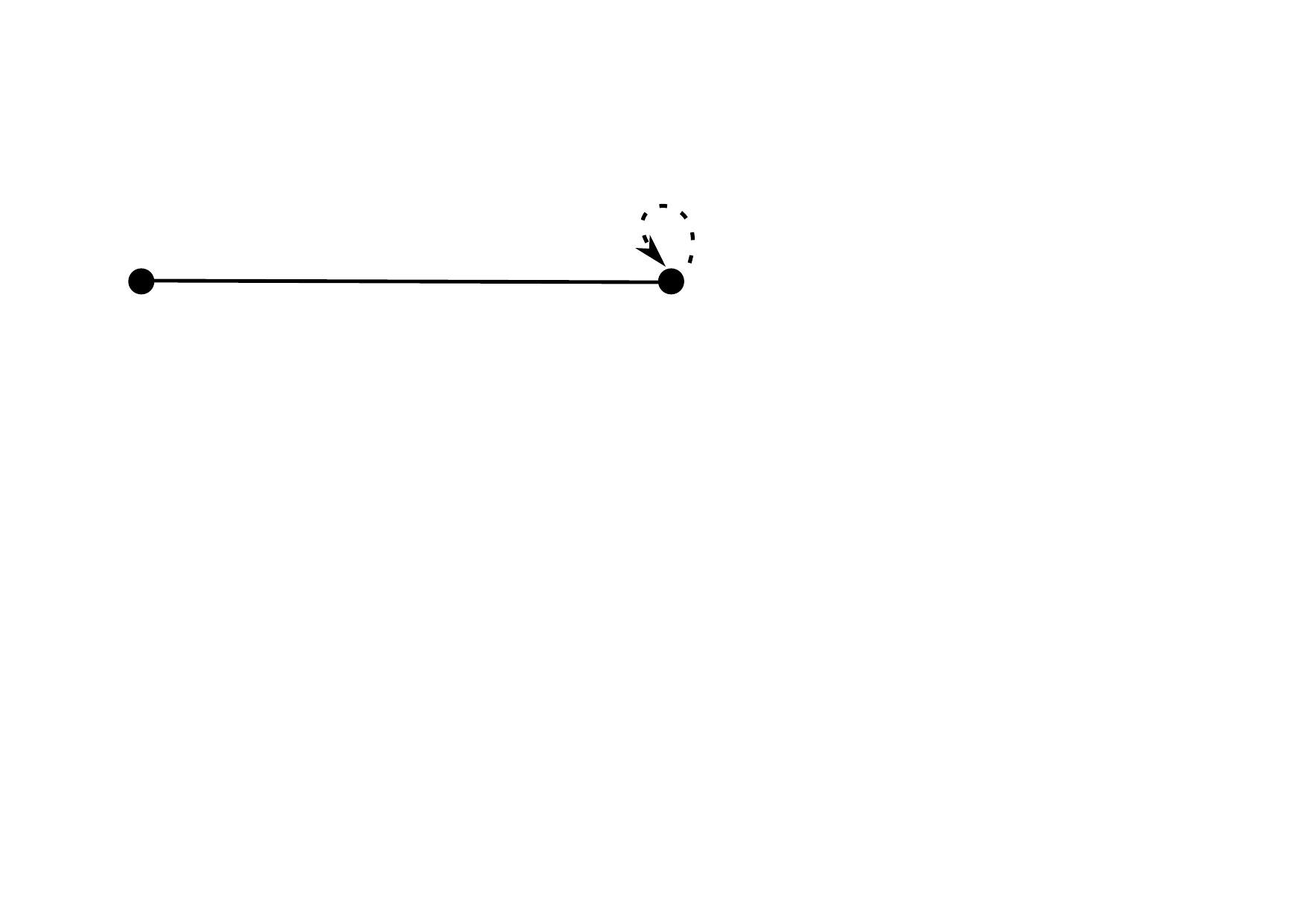

    \def\svgwidth{\columnwidth}
    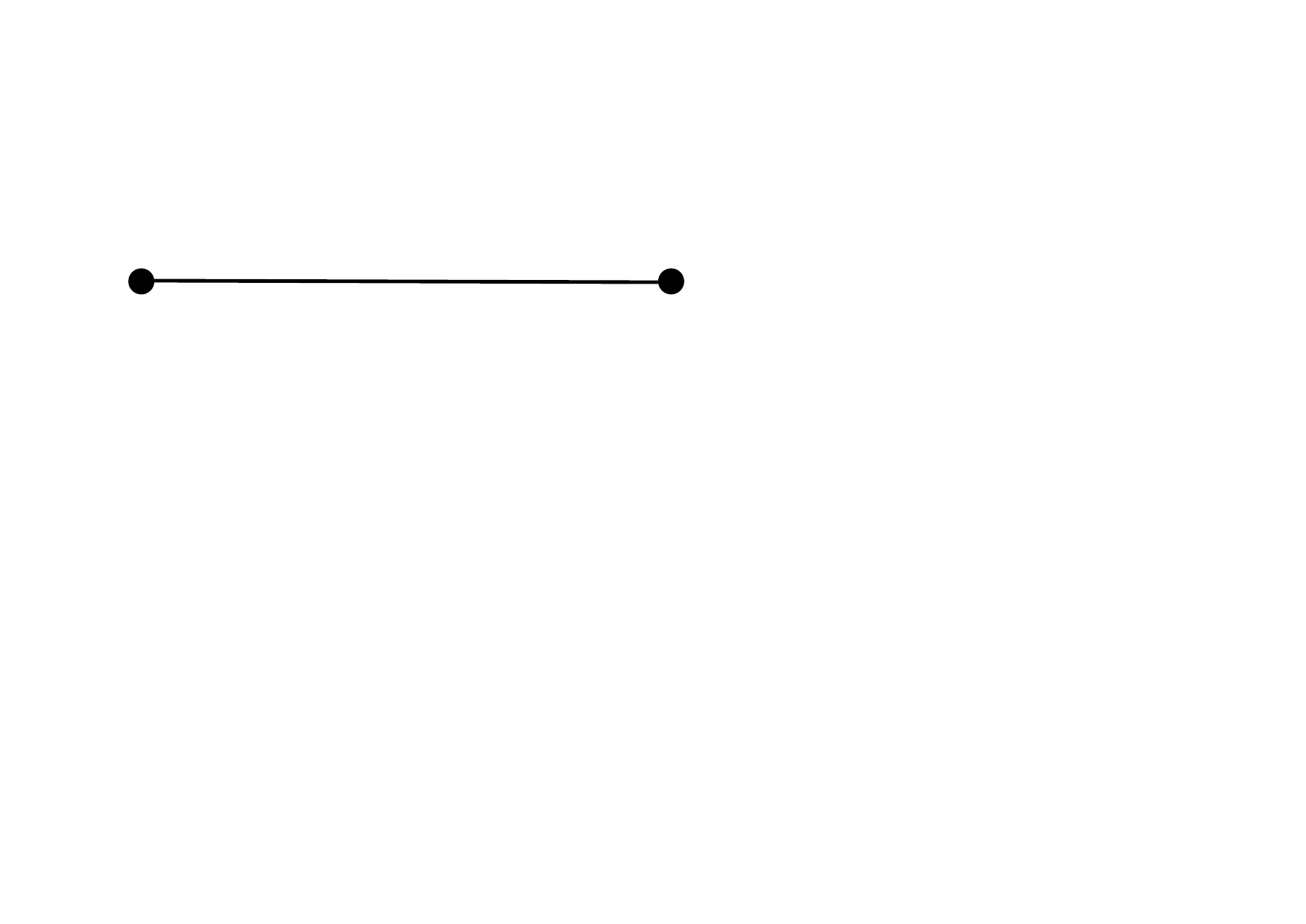

  }
   \resizebox{0.7\linewidth}{!}{
    \def\svgwidth{\columnwidth}
    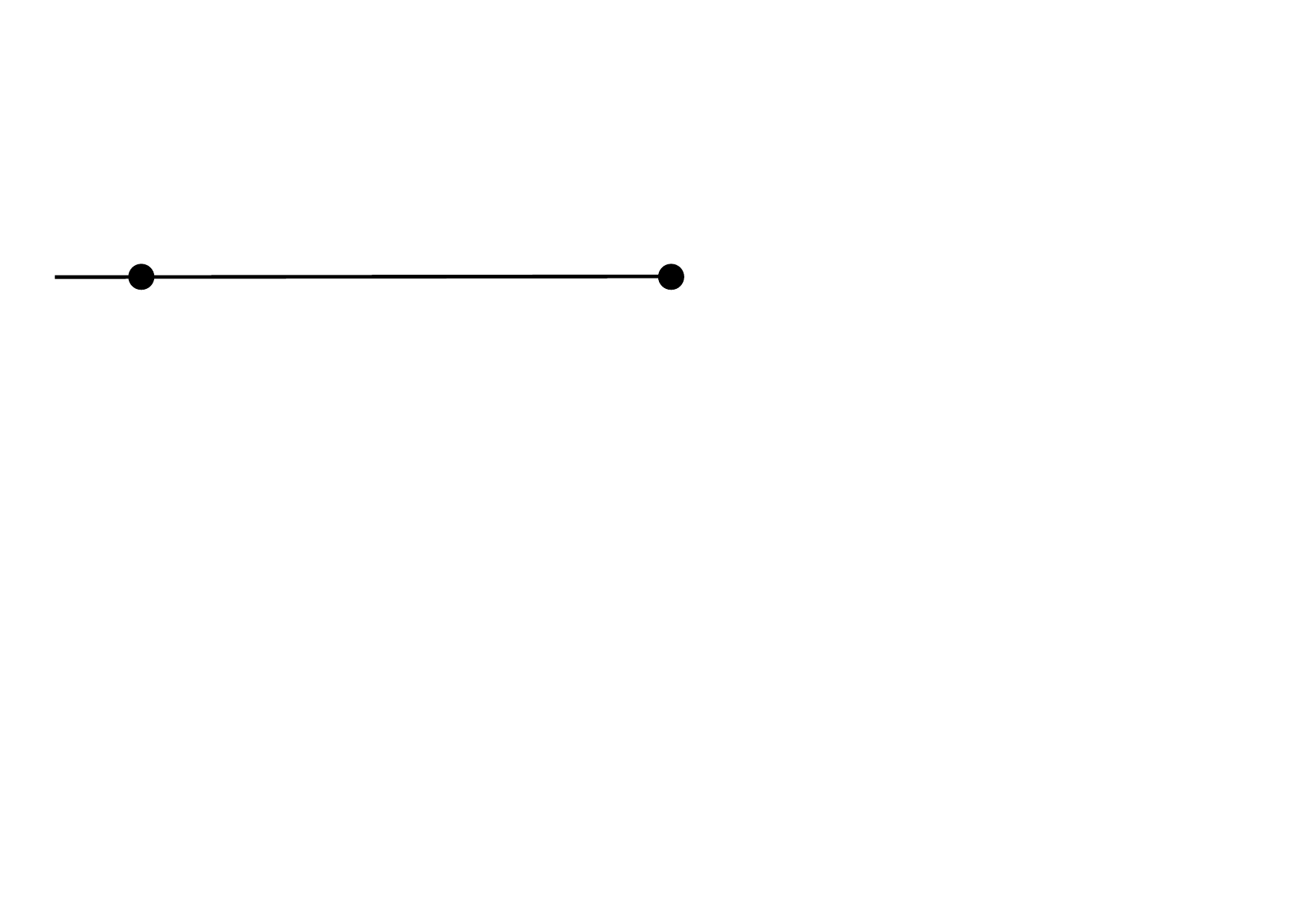

  }
  \caption{Illustrations of pointed simplicial tunings. The angles for edges are labeled on the graph. The resulting pointed Hubbard trees for the first two are not simplicial.}
  \label{fig:ST}
\end{figure}

\subsection*{Pointed simplicial quotient}
An inverse operation for pointed simplicial tuning is {\em pointed simplicial quotient}.
Let us fix a simplicial structure, i.e. a vertex set $V$ for $H$.
We assume that $V$ is forward invariant, and contains all critical points.
A subtree $\vp\in S \subseteq H$ is said to be {\em invariant} if $P(S) \subseteq S$.
An invariant subtree is said to be {\em simplicial} if $P$ is a simplicial map on $S$.
The union of any two simplicial invariant subtrees is again a simplicial invariant subtree.
Thus the maximal simplicial invariant subtree is well-defined.

\begin{lem}\label{lem:sbv}
Let $(H,\vp)$ be the pointed Hubbard tree corresponding to a geometrically finite polynomial $\hat P \in \overline{\PH_d}$.
Let $a\in T_\vp H$ be a periodic tangent direction at $\vp$ of period $q$, and let $E=[\vp,w]$ be the corresponding incident edge.
Then $P^q$ maps $E$ homeomorphically to $E$.
\end{lem}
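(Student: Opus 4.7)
The plan is to combine Proposition \ref{prop:bp} with a Schwarz-type non-expansion argument at $\hat{\vp}$, paralleling Corollary \ref{cor:ne} for quasi-invariant trees. As setup, since $DP^{q}(a)=a$, the image $P^{q}(E)$ is an arc in $H$ starting at $\vp$ in direction $a$ and ending at the vertex $P^{q}(w)$. I will show $P^{q}(w)=w$ and that $P^{q}|_{E}$ has no folds, which together give the desired homeomorphism $E\to E$.

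The first step is to identify the attracting structure at $\hat{\vp}$ in direction $a$. Via the topological conjugacy of the Julia dynamics for $P$ and $\hat{P}$, the direction $a$ corresponds to a tangent direction $\hat{a}$ at $\hat{\vp}$ and to a $q$-periodic component $\hat{U}$ of $K(\hat{P})\setminus\{\hat{\vp}\}$. Since $\hat{\vp}$ is defined as the limit of attracting fixed points of polynomials in $\PH_{d}$, it is either attracting or (when it is a cut point) parabolic; in the latter case Proposition \ref{prop:bp} places an attracting basin of $\hat{\vp}$ inside $\hat{U}$. So in either case $\hat{P}^{q}$ has an attracting direction at $\hat{\vp}$ along $\hat{a}$, and by the Schwarz lemma on the immediate basin, respectively the Leau--Fatou flower theorem, $\hat{P}^{q}$ strictly contracts a sufficiently short initial sub-arc of the arc $\hat{E}\subseteq K(\hat{P})$ corresponding to $E$ toward $\hat{\vp}$.

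The second step uses this local contraction to rule out $P^{q}(E)\supsetneq E$: otherwise the iterates $\hat{P}^{nq}(\hat{E})$ would form a sequence of arcs from $\hat{\vp}$ in direction $\hat{a}$ whose combinatorial length (the number of edges of $H$ they traverse) strictly grows with $n$, contradicting the finiteness of $H$. The opposite extreme $P^{q}(w)\in[\vp,w)$ is ruled out because $E$ has no interior vertex of $H$ and $P^{q}(w)$ must be a vertex; so $P^{q}(w)=w$. For monotonicity, I would refine the vertex set of $H$ by adjoining finitely many preimages of vertices under $P$, making $P$ simplicial on the refinement. In the refined tree each edge maps to a single edge, and the period-$q$ direction $a$ yields a periodic edge at $\vp$ that $P^{q}$ maps homeomorphically onto itself.

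The main obstacle is propagating this homeomorphism from the refinement-edge at $\vp$ outward along $E$ to its far endpoint $w$, i.e., ruling out critical points of $P^{q}$ in the interior of $E$. This is where the attracting-direction picture from the first step has to be coupled with the simplicial refinement: an interior critical point of $P^{q}$ on a periodic incident edge would propagate under iteration to additional branching of $\hat{U}$ near $\hat{\vp}$, inconsistent with the single attracting basin of $\hat{\vp}$ inside $\hat{U}$ supplied by Proposition \ref{prop:bp}. Carefully tracking the extension of the simplicial structure along $E$ from $\vp$ toward $w$, and coupling it with this obstruction, is the technical heart of the argument.
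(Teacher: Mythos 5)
Your strategy departs from the paper's, and as written it has genuine gaps. First, the step meant to give $P^q(w)=w$ does not go through: if $E\subsetneq P^q(E)$, the images $P^{nq}(E)$ form an increasing sequence of subtrees of the finite tree $H$, and such a sequence simply stabilizes, so "strictly growing combinatorial length" is not a contradiction. Moreover, writing $P^q(E)$ as an arc $[\vp,P^q(w)]$ comparable to $E$ already presupposes the no-folding property you postpone to the end, and your exclusion of $P^q(w)\in[\vp,w)$ "because it must be a vertex" overlooks the vertex $\vp$ itself, i.e.\ the possibility $P^q(w)=\vp$, which again can only be ruled out once injectivity of $P^q|_E$ is known. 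Second, the no-folding step is precisely what you leave unproved: the interior of $E$ contains no critical point of $P$, but a critical point of $P^q$ appears as soon as some image $P^j(\mathrm{int}\,E)$, $0<j<q$, passes through a critical vertex, and nothing in your argument controls where these images travel in $H$. The proposed contradiction ("an interior critical point would propagate to additional branching of $\hat U$ near $\hat\vp$, against the single attracting basin from Proposition \ref{prop:bp}") is not substantiated; a fold occurring far from $\hat\vp$ produces no extra branching near $\hat\vp$. (A smaller inaccuracy: $\hat\vp$ can also be a parabolic endpoint rather than attracting or a parabolic cut point, so the local-contraction setup needs that case too.)

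The missing ingredient is the structural fact that the paper reads off directly from Proposition \ref{prop:bp}: in a periodic direction at $\vp$ the adjacent vertex is forced to be of Fatou type. If $\vp$ is a Julia fixed point, every adjacent vertex is a periodic Fatou point whose component contains an attracting petal at the parabolic point $\hat\vp$, and $P$ is simplicial on the union of these edges. If $\vp$ is a Fatou fixed point with component $\Omega$ and $x\in\partial\Omega$ is the periodic boundary point in direction $a$, then either $w=x$, or, applying Proposition \ref{prop:bp} to the cut point corresponding to $x$, $w$ is a periodic Fatou point whose component is attracted to $\Omega$ at $x$. In every case $E$ decomposes into at most two internal-ray segments inside closures of periodic Fatou components, each of which $P^q$ maps homeomorphically onto itself; this yields $P^q(w)=w$ and injectivity simultaneously. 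Without this identification of $w$, your combination of local contraction at $\hat\vp$ and simplicial refinement controls only a germ of $E$ at $\vp$ and cannot be propagated to the far endpoint, so the argument as proposed does not close.
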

\begin{proof}
Suppose $\vp$ is a Julia fixed point. Then the adjacent vertices are periodic Fatou points by Proposition \ref{prop:bp}, and $P$ is simplicial on the union of these edges.

Suppose $\vp$ is a Fatou fixed point. Let $\Omega$ be the corresponding fixed Fatou component for $\hat P$.
Let $x\in \partial \Omega$ be the corresponding periodic point in the direction $a$.
If $w=x$, then $P^q$ is a homeomorphism on $[\vp, w]$.
Otherwise, by Proposition \ref{prop:bp}, $w$ is a periodic Fatou point and the corresponding Fatou component is attracted to $\Omega$ at $x$ and the conclusion follows.
\end{proof}

\begin{cor}
Let $(H,\vp)$ be the pointed Hubbard tree corresponding to a geometrically finite polynomial $\hat P \in \overline{\PH_d}$.
If $H$ is non-trivial, i.e., $H \neq \{\vp\}$, then the maximal simplicial invariant subtree $S$ is non-trivial.
\end{cor}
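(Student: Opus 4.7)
The plan is to produce an explicit non-trivial simplicial invariant subtree $S$ as a star tree rooted at $\vp$, using Lemma \ref{lem:sbv} as the only real input. Since $H$ is non-trivial and $\vp \in H$, the tangent space $T_\vp H$ is a non-empty finite set on which the tangent map $DP$ acts, and any self-map of a finite set has a periodic orbit. So I pick a periodic direction $a \in T_\vp H$ of exact period $q \geq 1$ and let $E = [\vp, w]$ be the incident edge in direction $a$. Lemma \ref{lem:sbv} then gives that $P^q \colon E \to E$ is a homeomorphism; in particular $w$ is periodic, and writing $P^q = P^{q-k} \circ P^k$ shows that each intermediate iterate $P^k|_E$ is injective, so each $P^k(E)$ is a genuine arc in $H$ joining $\vp$ to $P^k(w)$.

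Define $S := \bigcup_{k=0}^{q-1} P^k(E)$. By construction $P(S) \subseteq S$, and $w \neq \vp$ lies in $S$. The heart of the argument is to verify that $S$ is literally a $q$-star centered at $\vp$: the arm $P^k(E)$ emanates from $\vp$ in the tangent direction $DP^k(a)$, and since $a$ has exact period $q$ the directions $\{DP^k(a)\}_{k=0}^{q-1}$ are pairwise distinct, so in the tree $H$ two distinct arms can meet only at their common endpoint $\vp$. Declaring $V_S := \{\vp\} \cup \{P^k(w) : 0 \leq k < q\}$ equips $S$ with a finite simplicial structure whose edges are the arcs $[\vp, P^k(w)]$, and $P|_S$ cyclically permutes these edges while fixing $\vp$, so it is simplicial with respect to $V_S$.

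This exhibits a non-trivial simplicial $P$-invariant subtree of $H$, forcing the maximal one to be non-trivial as well. The only place where any real work enters is the star-tree verification, where the \emph{exact} periodicity of $a$ (rather than mere eventual periodicity) is essential: otherwise two arms could share a non-trivial initial segment and the proposed edge structure would fail to be simplicial. The remainder of the argument is a direct reading of Lemma \ref{lem:sbv}.
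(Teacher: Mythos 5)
Your proof is correct and follows essentially the route the paper intends: the corollary is stated there without proof as an immediate consequence of Lemma \ref{lem:sbv}, and your argument (pick a tangent direction at $\vp$ of exact period $q$, apply the lemma to the corresponding edge, and check that the orbit $\bigcup_{k=0}^{q-1}P^k(E)$ is a $q$-star at $\vp$ on which $P$ permutes the edges simplicially) is precisely the routine verification being left to the reader. The details you supply — injectivity of the intermediate iterates, distinctness of the directions $DP^k(a)$ forcing the arms to meet only at $\vp$ — are exactly the right ones.
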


We say $S'$ is a {\em subtree preimage} of $S$ if $S'$ is a subtree (with the simplicial structure given by $V$) and $P(S') \subseteq S$.
The maximal one exists by taking unions.
Let $S$ be the maximal simplicial invariant subtree of $(H,\vp)$.
Note that $S$ itself is a maximal subtree preimage of $S$.
We construct a new tree $H'$ from $H$ by collapsing $S$ and each inductive maximal subtree-preimages to a point and let $\vp' \in H'$ be the point associated with $S$.
Similarly, the new vertex set $V'$ is constructed from $V$ by collapsing, and the map $P$ induces a map $P'$ on $V'$ by $P'([x]) = [P(x)]$ where $[x]$ represents the vertex in $V'$ that $x\in V$ collapses to.
The local degrees can also be recovered by counting the multiplicities of all collapsed critical vertices.

\begin{lem}
The map $P': V' \longrightarrow V'$ satisfies $P'(a) \neq P'(b)$ whenever $[a, b]$ is an edge in $H'$.
\end{lem}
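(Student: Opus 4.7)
The plan is to proceed by contradiction. Suppose $[a,b]$ is an edge of $H'$ and $P'(a) = P'(b) = c$. Denote by $T_a, T_b, T_c \subseteq H$ the connected subtrees that collapse under the quotient $H \to H'$ to the vertices $a, b, c \in V'$, respectively. By the definition of the induced map $P'$, we have $P(T_a) \subseteq T_c$ and $P(T_b) \subseteq T_c$.

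The next step is to extract concrete combinatorial information from the hypothesis that $[a,b]$ is an edge of $H'$. Since $H'$ is obtained from $H$ by collapsing the family of subtrees to points, the existence of a single edge in $H'$ between $a$ and $b$ means there is an edge $[\tilde a, \tilde b]$ of $H$ (with respect to the vertex set $V$), with $\tilde a \in T_a$ and $\tilde b \in T_b$, whose open interior meets no collapsed subtree. Using that the post-critically finite polynomial $P$ is simplicial on $H$ with respect to $V$, the image $P([\tilde a, \tilde b])$ is the arc $[P(\tilde a), P(\tilde b)]$; since both endpoints lie in the subtree $T_c$, this entire arc lies in $T_c$.

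The contradiction then comes from forming
$$
S' \;:=\; T_a \cup [\tilde a, \tilde b] \cup T_b,
$$
which is connected (a union of two subtrees joined by an edge) and satisfies $P(S') \subseteq T_c$. Thus $S'$ is a subtree preimage of $T_c$ strictly containing $T_a$. By the inductive construction, $T_a$ and $T_b$ are each maximal subtree preimages of $T_c$, selected at the same step of the procedure, namely the step immediately after $T_c$ has been collapsed. Hence at that step $S'$ is an eligible candidate strictly larger than $T_a$, contradicting the maximality of $T_a$.

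The only real (and mild) obstacle is the bookkeeping of the inductive construction: one must verify that $T_a$ and $T_b$ are produced at the same stage of the collapsing procedure, so that the merged candidate $S'$ is indeed available to compete with $T_a$ for maximality, and that "maximal subtree preimage" is interpreted as maximal among connected subtrees (built from $V$) whose image lies in the union of previously collapsed pieces. Once this is in place, the proof is essentially forced by $P$ being simplicial and $T_c$ being a subtree.
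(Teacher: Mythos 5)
Your argument is essentially the paper's: join the two collapsed subtrees by the connecting edge, check that the image of this union lies in the target class, and contradict maximality of the (inductive) maximal subtree preimages, which forces the classes of $\tilde a$ and $\tilde b$ to coincide, against $[a,b]$ being an edge. One justification needs fixing, though: you invoke ``$P$ is simplicial on $H$ with respect to $V$'' to get $P([\tilde a,\tilde b])=[P(\tilde a),P(\tilde b)]$, but that hypothesis is not available here --- in this construction $H$ is in general \emph{not} simplicial (if it were, the maximal simplicial invariant subtree would be all of $H$ and the quotient would be trivial). What is true, and suffices, is that the interior of an edge of $H$ contains no critical point (the vertex set $V$ contains all critical points), so $P$ is injective on $[\tilde a,\tilde b]$, its image is the arc $[P(\tilde a),P(\tilde b)]$, and this arc lies in $T_c$ simply because $T_c$ is a connected subtree containing both endpoints --- which is exactly how the paper phrases it. Your bookkeeping worry about stages also resolves: distinct collapsed classes are pairwise disjoint and each class maps into a single class, so if $P'(a)=P'(b)=c$ then $T_a$ and $T_b$ are both (maximal) subtree preimages of $T_c$ (one of them possibly being $S$ itself when $c=\vp'$), and the merged tree $S'=T_a\cup[\tilde a,\tilde b]\cup T_b$ violates maximality exactly as you claim; the paper's own proof is no more explicit on this point.
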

\begin{proof}
Let $[a,b]$ be an edge. Let $x, y$ be vertices for $V$ that lie above $a, b$ respectively.
We may assume that $[x,y]$ is an edge of $H$.
Suppose for contradiction that $P'(x) = P'(y)$, then $[P(x)] = [P(y)]$.
Thus, $P(x)$ and $P(y)$ are contained in some inductive maximal subtree preimage $S'$ of $S$, so $[P(x), P(y)] \subseteq S'$.
Therefore $[x] = [y]$ by maximality of subtree preimage which is a contradiction.
\end{proof}

To construct the angles at the new vertices, we first show the conclusion of Lemma \ref{lem:sbv} also holds for $(H',\vp')$.
\begin{lem}\label{lem:sbv2}
Let $a'\in T_{\vp'} H'$ be a periodic tangent direction of period $q$, and let $E' = [\vp',w']$ be the corresponding incident edge. Then $(P')^q$ maps $E'$ homeomorphically to $E'$.
\end{lem}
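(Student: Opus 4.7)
The plan is to lift the statement back to the original pointed Hubbard tree $(H,\vp)$ via the collapse $\pi \colon H \to H'$ and reduce to showing $(P')^q(w') = w'$ for a suitable lift, then adapt the Julia-side argument of Lemma \ref{lem:sbv}.

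First I would lift the edge $E' = [\vp', w']$ to an edge $E = [v, x]$ in $H$, where $v$ is a boundary vertex of the collapsed subtree $S$ and $a \in T_v H$ is the outward tangent direction at $v$ corresponding to $a'$. The preceding lemma, combined with $P'(\vp') = \vp'$, rules out the possibility that $w'$ is the collapse $[S_1]$ of an inductive maximal subtree-preimage of $S$: if it were, then $P(S_1) \subseteq S$ would give $P'(w') = \vp' = P'(\vp')$, contradicting the non-degeneracy of $P'$ on edges. Hence $w'$ corresponds to a genuine non-collapsed vertex $x$ of $H$ with $P(x) \notin S$, and it suffices to show $P^q(x) = x$.

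Next I would translate the direction-level periodicity $(DP')^q(a') = a'$ into a combinatorial statement inside $H$. Tangent directions at $\vp'$ in $H'$ are in natural bijection with outward pairs $(v,a)$ with $v \in \partial S$; under this bijection, $DP'$ corresponds to the first-exit map $\sigma$ that, given such a pair, records the first pair $(v_1, a_1)$ at which the path $P(E)$ leaves $S$ after starting at $P(v) \in S$. The periodicity then reads $\sigma^q(v, a) = (v, a)$, so the path $P^q(E)$ in $H$ decomposes as a path inside $S$ from $P^q(v)$ back to $v$, followed by the edge $E$, followed by a path in $H$ from $x$ to $P^q(x)$.

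The main obstacle is upgrading this to show $P^q(x) = x$, which yields $\pi(P^q(E)) = E'$ and the desired homeomorphism. I would establish this by adapting the argument of Lemma \ref{lem:sbv} to the periodic vertex $v \in S$: the proof of Lemma \ref{lem:sbv} uses Proposition \ref{prop:bp} only locally at the corresponding periodic point in the Julia set of $\hat P$, and Proposition \ref{prop:bp} applies to any periodic cut point, so the argument transfers to $\hat v$. When $\hat v$ is a Julia cut point it is parabolic and the adjacent vertices outside $S$ are periodic Fatou points on which $P$ is simplicial, which forces $P^q(x) = x$. When $\hat v$ is a Fatou periodic point, the associated Fatou component $\Omega$ attracts either $x$ itself or the nearby periodic Fatou point singled out by $a$, and in either subcase the attraction structure on $\partial \Omega$ forces $P^q(x) = x$. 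The crux of the argument is precisely this extension of the Julia-side analysis of Lemma \ref{lem:sbv} from the marked fixed point $\vp$ to a general periodic vertex $v$, made possible by the local nature of Proposition \ref{prop:bp}; combining it with $\sigma^q(v,a) = (v,a)$ then identifies $\pi(P^q(E))$ with $E'$ as subsets of $H'$ and shows $(P')^q \colon E' \to E'$ is a homeomorphism.
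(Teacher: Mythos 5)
Your reduction is the right skeleton and matches the paper up to a point: lift $E'$ to the edge $E=[v,x]$ of $H$ with $v\in S$, use the periodicity of $a'$ to see that the arc $[P^q(v),P^q(x)]$ exits $S$ at $v$ along $E$ (so $P^q(E)\supseteq E$), and then try to show the outer endpoint satisfies $P^q(x)=x$. The gap is in how you try to prove that last statement. You treat $v$ as a \emph{periodic} vertex and propose to run the argument of Lemma \ref{lem:sbv} at $\hat v$ via Proposition \ref{prop:bp}; but periodicity of the direction $a'$ at $\vp'$ does not give $P^q(v)=v$. The quotient collapses all of $S$ to the single point $\vp'$, so the exit condition only says that the arc $[P^q(v),P^q(x)]$ passes through $v$, while $P^q(v)$ may be a different vertex of $S$ (for instance $v$ can be strictly preperiodic inside $S$). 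Moreover, in the exceptional situation where $v$ \emph{is} fixed by $P^q$ with the outgoing direction invariant, Proposition \ref{prop:bp} makes the direction of $E$ an attracting-petal direction at the corresponding parabolic point, so $S$ together with the orbit of $E$ would again be a simplicial invariant subtree, contradicting the maximality of $S$. In other words, the only case your case analysis addresses is vacuous in the actual setting, and the generic case ($v$ not $P^q$-fixed) — which is the one that occurs — is left without any argument.

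The paper's proof is arranged precisely to avoid assuming anything about $v$: from $P^q(E)\supseteq E$ it takes the subinterval of $E$ nearest the inner endpoint that $P^q$ maps homeomorphically onto $E$; this produces a $P^q$-fixed point in the closed edge. Proposition \ref{prop:bp} is then invoked twice: if that fixed point were the inner endpoint, the petal structure would allow $S$ to be enlarged (the maximality contradiction described above); if it lay in the open edge, it would be a repelling periodic cut point not on the boundary of any periodic Fatou component, which Proposition \ref{prop:bp} forbids. Hence the fixed point is the outer endpoint, giving $P^q(x)=x$ together with the homeomorphism on the edge. Your proposal lacks this mechanism for producing and locating the fixed point, and appealing to Lemma \ref{lem:sbv} cannot replace it, since that lemma's hypotheses place the base point at the marked fixed point (or at least at a genuinely periodic vertex), which is exactly what fails here. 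A smaller issue: your argument that $w'$ is not a collapsed vertex only excludes the first-level subtree preimages of $S$ (those with $P(S_1)\subseteq S$), not the deeper inductive ones; this can be repaired, but it is another place where the write-up is incomplete.
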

\begin{proof}
The tangent direction $a'$ corresponds to an edge $E = [x,y] \subseteq H$ with $x\in S$.
Since $a'$ has period $q$, $P^q([x,y])$ contains $[x,y]$.
Let $I = [x',y'] \subseteq [x,y]$ be the interval closest to $x$ such that $P^q$ maps $I$ homeomorphically to $[x,y]$.
If $x = x'$, then $x$ is a periodic cut point. 
By Proposition \ref{prop:bp}, $[x,y]$ corresponds to an attracting petal for the corresponding parabolic point $\hat x\in J(\hat P)$.
Thus, we can extend $S$ and still get a simplicial action, which is a contradiction to the maximality of $S$.

Therefore, $x \neq x'$.
Suppose for contradiction that $y \neq y'$. 
By pulling back, there exists a periodic point on $(x',y')$, which is not on the boundary of any periodic Fatou component. 
This point gives a repelling periodic cut point for $\hat P$, which is a contradiction to Proposition \ref{prop:bp}.
Thus $y = y'$, and hence $(P')^q$ maps $E'$ homeomorphically to $E'$.
\end{proof}

Let $a'\in T_{\vp'} H'$ be a periodic tangent direction of period $q$ corresponding to the edge $[x,y] \subseteq H$.
As in the proof of Lemma \ref{lem:sbv2}, we have a map $P^q : [x',y] \subseteq [x,y] \longrightarrow [x,y]$.
Thus, there exists a point $w \in [x', y]$ with period $q$, corresponding to a periodic end for the polynomial $Q$ associated with $S$.
We define the angle at the direction $a'$ by the external angle that lands at $t$.
The angles for pre-periodic tangent directions and inductive preimages of $\vp'$ are defined by pullback.
Since $H$ is expanding, it is not hard to verify that $H'$ is expanding as well.
We call $(H', \vp')$ the {\em pointed simplicial quotient} for $(H,\vp)$.
It can be verified that this operation is an inverse for pointed simplicial tuning.
More precisely, if $(H',p')$ is a pointed simplicial quotient of $(H,p)$, then $(H,p)$ can be constructed from $(H',p')$ using pointed simplicial tuning.

This process can be iterated.
The same proof of Lemma \ref{lem:sbv2} using Proposition \ref{prop:bp} gives that unless the Hubbard tree is trivial, the maximal simplicial invariant subtree is non-trivial.
Since pointed simplicial quotient reduces the number of vertices, the process eventually terminates at the trivial Hubbard tree.
This proves Proposition \ref{prop:NC}.

\section{Boundary of $\PH_d$}\label{sec:BPH}
In this section, we prove the other direction of Theorem \ref{thm: eq}.
Let $\hat P \in \overline{\PH_d}$ be a marked geometrically finite polynomial associated with the marked pointed Hubbard tree $(H, \vp)$.
Let $P$ be the corresponding marked post-critically finite polynomial.
For each critical and post-critical Fatou component $\Omega$ of $P$, we choose a point on the boundary $t(\Omega)\in \partial \Omega$ as a marking which satisfies $P(t(\Omega)) = t(P(\Omega))$ (see \cite[\S 5]{Milnor12}).
The boundary marking is chosen using a similar anchored convention as in Definition \ref{defn:anchored} and Definition \ref{defn:mcpc} with respect to the external angle $0$.
The topological conjugacy carries this boundary marking to $\hat P$.
Recall that a Blaschke product is $M$-uni-critical if the critical set is contained in $B_{\Hyp^2}(0, M)$.
A proper holomorphic map $f: U \longrightarrow V$ between two connected and simply connected proper subsets of $\C$ is said to be $M$-uni-critical if there exist uniformizing maps $\phi_U$ and $\phi_V$ so that $\phi_V^{-1} \circ f \circ \phi_U: \D \longrightarrow \D$ is $M$-uni-critical.

Let $\mathcal{S}_\p \subseteq V$ be the set of backward orbits of $\p$ and let $\mathcal{F}$ be a normalized mapping scheme on $\mathcal{S}_\p$ (see Definition \ref{defn:nms}).
We first prove the following (cf. Proposition \ref{prop:rt}):
\begin{prop}\label{prop:rsh}
Let $(H,\vp)$ be a marked simplicial pointed Hubbard tree of degree $d$.
In the case when $\delta(\p) \geq 2$, we let $\mathcal{F}$ be a normalized mapping scheme on $\mathcal{S}_\p$.
Then there exists a sequence $P_n \in \PH_d$ converging to a geometrically finite polynomial $\hat P \in \overline{\PH_d}$ associated to $(H,\vp)$.
Moreover, 
\begin{enumerate}
\item the dynamics of $\hat P$ on the Fatou components corresponding to $\mathcal{S}_\p$ is conjugate to $\mathcal{F}$ (with compatible markings);
\item there exists a constant $M$ depending only on $(H,\vp)$ (and independent of $\mathcal{F}$) so that 
\begin{enumerate}
\item[(a)] for any periodic Fatou point $v\neq\p$ of period $q$, $\hat P^q$ on the corresponding Fatou component $\Omega_v$ is $M$-uni-critical;
\item[(b)] for any strictly pre-periodic Fatou point $w \notin \mathcal{S}_\p$, if $k$ is the smallest positive integer so that $f^k(w)$ is critical, then $\hat P^k: \Omega_w \longrightarrow \Omega_{f^k(w)}$ is $M$-uni-critical and the critical values are within $M$ hyperbolic distance from the critical points in $\Omega_{f^k(w)}$.
\end{enumerate}
\end{enumerate}
\end{prop}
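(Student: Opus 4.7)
The plan is to reduce the statement to the realization theorem for admissible angled tree maps. From the simplicial pointed Hubbard tree $(H,\vp)$, I would first construct a minimal admissible angled tree map $f : (\mathcal{T}, \p) \longrightarrow (\mathcal{T}, \p)$ of degree $d$ by performing an admissible splitting at every periodic Julia branch point of $H$ other than $\vp$ (as pictured in Figure \ref{fig:HD}): replace each such branch point by a small star of new simple Julia vertices connected by new edges whose cyclic order is inherited from the Ribbon structure of $H$, keep the angles at the old vertices, and define the angles at the new simple vertices as the unique ones compatible with the dynamics. By construction the core is critically star-shaped (trivially when $\delta(\p) \geq 2$, and by the choice of splitting when $\delta(\p) = 1$), and every periodic branch point other than $\vp$ is now a Fatou point, so Definition \ref{defn:adm} is satisfied. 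Applying Proposition \ref{prop:rt} to $f$ (together with the prescribed mapping scheme $\mathcal{F}$ on $\mathcal{S}_\p$ when $\delta(\p) \geq 2$) then produces a $K$-quasi post-critically finite sequence $f_n \in \BP_d$ realizing $f$, whose rescaling limits on $\mathcal{S}_\p$ are conjugate to $\mathcal{F}$ with compatible markings and whose periodic and aperiodic rescaling limits satisfy the $M$-uni-critical bounds with $M$ depending only on $f$, hence only on $(H,\vp)$.

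Next, I set $P_n := f_n \sqcup z^d \in \PH_d$ via the boundary markings on $\mathbb{S}^1$. Since $\PH_d$ lies inside the connectedness locus and hence is bounded in $\MP_d$, after passing to a subsequence $P_n$ converges to some $\hat P \in \overline{\PH_d}$ of degree $d$. By Lemma \ref{lem:ac} the dynamics of $\hat P$ outside the holes is controlled by $f_n$ and $z^d$, and the classification of the rescaling limits at periodic Fatou vertices from \S \ref{sec: gfb} (hyperbolic, parabolic, or boundary-hyperbolic) identifies the bounded Fatou components of $\hat P$ associated with vertices in $\mathcal{S}_\p$, showing that their dynamics is conformally conjugate to $\mathcal{F}$ with the prescribed markings.

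To finish, I would identify the pointed Hubbard tree of $\hat P$ with $(H,\vp)$. By Proposition \ref{prop:bp}, each periodic Julia branch point of $H$ that was split in the first step limits to a parabolic periodic cut point of $\hat P$ whose attracting petals correspond exactly to the components of the filled Julia set $K(\hat P)$ minus that point that do not contain $\hat \vp$; conversely, no other periodic cut points can appear in $J(\hat P)$, again by Proposition \ref{prop:bp} applied to $\hat P \in \overline{\PH_d}$. Combined with the landing/lamination analysis of Proposition \ref{prop:al} to control pre-periodic identifications, this shows that $\hat P$ is geometrically finite with pointed Hubbard tree $(H,\vp)$ and marked fixed point $\hat \vp$ obtained as the limit of the attracting fixed points $0 \in \D$ of the $f_n$-factors; the $M$-uni-critical conclusions on $\hat P^q : \Omega_v \to \Omega_v$ and $\hat P^l : \Omega_w \to \Omega_{f^l(w)}$ then transfer directly from the corresponding bounds on $F^q_v$ and $F_w$ provided by Proposition \ref{prop:rt}. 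The main obstacle is this last identification: one must verify that the sequence of split quasi-invariant trees Hausdorff-degenerates so that each new simple Julia star collapses back to a single parabolic cut point with exactly the correct combinatorics of attracting petals, while simultaneously ruling out any extra parabolic cycles or spurious identifications of Julia points in the algebraic limit.
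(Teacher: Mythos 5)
Your overall strategy is the paper's: perform an admissible splitting of $(H,\vp)$, realize the resulting admissible angled tree map by Proposition \ref{prop:rt}, mate with $z^d$ to get $P_n = f_n \sqcup z^d \in \PH_d$, and identify the limit. But the step you yourself flag as ``the main obstacle'' is not an afterthought to be checked --- it is the bulk of the actual proof, and nothing you cite resolves it. Proposition \ref{prop:bp} cannot do this work: it describes the cut points of a polynomial \emph{already known} to be geometrically finite in $\overline{\PH_d}$, and says nothing about how the split quasi-invariant trees degenerate or whether the limit $\hat P$ realizes $(H,\vp)$ rather than some collapsed or differently identified tree. The paper fills this gap with a chain of arguments you do not supply: the Hausdorff limit $\T_\infty$ of the quasi-invariant trees and its star-shaped intersections with Carath\'eodory limits of Fatou components (Lemmas \ref{lem:cfm}, \ref{lem:fr}, \ref{lem:gfr}), the key Proposition \ref{prop:dfc} showing that distinct Fatou vertices $v\neq w$ give \emph{distinct} Fatou components $\Omega_v\neq\Omega_w$ of $\hat P$ (a delicate loop/petal argument ruling out exactly the ``spurious identifications'' you mention), Proposition \ref{prop:lam} showing the dual lamination of the split tree generates the same equivalence relation as that of $H$, and Proposition \ref{prop:la}, whose nested-domain and parabolic-multiplicity counting (which itself relies on \ref{prop:dfc}) shows the external rays land so that each split star recombines into a single parabolic cut point with the correct petals; only then does Proposition \ref{prop: sl} give the conjugacy on Julia sets. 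Geometric finiteness of the limit also needs an argument (Proposition \ref{prop:gfs} via Lemma \ref{lem:key}), not just Lemma \ref{lem:ac}.

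Two further points. First, the $M$-uni-critical conclusion does \emph{not} ``transfer directly'' from Proposition \ref{prop:rt}: that proposition only guarantees the bound at \emph{one} vertex of each periodic cycle (the paper explicitly warns of this), whereas the statement you are proving asserts it for \emph{every} periodic Fatou point $v\neq\p$; the paper closes this by combining the Schwarz inclusion $U_{v,\infty}\subseteq\Omega_v$ with the compactness of $M$-uni-critical doubly parabolic Blaschke products (Proposition \ref{lem:Muc}) to propagate the bound around the cycle. Second, your description of the splitting --- replacing a periodic Julia branch point by ``a small star of new simple Julia vertices'' --- would reintroduce a periodic Julia branch point and violate Definition \ref{defn:adm}; the admissible splitting instead deletes the branch point and reconnects the adjacent Fatou vertices directly, with angles $0,0^{\pm}$ encoding boundary-hyperbolic rescaling limits, and it is precisely this angle data that later forces the split configuration to collapse back to a single parabolic point in the limit.
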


\subsection*{Simplicial pointed Hubbard tree and admissible angled tree map}
Let $(H, \vp)$ be a marked simplicial pointed Hubbard tree.
We first associate an admissible angled tree map to it.
Indeed, the pointed Hubbard tree comes with a local degree function $\delta$.
The dynamics of the polynomial $P$ gives angles between any pair of edges incident at a vertex (see \cite{Poirier93}).
For a periodic Julia vertex, we follow the same convention as in the \S \ref{sec:atm} for realizing angled tree maps:
any two consecutive (in cyclic ordering at the vertex) have $\frac{1}{\nu}$ angle where $\nu$ is valence at the vertex.
To specify the angle 0, we use the anchored convention in Definition \ref{defn:comp} and Definition \ref{defn:angt}.
Therefore, $P:(H, \p) \longrightarrow (H, \p)$ is naturally an angled tree map.

The core of $H$ is critically star-shaped.
Indeed, if $\delta(\p) \geq 2$, then this is vacuously true.
Otherwise, each adjacent vertex $v$ to $\p$ is in a periodic Fatou point.

On the other hand, the Hubbard tree $H$ may contain many periodic Julia branch points.
In the following, we introduce an operation on these branch points, which we call  {\em split modification}, to get an admissible angled tree map.

Let $v \neq \p$ be a periodic Julia branch point.
After passing to an iterate, we may assume that $v$ is fixed.
Let $S$ be the star-shaped subtree consisting of all vertices adjacent to $v$.
Let $a_0$ be the vertex in $S$ corresponding to the direction associated to $\p$, and label the other vertices by $a_1,..., a_m$ in counterclockwise order.
Since $P$ is simplicial on $H$ and fixes $\p$, $a_0$ is fixed, and thus all $a_i$ are fixed.
By Proposition \ref{prop:bp}, each $a_i$ is a fixed Fatou point for $i=1,..., m$.

We modify $H$ locally in $S$ by first removing $v$ and its incident edges.
On the first level, we choose $k_1 \in \{1,..., m\}$ and connect $a_0$ with $a_{k_1}$.
On the second level, we choose $k_{2,1} \in \{1,..., k_1-1\}$ and $k_{2,2} \in \{k_1+1,..., m\}$ and connect $a_{k_1}$ with $a_{k_{2,1}}$ and $a_{k_{2,2}}$.
Inductively, $k_1, k_{2,1}, k_{2,2}$ divide the set $\{1,..., m\}$ into $4$ subsets (some subset may be empty), and we proceed as above for each of the subinterval.
The trees $\tilde{S}$ that can be constructed in this way will be called {\em admissible splitting} (see Figure \ref{fig:M}).
For an admissible splitting, each vertex $a_i$ $i=0,1,..., m$ can be assigned a level, which is the edge distance between $a_i$ and $a_0$. 
An edge connects a level $k$ to a level $k+1$ vertex.

\begin{figure}[ht]
  \centering
  \begin{subfigure}{1\textwidth}
  \centering
  \resizebox{1\linewidth}{!}{
    \def\svgwidth{\columnwidth}
    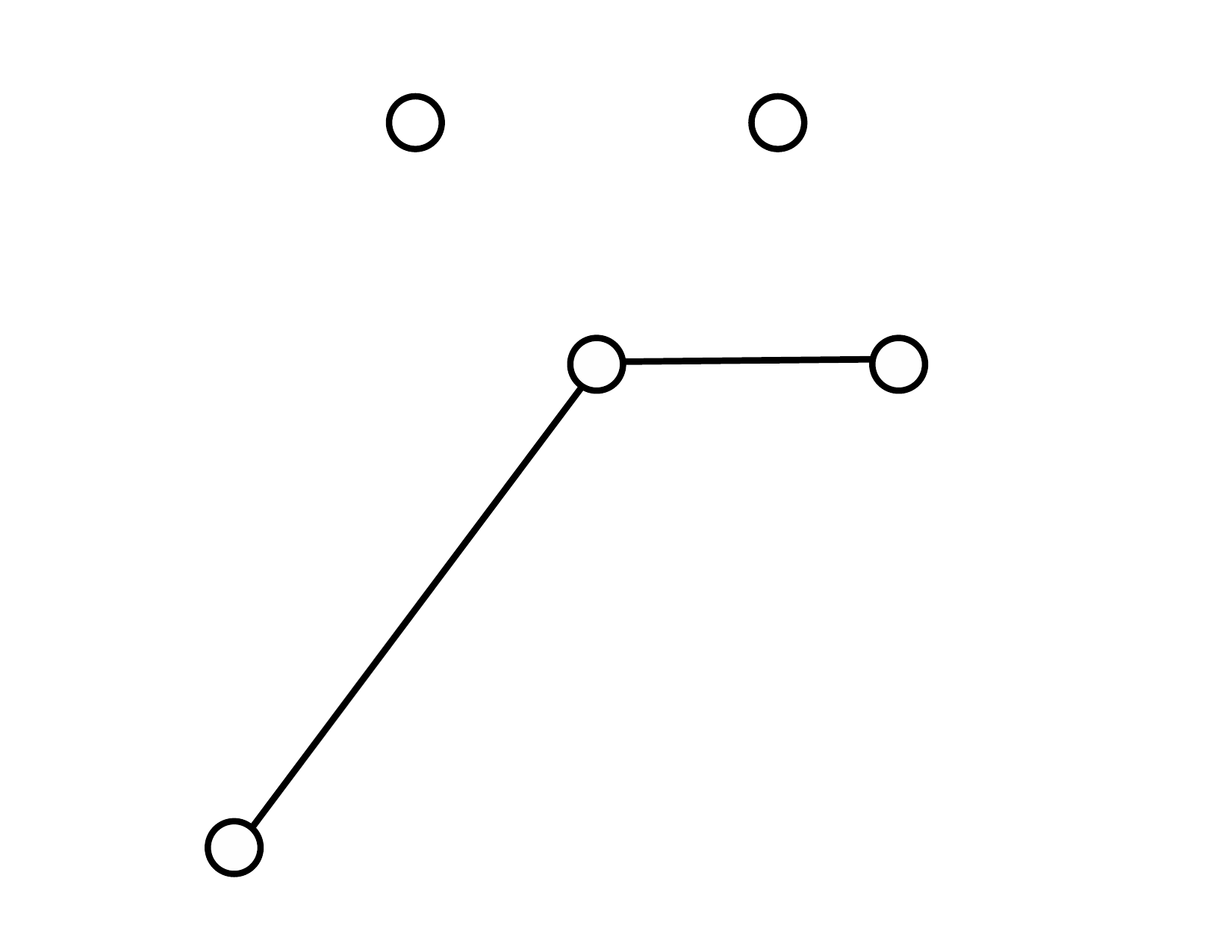

    \def\svgwidth{\columnwidth}
    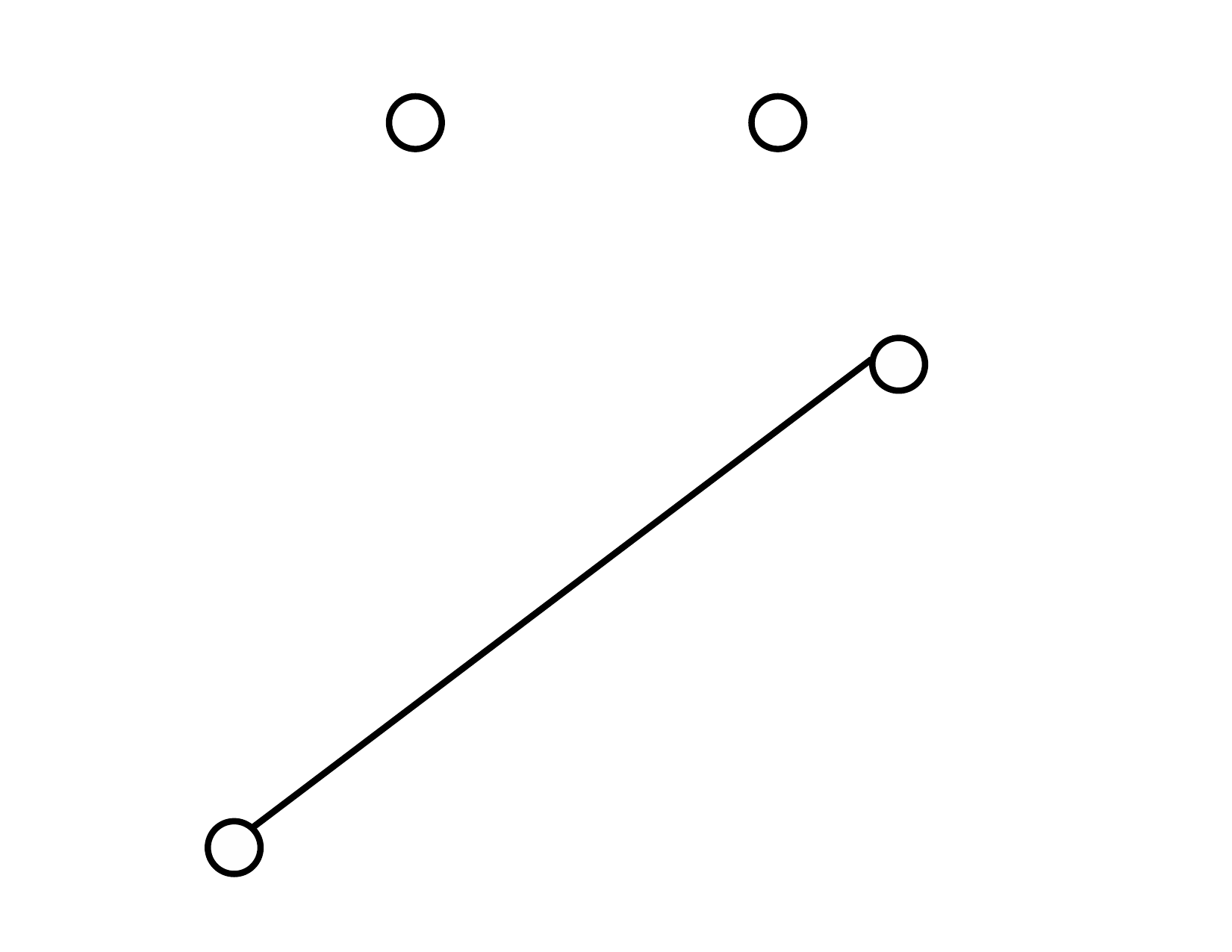

    \def\svgwidth{\columnwidth}
    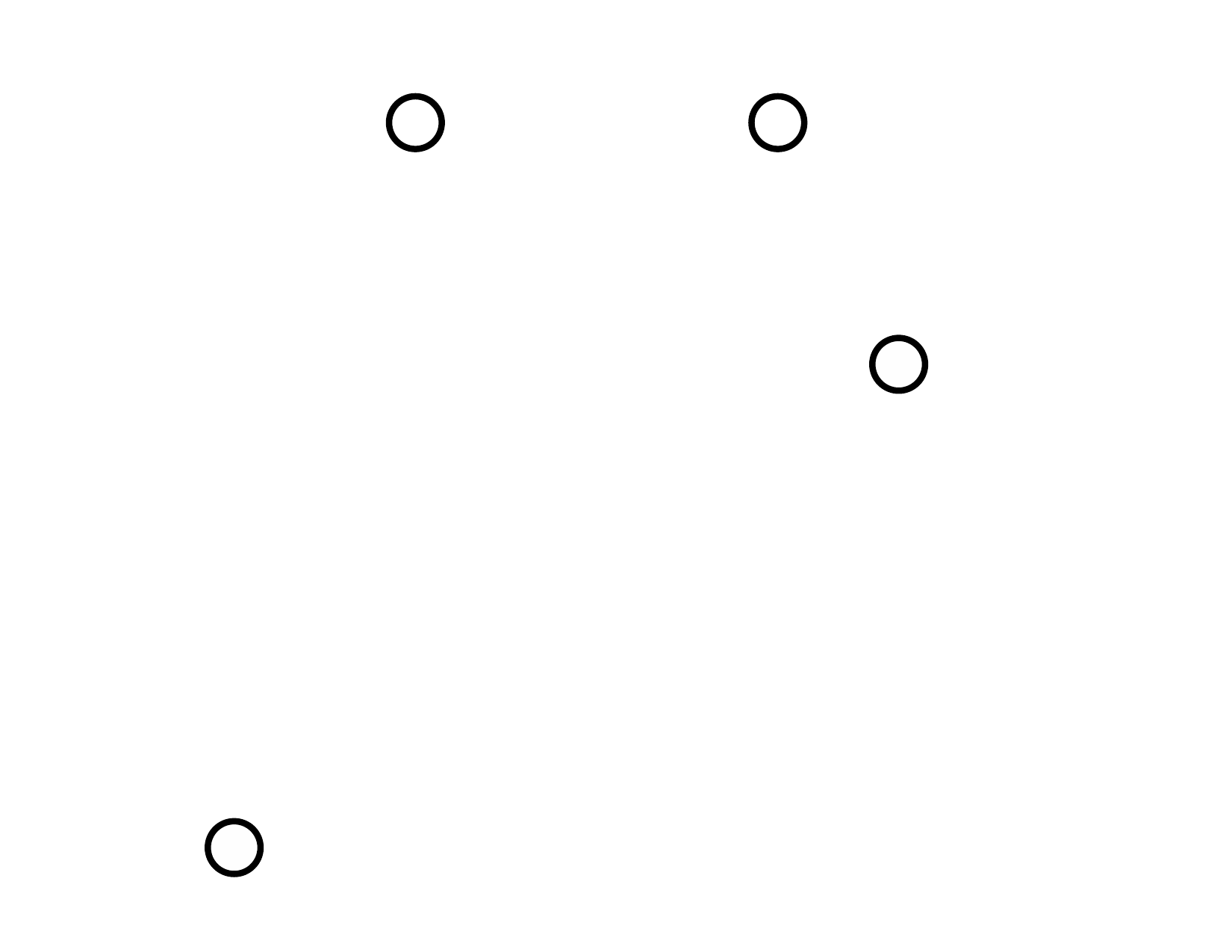

  }
  \caption{A star-shaped neighborhood of a periodic Julia branch point $v$ with two different admissible splittings with angles specified.}
  \end{subfigure}
  \begin{subfigure}{1\textwidth}
  \centering
  \resizebox{1\linewidth}{!}{
    \def\svgwidth{\columnwidth}
\begingroup%
  \makeatletter%
  \providecommand\color[2][]{%
    \errmessage{(Inkscape) Color is used for the text in Inkscape, but the package 'color.sty' is not loaded}%
    \renewcommand\color[2][]{}%
  }%
  \providecommand\transparent[1]{%
    \errmessage{(Inkscape) Transparency is used (non-zero) for the text in Inkscape, but the package 'transparent.sty' is not loaded}%
    \renewcommand\transparent[1]{}%
  }%
  \providecommand\rotatebox[2]{#2}%
  \newcommand*\fsize{\dimexpr\f@size pt\relax}%
  \newcommand*\lineheight[1]{\fontsize{\fsize}{#1\fsize}\selectfont}%
  \ifx\svgwidth\undefined%
    \setlength{\unitlength}{792bp}%
    \ifx\svgscale\undefined%
      \relax%
    \else%
      \setlength{\unitlength}{\unitlength * \real{\svgscale}}%
    \fi%
  \else%
    \setlength{\unitlength}{\svgwidth}%
  \fi%
  \global\let\svgwidth\undefined%
  \global\let\svgscale\undefined%
  \makeatother%
  \begin{picture}(1,0.77272727)%
    \lineheight{1}%
    \setlength\tabcolsep{0pt}%
    \put(0,0){\includegraphics[width=\unitlength,page=1]{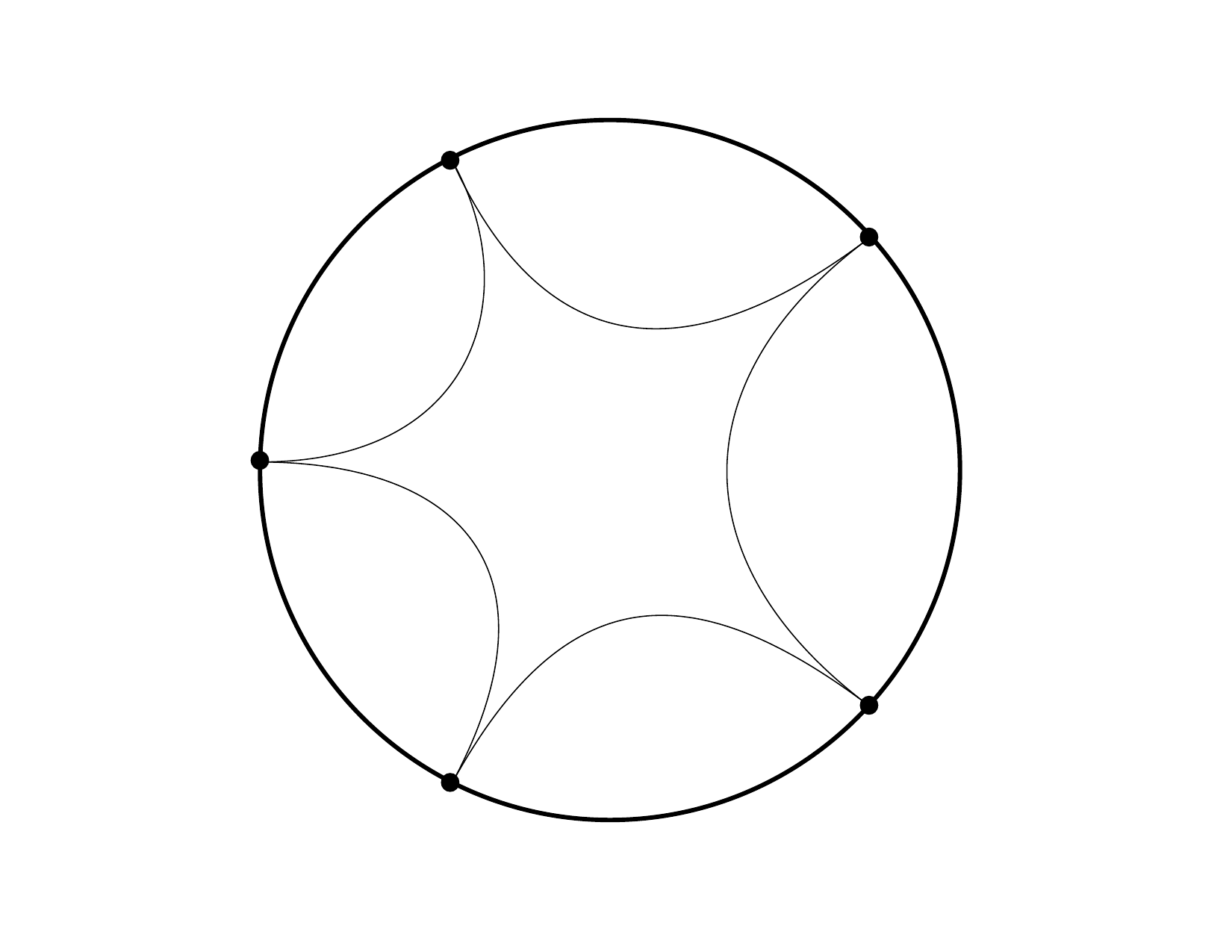}}%
    \put(0.15114024,0.36330933){\color[rgb]{0,0,0}\makebox(0,0)[lt]{\lineheight{1.25}\smash{\begin{tabular}[t]{l}{\LARGE $A_{40}$}\end{tabular}}}}%
    \put(0.33211838,0.09025614){\color[rgb]{0,0,0}\makebox(0,0)[lt]{\lineheight{1.25}\smash{\begin{tabular}[t]{l}{\LARGE $A_{01}$}\end{tabular}}}}%
    \put(0.69753348,0.15913013){\color[rgb]{0,0,0}\makebox(0,0)[lt]{\lineheight{1.25}\smash{\begin{tabular}[t]{l}{\LARGE $A_{12}$}\end{tabular}}}}%
    \put(0.70327295,0.59150604){\color[rgb]{0,0,0}\makebox(0,0)[lt]{\lineheight{1.25}\smash{\begin{tabular}[t]{l}{\LARGE $A_{23}$}\end{tabular}}}}%
    \put(0.33020522,0.66229325){\color[rgb]{0,0,0}\makebox(0,0)[lt]{\lineheight{1.25}\smash{\begin{tabular}[t]{l}{\LARGE $A_{34}$}\end{tabular}}}}%
  \end{picture}%
\endgroup%

    \def\svgwidth{\columnwidth}
\begingroup%
  \makeatletter%
  \providecommand\color[2][]{%
    \errmessage{(Inkscape) Color is used for the text in Inkscape, but the package 'color.sty' is not loaded}%
    \renewcommand\color[2][]{}%
  }%
  \providecommand\transparent[1]{%
    \errmessage{(Inkscape) Transparency is used (non-zero) for the text in Inkscape, but the package 'transparent.sty' is not loaded}%
    \renewcommand\transparent[1]{}%
  }%
  \providecommand\rotatebox[2]{#2}%
  \newcommand*\fsize{\dimexpr\f@size pt\relax}%
  \newcommand*\lineheight[1]{\fontsize{\fsize}{#1\fsize}\selectfont}%
  \ifx\svgwidth\undefined%
    \setlength{\unitlength}{792bp}%
    \ifx\svgscale\undefined%
      \relax%
    \else%
      \setlength{\unitlength}{\unitlength * \real{\svgscale}}%
    \fi%
  \else%
    \setlength{\unitlength}{\svgwidth}%
  \fi%
  \global\let\svgwidth\undefined%
  \global\let\svgscale\undefined%
  \makeatother%
  \begin{picture}(1,0.77272727)%
    \lineheight{1}%
    \setlength\tabcolsep{0pt}%
    \put(0,0){\includegraphics[width=\unitlength,page=1]{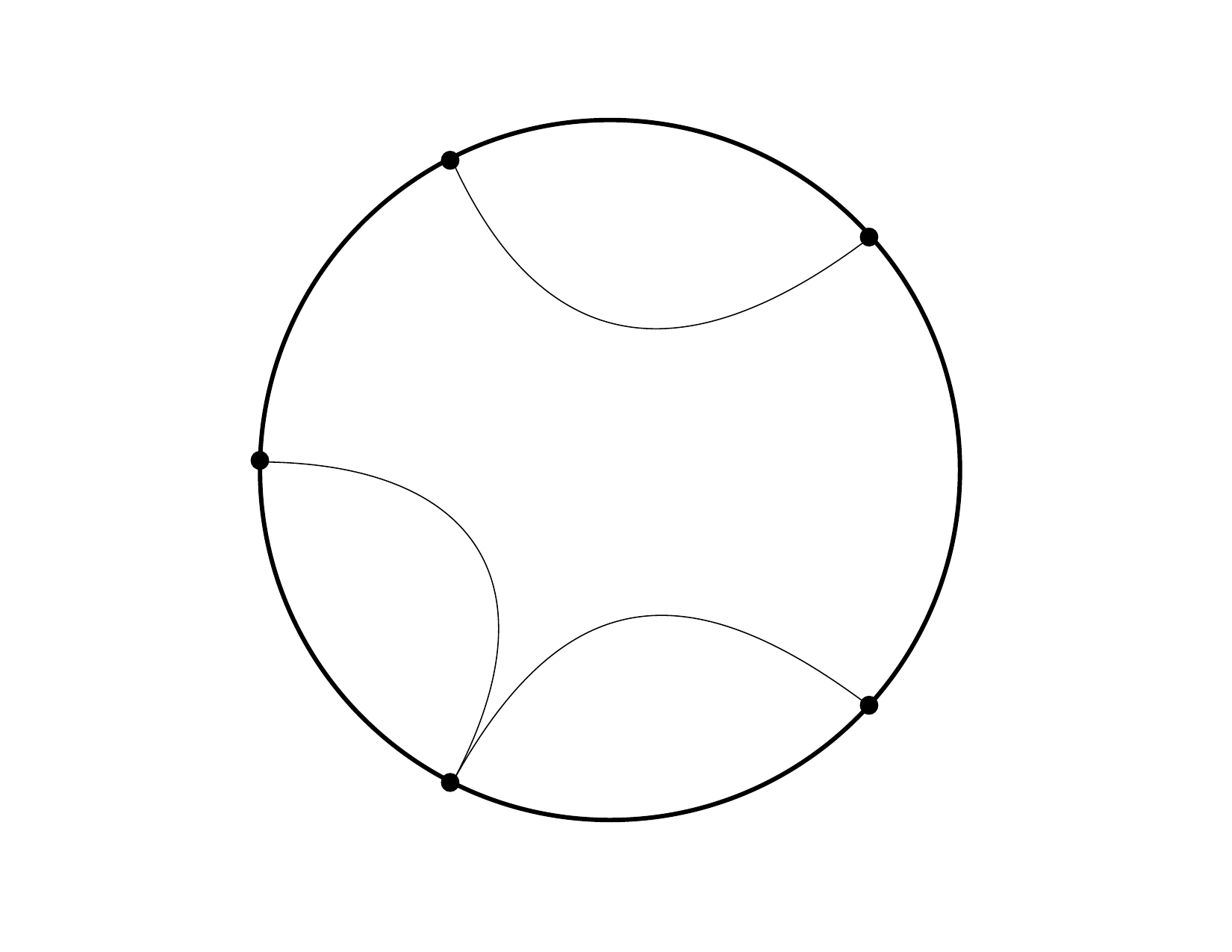}}%
    \put(0.15114024,0.36330933){\color[rgb]{0,0,0}\makebox(0,0)[lt]{\lineheight{1.25}\smash{\begin{tabular}[t]{l}{\LARGE $A_{40}$}\end{tabular}}}}%
    \put(0.33211838,0.09025614){\color[rgb]{0,0,0}\makebox(0,0)[lt]{\lineheight{1.25}\smash{\begin{tabular}[t]{l}{\LARGE $A_{01}$}\end{tabular}}}}%
    \put(0.69753348,0.15913013){\color[rgb]{0,0,0}\makebox(0,0)[lt]{\lineheight{1.25}\smash{\begin{tabular}[t]{l}{\LARGE $A_{12}$}\end{tabular}}}}%
    \put(0.70327295,0.59150604){\color[rgb]{0,0,0}\makebox(0,0)[lt]{\lineheight{1.25}\smash{\begin{tabular}[t]{l}{\LARGE $A_{23}$}\end{tabular}}}}%
    \put(0.33020522,0.66229325){\color[rgb]{0,0,0}\makebox(0,0)[lt]{\lineheight{1.25}\smash{\begin{tabular}[t]{l}{\LARGE $A_{34}$}\end{tabular}}}}%
    \put(0,0){\includegraphics[width=\unitlength,page=2]{L2.pdf}}%
  \end{picture}%
\endgroup%

    \def\svgwidth{\columnwidth}
\begingroup%
  \makeatletter%
  \providecommand\color[2][]{%
    \errmessage{(Inkscape) Color is used for the text in Inkscape, but the package 'color.sty' is not loaded}%
    \renewcommand\color[2][]{}%
  }%
  \providecommand\transparent[1]{%
    \errmessage{(Inkscape) Transparency is used (non-zero) for the text in Inkscape, but the package 'transparent.sty' is not loaded}%
    \renewcommand\transparent[1]{}%
  }%
  \providecommand\rotatebox[2]{#2}%
  \newcommand*\fsize{\dimexpr\f@size pt\relax}%
  \newcommand*\lineheight[1]{\fontsize{\fsize}{#1\fsize}\selectfont}%
  \ifx\svgwidth\undefined%
    \setlength{\unitlength}{792bp}%
    \ifx\svgscale\undefined%
      \relax%
    \else%
      \setlength{\unitlength}{\unitlength * \real{\svgscale}}%
    \fi%
  \else%
    \setlength{\unitlength}{\svgwidth}%
  \fi%
  \global\let\svgwidth\undefined%
  \global\let\svgscale\undefined%
  \makeatother%
  \begin{picture}(1,0.77272727)%
    \lineheight{1}%
    \setlength\tabcolsep{0pt}%
    \put(0,0){\includegraphics[width=\unitlength,page=1]{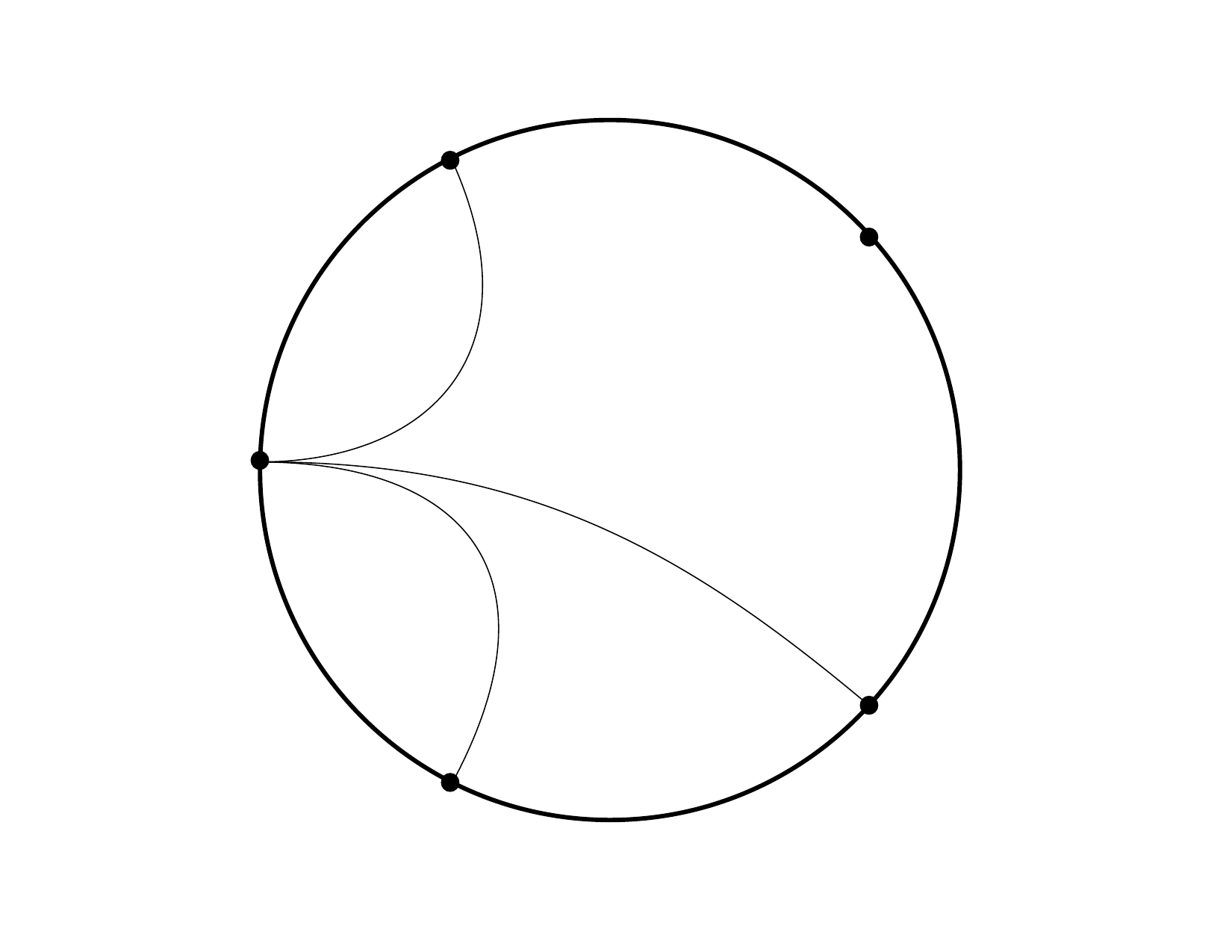}}%
    \put(0.15114024,0.36330933){\color[rgb]{0,0,0}\makebox(0,0)[lt]{\lineheight{1.25}\smash{\begin{tabular}[t]{l}{\LARGE $A_{40}$}\end{tabular}}}}%
    \put(0.33211838,0.09025614){\color[rgb]{0,0,0}\makebox(0,0)[lt]{\lineheight{1.25}\smash{\begin{tabular}[t]{l}{\LARGE $A_{01}$}\end{tabular}}}}%
    \put(0.69753348,0.15913013){\color[rgb]{0,0,0}\makebox(0,0)[lt]{\lineheight{1.25}\smash{\begin{tabular}[t]{l}{\LARGE $A_{12}$}\end{tabular}}}}%
    \put(0.70327295,0.59150604){\color[rgb]{0,0,0}\makebox(0,0)[lt]{\lineheight{1.25}\smash{\begin{tabular}[t]{l}{\LARGE $A_{23}$}\end{tabular}}}}%
    \put(0.33020522,0.66229325){\color[rgb]{0,0,0}\makebox(0,0)[lt]{\lineheight{1.25}\smash{\begin{tabular}[t]{l}{\LARGE $A_{34}$}\end{tabular}}}}%
    \put(0,0){\includegraphics[width=\unitlength,page=2]{L3.pdf}}%
  \end{picture}%
\endgroup%

  }
  \caption{The corresponding dual laminations generating the same equivalence relations on $\mathbb{S}^1$.}
  \end{subfigure}
  \caption{The split modification and dual laminations.}
  \label{fig:M}
\end{figure}

The dynamics are modified in $S$ so that each edge of $\tilde{S}$ is fixed.
The local degree function $\delta$ is defined to be the same as before the modification.
The angle function at $a_i$ is modified with the following rule (see Figure \ref{fig:M}):
\begin{itemize}
\item If $a_ia_j$ is an edge where $a_j$ is closer to $a_0$ than $a_i$, we set the angle of the tangent direction corresponding to $a_j$ to be $0$;
\item If $a_ia_j$ is an edge where $a_j$ is further to $a_0$ than $a_i$, we set the angle of the tangent direction corresponding to $a_j$ to be $0^+$ if $j<i$ and $0^-$ if $j>i$;
\item The other angles remain the same.
\end{itemize}

We also modified $H$ on the backward orbits of vertices in $S$ by pullback.
We will assume that $w \neq v \in \mathcal{V}$ is a preimage of $v$.
The general construction on backward orbits can be done by induction.

If $\delta(w) = 1$, then we remove the $1$-neighborhood of $w$ and glue back a copy of $\tilde S$.
Note that there is a unique way of gluing back $\tilde S$ by the dynamics.

If $\delta(w) \geq 2$, then the pullback is not unique. 
First, we modify the simplicial structure of $\tilde{S}$ by adding a vertex on the `midpoint' of each edge.
We call a degree $\delta(w)$ branched covering $f: \tilde{S}' \longrightarrow \tilde{S}$ {\em admissible} if the branch locus is contained in the midpoints of the edges of $\tilde{S}$. 
We call $\tilde{S}'$ an admissible $\delta(w)$ branched cover of $\tilde{S}$.
The pullback is constructed by removing the $1$-neighborhood of $w$ and then glueing back a copy of admissible $\delta(w)$ branched cover $\tilde S'$.
Similar to the degree $1$ case, the gluing is determined by the dynamics.

Note that in both cases, we may introduce some new vertices as the preimage of the full $1$-neighborhood of a periodic simple Julia branch point may not be in the Hubbard tree.
Those new vertices are defined to have local degree $1$, and the angle functions are defined by pullback.

Let $\mathcal{T}$ be the tree after the split modification over all periodic Julia branch vertices.
Removing vertices of $\mathcal{T}$ if necessary, we may assume that $f$ is minimal.
It then follows that

\begin{prop}\label{prop:eqaht}
Let $(H, \p)$ be a simplicial pointed Hubbard tree.
Let $f: (\mathcal{T}, \p) \longrightarrow (\mathcal{T}, \p)$ be the minimal angled tree map after performing admissible split modification over all periodic Julia branch vertices.
Then $f: (\mathcal{T}, \p) \longrightarrow (\mathcal{T}, \p)$ is admissible.
\end{prop}

We remark that, conjecturally,  these different splittings and pullbacks will all result in different accesses of points on $\partial \mathcal{H}$ and create complicated topology of $\overline{\mathcal{H}_d}$.
To prove Theorem \ref{thm: eq}, we only need the existence of one admissible splitting.
To prove Theorem \ref{thm:sb}, we only need two different admissible splittings (see \S \ref{sec:sb}).
It would be interesting to know whether such combinatorial operation can tell us about the `complexity' of $\partial \mathcal{H}_d$.

\begin{prop}\label{prop:lam}
For any admissible split modification $(\mathcal{T},\p)$ of $(H,\p)$, the dual laminations $\mathcal{L}_\mathcal{T}$ and $\mathcal{L}_H$ generate the same equivalence relations on $\mathbb{S}^1$.
\end{prop}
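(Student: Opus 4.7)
The plan is to localize: the split modification only alters $H$ in a neighbourhood of each periodic Julia branch vertex and, inductively, in the pullbacks of such neighbourhoods; outside these regions $\mathcal{T}$ and $H$ coincide as marked angled trees, so the edges outside contribute literally identical leaves to $\mathcal{L}_H$ and $\mathcal{L}_\mathcal{T}$. It therefore suffices to prove the equality $\sim_\mathcal{T}\, =\, \sim_H$ at one split vertex and then propagate by pullback.

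First I would fix a periodic Julia branch vertex $v$ with adjacent vertices $a_0, a_1, \dots, a_m$ in cyclic order, $a_0$ pointing toward $\p$. In $H$, the $m+1$ edges at $v$ produce $m+1$ leaves of $\mathcal{L}_H^F$, and reading off the associated itineraries shows they chain the external angles $\theta_0, \dots, \theta_m$ landing at $v$ into a single $\sim_H$-class. In $\mathcal{T}$ this neighbourhood is replaced by $\tilde S$, and the crux is a combinatorial check of the itineraries induced by the $\{0, 0^+, 0^-\}$ assignment described before the statement: for a cross-edge $[a_i, a_j]$ with $a_j$ one level higher, the angle $0$ on the low-level side together with $0^\pm$ (sign determined by whether $j<i$ or $j>i$) on the high-level side forces the two accesses of the edge to carry the itineraries of external rays landing at $v$, and the left/right convention identifies precisely which two of the $\theta_k$'s they are. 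A case analysis over the nested bracketing $k_1, k_{2,1}, k_{2,2}, \dots$ of an admissible splitting then shows that the collection of new leaves, together with the surviving outer edges at each $a_i$, chains all of $\theta_0, \dots, \theta_m$ together in $\sim_\mathcal{T}$, and nothing else. Thus the $\sim_H$- and $\sim_\mathcal{T}$-classes agree at $v$.

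To conclude I would propagate this equality along backward orbits. The split modification at a pre-image $w$ of $v$ is defined by pullback (using an admissible branched cover of $\tilde S$ when $\delta(w)\geq 2$), and $\mathcal{L}_H$ and $\mathcal{L}_\mathcal{T}$ are themselves defined by taking successive pullbacks of their finite parts; since pullback by a branched cover transports leaves to leaves, it transports $\sim$-classes to $\sim$-classes. Hence an induction on the pullback level $k$, simultaneous over all periodic Julia branch vertices, extends $\sim_H = \sim_\mathcal{T}$ from the finite laminations $\mathcal{L}_H^F$ and $\mathcal{L}_\mathcal{T}^F$ to $\mathcal{L}_H$ and $\mathcal{L}_\mathcal{T}$, and taking closure preserves the generated equivalence relation. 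The principal obstacle is the combinatorial step at $v$: verifying that every admissible nested bracketing of $\{1,\dots,m\}$ together with the signed zero-angle convention produces exactly the chain $\theta_0 \sim \theta_1 \sim \cdots \sim \theta_m$, with no spurious identifications. This is finite but delicate, and is where the definition of admissible splitting is tightly adapted to the itinerary structure of $\mathcal{L}_H$.
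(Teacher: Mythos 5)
Your overall strategy is the same as the paper's: reduce by pullback and induction to a single splitting at a fixed periodic Julia branch vertex $v$, check there that the angles $A_0,\dots,A_m$ landing at $v$ form exactly one equivalence class for $\sim_\mathcal{T}$ as they do for $\sim_H$, and note that away from the modified regions the two trees contribute identical leaves. The issue is that the one substantive step — that every admissible nested bracketing, with the $\{0,0^+,0^-\}$ conventions, chains precisely $A_0\sim A_1\sim\cdots\sim A_m$ and produces no spurious identifications — is only asserted as ``a case analysis over the nested bracketing,'' and you yourself flag it as the principal obstacle. As written, the proposal defers exactly the point the proposition is about, so it is not yet a complete proof.

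The paper closes this step without any case analysis, by a uniform observation you could adopt: orient, inside the splitted tree $\tilde S$, the path of edges from $a_i$ to $a_{i+1}$; by the dynamics and the assignment of angles $0$ and $0^\pm$ along cross-edges, the angle $A_i$ (the access between the Fatou components of $a_i$ and $a_{i+1}$) is the \emph{only} external angle landing on the right side of that directed path, and this holds for every admissible bracketing simultaneously. Moreover, at each vertex $a_i$ the angle between an incident edge of $\tilde S$ and an incident edge outside $\tilde S$ is strictly positive, so $A_i$ cannot land on any edge outside $\tilde S$. Together these two facts show the leaves of $\mathcal{L}_\mathcal{T}$ supported on $\tilde S$ identify exactly the set $\{A_0,\dots,A_m\}$ and nothing else, which is the $\sim_H$-class at $v$; the propagation to backward orbits and to the full laminations $\mathcal{L}_H$, $\mathcal{L}_\mathcal{T}$ by pullback then goes through as you describe. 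If you replace your deferred case analysis with this landing argument, your proof coincides with the paper's.
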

\begin{proof}
By induction and pullback, it suffices to consider the splitting at a periodic Julia branch point $v \in H$. 
Without loss of generality, we may assume $v$ is fixed.
Let $a_0,..., a_k$ be the vertices adjacent to $v$.
Then there are $k+1$ angles $A_0,..., A_k$ landing at $v$, where $A_i$ corresponds to the access between the Fatou components of $a_i$ and $a_{i+1}$.

Let $S$ be the subtree containing $a_0, ..., a_k$ after the modification.
By the dynamics, $A_i$ is the only angle landing at the right side of the union of directed edges from $a_i$ to $a_{i+1}$ (see Figure \ref{fig:M}).
Since the angle at the vertex $a_i$ between an incident edge in $S$ and an incident edge outside of $S$ is strictly bigger than $0$, $A_i$ does not land at any other edges.
Therefore, $A_0,..., A_k$ form an equivalence class of the equivalence relation generated by the lamination for $\mathcal{T}$.
The claim now follows.
\end{proof}

\subsection*{Carath\'eodory convergence}
A {\em disk} is a simply connected and connected open subset in $\C$.
It is said to be hyperbolic if it is not $\C$.
For a sequence of pointed disks $(U_n, u_n)$, we say $(U_n, u_n)$ converges to $(U, u)$ in Carath\'eodory topology if 
\begin{itemize}
\item $u_n \to u$;
\item for any compact set $K \subseteq U$, $K \subseteq U_n$ for all sufficiently large $n$;
\item for any open connected set $N$ containing $u$, if $N \subseteq U_n$ for all sufficiently large $n$, then $N \subseteq U$.
\end{itemize}
Equivalently, the convergence means that $u_n \to u$ and for any subsequence such that $(\hat\C - U_n) \to K$ in the Hausdorff topology on compact sets of the sphere, $U$ is equal to the component of $\hat\C - K$ containing $u$ (see \cite[\S 5.1]{McM94}).

Similarly, we say a sequence of proper holomorphic maps between pointed disks $f_n: (U_n, u_n) \longrightarrow (V_n, v_n)$ converges to $f:(U, u) \longrightarrow (V, v)$ if 
\begin{itemize}
\item $(U_n, u_n), (V_n,v_n)$ converge to $(U, u), (V, v)$ in Carath\'eodory topology;
\item for all sufficiently large $n$, $f_n$ converges to $f$ uniformly on compact subsets of $U$.
\end{itemize}

We have the following compactness result:
\begin{theorem}[\cite{McM94}, Theorem 5.2]\label{thm:cmpc}
The set of disks $(U_n, 0)$ containing $B(0,r)$ for some $r>0$ is compact in Carath\'eodory topology.
\end{theorem}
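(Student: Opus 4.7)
The plan is to prove this via Riemann maps and normal families, ultimately reducing to Carath\'eodory's classical kernel convergence theorem. Given a sequence $(U_n, 0)$ with $B(0,r) \subseteq U_n$, I split into cases based on whether $U_n = \C$. If $U_n = \C$ for infinitely many $n$, pass to that subsequence and note trivially $(U_n, 0) \to (\C, 0)$ in Carath\'eodory topology. Otherwise, by discarding finitely many terms, assume each $U_n$ is a hyperbolic disk and let $\phi_n : (\D, 0) \to (U_n, 0)$ be the Riemann map normalized so that $\phi_n'(0) > 0$.

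The key a priori bound is $\phi_n'(0) \geq r$: since $B(0,r) \subseteq U_n$, the map $\phi_n^{-1}$ is holomorphic on $B(0,r)$ with $\phi_n^{-1}(0) = 0$ and image in $\D$, so rescaling and applying the Schwarz lemma gives $|(\phi_n^{-1})'(0)| \leq 1/r$. Now extract a subsequence along which $\phi_n'(0) \to \rho \in [r, +\infty]$. In the finite case $\rho < \infty$, Koebe's growth theorem yields $U_n \subseteq B(0, 4\phi_n'(0))$, and the standard distortion bounds make $\{\phi_n\}$ uniformly bounded on every compact subset of $\D$, hence a normal family. Pass to a locally uniformly convergent subsequence $\phi_n \to \phi$; since $\phi'(0) = \rho \geq r > 0$, Hurwitz's theorem implies $\phi$ is univalent, and I set $U := \phi(\D)$.

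To verify $(U_n, 0) \to (U, 0)$ in Carath\'eodory topology I check the two kernel conditions. For any compact $K \subseteq U$, choose a slightly larger compact neighborhood $K' \subseteq \D$ of $\phi^{-1}(K)$; locally uniform convergence plus the argument principle (or degree theory applied to $\phi_n - w$ for $w \in K$) gives $K \subseteq \phi_n(K'^\circ) \subseteq U_n$ for all large $n$. Conversely, suppose $N \ni 0$ is open connected with $N \subseteq U_n$ eventually; then $\psi_n := \phi_n^{-1}|_N$ is a normal family (uniformly bounded by $1$), and any subsequential limit $\psi$ is a univalent map $N \to \D$ with $\psi(0) = 0$, whence $\psi = \phi^{-1}|_N$ on a neighborhood of $0$, and by uniqueness of analytic continuation $N \subseteq \phi(\D) = U$. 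In the infinite case $\rho = +\infty$, Koebe's $1/4$-theorem gives $B(0, \phi_n'(0)/4) \subseteq U_n$, so every compact $K \subset \C$ lies in $U_n$ for large $n$, yielding $(U_n, 0) \to (\C, 0)$.

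The main obstacle is the careful verification of the second Carath\'eodory condition in the finite case, specifically showing that any open connected neighborhood $N$ of $0$ eventually contained in $U_n$ must lie in the limiting disk $U$. This is exactly the content of Carath\'eodory's kernel theorem and requires the normal family argument applied to the inverse maps $\phi_n^{-1}$, together with care that these inverses are defined on $N$ for all large $n$; once this is handled, the compactness follows immediately from extracting a subsequence where $\phi_n'(0)$ converges in $[r, +\infty]$.
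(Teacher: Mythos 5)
Your argument is correct in substance, and since the paper does not prove this statement at all (it is quoted from McMullen, and the proof there is the same classical one), your route -- normalized Riemann maps, the Schwarz-lemma bound $\phi_n'(0)\geq r$, Koebe growth estimates plus normal families, and Carath\'eodory kernel convergence -- is exactly the expected argument. Two corrections of detail. First, the aside that ``Koebe's growth theorem yields $U_n \subseteq B(0,4\phi_n'(0))$'' is false: a half-plane image has $\phi'(0)$ of definite size but unbounded image, and no upper inclusion of this kind holds. What the growth theorem does give, and what you actually use, is $|\phi_n(z)| \leq \phi_n'(0)|z|/(1-|z|)^2$, i.e.\ uniform boundedness of $\{\phi_n\}$ on compact subsets of $\D$, so the error is harmless but should be removed. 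Second, in the verification of the second kernel condition you need to rule out that a subsequential limit $\psi$ of $\psi_n=\phi_n^{-1}|_N$ is constant before applying Hurwitz; this follows from $\psi_n'(0)=1/\phi_n'(0)\to 1/\rho>0$. With that, $|\psi|<1$ on $N$ by the maximum principle, $\phi\circ\psi=\mathrm{id}$ near $0$ and hence on all of the connected set $N$ by the identity theorem, which gives $N\subseteq \phi(\D)=U$ as you claim. Finally, for compactness of the stated set (with $r$ fixed) one should also record that the limit again contains $B(0,r)$; this is immediate from your second kernel condition applied to $N=B(0,r)$.
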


The definition of Carath\'eodory convergence naturally generalizes to simply connected and connected open subset of $\hat \C$.
Recall that by Lemma \ref{lem:ac}, a sequence of degree $d$ rational maps $R_n$ converges uniformly on $\hat\C$ to a rational map $R$ if and only if $R_n$ converges algebraically to $R$ and $R$ has degree $d$.
The following lemma is very useful in studying degenerations of quasi post-critically finite Blaschke products.
\begin{lem}\label{lem:key}
Let $R_n: \hat \C \longrightarrow \hat \C$ be a sequence of degree $d$ rational maps converging uniformly on $\hat\C$ to $R: \hat\C \longrightarrow \hat \C$.
Let $U_n$ be a sequence of invariant hyperbolic disks for $R_n$, and $x_n \in U_n$.
If there exists $K$ so that $d_{U_n}(x_n, R_n(x_n)) \leq K$ for all $n$, then after passing to a subsequence, either
\begin{itemize}
\item $\lim_{n\to\infty} x_n = x$ and $x$ is fixed by $R$; or
\item $(U_n, x_n)$ converges in Carath\'eodory topology to $(U, x)$. Thus $R_n: (U_n, x_n) \longrightarrow (U_n, R_n(x_n))$ converges to $R:(U, x)\longrightarrow (U, R(x))$.
\end{itemize}
\end{lem}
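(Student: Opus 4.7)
The plan is to reduce the stated dichotomy to a Hurwitz alternative for the uniformizations of the $U_n$. After passing to a subsequence, I would assume $x_n \to x \in \hat{\C}$ (by compactness) and uniformize each $U_n$ by a Riemann map $\phi_n \colon (\D, 0) \to (U_n, x_n)$. Since the $\phi_n$ take values in the compact space $\hat{\C}$, Montel's theorem makes the family normal, and after a further subsequence $\phi_n \to \phi \colon \D \to \hat{\C}$ uniformly on compact sets. By Hurwitz's theorem $\phi$ is either univalent or constant, and I expect these two alternatives to correspond exactly to the two conclusions of the lemma.

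In the univalent case, set $U := \phi(\D)$, which is a simply connected proper open subset of $\hat{\C}$ and hence a hyperbolic disk. Carath\'eodory convergence $(U_n, x_n) \to (U, x)$ follows from the standard equivalence between Carath\'eodory convergence of pointed disks and uniform-on-compacta convergence of their normalized Riemann maps. To verify that $R$ descends to a proper holomorphic self-map of $U$, for any $z \in U$ pick a neighborhood $V$ of $z$ eventually contained in $U_n$; then $R_n(V) \subseteq U_n$, and since $R$ is open, $R(V)$ is a neighborhood of $R(z)$, which the uniform convergence $R_n \to R$ on $\overline{V}$ forces to eventually lie inside $U_n$, placing $R(z) \in U$ by the Carath\'eodory characterization. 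Combined with $R_n \to R$ uniformly, this yields the claimed convergence of $R_n \colon (U_n, x_n) \to (U_n, R_n(x_n))$ to $R \colon (U, x) \to (U, R(x))$.

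In the constant case, necessarily $\phi \equiv x$ since $\phi(0) = \lim x_n = x$. Pulling back $R_n$ by the uniformization, set $\tilde{R}_n := \phi_n^{-1} \circ R_n \circ \phi_n \colon \D \to \D$. Since $\phi_n$ is a hyperbolic isometry, the hypothesis $d_{U_n}(x_n, R_n(x_n)) \leq K$ becomes $d_{\D}(0, \tilde{R}_n(0)) \leq K$, confining $\tilde{R}_n(0)$ to the fixed hyperbolic ball $\overline{B_{\D}(0, K)}$, which is compact in $\D$. On this compact set $\phi_n \to x$ uniformly, so $R_n(x_n) = \phi_n(\tilde{R}_n(0)) \to x$; combined with $R_n(x_n) \to R(x)$ from the uniform convergence of $R_n$, we obtain $R(x) = x$.

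The main subtlety will be in recognizing that the Hurwitz dichotomy for $\phi$ is precisely the dichotomy the lemma asserts: univalence of the limit uniformization yields nondegenerate Carath\'eodory convergence, while its constancy encodes the collapse of $U_n$ near $x_n$, whereupon the Schwarz--Pick contraction forces $x$ to be a fixed point of $R$. Once this correspondence is identified, the two cases follow from standard facts about Carath\'eodory convergence and the isometry property of Riemann maps, respectively.
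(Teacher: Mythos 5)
Your overall architecture (uniformize, extract a limit of the Riemann maps $\phi_n$, and read the dichotomy off the Hurwitz alternative univalent/constant) is genuinely different from the paper's argument, which never uniformizes: there one assumes $x$ is not fixed, combines the density estimate of Lemma \ref{lem:hme} with the bound $d_{U_n}(x_n,R_n(x_n))\le K$ to produce a uniform round ball $B(x_n,r)\subseteq U_n$, and then quotes the compactness theorem for pointed disks (Theorem \ref{thm:cmpc}). However, your first step has a genuine gap: the claim that $\{\phi_n\}$ is normal ``since the $\phi_n$ take values in the compact space $\hat\C$'' is not a correct use of Montel. A family of sphere-valued holomorphic maps is not normal merely because the target is compact (Marty's criterion requires locally bounded spherical derivatives), and under the lemma's stated hypotheses normality can actually fail: take $R_n(z)=z+1/n\to R=\mathrm{id}$ uniformly, $U_n=\{\operatorname{Im}z>-n\}$ (invariant), $x_n=0$; then $d_{U_n}(x_n,R_n(x_n))\to 0$, so the hypotheses hold, but $\phi_n(w)=2inw/(1-w)$ has no locally uniformly convergent subsequence, so neither branch of your alternative is ever reached (the lemma itself survives via the first bullet). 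Even in the intended setting $\deg R\ge 2$, normality of the $\phi_n$ is true but needs a real argument: $\hat\C\setminus U_n$ contains $J(R_n)$, whose spherical diameter is bounded below because repelling cycles of $R$ persist under the uniform convergence $R_n\to R$, and then a Koebe/Marty estimate for univalent maps omitting a continuum of definite spherical size bounds the spherical derivatives. None of this is in your write-up, and it is exactly the compactness input the paper gets instead from Theorem \ref{thm:cmpc}.

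A secondary soft spot is the invariance step in the univalent case. From $R_n(V)\subseteq U_n$ and uniform convergence you cannot conclude directly that $R(V)$ eventually lies in $U_n$ (the domains vary with $n$; one needs a Rouch\'e/Hurwitz argument showing values of $R$ on a slightly smaller set are already attained by $R_n$ inside $V$), and even then ``$R(z)\in U_n$ for all large $n$'' does not give $R(z)\in U$: the third condition in the definition of Carath\'eodory convergence applies only to connected open sets containing the base point $x$, and a point lying in every $U_n$ can still miss the kernel. To anchor the argument you need $R(x)\in U$, which is precisely where the hypothesis $d_{U_n}(x_n,R_n(x_n))\le K$ must enter -- note your univalent case never uses $K$. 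With $w_n=\phi_n^{-1}(R_n(x_n))$ confined to a fixed compact hyperbolic ball you get $R(x)=\phi(\lim w_n)\in U$; a cleaner route to the whole ``Thus'' clause is to pass to $g_n=\phi_n^{-1}\circ R_n\circ\phi_n:\D\to\D$, extract a limit $g$ with $g(0)\in\D$, and read off $R\circ\phi=\phi\circ g$, hence $R(U)\subseteq U$ together with the convergence of maps.
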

\begin{proof}
After passing to a subsequence, we may assume $x_n \to x$.
We may assume $U_n \subseteq \C$ for all sufficiently large $n$ and $x\in \C$.
Suppose $x$ is not fixed by $R$, we claim there exists a Euclidean ball of radius $r$ so that $B(x_n, r) \subseteq U_n$ for all sufficiently large $n$.
Suppose not, then the hyperbolic metric $\rho_{U_n}(x_n)|dz|$ at $x_n$ is going to infinity by Lemma \ref{lem:hme}.
Since $R_n$ converges to $R$ uniformly and $x$ is not fixed, for sufficiently large $n$, the Euclidean distance between $x_n$ and $R_n(x_n)$ is bounded below from $0$.
Thus, $d_{U_n}(x_n, R_n(x_n))$ is unbounded which is a contradiction.
Therefore, after passing to a subsequence, the pointed disk $(U_n, x_n)$ converges in Carath\'eodory topology to $(U, x)$ by Theorem \ref{thm:cmpc}.
\end{proof}

We remark that in the first case, the pointed disks $(U_n, x_n)$ usually diverges in the space of pointed disks under the Carath\`eodory topology.

A typical example of a divergent pointed disks can be constructed as follows.
Let $x_n = 0$ and let $U_n = B(-1+1/n, 1) \cup B(1-1/n, 1)$ be the union of two unit balls centered at $-1+1/n$ and $1-1/n$.
As $n \to \infty$, $\hat\C - U_n$ converges in Hausdorff topology to $K = \hat \C - (B(-1,1) \cup B(1,1))$ which contains limit point $0 = \lim x_n$, so $(U_n, x_n)$ diverges.

As an application of the Carath\'eodory limits, we have
\begin{prop}\label{prop:gfs}
If $f_n\in \BP_d$ is a quasi post-critically finite sequence, then after passing to a subsequence, the corresponding polynomials $P_n$ converge to a geometrically finite polynomial $\hat P$.
\end{prop}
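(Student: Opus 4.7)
The plan is to pass to subsequences making the quasi-invariant tree data converge, extract an algebraic limit of $P_n$ in $\overline{\MP_d}$, identify its dynamics via the rescaling limits, and read off geometric finiteness from the tree-theoretic picture.

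First, apply Theorem \ref{thm:qit} and Proposition \ref{prop:al} to pass to a subsequence along which the simplicial map $f:(\mathcal{T},\p)\to(\mathcal{T},\p)$, the rescaling limits $F_v$, and the landing data of all pre-periodic points are all defined; simultaneously extract an algebraic limit $P_n \to \hat P$. To see that $\hat P$ is a genuine polynomial in $\MP_d$ of degree $d$, note that each filled Julia set $K(P_n)$ is connected of logarithmic capacity $1$ (by the monic normalisation), hence of diameter at most $4$; together with the centering $\sum_i c_{i,n}=0$, this bounds the critical points uniformly, hence bounds the coefficients of $P_n$, so $\hat P \in \MP_d$ has degree $d$. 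Upper semicontinuity of filled Julia sets along the convergence places the limits of all critical points in $K(\hat P)$, so $J(\hat P)$ is connected.

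Second, I would identify the Fatou components of $\hat P$ with Carath\'eodory limits of the bounded Fatou component of $P_n$. Let $B_n$ denote the bounded Fatou component of $P_n$ and $\Phi_n:\D\to B_n$ the conformal conjugacy satisfying $\Phi_n^{-1}\circ P_n\circ \Phi_n = f_n$. For each periodic vertex $v\in\mathcal{V}$ of period $q$, apply Lemma \ref{lem:key} to $P_n^q$ at the base point $\Phi_n(\phi_n(v))$: either the base points converge to a non-repelling periodic point of $\hat P$, or $(B_n,\Phi_n(\phi_n(v)))$ converges in Carath\'eodory topology to a pointed Fatou component of $\hat P$ on which $\hat P^q$ is conformally conjugate to the first-return rescaling limit $F_v^q$. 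The trichotomy interior-hyperbolic / parabolic / boundary-hyperbolic from the start of \S\ref{sec: gfb} then classifies this component as attracting or parabolic. The simple cycles on $\mathbb{S}^1$ not landing on $\mathcal{V}$ identified in Proposition \ref{prop:al} provide any further parabolic cycles of $\hat P$ on $J(\hat P)$.

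Third, verify geometric finiteness. Every critical point of $f_n$ lies within bounded hyperbolic distance of some vertex $v\in\mathcal{V}$ by the quasi post-critically finite assumption, and the $f$-orbit of every vertex of $\mathcal{V}$ is eventually periodic. Transported through $\Phi_n$ and passed to the algebraic limit, each critical point of $\hat P$ other than the super-attracting fixed point at $\infty$ has eventually periodic forward orbit under $\hat P$. In particular any critical point of $\hat P$ that happens to lie in $J(\hat P)$ has finite forward orbit, so $\hat P$ is geometrically finite.

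The main obstacle will be the second step: patching the local Carath\'eodory limits around the various periodic vertices into a coherent global description of $\hat P$, and showing in particular that distinct periodic cycles of $\mathcal{V}$ whose rescaling limits share the same boundary parabolic or boundary-hyperbolic fixed point genuinely collapse to a single parabolic cycle of $\hat P$, with parabolic multiplicities consistent between the tree-theoretic picture and the count provided by Proposition \ref{prop:al}. The uniform lower bound on angles at vertices built into Theorem \ref{thm:qit}, together with Lemma \ref{lem:key}, should make this identification canonical.
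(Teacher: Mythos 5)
Your first two steps assemble the right tools (compactness in $\MP_d$, Lemma \ref{lem:key}, the rescaling limits), but the third step, which is where geometric finiteness is actually supposed to be proved, rests on a false claim. You assert that, because each critical point of $f_n$ stays within bounded hyperbolic distance of a vertex of $\mathcal{V}$ and the tree orbit of every vertex is eventually periodic, ``each critical point of $\hat P$ other than the super-attracting fixed point at $\infty$ has eventually periodic forward orbit under $\hat P$.'' This does not follow and is generally wrong: quasi post-critical finiteness only gives $d_{\Hyp^2}(f_n^{l_i}(c_{i,n}), f_n^{l_i+q_i}(c_{i,n}))\leq K$, and a uniform hyperbolic bound collapses to genuine periodicity in the limit only when the orbit escapes to the boundary of the bounded Fatou component, so that the hyperbolic metric blows up and the Euclidean distance tends to $0$. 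When instead the pointed disk persists in the Carath\'eodory limit, the limiting critical point lies in a Fatou component of $\hat P$ and its forward orbit is typically infinite. Already in degree $2$ the limits of the degenerations of Lemma \ref{lem:r2} are parabolic quadratic polynomials whose critical point lies in the parabolic basin with infinite forward orbit; the same happens for critical points falling into the attracting basin when $\delta(\p)\geq 2$. So the intermediate statement you rely on fails in the very examples the proposition is about, and your argument never establishes the statement that is actually needed, namely that a critical point which ends up in $J(\hat P)$ has finite orbit.

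The correct argument is exactly the dichotomy your step two almost contains, but applied along the critical orbits rather than at the periodic vertices of $\mathcal{T}$: for each critical point set $x_n = P_n^{l_i}(c_{i,n})$, so that $d_{U_n}(x_n, P_n^{q_i}(x_n))\leq K$ in the bounded Fatou component $U_n$, and pass to a subsequence with $P_n\to\hat P$, $c_{i,n}\to c_\infty$, $x_n\to x_\infty$. Either $x_\infty$ is fixed by $\hat P^{q_i}$, in which case $c_\infty$ is pre-periodic; or it is not, in which case Lemma \ref{lem:key} produces a Carath\'eodory limit $(U_\infty,x_\infty)$ of $(U_n,x_n)$ that is invariant with normal iterates, so $x_\infty$, hence $c_\infty$, lies in the Fatou set and no finiteness of its orbit is required. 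Running this over all $d-1$ critical points gives geometric finiteness directly. Note also that the global concerns you single out as the ``main obstacle''---patching the local Carath\'eodory limits into a coherent description of $\hat P$ and matching parabolic multiplicities against Proposition \ref{prop:al}---are not needed for this proposition at all (they belong to the later analysis of $\hat P$'s Fatou components and laminations); what is missing is only the pointwise dichotomy above.
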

\begin{proof}
Let $\tilde c_n$ be a critical point of $f_n$ and let $\tilde x_n$ be an iterate of $\tilde c_n$ with 
$$
d_{\D}(\tilde x_n, f_n^{q}(\tilde x_n)) \leq K.
$$
Let $x_n$ and $c_n$ be the corresponding points for $P_n$.
After passing to a subsequence, we assume $P_n \to P$, $x_n \to x_\infty \in \C$ and $c_n \to c_\infty \in \C$.

Let $U_n$ be the bounded Fatou component of $P_n$.
If $x_\infty$ is fixed by $P^q$, then $c$ is pre-periodic.
If $x_\infty$ is not fixed by $P^q$, by Lemma, \ref{lem:key}, after passing to a subsequence, the pointed disk $P_n: (U_n, x_n) \longrightarrow (U_n, P^q_n(x_n))$ converges to $P:(U_\infty, x_\infty)\longrightarrow (U_\infty, P^q(x_\infty))$.
So $U_\infty$ is contained in the Fatou set.
Hence $c_\infty$ is in the Fatou set.

Since any critical point of $P$ is approximated by critical points of $P_n$, we conclude that $P$ is geometrically finite.
\end{proof}

\subsection*{Construction of geometrically finite polynomials}
Let $(H, \p)$ be a marked simplicial pointed Hubbard tree and $f: (\mathcal{T}, \p) \longrightarrow (\mathcal{T}, \p)$ be the angled tree map after the admissible split modification.
Let $\mathcal{F}$ be a normalized mapping scheme on $\mathcal{S}_\p$ if $\delta(\p) \geq 2$.
Let $f_n \in \BP_d$ be as in Proposition \ref{prop:rt} and $P_n = f_n \sqcup z^d$ be the corresponding polynomials.
By Proposition \ref{prop:gfs}, after passing to a subsequence, $P_n$ converges to a geometrically finite polynomial $\hat P$.
Denote by $U_n$ the bounded Fatou component of $P_n$.
Using the conjugacy between $\D$ and $U_n$, the quasi-invariant tree for $f_n$ corresponds to a quasi-invariant tree for $P_n$ in $U_n$. 
Abusing the notations, we denote this quasi-invariant tree for $P_n$ as $
\T_n \subseteq U_n$.

We shall now prove that $\hat P$ has the desired property in Proposition \ref{prop:rsh}.
Recall a vertex $v\in \mathcal{V}$ is said to be a {\em Fatou point} if $v$ is eventually mapped to a critical periodic orbit.
It is called a {\em Julia point} otherwise.

\begin{lem}\label{lem:cfm}
Let $v \in \mathcal{V}$ be a periodic Fatou point of period $q$ and $v_n$ be the corresponding point for $P_n$. After passing to a subsequence, $(U_n, v_n)$ converges in Carath\'eodory topology to $(U_{v,\infty}, v_\infty)$ and 
$$
\hat P^q: (U_{v,\infty}, v_\infty) \longrightarrow (U_{v,\infty}, \hat P^q(v_\infty))
$$
is conformally conjugate to the first return rescaling limit
$$
F^q_v: (\D_v, 0) \longrightarrow (\D_v, F^q_v(0)).
$$
\end{lem}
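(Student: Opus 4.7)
The plan is to transfer the dynamics on $\D$ to the Fatou components via the uniformizer $\psi_n\colon (\D,0)\to (U_n,\p_n)$ conjugating $f_n|_{\D}$ to $P_n|_{U_n}$, and then to extract a Carath\'eodory limit together with the conformal conjugacy through a normal family argument applied to the rescaled uniformizers $\Psi_n:=\psi_n\circ M_{v,n}\colon (\D,0)\to (U_n,v_n)$.

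I would first combine $\psi_n\circ f_n^q=P_n^q\circ\psi_n$ with the algebraic convergence $G_n:=M_{v,n}^{-1}\circ f_n^q\circ M_{v,n}\to F^q_v$ to obtain the conjugacy identity
\begin{equation}\label{eq:lemcfm-conj}
P_n^q\circ\Psi_n \;=\; \Psi_n\circ G_n.
\end{equation}
Theorem \ref{thm:qit} gives $d_{\Hyp^2}(\phi_n(v),f_n^q(\phi_n(v)))\le K$, which transfers via the hyperbolic isometry $\psi_n$ to $d_{U_n}(v_n,P_n^q(v_n))\le K$. Since $P_n\to\hat P$ in $\MP_d$, the filled Julia sets lie in a common compact set of $\C$, so $|\Psi_n|\le R$ uniformly. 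After passing to a subsequence, $v_n\to v_\infty\in\C$ and $\Psi_n\to\Psi\colon\D\to\C$ locally uniformly by Montel; by Hurwitz, $\Psi$ is either univalent or constant. In the univalent case the Carath\'eodory kernel theorem yields $(U_n,v_n)\to(U_{v,\infty},v_\infty):=(\Psi(\D),\Psi(0))$, and passing \eqref{eq:lemcfm-conj} to the limit on compact subsets of $\D$ (on which $G_n\to F^q_v$ uniformly by Lemma \ref{lem:ac}) gives $\hat P^q\circ\Psi=\Psi\circ F^q_v$, so $\hat P^q|_{U_{v,\infty}}=\Psi\circ F^q_v\circ\Psi^{-1}$ is the required conjugacy.

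The hard part will be ruling out $\Psi\equiv v_\infty$ constant. I would argue by contradiction using three ingredients. The classical inequalities $|\Psi_n'(0)|/4\le\mathrm{dist}(v_n,\partial U_n)\le|\Psi_n'(0)|$ together with $\Psi_n'(0)\to 0$ force $\mathrm{dist}(v_n,\partial U_n)\to 0$; since $\partial U_n\subseteq J(P_n)$ and Julia sets are upper semi-continuous on $\MP_d$, this forces $v_\infty\in J(\hat P)$. Next, because $v$ is a periodic Fatou vertex the cumulative degree $\Delta(v)\ge 2$, so $F^q_v$ is proper of degree $\ge 2$ and admits a critical point $c\in\D$; continuity of critical points under algebraic convergence produces critical points $\tilde c_n\in\D$ of $f_n^q$ with $M_{v,n}^{-1}(\tilde c_n)\to c$, and $\psi_n(\tilde c_n)=\Psi_n(M_{v,n}^{-1}(\tilde c_n))$ is a critical point of $P_n^q$ in $U_n$ converging to $\Psi(c)=v_\infty$, whence $v_\infty$ is a critical point of $\hat P^q$. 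Finally, evaluating \eqref{eq:lemcfm-conj} at $0$ and passing to the limit gives $\hat P^q(v_\infty)=\Psi(F^q_v(0))=v_\infty$, so $v_\infty$ is a critical fixed point of $\hat P^q$, hence super-attracting and lying in the Fatou set---contradicting $v_\infty\in J(\hat P)$. Therefore $\Psi$ must be univalent and the lemma follows.
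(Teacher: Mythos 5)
Your overall route (rescaled uniformizers $\Psi_n=\psi_n\circ M_{v,n}$, Montel plus Hurwitz, the Carath\'eodory kernel theorem, and passing the conjugacy $P_n^q\circ\Psi_n=\Psi_n\circ G_n$ to the limit) is sound and genuinely different from the paper's, which instead applies Lemma \ref{lem:key} to a critical point of $P_n^q$ in the cluster of $v$ and treats the remaining case directly. However, your argument ruling out a constant limit has a genuine gap: the claim that ``Julia sets are upper semi-continuous on $\MP_d$'' is false, and with it the inference $\operatorname{dist}(v_n,\partial U_n)\to 0\Rightarrow v_\infty\in J(\hat P)$ collapses. What is true is that the \emph{filled} Julia set is upper semicontinuous and the Julia set is only \emph{lower} semicontinuous; limits of points of $J(P_n)$ can perfectly well lie in the Fatou set of $\hat P$ (parabolic implosion is the standard example, e.g. $z^2+1/4+\epsilon$), and this failure is endemic to the present setting: the limit $\hat P$ has parabolic basins, and the paper itself stresses that $J(P_n)$ does \textbf{not} converge in the Hausdorff topology to $J(\hat P)$, the Hausdorff limit containing extra decorations inside the Fatou set of $\hat P$. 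So your final contradiction (``superattracting, hence in the Fatou set, contradicting $v_\infty\in J(\hat P)$'') is not established as written.

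The gap is repairable, and the repair is exactly the paper's second case. Having shown (correctly) that a constant limit forces $v_\infty$ to be a fixed critical point of $\hat P^q$, hence superattracting, choose an open set $U\ni v_\infty$ with $\hat P^q(U)$ compactly contained in $U$; since $P_n\to\hat P$ uniformly, $P_n^q(U)\Subset U$ for large $n$, so the iterates of $P_n^q$ are normal on $U$ and $U$ lies in the Fatou set of $P_n$. As $v_n\to v_\infty$ and $v_n\in U_n$, connectedness gives $U\subseteq U_n$ for large $n$, so $\operatorname{dist}(v_n,\partial U_n)$ is bounded below --- contradicting your Koebe estimate $\operatorname{dist}(v_n,\partial U_n)\le|\Psi_n'(0)|\to 0$. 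With that substitution (and no appeal to any semicontinuity of $J$), your proof closes and gives the lemma, including the conjugacy via $\Psi$ in the univalent case.
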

\begin{proof}
The proof is similar to Proposition \ref{prop:gfs}.
Let $\tilde c_n$ be a critical point of $f^q_n$ in the cluster associated to $v$.
Let $c_n$ be the corresponding critical point for $P_n$.
After passing to a subsequence, we assume $c_n \to c_\infty$ and $P_n \to \hat P$.

If $\hat P^q(c_\infty) \neq c_\infty$, since $d_{U_n}(c_n, v_n)$ is bounded, the claim on Carath\'eodory limit follows by Lemma \ref{lem:key}.
Since each $P^q_n$ is conformally conjugate on $U_n$ to $f^q_n$, the limit $\hat P^q$ on $U_{v,\infty}$ is conformally conjugate to $F^q_v$.

If $\hat P^q(c_\infty) = c_\infty$, then $c_\infty$ is a super-attracting fixed point for $\hat P^q$.
Thus there exists an open set $c_\infty \in U$ so that $\hat P^q(U)$ is compactly contained in $U$.
Since $P_n$ converges to $\hat P$, for sufficiently large $n$, the iterates of $P_n$ are normal on $U$.
Thus $U \subset U_n$ for sufficiently large $n$.
The rest of the argument is the same as the previous case.
\end{proof}

By pulling back, we immediately get
\begin{cor}\label{cor:cfm}
Let $v \in \mathcal{V}$ be a Fatou point. 
Then after passing to a subsequence, $(U_n, v_n)$ converges in Carath\'eodory topology to $(U_{v,\infty}, v_\infty)$ and
$$
\hat P: (U_{v,\infty}, v_\infty) \longrightarrow (U_{f(v),\infty}, \hat P(v_\infty))
$$
is conformally conjugate to the rescaling limit
$$
F_v: (\D_v, 0) \longrightarrow (\D_{f(v)}, F_v(0)).
$$
\end{cor}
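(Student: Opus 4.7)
The plan is to induct on the preperiod $l$ of $v$, with $l = 0$ giving the periodic case (handled by Lemma~\ref{lem:cfm}) and $l \geq 1$ requiring a pull-back argument. Throughout, write $\psi_n : \D \to U_n$ for the B\"ottcher-type conformal isomorphism conjugating $f_n$ with $P_n|_{U_n}$, and for each vertex $u \in \mathcal{V}$ set
\[
\tau_{u,n} := \psi_n \circ M_{u,n} : (\D, 0) \to (U_n, u_n),
\]
a conformal isomorphism. Rewriting the rescaling-limit construction in these coordinates gives the semi-conjugacy
\[
P_n \circ \tau_{v,n} \;=\; \tau_{f(v),n} \circ \bigl( M_{f(v),n}^{-1} \circ f_n \circ M_{v,n} \bigr),
\]
in which the inner factor on the right converges algebraically to $F_v$.

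For the inductive step, assume the corollary holds for $f(v)$, so after a subsequence $\tau_{f(v),n}$ converges uniformly on compact subsets of $\D$ to a conformal isomorphism $\tau_{f(v)} : (\D, 0) \to (U_{f(v),\infty}, (f(v))_\infty)$, with $\hat P$ conjugate to $F_{f(v)}$. The base case $l = 0$ is obtained by applying Lemma~\ref{lem:cfm} to every vertex of the periodic cycle of $v$ and harvesting the same uniformizers $\tau_{u,n}$ from its proof. Combining compact convergence of $\tau_{f(v),n}$ and algebraic convergence of the rescaled maps yields
\[
P_n \circ \tau_{v,n} \;\longrightarrow\; \tau_{f(v)} \circ F_v
\]
uniformly on compact subsets of $\D$. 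Each $\tau_{v,n}$ takes values in the uniformly bounded filled Julia set of $P_n$, so $\{\tau_{v,n}\}$ is a normal family; extract a further subsequence along which $\tau_{v,n} \to \tau_v$ uniformly on compacta, and use $P_n \to \hat P$ uniformly on $\hat\C$ to pass to the limit and obtain
\[
\hat P \circ \tau_v \;=\; \tau_{f(v)} \circ F_v.
\]

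Since $F_v$ has positive degree and $\tau_{f(v)}$ is injective, the right-hand side is non-constant, so $\tau_v$ is non-constant; Hurwitz's theorem then promotes $\tau_v$ from a normal-family limit of univalent maps to a univalent map, that is, a conformal isomorphism onto its image $U_{v,\infty} := \tau_v(\D)$. Uniform convergence of the uniformizers $\tau_{v,n} \to \tau_v$ translates directly into Carath\'eodory convergence $(U_n, v_n) \to (U_{v,\infty}, v_\infty)$ for $v_\infty := \tau_v(0)$, and the functional equation identifies $\hat P : (U_{v,\infty}, v_\infty) \to (U_{f(v),\infty}, \hat P(v_\infty))$ with $F_v : (\D_v, 0) \to (\D_{f(v)}, F_v(0))$ via the pair $(\tau_v, \tau_{f(v)})$. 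The main obstacle is ruling out that $\tau_v$ is constant; this is precisely what forces the inductive hypothesis to carry full Carath\'eodory (not merely algebraic) data at $f(v)$, and is why Lemma~\ref{lem:cfm} rather than only Theorem~\ref{thm:qit} is needed at the base.
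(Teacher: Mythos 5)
Your argument is correct and is essentially the paper's intended ``pulling back'' of the periodic case: the paper's proof of this corollary is exactly the reduction to Lemma \ref{lem:cfm}, which you implement via the semiconjugacy $P_n\circ\tau_{v,n}=\tau_{f(v),n}\circ(M_{f(v),n}^{-1}\circ f_n\circ M_{v,n})$, normality of the uniformizers, Hurwitz, and the Carath\'eodory kernel theorem. The only point to keep in mind is that for periodic $v$ Lemma \ref{lem:cfm} literally gives the first-return conjugacy, so the one-step conjugacy along the cycle is also obtained from your functional-equation mechanism (with the convergence of $\tau_{u,n}$ at the cycle vertices supplied by the proof of Lemma \ref{lem:cfm}), which is what your base-case phrasing already does.
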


By Lemma \ref{lem:cfm} and Corollary \ref{cor:cfm}, if $v \in \mathcal{V}$ is a Fatou point, $U_{v,\infty}$ is contained in a Fatou component $\Omega_v$ of $\hat P$.

\begin{cor}\label{cor:afc}
Suppose $\delta(\p) \geq 2$. Then $U_{\p,\infty} = \Omega_\p$ and the Fatou component $\Omega_\p$ is attracting.
If $v\neq \p\in \mathcal{V}$ is a periodic Fatou point, then the Fatou component $\Omega_v$ is parabolic.
\end{cor}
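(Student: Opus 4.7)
The plan is to treat the two claims separately, both using Corollary \ref{cor:cfm} to identify $\hat P$ on $U_{v,\infty}$ with the rescaling limit $F_v$ on $\D_v$, and then comparing the limit dynamics with the global dynamics of $\hat P$.

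For the first claim, assume $\delta(\p)\geq 2$. By the normalized mapping scheme convention, $F_\p=\mathcal{F}_\p$ is fixed-point centered and proper of degree $\delta(\p)\geq 2$, so $F_\p(0)=0$ and Schwarz's lemma forces $|F_\p'(0)|<1$. Pulling this through the conformal conjugacy of Corollary \ref{cor:cfm}, $\p_\infty$ is an attracting fixed point of $\hat P$, so $\Omega_\p$ is attracting. To upgrade the inclusion $U_{\p,\infty}\subseteq\Omega_\p$ to equality, I would match degrees: the conjugacy gives $\deg(\hat P|_{U_{\p,\infty}})=\delta(\p)$, and the $M$-uni-critical conclusion of Proposition \ref{prop:rt} forces the $\delta(\p)-1$ critical points of $P_n$ in the cluster of $\p$ to accumulate on critical points of $\hat P$ inside $\Omega_\p$, so $\deg(\hat P|_{\Omega_\p})=\delta(\p)$ as well. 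A degree count then gives $\hat P^{-1}(U_{\p,\infty})\cap \Omega_\p=U_{\p,\infty}$, making $\Omega_\p\setminus U_{\p,\infty}$ forward-invariant under $\hat P$. But every orbit in $\Omega_\p$ is attracted to $\p_\infty\in U_{\p,\infty}$ and must eventually enter $U_{\p,\infty}$, forcing the complement to be empty.

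For the second claim, let $v\neq \p$ be a periodic Fatou vertex of period $q$. Proposition \ref{prop:gfs} shows $\hat P$ is geometrically finite, so (polynomials admit no Herman rings and geometrically finite maps admit no Siegel disks) $\Omega_v$ is either attracting, super-attracting, or parabolic. Suppose for contradiction it is (super-)attracting, with attracting periodic point $\hat v\in \Omega_v$ for $\hat P^q$. The multiplier of $\hat P^q$ at $\hat v$ lies strictly inside the unit disk, so the implicit function theorem yields for all sufficiently large $n$ an attracting periodic cycle of $P_n$ near the orbit of $\hat v$. This cycle must absorb at least one critical orbit of $P_n$. But $P_n\in \PH_d$ has $\{0\}$ as its unique finite attracting cycle, and it already absorbs every finite critical orbit; hence the perturbed cycle coincides with $\{0\}$, which forces $\hat v=\p_\infty$ and, by disjointness of Fatou components, $\Omega_v=\Omega_\p$.

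It remains to derive a contradiction with the tree data. Since $\Omega_v=\Omega_\p$, the point $v_\infty$ lies inside $\Omega_\p=U_{\p,\infty}$ (by the first claim), so both $v_\infty$ and $\p_\infty$ are interior points of $U_{\p,\infty}$. Carathéodory convergence of $(U_n,\p_n)$ to $(U_{\p,\infty},\p_\infty)$ then implies $d_{U_n}(v_n,\p_n)\to d_{U_{\p,\infty}}(v_\infty,\p_\infty)<\infty$. On the other hand, the conformal isomorphism used to transport $\mathcal{T}_n$ from $\D$ into $U_n$ is a hyperbolic isometry, and the degenerating tree conclusion of Theorem \ref{thm:qit} gives $d_{U_n}(v_n,\p_n)=d_{\Hyp^2}(\phi_n(v),\phi_n(\p))\to\infty$ because $v\neq\p$. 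This contradiction forces $\Omega_v$ to be parabolic. The main obstacle will be pinning down this final Carathéodory/hyperbolic-distance comparison precisely; a secondary subtlety is ensuring that the $M$-uni-critical statement of Proposition \ref{prop:rt} really places exactly $\delta(\p)-1$ critical points of $\hat P$ inside $\Omega_\p$ (with no critical escapes to $\partial \Omega_\p$), which should follow from applying Carathéodory convergence to each critical cluster.
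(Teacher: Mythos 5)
Your overall architecture (identify $\hat P$ on $U_{v,\infty}$ with the rescaling limit via Corollary \ref{cor:cfm}, then compare with the global dynamics) is the paper's, but the way you upgrade $U_{\p,\infty}\subseteq\Omega_\p$ to equality has a genuine gap. Your degree count needs $\deg(\hat P|_{\Omega_\p})=\delta(\p)$ \emph{exactly}; the accumulation of the $\delta(\p)-1$ critical points from the $\p$-cluster only gives $\deg(\hat P|_{\Omega_\p})\geq\delta(\p)$. To get equality you must know that no critical point of $\hat P$ arising from the other clusters lies in $\Omega_\p$, and that is not available at this stage: it is essentially the content of Proposition \ref{prop:dfc} (distinctness of the components $\Omega_v$), which is proved \emph{after} this corollary and whose proof settles the case $v=\p$ by citing this very corollary — so your route is circular. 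Your proposed remedy (Carath\'eodory convergence applied cluster by cluster) does not close this: it only places the other clusters' critical points inside their own limits $U_{w,\infty}$, and without already knowing $U_{\p,\infty}=\Omega_\p$ (or $\Omega_w\neq\Omega_\p$) nothing prevents those sets from sitting inside $\Omega_\p$. The paper avoids all critical-point accounting: $\hat P\colon U_{\p,\infty}\to U_{\p,\infty}$ is a \emph{proper} self-map (the Carath\'eodory limit of Lemma \ref{lem:cfm}, conjugate to $F_\p$), so it maps $\partial U_{\p,\infty}\cap\Omega_\p$ into itself; if this set were nonempty, the orbit of one of its points would never enter $U_{\p,\infty}$, contradicting that every orbit in $\Omega_\p$ converges to the attracting fixed point $\p_\infty\in U_{\p,\infty}$. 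Replacing your degree count by this invariance argument repairs the first claim.

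For the second claim, your persistence argument is exactly the paper's mechanism ($P_n\in\PH_d$ has the fixed point $0$ as its unique finite attracting cycle, so any attracting cycle of $\hat P$ must be $\{\lim \p_n\}$), and the concluding Carath\'eodory/hyperbolic-distance contradiction against the degenerating-tree property of Theorem \ref{thm:qit} is sound. But as written it only covers $\delta(\p)\geq 2$, since the final step invokes "the first claim", whereas the corollary's second sentence carries no such hypothesis. When $\delta(\p)=1$ you need the (easier) observation that the rescaling limit at $\p$ is a critical-point-free, hence degree-one, self-map of $\D$ fixing $0$, i.e.\ a rotation, so $\hat P'(\p_\infty)=\lim P_n'(0)$ has modulus one; thus $\hat P$ has no attracting cycle at all, and geometric finiteness (Proposition \ref{prop:gfs}) rules out Siegel disks, leaving $\Omega_v$ parabolic. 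This is the content of the paper's blanket statement that $\hat P\in\overline{\PH_d}$ has at most one attracting Fatou component, necessarily associated to $\p$.
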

\begin{proof}
Since $\delta(\p) \geq 2$, by Lemma \ref{lem:cfm}, $\Omega_\p$ is attracting.
Since $U_{\p, \infty}$ is invariant under $\hat P$ and contains the attracting fixed point, $U_{\p, \infty} = \Omega_\p$.
Since $\hat P \in \overline{\PH_d}$, there exists at most one attracting Fatou component, so $\Omega_v$ is not attracting as $v\neq \p$.
So $\Omega_v$ is parabolic by Proposition \ref{prop:gfs}.
\end{proof}

Recall $T$ is a {\em $\nu$ star-shaped tree} if $T$ is a union of the $\nu$ arcs $[c, x_i]$, $i=1,..., \nu$, glued at $c$.
We say $T$ is an {\em open $\nu$ star-shaped tree} if $T$ is a union of the $\nu$ arcs $[c, x_i)$, $i=1,..., \nu$, glued at $c$.

After passing to a subsequence, we let $\T_\infty$ be the Hausdorff limit of $\T_n$.
We show the following (see Figure \ref{fig:HD}):

\begin{lem}\label{lem:fr}
Let $v \in \mathcal{V}$ be a periodic Fatou point of period $q$. Then $\T_\infty \cap U_{v,\infty}$ is an open star-shaped tree where each limit point on $\partial U_{v, \infty}$ is pre-periodic.
Each tangent direction $a \in T_v\mathcal{T}$ gives one limit point $s_a \in \partial U_{v,\infty}$.
The limiting graph $\T_\infty$ gives an attracting direction for $s_a$ if $a$ corresponds to the direction of $\p$ and a repelling direction otherwise.
\end{lem}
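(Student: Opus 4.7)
The plan is to combine the Carath\'eodory conjugacy of Corollary \ref{cor:cfm} with the tangential marking $\xi_v : T_v \mathcal{T} \hookrightarrow \mathbb{S}^1_v$ developed in Section \ref{sec: gfb}. After passing to a subsequence, the Riemann maps $\psi_n \circ M_{v,n} : (\D_v, 0) \to (U_n, v_n)$ --- where $\psi_n : \D \to U_n$ conjugates $f_n$ to $P_n|_{U_n}$ --- converge compactly to a Riemann map $\psi_\infty : (\D_v, 0) \to (U_{v,\infty}, v_\infty)$ intertwining the first return rescaling limit $F^q_v$ with $\hat P^q|_{U_{v,\infty}}$. The strategy is to work entirely in the model disk $\D_v$, establish a star-shaped Hausdorff limit there, and then push it forward by $\psi_\infty$.

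For each vertex $w$ adjacent to $v$ with tangent $a \in T_v\mathcal{T}$, the edge $[\phi_n(v), \phi_n(w)]$ is a hyperbolic geodesic whose preimage in $\D_v$ is the segment from $0$ to $M_{v,n}^{-1}(\psi_n^{-1}(\phi_n(w)))$; since this endpoint converges to $\xi_v(a) \in \mathbb{S}^1_v$ by the very definition of $\xi_v$, the segment converges in Hausdorff topology on $\overline{\D}_v$ to the closed geodesic ray $[0, \xi_v(a)]$. Every other vertex of $\mathcal{T}_n$ has hyperbolic distance going to infinity from $\phi_n(v)$ and thus escapes every compact subset of $\D_v$ in $v$-coordinates, and all vertices lying in the same component of $\mathcal{T} \setminus \{v\}$ as $w$ share the common $v$-limit $\xi_v(a)$, so the entire tail of that branch collapses to the single point $\xi_v(a)$. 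The uniform lower bound on angles at branch points of the quasi-invariant tree (cf.\ the proof of Lemma \ref{lem:qfiv}) also prevents edges not incident to $\phi_n(v)$ from entering any compact set of $\D_v$ in the limit. Hence, on every compact $K \subseteq \D_v$,
\[
M_{v,n}^{-1}\bigl(\psi_n^{-1}(\mathcal{T}_n)\bigr) \cap K \;\longrightarrow\; \Bigl(\bigcup_{a \in T_v\mathcal{T}} [0, \xi_v(a))\Bigr) \cap K
\]
in Hausdorff topology, and pushing forward by $\psi_\infty$ identifies $\mathcal{T}_\infty \cap U_{v,\infty}$ with $\psi_\infty\bigl(\bigcup_a [0, \xi_v(a))\bigr)$, an open star-shaped tree centered at $v_\infty$.

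It then remains to identify each arm's endpoint $s_a \in \partial U_{v,\infty}$ and its dynamical character. By Lemma \ref{lem:ct}, $\xi_v(a)$ is a periodic point of $F^q_v$ on $\mathbb{S}^1_v$ (fixed after replacing $q$ by a suitable multiple); Julia--Carath\'eodory theory at this boundary periodic point, combined with local connectivity of $\partial U_{v,\infty}$ which holds because $\hat P$ is geometrically finite, shows the arm $\psi_\infty([0, \xi_v(a)))$ lands at a well-defined point $s_a \in \partial U_{v,\infty}$ that is periodic under $\hat P^q$, hence pre-periodic under $\hat P$. The attracting/repelling dichotomy is then forced by the anchored convention of Definition \ref{defn:anchored}: when $v \neq \p$, the map $F^q_v$ is boundary-hyperbolic and its unique attracting fixed point $1 \in \mathbb{S}^1_v$ equals $\xi_v(a_0)$ for the direction $a_0$ toward $\p$, while every other $\xi_v(a)$ --- and every $\xi_v(a)$ in the hyperbolic case $v = \p$ --- is a repelling periodic point of $F^q_v$. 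The conjugacy $\psi_\infty$ transports the attracting petal at $1$ to an attracting petal of $\hat P^q$ at $s_{a_0}$ and each repelling direction at a repelling $\xi_v(a)$ to the repelling direction at $s_a$, yielding the stated dichotomy. The principal technical obstacle is precisely this Julia--Carath\'eodory landing step: upgrading the clean Hausdorff picture inside the model disk to a genuine topological landing on $\partial U_{v,\infty}$, and verifying in the boundary-hyperbolic case that each of the radial landings is realized as an actual point of $\partial U_{v,\infty}$ rather than merely a prime end.
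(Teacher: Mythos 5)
Your interior analysis (compact convergence in the $v$-coordinate, identification of $\T_\infty\cap K$ with the star $\bigcup_a[0,\xi_v(a))$ pushed forward by the limiting conjugacy) is consistent with what Lemma \ref{lem:cfm} gives and matches the first line of the paper's argument. The genuine gap is exactly the step you flag as the ``principal technical obstacle'' and then leave unresolved: the boundary behavior. Your route through Julia--Carath\'eodory landing plus ``local connectivity of $\partial U_{v,\infty}$, which holds because $\hat P$ is geometrically finite'' does not work as stated. At this stage $U_{v,\infty}$ is only the Carath\'eodory limit of $(U_n,v_n)$; it is contained in a Fatou component $\Omega_v$ of $\hat P$ but is not yet known to equal it (indeed equality is only proved for $\p$ in Corollary \ref{cor:afc}), so $\partial U_{v,\infty}$ need not lie in $J(\hat P)$ and can contain Fatou points. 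Local connectivity of the Julia set of a geometrically finite map says nothing about this boundary, and in any case the structural facts about $\hat P$ that would feed such an argument (Proposition \ref{prop:la}, Proposition \ref{prop: sl}) are proved downstream of this very lemma, so leaning on them risks circularity. Moreover, even granting landing of each arm, one must still rule out that the Hausdorff limit of the trees accumulates on $\partial U_{v,\infty}$ at points other than the arm endpoints, which compact convergence alone does not control.

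The paper's proof closes this gap by a short hyperbolic-geometry argument that bypasses landing theory entirely: any boundary accumulation point $x$ of $\T_\infty\cap U_{v,\infty}$ is a limit of points $x_n\in\T_n$ which, by the quasi-invariance of the tree (Theorem \ref{thm:qit}/Proposition \ref{prop:qi}), satisfy $d_{U_n}(x_n,P_n^k(x_n))\le K$ (or the pre-fixed analogue) for some $k\mid q$; since $x_n\to\partial U_{v,\infty}$, Lemma \ref{lem:hme} forces the hyperbolic density $\rho_{U_n}(x_n)$ to blow up, so the Euclidean displacement $d_{\R^2}(x_n,P_n^k(x_n))\to 0$ and $x$ is fixed (resp.\ pre-fixed) by $\hat P^k$. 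Discreteness of the fixed-point set of $\hat P^k$, together with connectedness of each arm's accumulation set, then gives a unique limit point $s_a$ per tangent direction, and the attracting/repelling dichotomy is read off from $F_v^q$ on $\D_v$. One further caution about your last step: asserting that $F_v^q$ is boundary-hyperbolic whenever $v\neq\p$ overstates Lemma \ref{lem:h}, which requires a periodic Fatou point on $[\p,v)$; when $\p$ is a non-critical (Julia) fixed point and $v$ is adjacent to it, the fixed point of $F_v^q$ at $1$ may be parabolic, and the ``attracting direction'' in the lemma is then the attracting parabolic direction rather than an attracting fixed point. If you replace the Julia--Carath\'eodory step with the quasi-invariance-plus-density argument above, your proof becomes correct and essentially coincides with the paper's.
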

\begin{proof}
We first note that the quasi-invariant trees converge to an open star-shaped tree in $\D_v$.
For any compact subset $K \subseteq U_{v,\infty}$, $\T_\infty \cap K$ is a star-shaped tree by Lemma \ref{lem:cfm}.
Let $x$ be a limit point of $\T_\infty \cap U_{v, \infty}$ in $\partial U_{v,\infty}$.
Then there exists a sequence $x_n \to x$ and $x_n \in \T_n \cap U_{v,\infty}$.

After passing to a subsequence, we may assume $x_n$ is either quasi-fixed or quasi pre-fixed by $\hat P^k$ for some large $k$ dividing $q$.
If $x_n$ is quasi-fixed, then $d_{U_n}(x_n, P^k_n(x_n)) \leq K$. 
Since $x_n \to \partial U_{v, \infty}$, the hyperbolic metric $\rho_{U_n}(x_n)|dz|$ goes to infinity by Lemma \ref{lem:hme}.
Thus the Euclidean distance $d_{\R^2}(x_n, P^k_n(x_n))$ goes to $0$ and we conclude that $x$ is fixed by $\hat P^k$.
If $x_n$ is quasi pre-fixed, then the same argument shows $x$ is prefixed.
Since the fixed points of $\hat P^k$ are discrete, each tangent gives one limit point.

The last statement follows from comparing with the dynamics of $F_v^q$ on $\D_v$.
\end{proof}

We remark that different tangent directions of $\mathcal{T}$ at $v$ may give the same limit point, thus the closure $\overline{\T_\infty \cap U_{v,\infty}}$ is a graph that is not necessarily a tree (see Figure \ref{fig:HD}).
The same proof also gives the following more general statement, where $x_n$ are allowed to be on the edges of $\T_n$:
\begin{lem}\label{lem:gfr}
Let $x_n \in \T_n \cap U_n$ be quasi periodic with period $q$ that converges to $x$. 
If $x$ is not fixed by $\hat P^q$, then $(U_n, x_n)$ converges to $(U, x)$ in Carath\'eodory topology and $\T_\infty \cap U$ is an open star-shaped tree whose limit points on $\partial U$ are pre-periodic.
Each tangent direction $a \in T_x\mathcal{T}$ gives one limit point $s_a$.
The limiting graph $\T_\infty$ gives an attracting direction for $s_a$ if $a$ corresponds to the direction of $\p$ and a repelling direction otherwise.
\end{lem}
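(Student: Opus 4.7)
The plan is to generalize the argument of Lemma \ref{lem:fr} by feeding the quasi-invariance of edges from Theorem \ref{thm:qit} into Lemma \ref{lem:key}. I would first transfer the hyperbolic bound across conjugacies: Theorem \ref{thm:qit} gives $d_{\Hyp^2}(f_n^q(\tilde x_n), \tilde x_n) \leq K$ for the points $\tilde x_n \in \T_n \subseteq \D$ corresponding to $x_n$, and since the conformal conjugacy between $\D$ and $U_n$ is an isometry for the respective hyperbolic metrics, this gives $d_{U_n}(x_n, \hat P_n^q(x_n)) \leq K$. Because $\hat P_n \to \hat P$ uniformly on $\hat\C$ and $x$ is assumed not fixed by $\hat P^q$, Lemma \ref{lem:key} yields Carath\'eodory convergence $(U_n, x_n) \to (U, x)$ together with locally uniform convergence $\hat P_n^q \to \hat P^q$ on $U$.

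Next, I would analyze the Hausdorff limit $\T_\infty \cap U$. For every compact $K \subset U$ we have $K \subset U_n$ for large $n$, and the combinatorial structure of $\T_n$ near $x_n$ is dictated by the tangent space $T_x\mathcal{T}$: each $a \in T_x\mathcal{T}$ is represented by a germ of edge issuing from $x_n$, and distinct germs diverge at a uniformly positive rate in the hyperbolic metric of $U_n$ by the construction of quasi-invariant trees in \S \ref{sec: gfb}. Exhausting $U$ by compacts, $\T_\infty \cap U$ is an open star-shaped tree with center $x$ and one arm per element of $T_x\mathcal{T}$; each arm terminates at a point $s_a \in \partial U$.

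Pre-periodicity of $s_a$ follows from the Euclidean-versus-hyperbolic argument used in Lemma \ref{lem:fr}: pick $y_n$ along the arm of $a$ with $y_n \to s_a$; Theorem \ref{thm:qit} makes $y_n$ quasi pre-fixed by some $\hat P_n^{q'}$ with $q\mid q'$, and $y_n \to \partial U$ forces $\rho_{U_n}(y_n) \to \infty$ by Lemma \ref{lem:hme}, so $|\hat P_n^{q'}(y_n) - y_n| \to 0$ in the Euclidean metric and $s_a$ is pre-fixed by $\hat P^{q'}$. The attracting-versus-repelling dichotomy then mirrors Lemma \ref{lem:fr}: under the anchored normalization, the direction pointing towards $\p$ corresponds to the attracting boundary fixed point of the rescaling picture, while any other direction corresponds to a repelling boundary fixed point. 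The main technical subtlety, since $x$ need not be a vertex of $\mathcal{T}$, is to ensure that distinct tangent directions produce distinct limit points $s_a$; I would handle this by combining the ribbon structure of $\T_n$ with Lemma \ref{lem:sep} applied to the limit dynamics to separate the candidate boundary fixed points, exactly as in Lemma \ref{lem:fr}.
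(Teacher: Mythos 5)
Your proposal is correct in substance and follows essentially the paper's route: the paper proves this lemma by the same argument as Lemma \ref{lem:fr}, namely transferring the quasi-invariance bound of Theorem \ref{thm:qit} through the conformal conjugacy to get $d_{U_n}(x_n, P_n^q(x_n)) \leq K$, invoking Lemma \ref{lem:key} (since $x$ is not fixed by $\hat P^q$) for the Carath\'eodory convergence, and using the hyperbolic-versus-Euclidean comparison of Lemma \ref{lem:hme} to show that every boundary accumulation point of an arm is pre-periodic, with the attracting/repelling dichotomy read off from the (here degree one) rescaling dynamics along the edge. One correction: the ``main technical subtlety'' you single out---showing that distinct tangent directions produce distinct limit points $s_a$---is not part of the claim and is false in general; the paper remarks immediately before this lemma that different tangent directions may give the same boundary point, so that $\overline{\T_\infty \cap U}$ need only be a graph. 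The word ``unique'' means that each arm accumulates at a single well-defined point of $\partial U$, which follows, exactly as in Lemma \ref{lem:fr}, from pre-periodicity together with the discreteness of the fixed points of $\hat P^k$; Lemma \ref{lem:sep}, which concerns uniform separation of preimages under $M$-uni-critical maps, is neither needed nor suited for this purpose.
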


\begin{prop}\label{prop:dfc}
Let $v\neq w \in \mathcal{V}$ be Fatou points. Then the Fatou components $\Omega_v \neq \Omega_w$.
\end{prop}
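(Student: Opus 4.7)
I would argue by contradiction: suppose $v\neq w$ are Fatou vertices with $\Omega_v=\Omega_w=:\Omega$, and derive a contradiction from the fact that orbits in $\Omega$ can only converge to its marked boundary fixed point at one characteristic rate.

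The first step is to establish that the Carath\'eodory limit disks are disjoint: $U_{v,\infty}\cap U_{w,\infty}=\emptyset$. This follows because $d_{U_n}(v_n,w_n)\to\infty$ by the construction of the quasi-invariant tree (distinct vertices of $\mathcal{T}$ separate hyperbolically, and $U_n$ is identified with $\D$ via the B\"ottcher map), while Carath\'eodory convergence of pointed hyperbolic disks implies locally uniform convergence of the intrinsic hyperbolic metrics. Any common point $z$ would yield finite limits of $d_{U_n}(v_n,z)$ and $d_{U_n}(z,w_n)$, contradicting divergence; in particular $v_\infty\neq w_\infty$.

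Next I would reduce to the case that $v,w$ are periodic of a common period $q$ by iterating $\hat P$ and applying Corollary \ref{cor:cfm}. If the orbits merge (so $v\neq w$ but $f(v)=f(w)$) one argues separately via Riemann--Hurwitz: both $U_{v,\infty}$ and $U_{w,\infty}$ would map properly onto the common $U_{f(v),\infty}$ with degrees $\delta(v)$ and $\delta(w)$, forcing $\deg(\hat P|_\Omega)\geq \delta(v)+\delta(w)$, an inequality which iterated backward along the tree ultimately descends to the truly periodic non-merging setting. By Corollary \ref{cor:afc}, at most one of the components $\Omega_v,\Omega_w$ can be attracting (namely $\Omega_\p$ when $\delta(\p)\geq 2$); assuming they are equal, both must therefore be parabolic, sharing a parabolic fixed point $\zeta\in\partial\Omega$ of $\hat P^q$.

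The contradiction now comes from comparing convergence rates to $\zeta$. Choose $z\in U_{v,\infty}$. On the one hand, $z$ lies in the parabolic basin of $\zeta$ in $\Omega$, so by the Fatou--Leau normal form there is a polynomial lower bound
\[
|\hat P^{qk}(z)-\zeta|\geq c\,k^{-1/m}
\]
for some $c>0$ and petal multiplicity $m\geq 1$. On the other hand, by Lemma \ref{lem:cfm} the restriction $\hat P^q|_{U_{v,\infty}}$ is conformally conjugate to the first-return rescaling limit $F^q_v:\D_v\to\D_v$, which is boundary-hyperbolic by Lemma \ref{lem:h} together with Corollary \ref{cor:afc}; its attracting boundary fixed point necessarily corresponds to $\zeta$, since $\zeta$ is the unique accumulation point of the $\hat P^q$-orbit of $z$ in $\overline{U_{v,\infty}}$. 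Boundary-hyperbolic dynamics contracts exponentially in the hyperbolic metric of $\D_v$, which transferred through the conjugacy $\psi_v:\D_v\to U_{v,\infty}$ yields Euclidean exponential decay
\[
|\hat P^{qk}(z)-\zeta|\leq C\mu^k
\]
for some $\mu\in(0,1)$. Polynomial and exponential decay cannot coexist for large $k$, giving the desired contradiction.

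\textbf{Main obstacle.} The delicate step is making the exponential upper bound extrinsic rather than intrinsic: the hyperbolic exponential contraction in $\D_v$ is immediate, but translating it into Euclidean decay in $\C$ requires $\psi_v$ to behave tamely at the attracting boundary fixed point. The cleanest way around this is to observe directly that sufficiently small hyperbolic neighborhoods of the attracting fixed point in $\D_v$ must map under $\psi_v$ into arbitrarily small Euclidean neighborhoods of $\zeta$ in $\Omega$ (both sequences accumulate only at $\zeta$), and this already converts exponential intrinsic decay into at-most-exponential Euclidean decay, which suffices to contradict the polynomial lower bound coming from the parabolic dynamics on $\Omega$.
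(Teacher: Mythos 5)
Your strategy has a genuine gap at its central step: the conversion of intrinsic exponential contraction into Euclidean exponential decay toward $\zeta$. The conjugacy of $\hat P^q|_{U_{v,\infty}}$ with a boundary-hyperbolic $F^q_v$ only gives exponential convergence to the Denjoy--Wolff boundary point \emph{in the uniformizing coordinate of} $U_{v,\infty}$; transporting this to a Euclidean rate requires a modulus-of-continuity estimate (H\"older-type) for the Riemann map of $U_{v,\infty}$ at that prime end, and this is exactly what fails here. The model to keep in mind is a half-strip $\{\Re w>0,\ 0<\Im w<\pi\}$ invariant under $w\mapsto w+1$ sitting inside a parabolic basin in Fatou coordinates: the intrinsic dynamics is boundary-hyperbolic (orbits move a definite hyperbolic distance per step, so $1-|g^k|\asymp\lambda^k$ in the disk uniformization), yet the Euclidean distance of the orbit to the parabolic point decays only polynomially, because the uniformizing map behaves logarithmically at that end. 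This is not a pathological counterexample but the actual geometry of the situation: by Corollary \ref{cor:afc}, every $\Omega_v$ with $v\neq\p$ is parabolic, while $U_{v,\infty}\subseteq\Omega_v$ carries boundary-hyperbolic (or parabolic) first-return dynamics; so even in the legitimate case $\Omega_v\neq\Omega_w$, intrinsic exponential contraction coexists with the Fatou--Leau polynomial rate. A correct version of your rate comparison would therefore "prove" that no $U_{v,\infty}$ can lie in a parabolic basin at all, contradicting Corollary \ref{cor:afc}; hence no contradiction can be extracted from rates, and the fix sketched in your last paragraph (small neighborhoods map to small neighborhoods) only gives convergence without any rate, which is not enough.

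A secondary problem is the premise itself: $F^q_v$ need not be boundary-hyperbolic. Lemma \ref{lem:h} requires a periodic Fatou point on $[\p,v)$; when $\delta(\p)=1$ and $v$ is a core vertex adjacent to $\p$, the first-return rescaling limit can be doubly parabolic (this already happens in the degree $2$ realization), so even the intrinsic exponential decay is unavailable in some of the cases you must treat. The paper's proof avoids rates entirely: it takes the Hausdorff limit $\T_\infty$ of the quasi-invariant trees and argues by separation. If $\Omega_v=\Omega_w$, the limits of the arcs $[\p_n,s_{v,n}]$, $[s_{v,n},s_{w,n}]$ produce a simple loop through the common parabolic point (or through a critical Fatou vertex), and Lemma \ref{lem:gfr} together with the attracting/repelling directions along $\T_\infty$ and the angle/marking data at the relevant vertex forces this loop either to enclose repelling periodic points or to exhibit repelling directions separating $U_{v,\infty}$ from $U_{w,\infty}$, contradicting their lying in one attracting petal. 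If you want to salvage your approach, you would need to replace the rate comparison by such a planarity/separation argument; the rate dichotomy alone cannot distinguish the two configurations.
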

\begin{proof}
Interchanging $v$ and $w$, we may assume $v$ is closer to $\p$ than $w$.
Assume $v, w$ are periodic, the case when $v, w$ are strictly pre-periodic is proved similarly.
After passing to an iterate, we assume $v$ and $w$ are fixed.
If $v = \p$, then the statement follows from Corollary \ref{cor:afc}.
Thus we assume $v, w\neq \p$.

Let $s_{v, \infty} \in \partial U_{v, \infty}$ and $s_{w, \infty} \in \partial U_{v, \infty}$ be the corresponding parabolic fixed points.
Let $E_v, E_w$ be the incident edges at $v$ and $w$ in the direction of $\p$.
Let $s_{v,n}\in E_{v,n} \subseteq \T_n$ (and $s_{w,n}$) be a sequence that converges to $s_{v, \infty}$ (and $s_{w, \infty}$ respectively).
Suppose for contradiction that $\Omega_v = \Omega_w$. Then $s_{v, \infty} = s_{w, \infty}$ and $U_v, U_w$ are in the same attracting petal.
We consider two cases.

\begin{figure}[ht]
  \centering
  \resizebox{1\linewidth}{!}{
    \def\svgwidth{\columnwidth}
\begingroup%
  \makeatletter%
  \providecommand\color[2][]{%
    \errmessage{(Inkscape) Color is used for the text in Inkscape, but the package 'color.sty' is not loaded}%
    \renewcommand\color[2][]{}%
  }%
  \providecommand\transparent[1]{%
    \errmessage{(Inkscape) Transparency is used (non-zero) for the text in Inkscape, but the package 'transparent.sty' is not loaded}%
    \renewcommand\transparent[1]{}%
  }%
  \providecommand\rotatebox[2]{#2}%
  \newcommand*\fsize{\dimexpr\f@size pt\relax}%
  \newcommand*\lineheight[1]{\fontsize{\fsize}{#1\fsize}\selectfont}%
  \ifx\svgwidth\undefined%
    \setlength{\unitlength}{841.88976378bp}%
    \ifx\svgscale\undefined%
      \relax%
    \else%
      \setlength{\unitlength}{\unitlength * \real{\svgscale}}%
    \fi%
  \else%
    \setlength{\unitlength}{\svgwidth}%
  \fi%
  \global\let\svgwidth\undefined%
  \global\let\svgscale\undefined%
  \makeatother%
  \begin{picture}(1,0.70707071)%
    \lineheight{1}%
    \setlength\tabcolsep{0pt}%
    \put(0.3309618,0.30327303){\color[rgb]{0,0,0}\makebox(0,0)[lt]{\lineheight{1.25}\smash{\begin{tabular}[t]{l}{\LARGE $s_{w,n}$}\end{tabular}}}}%
    \put(0.3720644,0.40441984){\color[rgb]{0,0,0}\makebox(0,0)[lt]{\lineheight{1.25}\smash{\begin{tabular}[t]{l}{\LARGE $s_{v,n}$}\end{tabular}}}}%
    \put(0,0){\includegraphics[width=\unitlength,page=1]{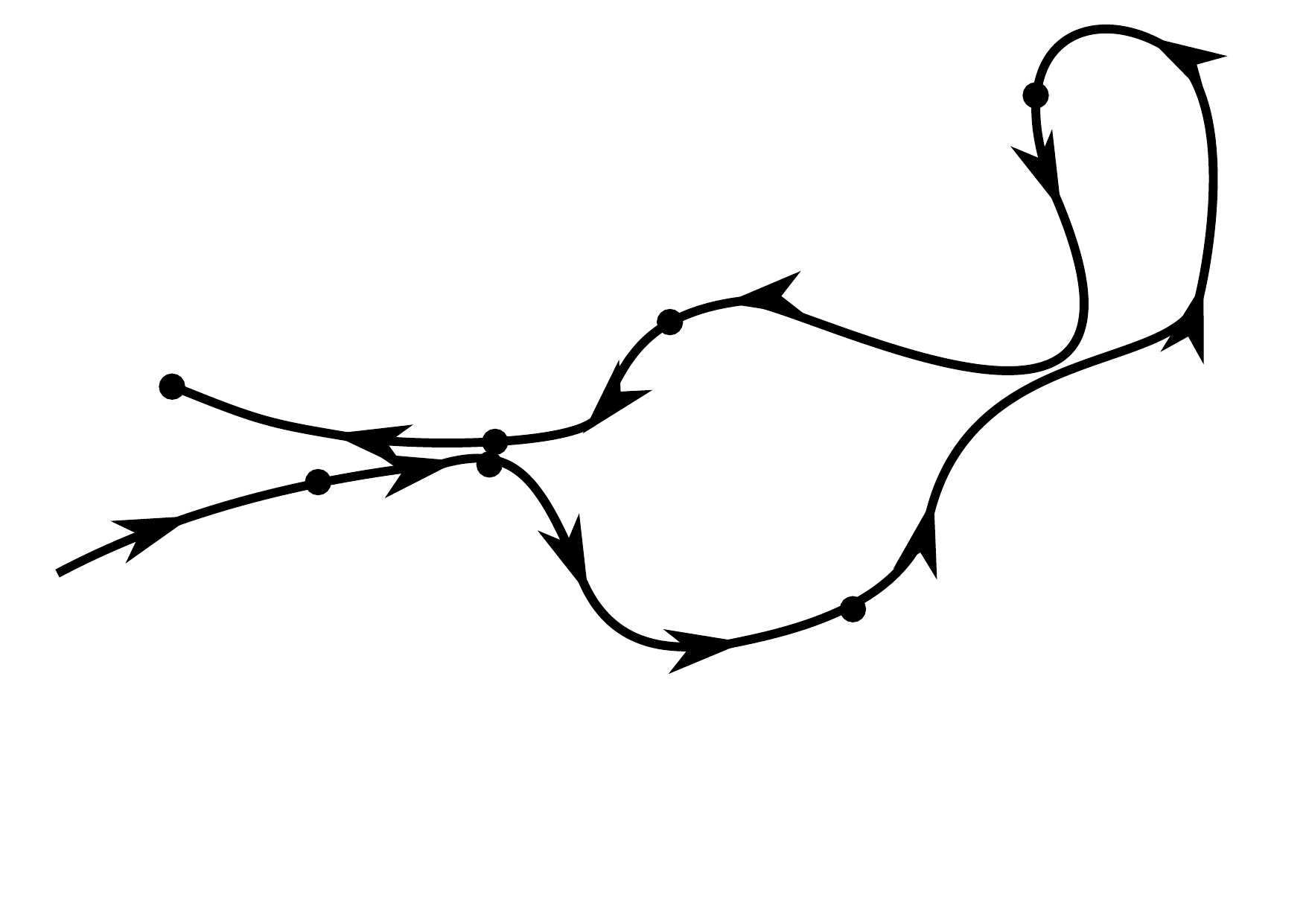}}%
    \put(0.1360856,0.43257519){\color[rgb]{0,0,0}\makebox(0,0)[lt]{\lineheight{1.25}\smash{\begin{tabular}[t]{l}{\LARGE $p_n$}\end{tabular}}}}%
    \put(0.50852216,0.4878811){\color[rgb]{0,0,0}\makebox(0,0)[lt]{\lineheight{1.25}\smash{\begin{tabular}[t]{l}{\LARGE $v_n$}\end{tabular}}}}%
    \put(0.24721969,0.29731568){\color[rgb]{0,0,0}\makebox(0,0)[lt]{\lineheight{1.25}\smash{\begin{tabular}[t]{l}{\LARGE $w_n$}\end{tabular}}}}%
    \put(0,0){\includegraphics[width=\unitlength,page=2]{TI.pdf}}%
  \end{picture}%
\endgroup%

    \def\svgwidth{\columnwidth}
\begingroup%
  \makeatletter%
  \providecommand\color[2][]{%
    \errmessage{(Inkscape) Color is used for the text in Inkscape, but the package 'color.sty' is not loaded}%
    \renewcommand\color[2][]{}%
  }%
  \providecommand\transparent[1]{%
    \errmessage{(Inkscape) Transparency is used (non-zero) for the text in Inkscape, but the package 'transparent.sty' is not loaded}%
    \renewcommand\transparent[1]{}%
  }%
  \providecommand\rotatebox[2]{#2}%
  \newcommand*\fsize{\dimexpr\f@size pt\relax}%
  \newcommand*\lineheight[1]{\fontsize{\fsize}{#1\fsize}\selectfont}%
  \ifx\svgwidth\undefined%
    \setlength{\unitlength}{841.88976378bp}%
    \ifx\svgscale\undefined%
      \relax%
    \else%
      \setlength{\unitlength}{\unitlength * \real{\svgscale}}%
    \fi%
  \else%
    \setlength{\unitlength}{\svgwidth}%
  \fi%
  \global\let\svgwidth\undefined%
  \global\let\svgscale\undefined%
  \makeatother%
  \begin{picture}(1,0.70707071)%
    \lineheight{1}%
    \setlength\tabcolsep{0pt}%
    \put(0,0){\includegraphics[width=\unitlength,page=1]{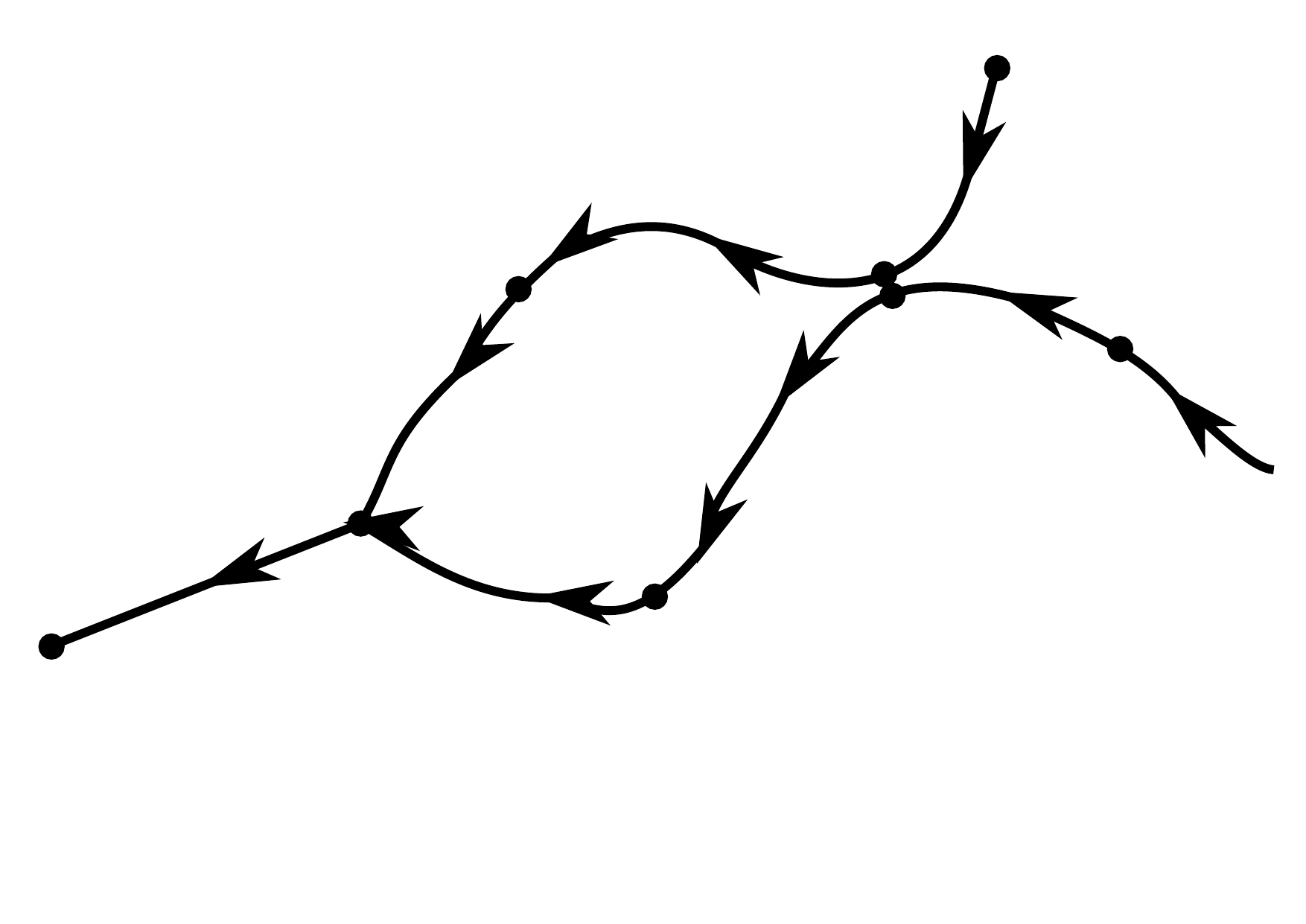}}%
    \put(0.04708794,0.18019479){\color[rgb]{0,0,0}\makebox(0,0)[lt]{\lineheight{1.25}\smash{\begin{tabular}[t]{l}{\LARGE$p_n$}\end{tabular}}}}%
    \put(0.2647106,0.25655368){\color[rgb]{0,0,0}\makebox(0,0)[lt]{\lineheight{1.25}\smash{\begin{tabular}[t]{l}{\LARGE$b_n$}\end{tabular}}}}%
    \put(0.6872294,0.44108746){\color[rgb]{0,0,0}\makebox(0,0)[lt]{\lineheight{1.25}\smash{\begin{tabular}[t]{l}{\LARGE$s_{v,n}$}\end{tabular}}}}%
    \put(0.61850654,0.52762745){\color[rgb]{0,0,0}\makebox(0,0)[lt]{\lineheight{1.25}\smash{\begin{tabular}[t]{l}{\LARGE$s_{w,n}$}\end{tabular}}}}%
    \put(0.77631473,0.67525453){\color[rgb]{0,0,0}\makebox(0,0)[lt]{\lineheight{1.25}\smash{\begin{tabular}[t]{l}{\LARGE$w_n$}\end{tabular}}}}%
    \put(0.8679453,0.44872335){\color[rgb]{0,0,0}\makebox(0,0)[lt]{\lineheight{1.25}\smash{\begin{tabular}[t]{l}{\LARGE$v_n$}\end{tabular}}}}%
  \end{picture}%
\endgroup%

    \def\svgwidth{\columnwidth}
\begingroup%
  \makeatletter%
  \providecommand\color[2][]{%
    \errmessage{(Inkscape) Color is used for the text in Inkscape, but the package 'color.sty' is not loaded}%
    \renewcommand\color[2][]{}%
  }%
  \providecommand\transparent[1]{%
    \errmessage{(Inkscape) Transparency is used (non-zero) for the text in Inkscape, but the package 'transparent.sty' is not loaded}%
    \renewcommand\transparent[1]{}%
  }%
  \providecommand\rotatebox[2]{#2}%
  \newcommand*\fsize{\dimexpr\f@size pt\relax}%
  \newcommand*\lineheight[1]{\fontsize{\fsize}{#1\fsize}\selectfont}%
  \ifx\svgwidth\undefined%
    \setlength{\unitlength}{841.88976378bp}%
    \ifx\svgscale\undefined%
      \relax%
    \else%
      \setlength{\unitlength}{\unitlength * \real{\svgscale}}%
    \fi%
  \else%
    \setlength{\unitlength}{\svgwidth}%
  \fi%
  \global\let\svgwidth\undefined%
  \global\let\svgscale\undefined%
  \makeatother%
  \begin{picture}(1,0.70707071)%
    \lineheight{1}%
    \setlength\tabcolsep{0pt}%
    \put(0,0){\includegraphics[width=\unitlength,page=1]{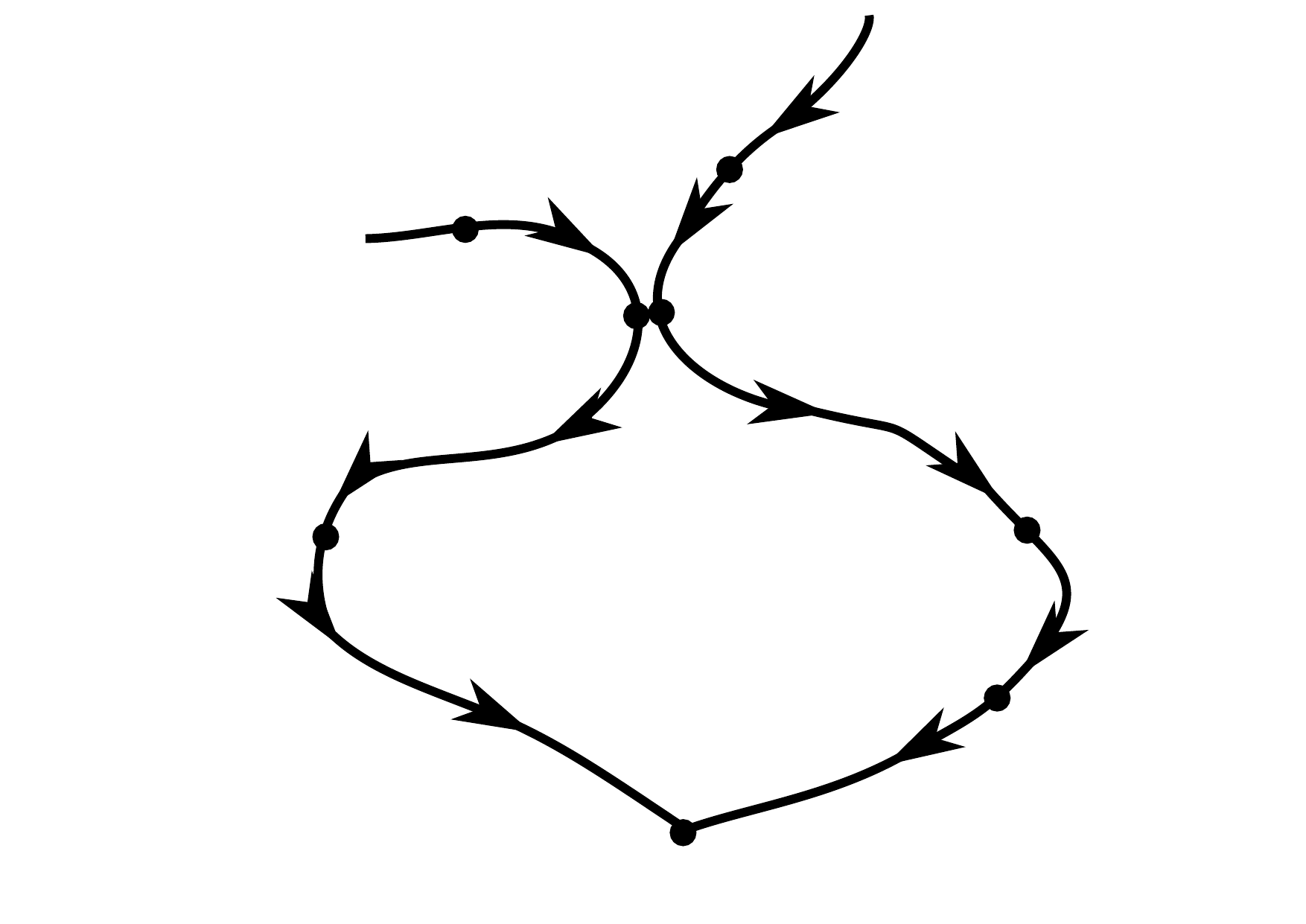}}%
    \put(0.5077862,0.10256336){\color[rgb]{0,0,0}\makebox(0,0)[lt]{\lineheight{1.25}\smash{\begin{tabular}[t]{l}{\LARGE$p_n$}\end{tabular}}}}%
    \put(0.51414942,0.48053952){\color[rgb]{0,0,0}\makebox(0,0)[lt]{\lineheight{1.25}\smash{\begin{tabular}[t]{l}{\LARGE$s_{v,n}$}\end{tabular}}}}%
    \put(0.57905435,0.56453422){\color[rgb]{0,0,0}\makebox(0,0)[lt]{\lineheight{1.25}\smash{\begin{tabular}[t]{l}{\LARGE$v_n$}\end{tabular}}}}%
    \put(0.39579323,0.47544892){\color[rgb]{0,0,0}\makebox(0,0)[lt]{\lineheight{1.25}\smash{\begin{tabular}[t]{l}{\LARGE$s_{w,n}$}\end{tabular}}}}%
    \put(0.35379588,0.55562571){\color[rgb]{0,0,0}\makebox(0,0)[lt]{\lineheight{1.25}\smash{\begin{tabular}[t]{l}{\LARGE$w_n$}\end{tabular}}}}%
    \put(0,0){\includegraphics[width=\unitlength,page=2]{TI3.pdf}}%
  \end{picture}%
\endgroup%

  }
  \caption{A schematic diagram for the limit of quasi-invariant trees $\T_n$. The arrows represent the dynamics by $P_n$: the points on $\T_n$ are moving towards the fixed point $\p_n$.}
  \label{fig:TI}
\end{figure}

Case (1): $v, w$ are in the same component of $\mathcal{T}-\{\p\}$. 
Let $b$ be the furthest point to $\p$ on $[\p, v] \cap [\p, w]$.

If $b = v$, i.e., $v\in [\p, w]$, we consider the oriented arcs $[\p_n, w_n] \subseteq \T_n$ (see Figure \ref{fig:TI} left).
Since $s_{v, \infty} = s_{w, \infty}$, the limit of oriented arcs $[s_{v,n}, s_{w,n}] \subseteq [\p_n, w_n]$ contains a simple oriented loop $\gamma$ containing $s_{v, \infty}$.
By Lemma \ref{lem:gfr}, $\T_\infty$ is contained in the bounded Fatou sets of $\hat P$ except at finitely many pre-periodic points.
Since $\gamma$ is simple, $\gamma = \cup_{j=1}^k\gamma_j$ is a finite union of closed smooth arcs whose interior of $\gamma_j$ is contained in $\Omega_v$.
Since $[\p, w]$ is fixed, $\partial \gamma_j$ are fixed points on $\partial \Omega_v$.
Since $P_n$ sends points on $[\p_n, w_n]$ towards $\p_n$, $\gamma_j$ gives a repelling direction for one of the endpoints and an attracting direction for the other.
Therefore $k=1$ and $\gamma$ is a loop in $\Omega_v \cup \{s_{v, \infty}\}$.

The loop $\gamma$ does not bound $w_\infty$ as $\gamma\cap U_{w, \infty} = \emptyset$ and $\partial U_{w, \infty}$ intersects the Julia set.
Similarly, $\gamma$ does not bound $\p_\infty = \lim p_n$.
Since $[\p_n, w_n]$ is an oriented arc, its limit $S$ containing $\gamma$ does not have a transverse intersection.
If $v \notin \mathcal{T}^C$, $\p_\infty \neq s_{v, \infty}$. 
Thus the limits of $[\p_n, s_{v,n}]$ and $[s_{v,n}, s_{w,n}]$ are non-trivial and give two repelling directions separating $U_{v, \infty}$ and $U_{w, \infty}$, so they are not in the same attracting basin, which is a contradiction.
Otherwise, $\alpha_v(T_v\mathcal{T}) \subseteq \mathbb{S}^1$.
Thus, the limit of the arc for $[\p, w]$ in $\D_v$ separates periodic points on $\mathbb{S}^1_v$, so $\gamma$ bounds repelling periodic points, which is a contradiction.

If $b \neq v$, since $\mathcal{T}$ is admissible, $b$ is a critical fixed point (see Figure \ref{fig:TI} middle).
The limit of $[b_n, s_{v,n}] \cup [b_n, s_{w,n}]$ contains a simple loop $\gamma$ containing $b_\infty$.
A similar argument as above shows that $\gamma$ is contained in $\Omega_b$ except at one point.
Since the limit of the arc for $[v,w]$ in $\D_b$ separates periodic points on $\mathbb{S}^1_b$, $\gamma$ bounds repelling periodic points, which is a contradiction.

Case (2): $v, w$ are in different components of $\mathcal{T}-\{\p\}$. 
If $v, w \notin \mathcal{T}^C$, then the proof is the same as above by considering the limit $[\p_n, s_{v,n}] \cup [\p_n, s_{w,n}]$ (see Figure \ref{fig:TI} right).
Otherwise, label the adjacent vertices to $\p$ by $v^1,..., v^k$ counterclockwise.
Let $t^i \in \mathbb{S}^1$ be the unique fixed point of $m_d$ landing at the right side of the oriented arc $[v^i,\p] \cup [\p, v^{i+1}]$.
If $t^i$ does not land at $v^i$ (or $v^{i+1}$), then the first return rescaling limit $F_{v^i}$ (or $F_{v^{i+1}}$) has a repelling direction in the clockwise (or counterclockwise) direction from the fixed point $0 \in \mathbb{S}^1_{v^i}$ (or $\mathbb{S}^1_{v^{i+1}}$), giving a repelling direction that separates $U_{v_i, \infty}$ and $U_{v_{i+1}, \infty}$.
Therefore, $\Omega_{v^i}$ are all different and the statement follows.
\end{proof}

We prove the lamination of $\hat P$ gives the desired equivalence relation.
\begin{prop}\label{prop:la}
Let $a\in H$ be a Julia point. Then there exists a corresponding pre-periodic point $\hat a \in J(\hat P)$ such that an external angle lands at $a$ if and only if it lands at $\hat a$.
\end{prop}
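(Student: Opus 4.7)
The plan is to factor through the admissible splitting $(\mathcal{T}, \p)$ and use the convergence of the realizing sequence $P_n \to \hat P$ from Proposition \ref{prop:rsh}. By Proposition \ref{prop:lam}, the equivalence relation on $\mathbb{S}^1$ generated by $\mathcal{L}_H$ coincides with the one generated by $\mathcal{L}_\mathcal{T}$, so it suffices to match the $\mathcal{L}_\mathcal{T}$-equivalence classes of external angles with the landing classes at points of $J(\hat P)$. I will first construct $\hat a$: for a Julia vertex $v \in \mathcal{V}$ of $\mathcal{T}$ with period $q$, quasi-invariance gives $d_{U_n}(\phi_n(v), P_n^q(\phi_n(v))) \leq K$, and because $v$ is Julia its approximating orbit escapes to $\partial U_n$, so the Euclidean distance $|\phi_n(v) - P_n^q(\phi_n(v))|$ tends to zero; by uniform convergence $P_n \to \hat P$, any subsequential Euclidean limit $\hat v$ is fixed by $\hat P^q$ and lies in $J(\hat P)$. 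For a Julia point $a \in H$ preserved by the splitting I set $\hat a := \hat{v_a}$, and for a periodic Julia cut point $a \in H$ removed by the splitting I set $\hat a$ to be the parabolic periodic point of $\hat P$ common to the boundaries $\partial\Omega_{a_0}, \dots, \partial\Omega_{a_m}$ of the adjacent Fatou components (uniqueness of which follows from Proposition \ref{prop:bp}). Aperiodic vertices are handled by pull-back.

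For the forward direction, let $t \in \mathbb{S}^1$ land at $a$ in the Hubbard tree sense; then $t$ is linked to a defining angle of $v_a$ (or of one of the split vertices near $a$) through a finite chain of leaves of $\mathcal{L}_\mathcal{T}$. Each leaf $\{t^+, t^-\}$ arises from an edge $E = [v, w]$ of some pull-back $\mathcal{T}^k$, and by construction of the dual lamination the corresponding landing points on $J(P_n)$ lie within bounded hyperbolic distance of $\phi_n(v)$ and $\phi_n(w)$; the edge $\phi_n(E)$ shrinks in Euclidean distance, so both endpoints converge to a common point $\hat v = \hat w$ of $J(\hat P)$. Combining the stability of landing rays at repelling periodic points (Lemma B.1 in \cite{GM93}) with its parabolic analogue, both external rays at angles $t^\pm$ for $\hat P$ land at this common limit, and iterating along the chain shows that $t$ lands at $\hat a$. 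For the converse, assume $t$ lands at $\hat a$; by density of pre-periodic angles and Proposition \ref{prop:al} (which accounts for the finitely many cycles not landing on $\mathcal{V}$), it suffices to treat pre-periodic $t$. If $t$ landed at a vertex $v'$ of $\mathcal{T}^\infty$ not equivalent to $v_a$, then $v'$ and $v_a$ would produce two distinct periodic orbits merging to $\hat a$ under the degeneration, violating the valence count for a repelling or parabolic periodic point of $\hat P$ guaranteed by Proposition \ref{prop:bp}.

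The principal obstacle lies in the middle step: controlling external rays through the degeneration, especially near the parabolic orbits arising from the split periodic Julia cut points, where multipliers of $P_n$ tend to roots of unity and naive stability arguments break down. The admissible splitting is precisely designed to record this bifurcation combinatorially: the vertices $a_0, \dots, a_m$ represent the repelling cycles of $P_n$ that coalesce, and the leaves connecting their laminations should give exactly the external angles landing at the parabolic $\hat a$. Making this rigorous requires careful Euclidean estimates on how the periodic edges $\phi_n(E)$ between split vertices collapse, together with the geometrically finite structure of $\hat P$ and the uni-critical control on rescaling limits from Proposition \ref{prop:rsh}, so that no spurious external ray landings are produced in the limit. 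I expect this compatibility between admissible splittings and parabolic bifurcations to be the conceptual core of the argument.
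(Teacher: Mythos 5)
Your proposal sets up the right objects, but the decisive step is missing, and you say so yourself: the passage from quasi-invariant-tree combinatorics to actual ray landing for $\hat P$ at the parabolic points created by the splitting is exactly what a proof must supply, and your appeal to ``stability of landing rays \ldots with its parabolic analogue'' does not supply it. Lemma B.1 of \cite{GM93} controls rays landing at repelling points that persist; at the points in question the multipliers of the approximating repelling cycles tend to roots of unity, landing points can jump in the limit, and there is no general ``parabolic analogue'' to invoke. Your construction of $\hat a$ in the cut-point case has the same problem in disguise: you define $\hat a$ as ``the parabolic periodic point common to $\partial\Omega_{a_0},\dots,\partial\Omega_{a_m}$'' and cite Proposition \ref{prop:bp} for uniqueness, but the existence of a single common boundary point --- i.e.\ that the fixed points $\hat s_i\in\partial U_{a_i,\infty}$ in the direction of $a_0$ all coincide --- is precisely the statement to be proved, and Proposition \ref{prop:bp} cannot be applied until one already knows the landing pattern of rays for $\hat P$. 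So the argument is circular at its core, and the forward and converse directions both rest on the unproven middle step.

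For comparison, the paper closes this gap without any parabolic ray-stability statement. Using Proposition \ref{prop:lam} it chooses periodic angles $A_k^{i,\pm}$ approximating the target angles $A_i$ from either side whose limits $s_k^{i,\pm}=\lim_{a_i}\eta_n(A_k^{i,\pm})\in\mathbb{S}^1_{a_i}$ are periodic points of the rescaling limits that are \emph{not holes}; the corresponding periodic points $\hat s_k^{i,\pm}$ of $\hat P$ are repelling and their rays do land. It then traps the candidate parabolic points inside a nested sequence of domains $R_k$ bounded by these external rays together with internal geodesics $[\hat s_k^{i,-},\hat s_k^{i,+}]$ in the Fatou components, so that for large $k$ the only angles that can land at the points $\hat s_i$ are the $A_j$; finally, Proposition \ref{prop:dfc} (the Fatou components $\Omega_{a_i}$ are pairwise distinct) yields $m-1$ attracting petals, and counting the multiplicity of the parabolic fixed point forces $\hat s_0=\dots=\hat s_{m-1}=:\hat a$ with landing set exactly $\{A_0,\dots,A_{m-1}\}$. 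This trapping-plus-petal-counting mechanism is the content your proposal defers to ``careful Euclidean estimates,'' so as written the proposal is a plan rather than a proof.
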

\begin{proof}
We assume $a$ is periodic, as the strictly pre-periodic case can be proved by pullback.
After passing to an iterate, we may assume $a$ is fixed.

If $a\neq \vp$, denote the adjacent vertices by $a_0,..., a_{m-1}$. 
We assume that the vertices are labeled counterclockwise and $a_0$ is the unique vertex that is closer to $\p$ than $a$.
Each $a_i$ is a periodic Fatou point as $H$ is simplicial.
There are exactly $m$ external rays landing at $a$.
Let $A_0,..., A_{m-1}$ be the angles landing at $a$, where $A_i$ corresponds to the access between the Fatou components associated to $a_i$ and $a_{i+1}$.
Let $S \subseteq \mathcal{T}$ be the convex hull of $a_0,..., a_m$ after the admissible splitting (see Figure \ref{fig:Par}).
Let $\hat s_i \in \partial U_{a_i, \infty}$ be the fixed point of $\hat P$ associated to the direction towards $a_0$.
Note $\hat s_i$ is a parabolic fixed point for $\hat P$.

\begin{figure}[ht]
  \centering
  \resizebox{0.8\linewidth}{!}{
    \def\svgwidth{\columnwidth}
    \import{./}{M2.pdf_tex}

    \def\svgwidth{\columnwidth}
    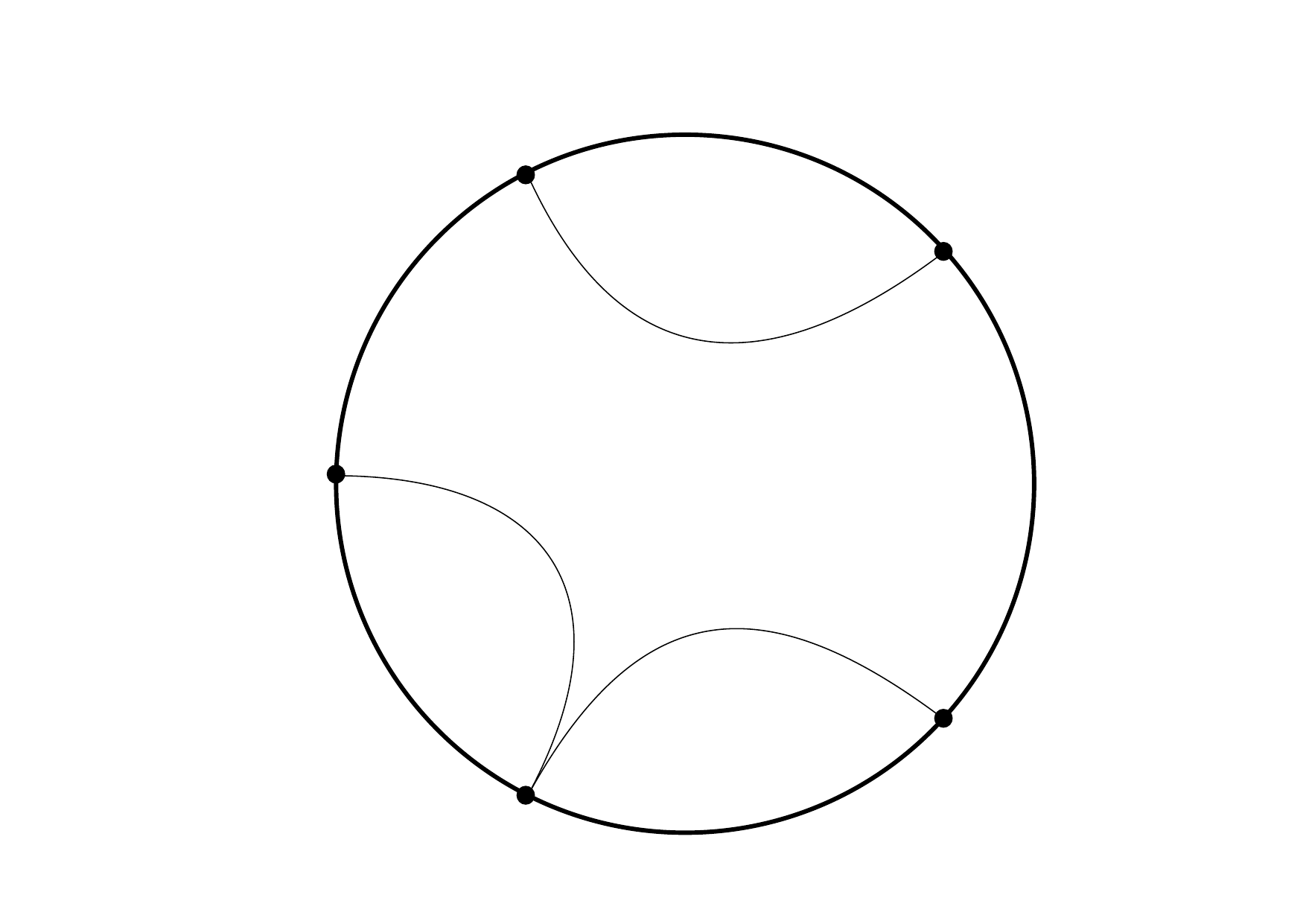

  }
  \caption{The subtree $S$ after the split modification on the left with its dual lamination on the right.}
  \label{fig:Par}
\end{figure}
By Proposition \ref{prop:lam}, we can find periodic points $s_{k}^{i,+} = \lim_{a_i}\eta_n (A^{i,+}_k), s_{k}^{i,-} =\lim_{a_i}\eta_n(A^{i, -}_k)\in \mathbb{S}^1_{a_i}$ that are not holes such that $A^{i,+}_k$ and $A^{i, -}_k$ converge to $A_i$ from below and $A_{i-1}$ from above in counterclockwise orientation (see Figure \ref{fig:Par}).
Let $\hat s_{k}^{i,\pm}$ be the corresponding periodic points of $\hat P$.
We let $R_k$ be the domain bounded by geodesics of $[\hat s_{k}^{i,-}, \hat s_{k}^{i,+}]$ in the Fatou component and external rays of angles $A^{i,\pm}_k$ for $i=0,..., {m-1}$.
Note that $R_{k+1} \subseteq R_k$ and the fixed points $\hat s_i$ $i=1,..., {m-1}$ are all contained in $R_n$.
For sufficiently large $n$, the external angles landing at $\{\hat s_i: i=1,...., {m-1}\}$ are exactly $A_j$, $j=0,..., {m-1}$.
Since $\Omega_{a_i} \neq \Omega_{a_j}$ by Proposition \ref{prop:dfc}, there are at least $m-1$ attracting petals
as the attracting fixed point of $P_n$ does not converge to $\hat s_i$. 
By counting the multiplicity of the parabolic fixed points, $\hat s_i$ must all be the same.
Let $\hat a$ be this parabolic fixed point.
Then the angles landing at it are exactly $A_i$, $i=0,..., {m-1}$.

If $a=\vp$ with valence $k$, then by Proposition \ref{prop:dfc} there are $k$ attracting petals at the parabolic fixed point $\hat a$.
The same nested domain argument shows the angles landing at $\hat a$ are exactly the $k$ angles landing at $a$.
\end{proof}

\begin{prop}\label{prop: sl}
The polynomials $\hat P$ and $P$ have topologically conjugate dynamics on their Julia sets.
\end{prop}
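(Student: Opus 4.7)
The plan is to show that both Julia sets are realized as the same topological quotient of $\mathbb{S}^1$ under the shift $m_d$, so that the semiconjugacies from $\mathbb{S}^1$ via external rays will produce the desired conjugacy. First I would invoke local connectivity: the post-critically finite polynomial $P$ has locally connected Julia set by the standard theory of Douady--Hubbard, and the geometrically finite polynomial $\hat P$ has locally connected Julia set by Ta\textsc{n} Lei--Yin. Consequently the Carath\'eodory loops $\gamma_P, \gamma_{\hat P}: \mathbb{S}^1 \rightarrow J(P), J(\hat P)$ extend continuously and surject onto the Julia sets, inducing equivalence relations $\sim_P$ and $\sim_{\hat P}$ on $\mathbb{S}^1$ with $J(P)=\mathbb{S}^1/\sim_P$ and $J(\hat P)=\mathbb{S}^1/\sim_{\hat P}$, each conjugating $m_d$ to the corresponding polynomial dynamics on its Julia set.

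Next I would prove $\sim_P\ =\ \sim_{\hat P}$. Proposition \ref{prop:la} already matches the landing patterns at every periodic Julia vertex of the Hubbard tree $H$ with those at the corresponding parabolic or repelling periodic points of $\hat P$. Pulling back by $P$ and $\hat P$ gives the identification of landing patterns at every Julia point in the grand orbit of the vertex set of $H$. Since this grand orbit is dense in $J(P)$ (and its lift is dense in $\mathbb{S}^1$), and since $\sim_P$ and $\sim_{\hat P}$ are closed equivalence relations on $\mathbb{S}^1$, the equality of their restrictions to this dense set extends to the whole circle. Concretely, if $\theta_1 \sim_{\hat P}\theta_2$, I approximate the common landing point of these rays in $J(\hat P)$ by grand-orbit preimages of vertices of $H$, use that the Julia ray landing is continuous in the rational landing angles, and deduce $\theta_1 \sim_P\theta_2$; the reverse inclusion is the same.

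Finally, the induced bijection $\phi: J(P)=\mathbb{S}^1/\sim_P \longrightarrow \mathbb{S}^1/\sim_{\hat P}=J(\hat P)$ is a homeomorphism between compact Hausdorff spaces (continuous bijection from a compact space) and semiconjugates $m_d$ to both $P$ and $\hat P$, hence conjugates $P|_{J(P)}$ to $\hat P|_{J(\hat P)}$. I expect the main obstacle to be the second step, specifically justifying that no \emph{spurious} identifications arise on the $\hat P$-side beyond those already present for $P$. This is where Proposition \ref{prop:dfc} is crucial: it ensures that distinct Fatou components of $P$ persist as distinct Fatou components of $\hat P$, so that two rays landing at distinct boundary points of different Fatou components of $P$ cannot collapse in the limit; combined with the admissibility of the splitting (which, via Proposition \ref{prop:lam}, guarantees the dual laminations of $\mathcal{T}$ and of $H$ generate the same equivalence relation) and the control of parabolic multiplicities used in the nested-domain argument of Proposition \ref{prop:la}, this rules out any extra collapsing and completes the identification $\sim_P\ =\ \sim_{\hat P}$.
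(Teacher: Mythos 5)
Your proposal is correct and follows essentially the same route as the paper: both match the Fatou components and the landing angles at critical and post-critical points via Propositions \ref{prop:dfc} and \ref{prop:la}, and then conclude that this landing data determines the topological dynamics on the Julia sets. The paper simply asserts this last combinatorial-rigidity step, whereas you unfold it through local connectivity of the two Julia sets and the identification of the induced circle laminations; this is an expansion of, not a departure from, the paper's argument.
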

\begin{proof}
By Proposition \ref{prop:dfc}, the critical and post-critical Fatou components of $\hat P$ are in correspondence with those of $P$.
By Proposition \ref{prop:la}, the external angles landing at roots of critical, post-critical Fatou components and critical, post-critical Julia points are the same for $\hat P$ and $P$.
Since these landing angles uniquely determine the dynamics of $\hat P$ and $P$ on their Julia sets, the proposition follows.
\end{proof}

\subsection*{$M$-uni-critical doubly parabolic Blaschke product}
Up to conjugation, there exists a unique uni-critical doubly parabolic Blaschke product for each degree: 
$$
f(z) = \frac{z^d+a}{1+az^d}, \, \text{ for } a=\frac{d-1}{d+1}.
$$
Recall that a degree $d$ proper holomorphic map $f: \D\longrightarrow \D$ is {\em $M$-uni-critical} if the critical points are contained in $B_{\Hyp^2}(0, M)$.
The following compactness result is useful and interesting.
\begin{prop}\label{lem:Muc}
The space of degree $d$ $M$-uni-critical doubly parabolic Blaschke products is bounded in the space of degree $d$ proper maps on $\D$.
\end{prop}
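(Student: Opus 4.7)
The plan is to argue by contradiction via the algebraic compactification. Suppose that some sequence $f_n$ of $M$-uni-critical doubly parabolic degree $d$ Blaschke products is unbounded in $\Rat_d$; after passing to a subsequence, $f_n \to f$ algebraically with $\deg \varphi_f = d' < d$. Writing $f_n(z) = e^{i\theta_n}\prod_{i=1}^d (z - a_{i,n})/(1 - \overline{a_{i,n}} z)$, the only way factors can cancel in the limit is when some zero $a_{i,n} \to \alpha \in \mathbb{S}^1$ and its conjugate pole $1/\overline{a_{i,n}}$ collides there, so $\mathcal{H}(f) \subseteq \mathbb{S}^1$. I also pass to a further subsequence so that the parabolic fixed points $p_n$ converge to some $p \in \mathbb{S}^1$.

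The principal case $d' \geq 1$ will be handled by Lemma \ref{lem:critc}: each hole must be the limit of a sequence of critical points of $f_n$, contradicting the confinement of critical points to the compact set $\overline{B_{\Hyp^2}(0, M)} \subset \D$, which is bounded away from $\mathbb{S}^1$.

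To rule out $d' = 0$ I must invoke the doubly parabolic hypothesis. In this case every zero escapes to $\mathbb{S}^1$ and $\varphi_f$ is a constant $c \in \mathbb{S}^1$. If $p \notin \mathcal{H}(f)$, then $f_n \to c$ uniformly on a neighborhood of $p$; since $f_n(p_n) = p_n \to p$ I obtain $c = p$, and uniform convergence to a constant forces $f_n'(p_n) \to 0$, contradicting $f_n'(p_n) = 1$. Otherwise $p \in \mathcal{H}(f)$; conjugating by a rotation around $0$ (which preserves $B_{\Hyp^2}(0, M)$) I set $p = 1$ and let $k \geq 1$ denote the number of zeros approaching $1$. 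For $k \geq 2$, I plan to show by a leading-order expansion of the critical-point equation $\sum_i (1 - |a_{i,n}|^2)/((z - a_{i,n})(1 - \overline{a_{i,n}} z)) = 0$ near $z = 1$ that $k - 1$ critical points are forced to approach $1$, again contradicting $M$-uni-criticality. For $k = 1$, the first-order identity $\sum_i (1 - |a_{i,n}|^2)/|1 - a_{i,n}|^2 = f_n'(1) = 1$ pins the asymptotic location of $a_{1,n}$ to the horocycle of parameter $1$ at $1$; a direct computation at the next order (using the formula $(M_a'/M_a)'(1) = -(1-|a|^2)(1 - 2\overline{a} + |a|^2)/|1-a|^4$) then shows that the doubly parabolic condition $f_n''(1) = 0$, equivalently $\sum_i (M_{a_{i,n}}'/M_{a_{i,n}})'(1) = -1$, is incompatible with the tangential horocyclic asymptotic for $a_{1,n}$, yielding the final contradiction.

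The main obstacle will be executing the leading-order asymptotic for the critical-point equation in the sub-case $k \geq 2$ cleanly, and verifying that the $k = 1$ horocyclic asymptotic genuinely conflicts with the second-order parabolic identity. The $d = 2$, $k = 2$ calculation (in which both zeros approach $1$ along the horocycle of parameter $2$ and the unique critical-point equation forces $(c-1)^2 \to 0$ after a leading-order cancellation) strongly suggests the expansion carries through uniformly in general.
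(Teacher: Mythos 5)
Your strategy is genuinely different from the paper's, and most of it does work. In the case $\deg\varphi_f\geq 1$, Lemma \ref{lem:critc} gives critical points of the rational extension of $f_n$ tending to a hole on $\mathbb{S}^1$; as written these could a priori be the reflected critical points outside $\overline{\D}$, so you need the one-line remark that the critical set of a Blaschke product is symmetric under $z\mapsto 1/\bar z$, after which critical points in $\D$ leave $B_{\Hyp^2}(0,M)$ and you get your contradiction. Your $k=1$ analysis also closes, provided you conjugate so that the parabolic point of each $f_n$ is exactly $1$ (not just in the limit): then $f_n'(1)=\sum_i\tfrac{1-|a_{i,n}|^2}{|1-a_{i,n}|^2}=1$, and $f_n''(1)=0$ is equivalent to $\sum_i\tfrac{(1-|a_{i,n}|^2)\,\mathfrak{Im}(a_{i,n})}{|1-a_{i,n}|^4}=0$; since the zeros tending to points $\neq 1$ contribute $o(1)$ to both sums, one gets $\tfrac{1-|a_{1,n}|^2}{|1-a_{1,n}|^2}\to 1$ and $\tfrac{\mathfrak{Im}(a_{1,n})}{|1-a_{1,n}|^2}\to 0$ simultaneously, which forces $1-\mathfrak{Re}(a_{1,n})\to 1$ and contradicts $a_{1,n}\to 1$. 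So the horocyclic conflict you anticipated is real.

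The genuine gap is the sub-case $k\geq 2$. The conclusion you want (some critical point of $f_n$ in $\D$ tends to $1$; one suffices, you do not need $k-1$) is true, but there is no single leading-order balance in the critical-point equation: the relevant scale depends on the mutual hyperbolic distances and escape rates of the colliding zeros. Already for $d=3$ with zeros $(1-s_n)e^{\pm i\theta_n}\to 1$ and a third zero near $-1$ at depth $u_n$, the critical points near $1$ sit at Euclidean scale $\sqrt{s_n/u_n}$ from the zeros in the regime $s_n\ll u_n\theta_n^2$, but at scale $\theta_n$ on the real axis in the complementary regime; an expansion modeled on your $d=2$ computation (where both zeros lie on a common horocycle) does not cover these configurations uniformly, and in general the zeros approaching $1$ form several clusters at incomparable rates. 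A soft argument closes this using the paper's tools: if no critical point lies within the constant $R$ of Theorem \ref{thm:almostisometry} of the geodesic $[a_{1,n},a_{2,n}]$ between two zeros tending to $1$, then $0=d_{\Hyp^2}(f_n(a_{1,n}),f_n(a_{2,n}))=d_{\Hyp^2}(a_{1,n},a_{2,n})+O(1)$, so the zeros stay a bounded hyperbolic distance apart and a normal-families-plus-Hurwitz argument produces a critical point a bounded distance from them; either way a critical point tends to $1$, violating $M$-uni-criticality. For comparison, the paper's proof sidesteps the whole hole-multiplicity analysis: $M$-uni-criticality forces the algebraic limit to be a unimodular constant $t$; the case $t=1$ is excluded by an arc argument on $\mathbb{S}^1$ using the uniform separation of $f_n^{-1}(1)$ (Lemma \ref{lem:sep}) to produce an attracting direction along the circle at $1$, impossible for a doubly parabolic point; and the case $t\neq 1$ is excluded because $f_n$ becomes univalent on the component of $f_n^{-1}(U)$ through $1$ contained in a small disk $U\ni 1$, contradicting $f_n'(1)=1$ via the Schwarz lemma. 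That route needs no expansion of the critical-point equation and no case division on $k$.
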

\begin{proof}
Suppose for contradiction that this space is not bounded.
Then there exists a sequence of $M$-uni-critical doubly parabolic Blaschke products $f_n$ that is degenerating viewed as rational maps.
We normalize by rotation so that $1$ is the parabolic fixed point.
Since $f_n$ is degenerating and the critical points are all within $M$ hyperbolic distance away from $0$, after passing to a subsequence, $f_n$ converges algebraically to a constant function $t$ by Lemma \ref{lem:critc}.

Suppose $t=1$. By Lemma \ref{lem:sep}, the preimages $f_n^{-1} (1)$ are uniformly separated.
Thus we can choose an arc $1\in \gamma\subseteq \mathbb{S}^1$ so that $f_n$ is injective on $\gamma$ and its end points $\partial \gamma$ are not holes.
Since $f_n (\partial \gamma) \to 1$ and $f_n(\gamma)$ contains $1$, for sufficiently large $n$, $1$ has at least one attracting direction on $\mathbb{S}^1$, which is a contradiction.

Suppose $t \neq 1$. Choose a disk $U$ with $1\in U$, $t\notin \overline{U}$ and such that $\partial U$ contains no holes.
Then for sufficiently large $n$, $U$ contains no critical values of $f_n$ and $f_n^{-1}(\partial U)\cap \partial U = \emptyset$ as $f_n(\partial U) \to t$.
Thus the component $V$ of $f_n^{-1}(U)$ containing $1$ is contained in $U$, and $f_n$ is univalent on $V$, which is a contradiction to $1$ being a parabolic fixed point.
\end{proof}

\begin{proof}[Proof of Proposition \ref{prop:rsh}]
By our construction, $P_n \to \hat P$ and $\hat P$ corresponds to the Hubbard tree $(H, \p)$ by Proposition \ref{prop: sl}.
The dynamics on $\Omega_v$ for $v\in \mathcal{S}_{\p}$ is conjugate to $\mathcal{F}$ by Lemma \ref{lem:cfm} and Corollary \ref{cor:afc}.
Let $M_1 = M_1(f)$ be the constant in Proposition \ref{prop:rt}.
If $\mathcal{C} \subseteq \mathcal{V}$ is a periodic Fatou cycle of period $q$, there exists $v\in \mathcal{C}$ for which $F_v^q$ is $M_1$-uni-critical.
Since $\hat P^q: U_{v_1, \infty} \longrightarrow U_{v_1, \infty}$ is conjugate to $F^q_{v_1}$ on $\D_{v_1}$ by Lemma \ref{lem:cfm}, $\hat P^q$ on $U_{v_1, \infty}$ is $M_1$-uni-critical.
Since $U_{v_1, \infty} \subseteq \Omega_{v_1}$, and the inclusion is distance non-increasing (with respect to the hyperbolic metric) by Schwarz lemma, $\hat P^q$ on the Fatou component $\Omega_{v_1}$ is also $M_1$-uni-critical.
Since $\hat P^q$ conjugates to a doubly parabolic Blaschke product on $\Omega_{v_1}$, by Proposition \ref{lem:Muc}, $d_{\Omega_{v_1}} (c, \hat P^q(c)) \leq M_2$ for all critical point $c$ and some constant $M_2$ depending only on $M_1$.
Therefore, $\hat P^q$ on $\Omega_{w}$ is $M$-uni-critical for all $w\in \mathcal{C}$, with $M$ depending only on the tree map.

The same argument works for strictly pre-periodic points, and the proposition follows.
\end{proof}

\subsection*{Degenerations on $\partial\PH_d$}
Let $S \subseteq \overline{\PH_d}$ be the space of all geometrically finite polynomials associated to a pointed simplicial Hubbard tree $(H, \p)$.
Proposition \ref{prop:rsh} gives that 
\begin{enumerate}
\item $S$ projects onto the space $\mathcal{B}^{\mathcal{S}_\p}$ of all normalized mapping schemes on $\mathcal{S}_\p$ (see Definition \ref{defn:nms}).
\item Each fiber contains at least $1$ polynomial which is $M$-uni-critical on all other Fatou components.
\end{enumerate}

\begin{figure}[ht]
  \centering
  \resizebox{1\linewidth}{!}{
    \def\svgwidth{\columnwidth}
\begingroup%
  \makeatletter%
  \providecommand\color[2][]{%
    \errmessage{(Inkscape) Color is used for the text in Inkscape, but the package 'color.sty' is not loaded}%
    \renewcommand\color[2][]{}%
  }%
  \providecommand\transparent[1]{%
    \errmessage{(Inkscape) Transparency is used (non-zero) for the text in Inkscape, but the package 'transparent.sty' is not loaded}%
    \renewcommand\transparent[1]{}%
  }%
  \providecommand\rotatebox[2]{#2}%
  \newcommand*\fsize{\dimexpr\f@size pt\relax}%
  \newcommand*\lineheight[1]{\fontsize{\fsize}{#1\fsize}\selectfont}%
  \ifx\svgwidth\undefined%
    \setlength{\unitlength}{371.25bp}%
    \ifx\svgscale\undefined%
      \relax%
    \else%
      \setlength{\unitlength}{\unitlength * \real{\svgscale}}%
    \fi%
  \else%
    \setlength{\unitlength}{\svgwidth}%
  \fi%
  \global\let\svgwidth\undefined%
  \global\let\svgscale\undefined%
  \makeatother%
  \begin{picture}(1,1.04646465)%
    \lineheight{1}%
    \setlength\tabcolsep{0pt}%
    \put(0,0){\includegraphics[width=\unitlength,page=1]{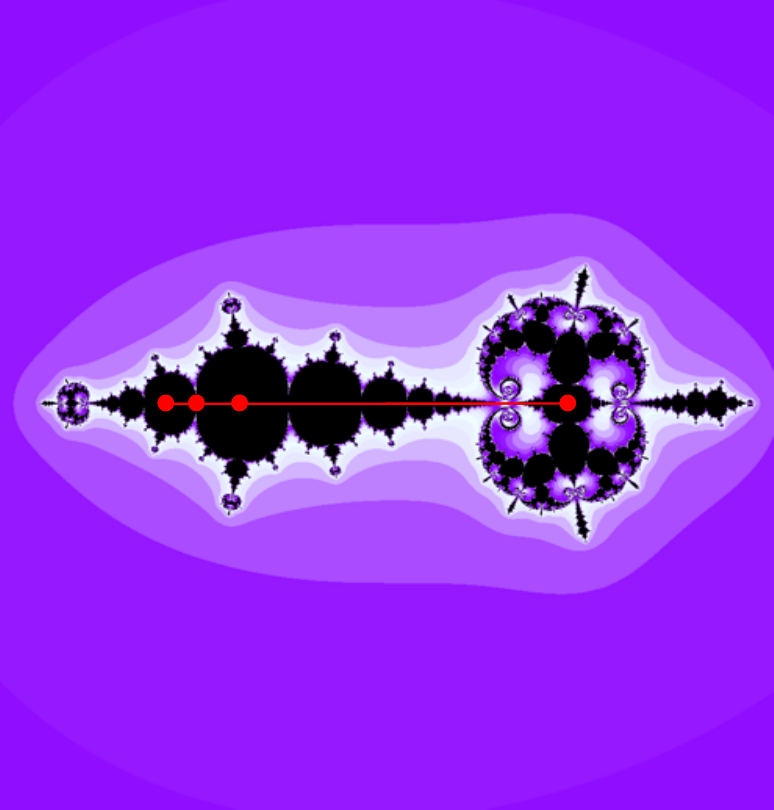}}%
    \put(0.25263705,0.49569761){\color[rgb]{1,0,0}\makebox(0,0)[lt]{\lineheight{1.25}\smash{\begin{tabular}[t]{l}{\LARGE$p_n$}\end{tabular}}}}%
    \put(0.31502261,0.49543151){\color[rgb]{1,0,0}\makebox(0,0)[lt]{\lineheight{1.25}\smash{\begin{tabular}[t]{l}{\LARGE$c_{1,n}$}\end{tabular}}}}%
    \put(0,0){\includegraphics[width=\unitlength,page=2]{D32.pdf}}%
    \put(0.72317635,0.49396332){\color[rgb]{1,0,0}\makebox(0,0)[lt]{\lineheight{1.25}\smash{\begin{tabular}[t]{l}{\LARGE$c_{2,n}$}\end{tabular}}}}%
  \end{picture}%
\endgroup%

    \def\svgwidth{\columnwidth}
\begingroup%
  \makeatletter%
  \providecommand\color[2][]{%
    \errmessage{(Inkscape) Color is used for the text in Inkscape, but the package 'color.sty' is not loaded}%
    \renewcommand\color[2][]{}%
  }%
  \providecommand\transparent[1]{%
    \errmessage{(Inkscape) Transparency is used (non-zero) for the text in Inkscape, but the package 'transparent.sty' is not loaded}%
    \renewcommand\transparent[1]{}%
  }%
  \providecommand\rotatebox[2]{#2}%
  \newcommand*\fsize{\dimexpr\f@size pt\relax}%
  \newcommand*\lineheight[1]{\fontsize{\fsize}{#1\fsize}\selectfont}%
  \ifx\svgwidth\undefined%
    \setlength{\unitlength}{371.25bp}%
    \ifx\svgscale\undefined%
      \relax%
    \else%
      \setlength{\unitlength}{\unitlength * \real{\svgscale}}%
    \fi%
  \else%
    \setlength{\unitlength}{\svgwidth}%
  \fi%
  \global\let\svgwidth\undefined%
  \global\let\svgscale\undefined%
  \makeatother%
  \begin{picture}(1,1.04646465)%
    \lineheight{1}%
    \setlength\tabcolsep{0pt}%
    \put(0,0){\includegraphics[width=\unitlength,page=1]{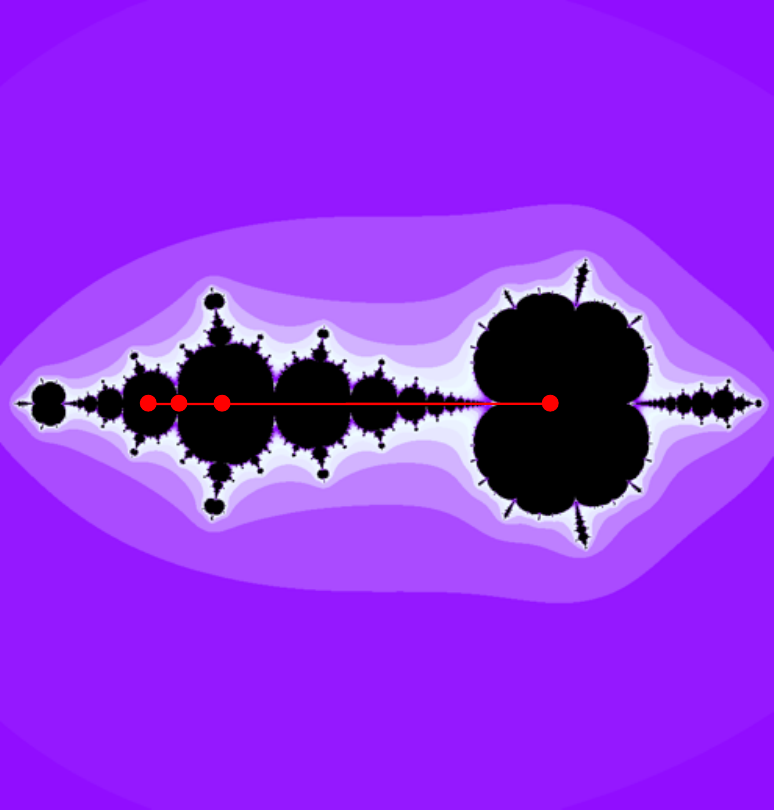}}%
    \put(0.22997812,0.49539942){\color[rgb]{1,0,0}\makebox(0,0)[lt]{\lineheight{1.25}\smash{\begin{tabular}[t]{l}{\LARGE$p$}\end{tabular}}}}%
    \put(0,0){\includegraphics[width=\unitlength,page=2]{D33.pdf}}%
    \put(0.28939439,0.49469738){\color[rgb]{1,0,0}\makebox(0,0)[lt]{\lineheight{1.25}\smash{\begin{tabular}[t]{l}{\LARGE$c_1$}\end{tabular}}}}%
    \put(0.69754817,0.4932292){\color[rgb]{1,0,0}\makebox(0,0)[lt]{\lineheight{1.25}\smash{\begin{tabular}[t]{l}{\LARGE$c_2$}\end{tabular}}}}%
  \end{picture}%
\endgroup%

  }
  \caption{The convergence to a degree $3$ geometrically finite polynomial $\hat P$ (on the right) with iterated-simplicial pointed Hubbard tree that is not simplicial. The critical point $c_{2,n}$ is `hidden' behind $c_{1,n}$. The escaping rates of the two critical points are incompatible, and $c_{2,n}$ is not quasi pre-periodic. $\hat P$ cannot be obtained directly by a quasi post-critically finite degeneration.}
  \label{fig:D3}
\end{figure}

The space $\mathcal{B}^{\mathcal{S}_\p}$ is a product of (normalized) space of Blaschke products (see \cite[\S 4]{Milnor12}).
The degenerations on $\BP^{\mathcal{S}_\p}$ can be defined similarly:
\begin{defn}
Let $\mathcal{F}_n \in \BP^{\mathcal{S}_\p}$. 
We say it is {\em $K$-quasi post-critically finite} if we can label the critical points as $c_{1,n},..., c_{k,n}$ such that for any sequence $c_{i,n}$, there exists quasi pre-period $l_i$ and quasi period $q_i$ with
$$
d_{\D}(\mathcal{F}_n^{l_i}(c_{i,n}), \mathcal{F}_n^{l_i+q_i}(c_{i,n})) \leq K.
$$
\end{defn}
Let $q_{n} \in \D_\p$ be the unique attracting fixed point for $\mathcal{F}_n$.
For $a\in \mathcal{S}_\p$, define $Q_{a,n}$ inductively with $Q_{\p, n} = \{q_n\}$, and $Q_{a,n} = \mathcal{F}_{a,n}^{-1}(Q_{f(a),n})$.

For each vertex $a\in \mathcal{S}_\p$, the quasi-invariant trees $\mathcal{T}_{a,n}$ are constructed similarly as in the case of $\BP_d$.
The dynamics are modeled by a collection of simplicial maps 
$$
\mathcal{F}_a: (\mathcal{T}_a, Q_a) \rightarrow (\mathcal{T}_{f(a)}, Q_{f(a)}),
$$
with rescaling limits $F_v: \D_v \rightarrow \D_{\mathcal{F}(v)}$ where $v$ is a vertex of $\mathcal{T}_a, a\in \mathcal{S}_\p$.
Here $Q_a$ is a finite set corresponding to $Q_{a,n}$.

These simplicial maps with the rescaling limits are combinatorially modeled by {\em angled tree mapping scheme}: a collection of simplicial maps 
$\mathcal{F}_a: (\mathcal{T}_a, Q_a) \rightarrow (\mathcal{T}_{f(a)}, Q_{f(a)})$ with local degree functions and anchored, compatible angle functions.
The angled tree mapping schemes are said to be {\em admissible} if $\mathcal{F}_\p: (\mathcal{T}_\p, q) \rightarrow (\mathcal{T}_\p, q)$ is admissible.

Minimality of the angled tree mapping schemes is defined similarly as angled tree maps.
Note that for a minimal angled tree mapping scheme, $\mathcal{F}_\p: (\mathcal{T}_\p, q) \rightarrow (\mathcal{T}_\p, q)$ may not be minimal as it may contain vertices corresponding to orbits of critical points in $\D_v$ with $v\neq \p$.

Let $\mathcal{S}_q \subseteq \bigcup_{a\in \mathcal{S}_\p} \mathcal{T}_a$ be the backward orbits of $q$ under the angled tree mapping schemes $\mathcal{F}$.

Since the induced dynamics $f: \mathcal{S}_\p \rightarrow \mathcal{S}_\p$ is particularly easy (it has a unique fixed point at $\p$ and all the other points are in the backward orbits of $\p$), by pulling back the degeneration on $\D_\p$, Proposition \ref{prop:rt} can be easily generalized to degenerations in $\BP^{\mathcal{S}_\p}$:
\begin{prop}\label{prop:rt2}
Let $\mathcal{F}: \bigcup_{a\in\mathcal{S}_\p} (\mathcal{T}_a, Q_a) \rightarrow \bigcup_{a\in\mathcal{S}_\p} (\mathcal{T}_a, Q_a)$ be a minimal admissible angled tree mapping scheme for $\mathcal{S}_\p$.
In the case when $\delta(q) \geq 2$,
we let $\mathcal{G}$ be a normalized mapping scheme on $\mathcal{S}_q$.
There exists a $K$-quasi post-critically finite sequence $\mathcal{F}_n \in \BP^{\mathcal{S}_\p}$ realizing $\mathcal{F}$ such that
\begin{enumerate}
\item the rescaling limits on $\mathcal{S}_q$ are conjugate to $\mathcal{G}$;
\item there exists a constant $M$ depending only on the angled tree mapping scheme $\mathcal{F}$ (and thus independent of $\mathcal{G}$) so that 
\begin{enumerate}
\item[(a)] for any periodic cycle $\mathcal{C}$ other than $q$, there exists a periodic point $v \in \mathcal{C}$ so that the first return rescaling limit 
$$
F^k_v: \D_v\longrightarrow \D_v
$$ 
is $M$-uni-critical, where $k$ is the period of $v$;
\item[(b)] for any strictly pre-periodic vertex $w \notin \mathcal{S}_q$, the rescaling limit $F_w: \D_w \longrightarrow \D_{f(w)}$ is $M$-uni-critical and the critical values are within $M$ hyperbolic distance from $0 \in \D_{f(w)}$.
\end{enumerate}
\end{enumerate}
\end{prop}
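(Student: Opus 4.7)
The plan is to reduce Proposition \ref{prop:rt2} to Proposition \ref{prop:rt} by exploiting the fact that the induced dynamics $f:\mathcal{S}_\p \to \mathcal{S}_\p$ is a rooted tree with unique fixed point $\p$ and all other vertices in backward orbits. First, I apply Proposition \ref{prop:rt} directly to the admissible angled tree map $\mathcal{F}_\p:(\mathcal{T}_\p, q) \to (\mathcal{T}_\p, q)$ with the prescribed normalized mapping scheme $\mathcal{G}$ on $\mathcal{S}_q\subseteq \mathcal{T}_\p^\infty$. This yields a $K$-quasi post-critically finite sequence $(f_{\p,n}) \in \BP_{\delta(q)}$ realizing $\mathcal{F}_\p$, with rescaling limits on $\mathcal{S}_q$ conjugate to $\mathcal{G}$ via compatible markings, and satisfying the $M$-uni-critical bounds at periodic cycles and aperiodic vertices, with $M = M(\mathcal{F}_\p)$ depending only on the combinatorics of $\mathcal{F}_\p$.

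Next, I induct on the generation $\ell(a) := \min\{\ell \geq 0 : f^\ell(a) = \p\}$ for $a \in \mathcal{S}_\p \setminus \{\p\}$. Given the realization $(f_{f(a),n}) \in \BP_{\delta(f(a))}$ at the previous generation, the proper map $\mathcal{F}_a:\D_a \to \D_{f(a)}$ together with the pull-back tree structure $(\mathcal{T}_a, Q_a)$ prescribes an angled map between disks to be realized by $(f_{a,n})$. Since $\mathcal{F}_a$ is not a self-map and all periodic structure is already accounted for at $\p$, the realization reduces to iteratively adding zeros at furthest preimages of vertices in $\mathcal{T}_a$, exactly as in Case (1) of Lemma \ref{lem:rt}. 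The hyperbolic estimates of Lemmas \ref{lem:hme}, \ref{lem:he}, \ref{lem:em}, \ref{lem:aem}, \ref{lem:cl} transfer verbatim since they only use intrinsic hyperbolic geometry of the target disk, and the quasiconformal surgery of Lemma \ref{lem:mfp} applies to each $\D_a$ separately.

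For the $M$-uni-critical conclusion, the argument of Lemma \ref{lem:iF} localizes to the periodic orbit inside $\mathcal{T}_\p$, so it is unchanged, and the doubly-parabolic compactness of Proposition \ref{lem:Muc} gives uniform bounds independent of $\mathcal{G}$. The surgery modifications in Lemma \ref{lem:mfp} are supported in annuli whose geometry is determined by $\mathcal{F}$ alone, so $M$ depends only on the angled tree mapping scheme. The main obstacle I anticipate is bookkeeping for the anchored marking conventions (Definitions \ref{defn:anchored}, \ref{defn:mcpc}) across many generations of pull-back: at each vertex $a$ one must select the preimage of the marked point at $f(a)$ that is both compatible with the conjugacy to $\mathcal{G}$ further up the tree and with the counterclockwise-nearest-to-$1$ convention. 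This is resolved by extending the inductive hypotheses IH \ref{ih:1}, IH \ref{ih:2}, IH \ref{ih:3} from self-maps to mapping schemes indexed by $\mathcal{S}_\p$; once these are stated correctly, each inductive step goes through as in Section \ref{sec:raatm}, and the proposition follows.
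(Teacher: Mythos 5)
Your proposal is correct and takes essentially the same route as the paper, whose proof of Proposition \ref{prop:rt2} is exactly the observation you make: the induced dynamics $f:\mathcal{S}_\p\to\mathcal{S}_\p$ has a unique fixed point $\p$ with all other vertices in its backward orbit, so one applies Proposition \ref{prop:rt} at $\p$ and pulls back the degeneration on $\D_\p$ to the remaining disks. Two cosmetic slips only: the realization at $\p$ lives in $\BP_{\delta(\p)}$ (the degree of $\mathcal{F}_\p$), not $\BP_{\delta(q)}$, and $\mathcal{S}_q$ meets every $\mathcal{T}_a$, not only $\mathcal{T}_\p$ and its pullbacks --- both points are absorbed by your generation induction together with the surgery of Lemma \ref{lem:mfp} on each $\D_a$.
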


Let $(H', \p')$ be a pointed simplicial tuning of $(H, \p)$. 
Then there exists an associated angled tree mapping scheme
$$
\mathcal{F}_a: (\mathcal{T}_a, Q_a) \rightarrow (\mathcal{T}_{f(a)}, Q_{f(a)}).
$$
By performing admissible splitting on $\mathcal{T}_\p$ and pullback the modifications accordingly to $\mathcal{T}_a, a\neq \p$, we can associate a quasi post-critically finite degeneration $\mathcal{F}_n\in\mathcal{B}^{\mathcal{S}_\p}$ by Proposition \ref{prop:rt2}.
By Proposition \ref{prop:rsh}, we can find a sequence $Q_n \in S \subseteq \overline{\PH_d}$ whose dynamics on Fatou components of $\mathcal{S}_\p$ are conjugated to $\mathcal{F}_n\in\mathcal{B}^{\mathcal{S}_\p}$ while remaining $M$-uni-critical on all other Fatou components.

Since $\overline{\PH_d}$ is compact, after passing to a subsequence, $Q_n$ converges to $Q\in \overline{\PH_d}$.
The same argument as in Proposition \ref{prop:gfs}, which uses Lemma \ref{lem:key}, shows that $Q$ is geometrically finite.
By Proposition \ref{lem:Muc}, the Fatou component $\Omega_{v,n}$ of $Q_n$ for $v\in \mathcal{V}-\mathcal{S}_\p$ converges to a corresponding Fatou component $\Omega_v$.
The same argument as Proposition \ref{prop:dfc} shows the corresponding Fatou component of $\Omega_v$ are different for different Fatou vertices of $\mathcal{T}_{a}, a\in \mathcal{S}_\p$.
Thus, a similar proof for Proposition \ref{prop:la} and Proposition \ref{prop: sl} gives that the associated pointed Hubbard tree for $Q$ is $(H',\p')$. 
Moreover, by Proposition \ref{prop:rt2}, the space $S' \subseteq \overline{\PH_d}$ of all geometrically finite polynomials associated to $(H', \p')$ again satisfies the two properties listed at the beginning of this subsection.
Therefore, by induction, we have
\begin{prop}\label{prop:conv}
Let $(H, \p)$ be an iterated-simplicial Hubbard tree. Then there exists a geometrically finite polynomial $\hat P \in \overline{\PH_d}$ associated to it.
\end{prop}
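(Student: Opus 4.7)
The plan is to prove Proposition \ref{prop:conv} by induction on the number $k$ of pointed simplicial tunings needed to build $(H,\p)$ from the trivial pointed Hubbard tree $(\{\p\},\p)$. For the base case $k=0$, the polynomial $z^d \in \PH_d \subseteq \overline{\PH_d}$ realizes the trivial tree. For the inductive step, I will strengthen the statement to carry the two auxiliary properties stated just before the proposition: namely that the space $S_k \subseteq \overline{\PH_d}$ of polynomials realizing $(H_k,\p_k)$ surjects onto the appropriate mapping scheme space $\mathcal{B}^{\mathcal{S}_{\p_k}}$, and that each fiber contains a representative which is $M_k$-uni-critical on every Fatou component outside $\mathcal{S}_{\p_k}$ for some constant $M_k$ depending only on $(H_k,\p_k)$.

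Suppose $(H_{k+1},\p_{k+1})$ is a pointed simplicial tuning of $(H_k,\p_k)$. First, I would extract the associated angled tree mapping scheme $\mathcal{F}: \bigcup_{a\in\mathcal{S}_{\p_k}}(\mathcal{T}_a,Q_a) \to \bigcup_{a\in\mathcal{S}_{\p_k}}(\mathcal{T}_a,Q_a)$ by performing an admissible splitting at every periodic Julia branch point of the Hubbard tree on $\D_{\p_k}$ and pulling back these modifications through the backward orbits in $\mathcal{S}_{\p_k}$. By Proposition \ref{prop:eqaht} this scheme is admissible. Applying Proposition \ref{prop:rt2}, I obtain a quasi post-critically finite sequence $\mathcal{F}_n \in \mathcal{B}^{\mathcal{S}_{\p_k}}$ realizing $\mathcal{F}$ and satisfying the $M'$-uni-critical bounds on all periodic and aperiodic vertices outside the backward orbit of $q \in \D_{\p_{k+1}}$ for a constant $M'$ depending only on $\mathcal{F}$.

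Next, using the inductive hypothesis, for each $n$ I can choose $Q_n \in S_k$ whose dynamics on the Fatou components indexed by $\mathcal{S}_{\p_k}$ is conformally conjugate (with compatible boundary markings) to $\mathcal{F}_n$, and which remains $M_k$-uni-critical on every Fatou component outside $\mathcal{S}_{\p_k}$. Since $\overline{\PH_d}$ is compact, after passing to a subsequence $Q_n \to Q \in \overline{\PH_d}$. Applying Lemma \ref{lem:key} to each critical orbit (as in the proof of Proposition \ref{prop:gfs}) shows $Q$ is geometrically finite. Proposition \ref{lem:Muc} combined with the uniform $M_k$-uni-critical bound on Fatou components away from $\mathcal{S}_{\p_k}$ controls those components under Carath\'eodory limits, so they persist to Fatou components of $Q$; on the components corresponding to $\mathcal{S}_{\p_k}$, Lemma \ref{lem:cfm} and Corollary \ref{cor:cfm} identify the limiting dynamics as conjugate to the rescaling limits of $\mathcal{F}_n$, which in turn match the mapping scheme prescribed by the tuning. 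Adapting the arguments of Propositions \ref{prop:dfc}, \ref{prop:la} and \ref{prop: sl} to this two-level setting shows that distinct Fatou vertices give distinct Fatou components and that the landing pattern of external angles at Julia vertices matches that of $(H_{k+1},\p_{k+1})$, so $Q$ realizes $(H_{k+1},\p_{k+1})$.

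Finally, I must verify that the space $S_{k+1}$ of realizations inherits the two uniformity properties for the next inductive step; this amounts to rerunning the above construction over every choice of normalized mapping scheme on $\mathcal{S}_{\p_{k+1}}$, which is allowed since the constant $M_{k+1}$ furnished by Proposition \ref{prop:rt2} depends only on the combinatorics and not on the scheme. The main obstacle is the last item: one needs to ensure that the $M$-uni-critical compactness propagates through the iteration, because without it the Carath\'eodory limits of Fatou components far from $\p_{k+1}$ could collapse, destroying the combinatorial identification. This is precisely why Proposition \ref{prop:rt2} was proved with a constant independent of $\mathcal{G}$, and it is also where Proposition \ref{lem:Muc} is essential, since it converts the uniform critical bound into genuine geometric compactness of the parabolic dynamics on non-attracting Fatou components.
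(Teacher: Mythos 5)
Your proposal is correct and follows essentially the same route as the paper: induction on the number of pointed simplicial tunings, strengthened to carry the two properties (surjectivity onto the mapping scheme space $\mathcal{B}^{\mathcal{S}_\p}$ and the existence of $M$-uni-critical representatives in each fiber), with the inductive step executed via Proposition \ref{prop:rt2}, the inductive hypothesis (Proposition \ref{prop:rsh} at the first level), compactness of $\overline{\PH_d}$ together with Lemma \ref{lem:key}, Proposition \ref{lem:Muc} to keep the far-away Fatou components from collapsing, and the transfer of Propositions \ref{prop:dfc}, \ref{prop:la} and \ref{prop: sl} to identify the limit with $(H_{k+1},\p_{k+1})$. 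No gaps beyond the level of detail the paper itself leaves to the reader.
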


\begin{proof}[Proof of Theorem \ref{thm: eq} ]
The theorem follows from Proposition \ref{prop:NC} and Proposition \ref{prop:conv}.
\end{proof}

\section{Self-bumps on $\partial \PH_d$}\label{sec:sb}
In this section, we shall prove Theorem \ref{thm:sb}.
We first explain the phenomenon with an example in degree $4$, which can then be easily generalized to any higher degree.

Consider the geometrically finite polynomial
$$
\hat P(z) = z^4-\frac{3}{8}z^2+\frac{9}{8}z-\frac{3}{256}.
$$
It has a super attracting fixed point at $-\frac{3}{4}$, and a double parabolic fixed point at $1/4$.
The associated pointed Hubbard tree is a tripod
$$
H = [v, a_0] \cup [v, a_1] \cup [v, a_2]
$$
where $\vp = a_0$ and the vertices are ordered counterclockwise (see Figure \ref{fig:P}). 

The dynamics fix all three edges.
There are two distinct admissible split modifications, resulting in two different admissible angled tree maps:
$$
{^1}\mathcal{T}:= [a_0, v] \cup [v, a_1] \cup [a_1, a_2]
$$
and 
$$
{^2}\mathcal{T}:= [a_0, v] \cup [v, a_2] \cup [a_2, a_1].
$$
Let ${^1}P_n, {^2}P_n\in \PH_d$ be the two sequences of polynomials associated to ${^1}\mathcal{T}$ and ${^2}\mathcal{T}$, where every polynomial in the sequences is assumed to have a super attracting fixed point associated to $a_0$ (see Figure \ref{fig:AccessTop} and \ref{fig:AccessBottom}).
There are three bounded critical Fatou components for $\hat P$,
and each Fatou component is fixed and contains one critical point.
One is super attracting, and the other two are parabolic.
Thus, the dynamics on the Fatou components are rigid (see \cite[\S 6]{McM88}).
Hence both ${^1}P_n$ and ${^2}P_n$ converge to $\hat P$.

Denote the repelling fixed points by ${^1}x_{1,n}, {^1}x_{2,n}, {^1}x_{3,n}$ and ${^2}x_{1,n}, {^2}x_{2,n}, {^2}x_{3,n}$ respectively, then ${^i}x_{j,n}$ converges to the parabolic fixed point $1/4$ for $\hat P$.
We label them so that the three fixed points are ordered counterclockwise and ${^i}x_{2,n}$ corresponds to the fixed point that is accessible from the positive real axis in the limit.
To prove that $\hat P$ gives a self-bump on $\partial \PH_d$, we study the multipliers of the three fixed points.

\subsection*{Residue computation}
Let $f$ be a holomorphic function defined in a neighborhood of $z_0 \in \C$, and $z_0$ is an isolated fixed point of $f$.
The {\em residue} of $f$ at the fixed point $z_0$ is
$$
res(f, z_0) = Res_{z=z_0} \frac{dz}{f(z) -z}
$$
where the right-hand side is the residue of the $1$-form at $z_0$.
The residue is invariant under conformal change of coordinate, thus it can also be defined for a fixed point at $\infty$.

If the multiplier at $z_0$ is $\lambda \neq 1$, then the residue is
$$
res(f, z_0) = \frac{1}{\lambda -1}.
$$
Thus, $z_0$ is a repelling fixed point if and only if
$$
\mathfrak{Re} (res(f, z_0)) > -\frac{1}{2}.
$$
Let $C$ be an oriented closed curve that bounds a domain $D$ with no fixed point on $C$.
Then the Residue Theorem gives that
$$
\sum_{z_i \text{ fixed point in } D} res(f, z_i) = \frac{1}{2\pi i}\int_{C} \frac{dz}{f(z) - z}.
$$
For a global rational map $f: \hat \C \longrightarrow \hat \C$, we have that
$$
\sum_{z_i \text{ fixed point}} res(f, z_i) = -1.
$$

Back to our setting, an explicit computation shows $res(\hat P, 1) = 1$.
\begin{lem}\label{lem:rf}
Let $U\subseteq \MP_4$ be any sufficiently small neighborhood of $\hat P$.
Let $f \in U \cap \PH_4$, and $z$ be any repelling fixed point of $f$.
Then $\mathfrak{Im} (f'(z)) \neq 0$.
\end{lem}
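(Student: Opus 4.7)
The plan is a short residue-sum argument. First I would observe that for $U$ sufficiently small, each $f\in U$ has four finite fixed points (by Rouch\'e): one $z_0$ near $-3/4$ and three $z_1,z_2,z_3$ near the parabolic point $1/4$. For $f\in \PH_4$ near $\hat P$, the unique attracting fixed point is $z_0$, so $z_1,z_2,z_3$ are precisely the repelling fixed points of $f$; hence it suffices to show $f'(z_i)\notin\mathbb{R}$ for $i=1,2,3$. The main identity I would use is $\sum_{i=0}^{3}\mathrm{res}(f,z_i)=0$, which holds because $f$ is a polynomial of degree $\geq 2$ and so $dz/(f(z)-z)$ has no pole at $\infty$. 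Combined with the paper's computations $\mathrm{res}(\hat P,-3/4)=-1$ and $\mathrm{res}(\hat P,1/4)=1$, plus residue-continuity through the degenerate zero at $1/4$, this yields $\mathrm{res}(f,z_0)\to -1$ and $\sum_{i=1}^{3}\mathrm{res}(f,z_i)\to 1$ as $f\to \hat P$. I would also invoke the paper's criterion that repelling is equivalent to $\mathfrak{Re}(\mathrm{res}(f,z))>-1/2$.

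The argument proper would be by contradiction: suppose $\lambda_1:=f'(z_1)\in\mathbb{R}$, and set $\mu_1:=\lambda_1-1$. Continuity of the derivative gives $\mu_1\to 0$ as $f\to\hat P$. For $z_1$ repelling with $\mu_1$ small and real one is forced to have $\mu_1>0$ (the other real repelling branch $\mu_1<-2$ is ruled out by $\mu_1\to 0$), hence $\mathrm{res}(f,z_1)=1/\mu_1$ is real and tends to $+\infty$. Taking real parts in the sum identity and using $\mathfrak{Re}\,\mathrm{res}(f,z_j)>-1/2$ for $j=2,3$ gives
$$\tfrac{1}{\mu_1}+\mathfrak{Re}\,\mathrm{res}(f,z_2)+\mathfrak{Re}\,\mathrm{res}(f,z_3)\longrightarrow 1, \qquad \mathfrak{Re}\,\mathrm{res}(f,z_2)+\mathfrak{Re}\,\mathrm{res}(f,z_3)>-1,$$
so $1/\mu_1<2+o(1)$ as $f\to\hat P$. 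This keeps $\mu_1$ bounded away from $0$, contradicting $\mu_1\to 0$ once $U$ is small enough. The same reasoning rules out $\lambda_2,\lambda_3\in\mathbb{R}$.

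The one mildly delicate point will be residue-continuity through the triple parabolic zero at $1/4$, but it is standard: integrate $dz/(f(z)-z)$ over a fixed small circle around $1/4$ (which contains no fixed point of $\hat P$ other than $1/4$ and, for $U$ small, exactly the three perturbed fixed points $z_1,z_2,z_3$), and use uniform convergence $f\to\hat P$ on that circle to conclude $\sum_{i=1}^{3}\mathrm{res}(f,z_i)\to \mathrm{res}(\hat P,1/4)=1$. I do not foresee any other obstacle.
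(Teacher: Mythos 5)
Your proposal is correct and follows essentially the same route as the paper's proof: both arguments rest on continuity of the residue sum at the degenerate fixed point near $1/4$ (so the three repelling residues sum to approximately $\mathrm{res}(\hat P,1/4)=1$), the criterion $\mathfrak{Re}(\mathrm{res})>-\tfrac{1}{2}$ for repelling fixed points, and the observation that a real repelling multiplier close to $1$ forces a huge positive real residue that the other two repelling residues cannot offset. The additional bookkeeping you include (Rouch\'e localization of the four finite fixed points and the global identity that the residues over the finite fixed points of a polynomial sum to $0$) is fine but only re-derives the residue-continuity statement the paper invokes directly.
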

\begin{proof}
Let $z_1, z_2, z_3$ be the three repelling fixed points of $f$.
If we choose $U$ sufficiently small, we have that the multipliers $\lambda_i$ are close to $1$ and
$$
\sum_{i=1}^3 res(f, z_i) = \sum_{i=1}^3 \frac{1}{\lambda_i-1}
$$
is close to $res(\hat P, 1) = 1$.
If $\mathfrak{Im} (\lambda_1) = 0$, then $\lambda_1 >1$, so
$$ 
\mathfrak{Re} (\frac{1}{\lambda_2-1}+ \frac{1}{\lambda_3-1})
$$
is very negative. 
Thus at least one of the multiplier is attracting, which is a contradiction.
\end{proof}

To prove that the intersection of any sufficiently small neighborhood $U$ of $\hat P$ with $\PH_4$ is disconnected, it suffices to show the {\em signatures}, i.e., signs of the imaginary part of the multipliers of ${^i}x_{j,n}$ are different for the two sequences.

\subsection*{Signatures of a simple parabolic point}
Let $f$ be a holomorphic function in a neighborhood of $0 \in \C$. Assume that $0$ is an isolated simple parabolic fixed point of $f$.
After a conformal change of coordinate, we may assume that
$$
f(z) = z+z^2+O(z^3)
$$
where $a > 0$.
Note that near $0$, the positive real axis is a repelling direction, while the negative real axis is an attracting direction.
Assume that we perform a small perturbation so that the parabolic fixed point splits into two repelling fixed ones.
After conjugating with $z\mapsto z+c$ if necessary, we may assume that these two fixed points are $\pm \epsilon$, symmetric with respect to $0$. 
Since we assume that the two fixed points are all repelling, $\epsilon$ is not a real number.
By this normalization, an easy computation shows that the signatures of $\pm \epsilon$ equal to the signs of the imaginary part of $\pm \epsilon$.

\subsection*{Splitting the double parabolic point}
To compute the signature, we degenerate in two steps, which allows us to consider perturbations of only simple parabolic points.
Consider the degree $4$ polynomial $Q\in \partial \PH_4$ with super attracting fixed point of local degree $3$ with a parabolic fixed point on the boundary of the immediate super attracting basin.
We can degenerate the dynamics on the attracting Fatou component while staying on $\partial \PH_4$ (see Proposition \ref{prop:rsh}).

We choose the marking for the degree $3$ Blaschke product so that $0 \in \R/\Z \cong \mathbb{S}^1$ corresponds to the parabolic fixed point of $Q$.
We call this repelling fixed point the {\em marked fixed point}.
We construct two geometrically finite sequences of Blaschke products ${^1}f_n$ and ${^2}f_n$, with quasi-invariant trees 
$$
{^i}\mathcal{T} = [\p, a],
$$
where $\p$ and $a$ are fixed points of local degree $2$. 
The rescaling limits at $a$ are different:
For ${^1}\mathcal{T}$, the marked fixed point has angle $0^+$ at the vertex $a$, while it has angle $0^-$ at the vertex $a$ for ${^2}\mathcal{T}$.
We also assume that both sequences have a super attracting fixed point.
Then the corresponding sequence of polynomials ${^1}Q_n$ and ${^2}Q_n$ both converge to $\hat P(z)$ (see Figure \ref{fig:AccessTop} and \ref{fig:AccessBottom}).

For sufficiently large $n$, we can perturb ${^1}Q_n$ and ${^2}Q_n$ slightly to get two polynomials ${^1}P_n$ and ${^2}P_n$ in $\PH_4$, with ${^i}P_n \to \hat P$.
As before, we denote the three repelling fixed points of ${^i}P_n$ by ${^i}x_{1,n}, {^i}x_{2,n}, {^i}x_{3,n}$, where ${^i}x_{2,n}$ corresponds to the fixed point that is accessible from the positive real axis in the limit (see Figure \ref{fig:SB}).

Note that ${^i}P_n$ is constructed from ${^i}Q_n$ by splitting the simple parabolic fixed point into two repelling fixed points.
Using the orientation of the dynamics near the simple parabolic fixed point of ${^i}Q_n$, we can compute the signature at ${^1}x_{2,n}$ is $+$ while the signature at ${^2}x_{2,n}$ is $-$.
Thus, by Lemma \ref{lem:rf}, Theorem \ref{thm:sb} holds for degree $4$.

If we replace the super attracting fixed point by a super attracting fixed point of degree $d-2$, Theorem \ref{thm:sb} holds for any degree $d \geq 4$.


\end{document}